\let\oldtocsection=\tocsection
\let\oldtocsubsection=\tocsubsection
\renewcommand{\tocsection}[2]{\hspace{0em}\oldtocsection{#1}{#2}}
\renewcommand{\tocsubsection}[2]{\hspace{1em}\oldtocsubsection{#1}{#2}}
\tikzset{node distance=3cm, auto}
\def\@secnumfont{\bfseries}
\def\section{\@startsection{section}{1}%
  \z@{.7\linespacing\@plus\linespacing}{.5\linespacing}%
  {\normalfont\Large\bfseries}}
\def\subsection{\@startsection{subsection}{2}%
  \z@{.5\linespacing\@plus.7\linespacing}{-.5em}%
  {\normalfont\large\bfseries}}
  \def\subsubsection{\@startsection{subsubsection}{3}%
  \z@{.5\linespacing\@plus.7\linespacing}{-.5em}%
  {\normalfont\bfseries}}
\newtheorem{thm}{Theorem}[subsection]
\newtheorem{lemma}[thm]{Lemma}
\newtheorem{prop}[thm]{Proposition}
\newtheorem{cor}[thm]{Corollary}
\newtheorem{remark}[thm]{Remark}
\newtheorem{rmk}[thm]{Remark}
\newtheorem{conjecture}[thm]{Conjecture}
\newtheorem{definition}[thm]{Definition}
\newtheorem{example}[thm]{Example}
\theoremstyle{remark}
\numberwithin{equation}{subsection} 
\numberwithin{figure}{section}
\numberwithin{table}{subsection}
\newcommand{\eend}{{\rm end}}
\newcommand{\whZ}{{\widehat{Z}}}
\newcommand{\pre}{{\rm pre}}
\newcommand{\ppre}{{\rm ppre}}
\newcommand{\oCP}{{\overline{{\C}P}}\!\,}
\newcommand{\ovm}{{\underline{m}}}
\newcommand{\ovd}{{\underline{d}}}
\newcommand{\ovq}{{\underline{q}}}
\newcommand{\ovp}{{\underline{p}}}
\newcommand{\id}{{\rm id}}
\newcommand{\N}{{\mathbb{N}}}
\newcommand{\Ee}{\mathcal{E}}
\newcommand{\Tt}{\mathcal{T}}
\newcommand{\vol}{{\rm vol\,}}
\newenvironment{itemlist}
   { \begin{list} {$\bullet$}
         { \setlength{\topsep}{.5ex}  \setlength{\itemsep}{.5ex} \setlength{\leftmargin}{2.5ex} } }
   { \end{list} }
\newcommand{\bbm}{{\bf{m}}}
\newcommand{\bw}{{\bf{w}}}
\newcommand{\bx}{{\bf{x}}}
\newcommand{\bn}{{\bf{n}}}
\newcommand{\bE}{{\bf{E}}}
\newcommand{\bB}{{\bf{B}}}
\newcommand{\intt}{{\rm int\,}}
\newcommand{\NI}{{\noindent}}
\newcommand{\Cc}{{\mathcal C}}
\newcommand{\Ss}{{\mathcal S}}
\newcommand{\Jj}{{\mathcal J}}
\newcommand{\ov}{\overline}
\newcommand{\al}{{\alpha}}
\newcommand{\be}{{\beta}}
\newcommand{\om}{{\omega}}
\newcommand{\vareps}{{\epsilon}}
\newcommand{\de}{{\delta}}
\newcommand{\io}{{\iota}}
\newcommand{\ka}{{\kappa}}
\newcommand{\la}{{\lambda}}
\newcommand{\si}{{\sigma}}
\newcommand{\less} {{\smallsetminus}}
\newcommand{\p}{{\partial}}
\newcommand{\MS}{{\medskip}}
\newcommand{\er}{{\Diamond}}
\newcommand{\Z}{\mathbb{Z}}
\newcommand{\R}{\mathbb{R}}
\newcommand{\Q}{\mathbb{Q}}
\newcommand{\C}{\mathbb{C}}
\newcommand{\CP}{\mathbb{CP}}
\newcommand{\eps}{\varepsilon}
\newcommand{\acc}{\mathrm{acc}}
\newcommand{\sembeds}{\stackrel{s}{\hookrightarrow}}
\newcommand{\se}{\stackrel{s}{\hookrightarrow}}
\newcommand{\dashover}[2][\mathop]{#1{\mathpalette\df@over{{\dashfill}{#2}}}}
\newcommand{\fillover}[2][\mathop]{#1{\mathpalette\df@over{{\solidfill}{#2}}}}
\newcommand{\df@over}[2]{\df@@over#1#2}
\newcommand\df@@over[3]{%
  \vbox{
    \offinterlineskip
    \ialign{##\cr
      #2{#1}\cr
      \noalign{\kern1pt}
      $\m@th#1#3$\cr
    }
  }%
}
\newcommand{\dashfill}[1]{%
  \kern-.5pt
  \xleaders\hbox{\kern.5pt\vrule height.4pt width \dash@width{#1}\kern.5pt}\hfill
  \kern-.5pt
}
\newcommand{\dash@width}[1]{%
  \ifx#1\displaystyle
    2pt
  \else
    \ifx#1\textstyle
      1.5pt
    \else
      \ifx#1\scriptstyle
        1.25pt
      \else
        \ifx#1\scriptscriptstyle
          1pt
        \fi
      \fi
    \fi
  \fi
}
\newcommand{\solidfill}[1]{\leaders\hrule\hfill}
\title{Staircase patterns  in Hirzebruch surfaces}
\author{Nicki Magill}
\address{Mathematics Department, Cornell University}
\email{nm627@cornell.edu}
\thanks{NSF Graduate Research Grant DGE-1650441}
\author{Dusa McDuff}
\address{Mathematics Department, Barnard College}
\email{dusa@math.columbia.edu}
\author{Morgan Weiler}
\address{Mathematics Department, Cornell University}
\email{morgan.weiler@cornell.edu}
\thanks{NSF Research Grant 2103245}
\keywords{symplectic embeddings in four dimensions, ellipsoidal capacity function, symplectic staircases, almost toric fibrations, continued fractions}
\subjclass{53D05, 11Y99}
\date{May 30, 2023}
\begin{document}

\begin{abstract}  The ellipsoidal capacity function of a symplectic four manifold $X$ measures how much the form on $X$ must be dilated in order for it to admit an embedded ellipsoid of eccentricity  $z$.  In most cases there are just finitely many obstructions to such an embedding besides the volume.  If there are infinitely many obstructions, $X$ is said to have a staircase.  This paper gives an almost complete description of the staircases 
in the ellipsoidal capacity functions of
the family of  symplectic Hirzebruch surfaces $H_b$ formed by 
blowing up  the projective plane with weight $b$. 
 We describe an interweaving, recursively defined, family of obstructions to  symplectic embeddings of ellipsoids that show there is an open dense set of 
shape parameters $b$ that are blocked, i.e. have no staircase, and an uncountable number of other values of  $b$ that do admit staircases.   The remaining $b$-values form a countable sequence 
of special rational numbers that are closely related to the symmetries discussed in Magill--McDuff
(arXiv:2106.09143). We show that none of them admit ascending staircases. Conjecturally, 
none admit  descending staircases.  Finally, we  show that, as long as $b$ is not one of these special rational values, any staircase in $H_b$ has irrational accumulation point.
  A crucial ingredient of our proofs is the new, more indirect approach to using  almost toric fibrations in the analysis of staircases  by Magill (arXiv:2204.12460).  In particular, the structure of the relevant mutations of 
the set of almost toric fibrations on $H_b$ is echoed in the structure of the set of blocked $b$-intervals.
\end{abstract}

\maketitle

\tableofcontents

\section{Introduction}

\subsection{Overview and statement of main theorem}\label{ss:intro}
The ellipsoidal capacity function $c_{X}: [1,\infty) \to \R$ for a general four-dimensional target manifold ($X,\om)$  is defined by
$$
c_{X}(z):=\inf\left\{\lambda\ \Big|\ E(1,z)\sembeds \lambda X\right\},$$
where $z\ge 1$ 
is a real variable,  $\lambda X: = (X,\lambda\omega)$,  the ellipsoid $E(c,d)\subset \mathbb{C}^2$ is the set
$$
E(c,d)=\left\{(\zeta_1,\zeta_2)\in\mathbb{C}^2 \ \big|\  \pi \left(  \frac{|\zeta_1|^2}{c}+\frac{|\zeta_2|^2}{d} \right)<1 \right\},
$$
and we write $E\se \la X$ if there is a symplectic embedding of $E$ into $\la X$.  

It is straightforward to see that
$c_{X}(z)$ is bounded below by the volume constraint function $$
V_X(z) =\sqrt{\frac{\vol E(1,z)}{\vol(X,\om)}}.
$$  
Using techniques developed in McDuff~\cite{Mell}, McDuff--Schlenk~\cite{ball} gave the first complete computation of this function for the case when $X$ is the standard 4-ball,  or, equivalently, $\CP^2$.  They found that the graph of this function has infinitely many nonsmooth points at values of $z$ that are ratios of Fibonacci numbers.  
The results of \cite{Mell} were generalized by
Cristofaro-Gardiner~\cite{CG}, whose work implies that if $(X,\om)$ is a 
four-dimensional toric manifold or rational convex toric domain the function $z\mapsto c_{X}(z)$ is piecewise linear when not identically equal to the volume constraint curve. When, as in the case of the ball,  its graph has infinitely many nonsmooth points lying above the volume curve, $(X,\om)$ is said to have a {\bf staircase}.\footnote{This is often referred to as an infinite staircase in the literature, but we presuppose that a staircase has infinitely many steps. On the other hand a staircase need not contain all the nonsmooth points in a neighborhood of the accumulation point.}

Four manifolds with staircases seem rather rare: Frenkel--Muller~\cite{FM}  used the methods of \cite{ball} to find a staircase for the monotone product $(S^2\oplus S^2, \om \times \om)$, while  
in \cite{AADT}, Cristofaro-Gardiner--Holm--Mandini--Pires used methods from ECH (embedded contact homology) to find staircases for the monotone blowup of $\CP^2$ by up to four points. Their conjecture  \cite[Conj.~1.23]{AADT} proposes (among other things) that that these are the only
rational toric  four manifolds with staircases.  In contrast, Usher~\cite{Usher} found infinitely many irrational $\beta$ such that the nonmonotone product $(S^2\times S^2,  \om \oplus \beta \om)$ admits a staircase.

The analogous question for the family of Hirzebruch surfaces $$
H_b: = \CP^2(1)\#\ov{\CP}^2(b), 0\le b < 1
$$ was first investigated by 
Bertozzi, Holm, Maw et al. in \cite{ICERM}, with work continued in Magill--McDuff~\cite{MM}.  The current paper completes this circle of ideas and provides an almost complete 
answer to the question of which $H_b $ admit staircases.  
Our main result, Theorem \ref{thm:main} below, is the key to the proof of a special case of \cite[Conj.~1.23]{AADT} that is given in the forthcoming Magill--Pires--Weiler~\cite{MPW}. Finally, the structure explained here will, we hope, provide a guide for classifying the ellipsoidal capacity functions for more general toric blowups, see Remark~\ref{rmk:fdgen}.

Throughout, we denote by $H_b$ the one-point blowup of $\CP^2$ with line of area $1$ by a ball of capacity $b$.\footnote
{Since $H_b$ is a rational four manifold, its symplectomorphism class is unique; see \cite[Ex.7.1.16]{MS}.}  Its volume constraint is 
 $$V_b(z) = \sqrt{\frac{z}{1-b^2}},
 $$
  where $1-b^2$ is the appropriately normalized volume of $H_b$.  
A key result from \cite[Thm.1.13]{AADT} is that 
if $c_{H_b}$ has a staircase, then the {\bf steps} (i.e. nonsmooth local maxima) of this staircase accumulate to the 
unique solution $z=\acc(b) >1$ of the following quadratic equation involving $b$:
\begin{align}\label{eq:accb0}
z^2-\left( \frac{\left(3-b\right)^2}{1-b^2}-2 \right)z+1=0.
\end{align}
Note that the coefficient of $z$ in this equation is determined by the shape of the moment polytope:
$1-b^2$ is its normalized volume, while $3-b$ is the affine length of its perimeter.   The function $b\mapsto \acc(b)$ is $2$-to-$1$ in general and takes its minimum value
$a_{\min} = 3+2\sqrt2$ at $b = 1/3$, the only positive rational value of $b$ that is known to admit a staircase; see Fig.~\ref{fig:1}. 
We say a staircase is \textbf{ascending} (resp. \textbf{descending})  if its steps have increasing (resp. \textbf{decreasing}) $z$-coordinates.

Another key point proved in \cite{AADT} is that when $H_b$ has a staircase, there is no obstruction at its accumulation point, i.e.
\begin{align}\label{eq:accb1}
c_{H_b}(\acc(b)) = V_b(\acc(b)).
\end{align}
More generally, a pair $(z,b)$ with  $z = \acc(b)$  such that $c_{H_b}(\acc(b)) = V_b(\acc(b))$ is called {\bf unobstructed}; otherwise $(z,b)$ (or simply $z$ or $b$) is said to be {\bf blocked}.
\MS

 Here is our main result.
 
 \begin{thm}\label{thm:main}
\begin{itemlist}\item[{\rm (i)}] The set  
$$
{\it Block} : =  \bigl\{ b\in[0,1) \ \big| \ c_{H_b}(\acc(b)) > V_b(\acc(b))\bigr\},
$$
is an open dense subset of $[0,1)$ that is invariant under the action of the symmetries defined below.

\item[{\rm (ii)}]  All other $b$-values, except possibly for those where $\acc(b)$ is one of the special rational points 
$(6,35/6, 204/35,\dots$ in \eqref{eq:specrat} below, admit staircases.  If $b \neq1/3$ is not an endpoint of a connected component of ${\it Block}$,
then $b$ admits both an ascending and a descending staircase; while if $b$ is an endpoint of a connected component of ${\it Block}$, then $b$ admits either an ascending or descending staircase with steps lying outside the corresponding blocked $z$-interval.

\item[{\rm (iii)}]   For $n\ge 0$ define 
\begin{align*}
{\it Block}_{[2n+6,2n+8]}: = \bigl\{ b \in {\it Block} \ \big| \ \acc(b) \in [2n+6,2n+8], \ b>1/3\bigr\}.
\end{align*}   
For each $n\ge 0$ there is a homeomorphism of ${\it Block}_{[2n+6,2n+8]}$ onto the complement $[-1,2]\less C$ of  the middle third Cantor set $C\subset [0,1]$.\MS

\item[{\rm (iv)}] The only rational numbers $z = p/q$ that might be staircase accumulation points  are the special rational points.  Any 
such staircase would have to be descending.
\end{itemlist}
\end{thm}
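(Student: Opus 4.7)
The plan is to organize everything around an interweaving, recursively defined family of (quasi-)perfect classes $\{\bE_\alpha\}$ whose step centers are dense in the range of $\acc$. Each $\bE$ with center $p/q$ produces a piecewise-linear obstruction $\mu_{\bE,b}$ whose value at $\acc(b)$ strictly exceeds $V_b(\acc(b))$ for $b$ in an open interval around the unique $b_0$ satisfying $\acc(b_0) = p/q$. I would denote this interval $I_\bE$ and reduce the proof of (i)--(iii) to an analysis of $\bigcup_\alpha I_{\bE_\alpha}$ and its complement.

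For part (i), openness of {\it Block} is immediate from the continuity of $\mu_{\bE,b}$ and $V_b$ in $b$, so $\{b : \mu_{\bE,b}(\acc(b)) > V_b(\acc(b))\}$ is open. Density follows once the centers $p/q$ of the classes in the recursive family are shown to be dense in the range of $\acc$, which should come out of the inductive/mutation structure of the family. Invariance under the symmetries reduces to verifying that the symmetry actions take perfect classes to perfect classes and preserve the blocking inequality; this is largely in place from \cite{MM}.

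For parts (ii) and (iii), the Cantor-set structure comes from the self-similar nesting of the intervals $I_{\bE_\alpha}$. Within ${\it Block}_{[2n+6,2n+8]}$ I would identify a dominant central blocking interval (from the ``primary'' class at that level) and then show that removing it leaves two sub-regions whose structure is a rescaled copy of the original, matching the complement construction $[-1,2]\less C$ level by level along the recursive tree. For (ii), any $b$ lying in the complement of {\it Block} and not at a boundary of a component has blocking intervals accumulating on both sides, with centers converging to $\acc(b)$; the corresponding perfect classes, once shown to remain live at $b$, supply the ascending and descending staircases. I expect this liveness issue to be the main obstacle, since it requires sharp envelope control on the entire family of obstructions at a single $b$ and is exactly where Magill's indirect almost-toric-fibration approach enters: the mutation structure on almost toric fibrations of $H_b$ organizes which classes are live at which $b$, and one must match this structure to the complement of $\bigcup_\alpha I_{\bE_\alpha}$.

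For part (iv), I would appeal to \eqref{eq:accb0}: for rational $b$, the accumulation point is rational iff the discriminant of that quadratic is a rational square, and by \cite[Lem.2.1.1]{MM} this singles out exactly the special rational values, with $\acc(b)$ equal to one of the points in \eqref{eq:specrat}. The non-existence of ascending staircases at special rationals would then follow from the shift $p/q \mapsto (6p-q)/p$: if an ascending sequence of perfect classes at a special rational existed, pushing it forward through the shift would yield a configuration incompatible with the recursive structure established in (i)--(iii) on the side above the special point. The hardest single step of the whole plan will be the liveness verification in (ii), since it is precisely what distinguishes a genuine staircase from a mere sequence of obstructions, and it is where the new almost toric fibration input does the essential work.
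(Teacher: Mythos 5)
Your outline for parts (i)--(iii) follows essentially the paper's route: blocked $b$-intervals $J_{\bE}$ attached to a recursively generated family of perfect classes, density from the mutation structure, symmetry-invariance from the action on perfect classes, the Cantor-complement homeomorphism from the self-similar ternary labelling, and staircases at non-endpoint $b$ from sequences of classes accumulating from both sides, with liveness correctly identified as the crux and the almost toric input of \cite{M1} correctly located (in the paper it enters via Lemma~\ref{lem:live2} to prove the classes are \emph{perfect}, after which liveness is handled by slope estimates and the arithmetic overshadowing argument of \cite{MM}). Two ingredients you elide are genuinely needed and nontrivial: the \emph{disjointness} of the intervals $J_{\bE}$ (Proposition~\ref{prop:disj}), without which neither Proposition~\ref{prop:1} nor the level-by-level matching with $[-1,2]\less C$ goes through, and the classification statement that \emph{every} perfect class with center $>a_{\min}$ already lies in the complete family or a symmetry image of it (Corollary~\ref{cor:disj1}), which is what guarantees that ${\it Block}$ is not larger than the union you construct.

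The real gap is in part (iv), where your proposed argument does not prove the statement. The first claim concerns rational $z=p/q$ as accumulation points, not rational $b$: your discriminant criterion identifies which rational $b$ have rational $\acc(b)$, which is a different question and gives no information about whether a rational $z$ can be unobstructed. What is actually needed is that every rational $z>a_{\min}$ other than the points $v_i$ lies in the \emph{interior} of some blocked $z$-interval, so that no staircase can accumulate there; the paper proves this by the weight-length covering argument of Lemma~\ref{lem:short} and Corollary~\ref{cor:short2} (weight lengths of unblocked centers grow with the level, so every rational of bounded weight length is blocked), and nothing in your plan substitutes for it. The second claim (no ascending staircase at a special rational) also needs a concrete obstruction: your suggestion to push a hypothetical ascending sequence through the shift and derive ``incompatibility with the recursive structure'' is not an argument, in part because the special rational $b$ are unblocked limits of staircase accumulation points, so no contradiction with the structure of ${\it Block}$ is available. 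The paper instead exhibits, for each special rational $b$, an explicit perfect class from the third strand of the $b=1/3$ staircase whose obstruction is constant near $v_{i+1}$ and passes exactly through $\bigl(\acc(b),V_b(\acc(b))\bigr)$ (Proposition~\ref{prop:noasc}, verified by a Pell-number identity), which obscures any ascending steps from below; some input of this kind is indispensable for your proof of (iv).
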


The {\bf symmetries}  studied in 
\cite{ICERM, MM} are an
 integral part of the structure of the set {\it Block} and its disjoint counterpart
$$
{\it Stair} : =  \bigl\{ b\in[0,1) \ \big| \ H_b \mbox{ has a staircase}\bigr\}.
$$
These symmetries stem from the arithmetic properties of the quadratic function \eqref{eq:accb0}, and 
 their existence reduces the problem of calculating {\it Block} and {\it Stair} to calculating the restriction of these sets  to $b\in [5/11,1)$, in other words to $b$ with $\acc(b)=z>7$. The sequence of points
\begin{align}\label{eq:specrat}
z= 6,\ 35/6,\ 204/35,\dots
\end{align}
given by the images of $z=6$ under repeated applications of the shift symmetry $S: p/q\mapsto (6p-q)/p$ play a special role. These are called {\bf the special rational points}, as are
the $b$-values that correspond to these points via the function $b\mapsto \acc(b)$. 
These $b$-values
are also rational by \cite[Lem.2.1.1]{MM}, and
are described more fully in 
equation~\eqref{eq:specialb}. A definition of the symmetries can be found in \S\ref{ss:symm}.
They are generated by the shift $S$ and a reflection $R$.

The proof of Theorem~\ref{thm:main}~(i) and~(iii)  is completed in \S\ref{ss:disj}, while those of (ii) and (iv) are completed in  \S\ref{ss:uncount} and \S\ref{ss:13stair} respectively. Since this work, the further developments in~\cite{MPW}  finish the cases of Theorem~\ref{thm:main} (ii) and (iv), thus giving a complete computation of {\it Stair}. We also expect Theorem~\ref{thm:main} to generalize to other toric domains. Evidence of this has been seen in the case of the polydisk in Farley--Holm--Magill et al.~\cite{spur} and \cite{Usher}, and a two-fold blowup of $\CP^2$ in Magill~\cite{M2}. See Remark~\ref{rmk:fdgen} for more details. 
\MS

The main work of the paper lies in constructing the set {\it Block} and in
computing infinitely many values of $c_b(z)$ for each $b$ claimed to be in {\it Stair}. As explained in detail in \S\ref{ss:mutation}, specific homology classes in various blowups of $\CP^2$, called {\bf exceptional classes} and denoted $\bE$, give lower bounds $\mu_{\bE,b}(z) \leq c_b(z)$ for the embedding function. These obstructions $\mu_{\bE,b}(z)$ vary continuously in $b$ and $z.$ We find particular exceptional classes $\bE$ called {\bf perfect blocking classes} such that there is a maximal interval $J_\bE \subset [0,1)$ where for all $b \in J_\bE,$
\[ V_b(\acc(b))<\mu_{\bE,b}(z).\] 
Thus, because $\mu_{\bE,b}(z)$ is a lower bound of $c_b(z)$, it follows from \eqref{eq:accb1} that $J_\bE \subset {\it Block}$. 

We calculate {\it Block} by finding all such perfect blocking classes; see Proposition~\ref{prop:1} for a  precise statement. 
These are built from the sequence of perfect blocking classes found in
 \cite[Thm.56]{ICERM}.
In Section~\ref{ss:genTt}, we organize these into triples of classes
called {\bf generating triples} that satisfy various compatibility conditions,  and then define a way to {\bf mutate} these triples to produce new triples in a recursive structure; see Example~\ref{ex:mainEx} for an illustration. We then use the existence of this whole family of 
perfect blocking classes to show that each 
 $J_\bB$ forms a connected component of {\it Block}.  
 The argument here
 relies on 
 the use by  Magill~\cite{M1} of almost toric fibrations to construct 
 full fillings\footnote
 {
 that is, symplectic embeddings  $\intt E(1,\acc(b)) \se H_b$}
  of $H_b$ by ellipsoids $E(1,\acc(b))$  when $b$ is the lower endpoint of $J_\bE$.  Thus these values of
$b$ are unobstructed, and 
by Lemma~\ref{lem:live2} this implies that all the classes obtained by mutation are exceptional classes.  Hence, by the results of \cite{MM} quoted in Proposition~\ref{prop:live},  infinitely many of these classes are {\bf live} (that is, the corresponding obstructions are visible in the 
capacity function $c_{H_b}$) at the relevant limiting $b$ value $b_\infty$. 

Theorem~\ref{thm:main1} states how the triples are generated and how each triple corresponds to two staircases.
 The structure of these triples allow us to conclude that ${\it Block}_{[2n+6,2n+8]}$ is homeomorphic to the complement of the Cantor set.  One crucial compatibility condition is called {\bf adjacency}, which  expresses the relation of a staircase to the perfect blocking class that blocks an interval  ending at its accumulation point; see Remark~\ref{rmk:adjac}.

\begin{rmk}\label{rmk:general}
\rm  (i) Theorem~\ref{thm:main}~(iii) implies that for each $n$ there is an order-preserving bijection from the centers of the steps in $[6,8]$ to those in $ [2n+6, 2n+8]$ that takes staircases with accumulation points   in $[6,8]$ (and $b>1/3$)  to those with accumulation points in $[2n+6, 2n+8]$.  However, this bijection
does not seem to have a natural extension to a homeomorphism.
 It is better thought of as an algebraic move (with an arithmetic description in terms of continued fractions) that is related to the process of $v$-mutation described in \cite{M1}; see \S\ref{ss:organ}.  \MS

 \NI (ii) Our conjecture that no special rational point has a staircase
 is related to the question of whether $b=1/3$ has a descending staircase. It is shown in \cite[Lem.2.2.12]{MM} that when $b=1/3$  either there  is a descending staircase or
there is
$\varepsilon>0$ so that for $\acc(b)<z<\acc(b)+\varepsilon$ the capacity function $c_{H_b}(z)$ equals the obstruction from the class $\bE=3L-E_0-2E_1-E_{2\dots6}$.  
This obstruction plays a special role because, by \cite[Ex.32]{ICERM} and \cite[Lem.2.2.7]{MM},  it  goes through the accumulation point for all special rational $b$ except for $1/5$. We extend the conjecture  in Theorem~\ref{thm:main}~(iv) by claiming  that for all such special $b$ as well as $b=1/3$ the capacity function should be given by  this obstruction at points  just above the corresponding special rational $z$. (This result is now proven in \cite{MPW}.) \MS

\NI (iii) Our staircases need not be sharp in the sense of Casals--Vianna~\cite{CV}; in other words the inner corners between the staircase steps need not lie on the volume obstruction $z\mapsto V_b(z)$.  In fact,  descending stairs with $z$-accumulation points  $< 6$ are never sharp because of the obstruction coming from the class $\bE = 3L-E_0 - 2E_1 - E_{2\dots 6}$; see \cite[Ex. 32 and Fig.5.3.1]{ICERM}. We did not explore this property for the other staircases.  However, it is known that the Fibonacci staircase is sharp while the staircase at $b=1/3$ is not; see \cite{AADT}.  Notice also that, although \cite{M1} does use almost toric fibrations to construct a full filling at the lower endpoint $z_\infty$ of a blocked $z$-interval, this full filling occurs for the corresponding $b$-value  $\acc_\eps^{-1}(z_\infty)$ rather than at one of the $b$-values for which the blocking class is a staircase step.  \hfill$\er$
 
\end{rmk}

\begin{center}
\begin{figure}[h]
\begin{tikzpicture}
\begin{axis}[
	axis lines = middle,
	xtick = {5.5,6,6.5,7},
	ytick = {2.5,3,3.5},
	tick label style = {font=\small},
	xlabel = $z$,
	ylabel = $\lambda$,
	xlabel style = {below right},
	ylabel style = {above left},
	xmin=5.4,
	xmax=7.2,
	ymin=2.4,
	ymax=3.6,
	grid=major,
	width=3in,
	height=2.25in]
\addplot [red, thick,
	domain = 0:0.7,
	samples = 120
]({ (((3-x)*(3-x)/(2*(1-x^2))-1)+sqrt( ((3-x)*(3-x)/(2*(1-x^2))-1)* ((3-x)*(3-x)/(2*(1-x^2))-1) -1 ))},{sqrt(( (((3-x)*(3-x)/(2*(1-x^2))-1)+sqrt( ((3-x)*(3-x)/(2*(1-x^2))-1)* ((3-x)*(3-x)/(2*(1-x^2))-1) -1 )))/(1-x^2))});
\addplot [black, only marks, very thick, mark=*] coordinates{(6,2.5)};
\addplot [blue, only marks, very thick, mark=*] coordinates{(6.85,2.62)};
\addplot [green, only marks, very thick, mark=*] coordinates{(5.83,2.56)};
\end{axis}
\end{tikzpicture}
\caption{ This plot found in \cite[Fig.1.1.4]{ICERM} shows the location of the accumulation point $(z,\lambda)=(\acc(b), V_b(\acc(b)))$ for $0\le b< 1$.
The blue point with $b=0$ is at $(\tau^4, \tau^2)$ and is the accumulation point for the Fibonacci stairs defined in \cite{ball}.  The green point  
with  $z = 3+2\sqrt2=:a_{\min},  b=1/3$  
is the accumulation point for the 
stairs in  $H_{1/3}$,   and is the minimum of the function $b\mapsto\acc(b)$.  
The black point  with $z=6, b=1/5$  is the place where $V_b(\acc(b))$ takes its minimum. 
} \label{fig:1} 
\end{figure}
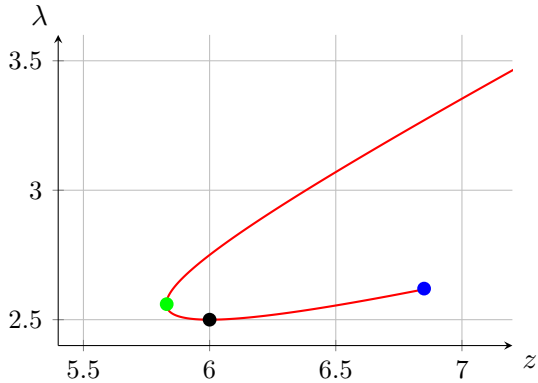
\end{center}

\subsection{Further results}\label{ss:mutation}
After summarizing some definitions and foundational results from \cite{ICERM, MM}, we state the  other main results of this paper.
Note that there are two approaches to calculating $c_{H_b}$. One is to work with ECH capacities as in  \cite{AADT}, which corresponds to identifying the $J$-holomorphic curves in $X\setminus E(1,z)$ counted by the ECH cobordism map. Here, we instead work with closed curves in a blowup of $H_b$, which in many cases neck-stretch to the ECH curves (see \cite{CGHM} and \S\ref{ss:stab}).

By \cite{Mell}, an ellipsoid $E(1,z)$ with rational eccentricity $z=p/q$ embeds into $\la H_b$ if and only if a certain finite collection $\sqcup_{i=1}^n B^4(w_i)$ of balls embeds into 
$\la H_b$.  This sequence $(w_i)$ is called the {\bf weight expansion} of $p/q$, see \eqref{eq:wpq}.
By \cite{MPolt}, the embedding of balls exists  if and only if there is a symplectic form $\om$ on the $n$-fold blowup $H_b\# n\ov{\CP}\,\!^{2} = \CP^2\# (n+1)\ov{\CP}\,\!^{2}$ of $H_b$ that takes the value $\la$ on the line, $\la b$ on the exceptional sphere $E_0$ of $H_b$,\footnote
{
This is the embedded $2$-sphere with self-intersection $-1$ obtained by 
blowing  $\CP^2$ up once, that is by removing an open $4$-ball and collapsing its boundary to a $2$-sphere via the Hopf map;
see \cite[Ch7.1]{MS} for more detail.}
 and $w_1,\dots,w_n$
on the other exceptional spheres $E_1,\dots, E_n$. Thus $\om$ should lie in the class $\al$ where $\al(L) = \la, \al(E_0) = \la b, \al(E_i) = w_i$.
As explained more fully in \cite{Mell}, it follows from
\cite{LL} that a class $\al \in H^2(\CP^2\# (n+1)\ov{\CP}\,\!^{2})$ has a symplectic representative if and only if
\begin{itemlist}\item [-] the volume is positive:  $\al^2>0$,
\item[-] the integral $\al(\bE)$ of $\al$ over every exceptional class $\bE$ in 
$\CP^2\# (n+1)\ov{\CP}\,\!^{2}$ is positive.
\end{itemlist}
Thus the significant constraints on the class $\al$ come from the {\bf exceptional classes}, which are defined to be the set of elements in $H_2(\CP^2\# (n+1)\ov{\CP}\,\!^{2})$ 
that are represented by symplectically embedded spheres of self-intersection $-1$.
These classes,
 denoted by
$$
\bE = dL - m E_0 - {\textstyle \sum_{i=1}^n} m_i E_i,
$$
are characterized by the fact that they 
satisfy the {\bf Diophantine equations}
\begin{equation}\label{eq:D}
3d-m=\sum_im_i+1 \mbox{ and } d^2-m^2=\sum_im_i^2-1,
\end{equation}
(these record certain restrictions on their first Chern class and self-intersection), and 
also reduce correctly under Cremona moves; see  \cite[Def.1.2.11ff]{ball}. 

Since $\al(\bE) = (d-mb)\la - \sum m_i w_i$,  
the condition $\alpha(\bE)>0$ implies that  each exceptional class $\bE$ with center $p/q$
 determines
an {\bf obstruction}
\[
\mu_{\bE,b}(p/q):=\frac{\sum m_i w_i}{d-bm}
\]
such that $\mu_{\bE,b}(p/q)\le \la$.
Therefore,
\[
c_{H_b}(p/q)\geq\mu_{\bE,b}(p/q) \mbox{ for all exceptional }  \bE.
\]
These obstruction functions extend to nearby $z$  
and, as in  \eqref{eq:muEb} below,
are piecewise linear.  Moreover, the
 above discussion implies that
\[
c_{H_b}(z)=\sup_{\bE\text{ exceptional}}\{\mu_{\bE,b}(z),V_{H_b}(z)\}.
\]
We say that an exceptional class $\bE$ is {\bf live at $z,b$} if it achieves this supremum, i.e. $\mu_{\bE,b}(z)=c_{H_b}(z)$.  Further, $\bE$ is called {\bf obstructive} at $z,b$  if
$\mu_{\bE,b}(z)> V_b(z)$.

It turns out that the most relevant obstructions are given by exceptional classes $\bE$ whose coefficients
$\mathbf{m}=(m_i)$
 equal the integral weight expansion 
$(q,\dots)$ of some rational number $p/q$. We call such classes {\bf perfect classes} and say that $p/q$ is their {\bf center}. If such a class is only known to satisfy the Diophantine equations, we say the class is {\bf quasi-perfect}.

If $\bE$ is quasi-perfect, \cite[Lem.16]{ICERM} shows that the obstruction function\footnote
{
Although we call this the obstruction function, we only know that
$\mu_{\bE,b}(z)\le c_{H_b}(z)$ for perfect classes $\bE$.}
  in the neighborhood of $p/q$ for which $\mu_{\bE,b}(z)>V_{H_b}(z)$  is given  by the formula
\begin{align}\label{eq:muEb}
\mu_{\bE,b}(z) = \frac {qz}{d-mb},\quad z\le p/q,\qquad \mu_{\bE,b}(z) = \frac {p}{d-mb},\quad z\ge p/q.
\end{align}
These obstructions are illustrated in 
 Fig.~\ref{fig:plotsIntro}.  
These functions are piecewise linear with break point (i.e. nonsmooth point) at the center $z=p/q$.  But  note that until we know that a given quasi-perfect class $\bE$ is in fact perfect, we cannot claim that  $\mu_{\bE,b}(z) \le c_{H_b}(z)$.
 Our staircases  are formed by infinite sequences of perfect classes whose break points $z_n$ (often called {\bf step centers} or simply {\bf steps})  converge. 
 Here are the key facts about these classes.

 \begin{lemma}\label{lem:basic}
 \begin{itemlist}\item[{\rm(i)}] If $\bE$ is perfect, $\mu_{\bE,b}$ is live at its center $z=p/q$ (i.e. $c_{H_b}(p/q) = \mu_{\bE,b}(p/q)$) when $b\approx m/d$.
\item[{\rm(ii)}] If $\bE$ is quasi-perfect, $\mu_{\bE,b}$ is obstructive at $z=p/q$ iff $|bd - m|< \sqrt{1-b^2}$.
 \end{itemlist} 
 \end{lemma}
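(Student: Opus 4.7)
The plan for part (ii) is a direct algebraic computation that relies only on the Diophantine relations satisfied by a perfect class. Recall that by definition the coefficients $(m_i) = \bw(p,q)$ are the weight expansion of $p/q$, so they satisfy the well-known identities $\sum_i m_i = p + q - 1$ and $\sum_i m_i^2 = pq$. Combining these with the conditions $c_1(\bE) = 3d - m - \sum m_i = 1$ and $\bE \cdot \bE = d^2 - m^2 - \sum m_i^2 = -1$ gives
\begin{equation*}
3d - m = p + q, \qquad pq = d^2 - m^2 + 1.
\end{equation*}
Now by \eqref{eq:muEb} the value of the obstruction at the peak is $\mu_{\bE,b}(p/q) = p/(d-mb)$, while $V_b(p/q)^2 = p/(q(1-b^2))$. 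Thus $\mu_{\bE,b}(p/q) > V_b(p/q)$ is equivalent to $pq(1-b^2) > (d-mb)^2$. Substituting $pq = d^2 - m^2 + 1$ and expanding gives $(d^2 - m^2 + 1)(1 - b^2) > d^2 - 2dmb + m^2 b^2$, and after cancellation this reduces to $1 - b^2 > (m - db)^2$, which proves (ii).

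For part (i), the goal is upgrade obstructiveness to liveness, i.e.\ to show that when $b$ is close to $m/d$, the piecewise linear obstruction actually equals the capacity at $p/q$. The plan is to use the ball-packing correspondence reviewed in the introduction: by \cite{MPolt} combined with \cite{LL}, the statement $c_{H_b}(p/q) \le \la$ is equivalent to the existence of a symplectic representative of the class $\al$ determined by $\al(L) = \la$, $\al(E_0) = \la b$, $\al(E_i) = w_i$ on $\C P^2 \# (n+1)\ov{\C P}\,\!^{2}$, which in turn is equivalent to positivity of $\al$ on every exceptional divisor together with $\al^2 > 0$. At $b = m/d$ and $\la = \mu_{\bE,b}(p/q) = pd/(d^2 - m^2)$, the class $\bE$ itself gives the equality constraint $\al(\bE) = 0$, while a calculation using part (ii) shows $\al^2 \ge 0$ (with equality only on the volume curve).

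The core input is therefore to show that no other exceptional class $\bE'$ gives a stronger obstruction at this particular pair $(b, z) = (m/d, p/q)$. This is the statement established in the companion analysis of \cite{ICERM} and \cite{MM}, where it is verified using the Cremona reduction / reflection procedure that identifies the dominant constraint among exceptional classes in a neighborhood of $(m/d, p/q)$. Once liveness is established at $b = m/d$, continuity of the functions $b \mapsto \mu_{\bE,b}$ and $b \mapsto c_{H_b}$ at fixed $z = p/q$, together with the fact that the obstructive neighborhood determined by part (ii) is open in $b$, extends liveness to all $b$ in a neighborhood of $m/d$.

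The main obstacle is the uniqueness-of-dominant-obstruction input needed for (i); once this is granted from the cited earlier work, everything else is direct. Part (ii) is essentially a one-line manipulation once the perfect class identities are invoked.
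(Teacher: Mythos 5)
Part (ii) of your proposal is correct, and it is in essence the computation behind the paper's citation: the paper proves both parts simply by quoting \cite{ICERM} (its Lem.~15 for (ii), Prop.~21 for (i)), and your manipulation using $3d-m=p+q$, $pq=d^2-m^2+1$ and \eqref{eq:muEb} reproduces the former correctly, including the reduction of $pq(1-b^2)>(d-mb)^2$ to $1-b^2>(m-db)^2$.

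For part (i) there is a genuine weakness. The substantive input you defer to the companion papers is fine in principle (the paper does the same), but your description of what that input is and how it is proved is off, and the one step you supply yourself does not close the argument. What \cite{ICERM} Prop.~21 actually uses (see Lemma~\ref{lem:pos} and Corollary~\ref{cor:pos} in this paper) is that a \emph{perfect} class intersects every other exceptional class nonnegatively, and this positivity of intersections yields the strict comparison $\mu_{\bE,b}(p/q)>\mu_{\bE',b}(p/q)$ for \emph{all} $b$ in the interval $\bigl(\tfrac{m^2-1}{md},\tfrac md\bigr]$ --- it is not a Cremona-reduction procedure identifying a dominant constraint at the single point $(m/d,p/q)$. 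This matters because the extension step you propose, passing from liveness at $b=m/d$ to liveness for $b\approx m/d$ by continuity, is not valid as stated: $c_{H_b}(p/q)$ is a supremum over infinitely many exceptional classes, so domination at one parameter value (and the packing argument at $b=m/d$ only gives you an equality, not strict domination) need not persist under perturbation of $b$ unless you first bound the degree of any class that could compete --- this is exactly the finiteness argument via the volume comparison in Lemma~\ref{lem:live1}. The clean fix is to use the interval form of the comparison coming from $\bE\cdot\bE'\ge0$, which makes the continuity step unnecessary; as written, your argument for (i) has a gap at precisely the point where it goes beyond the citation.
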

 \begin{proof}  The claim in (i) is proved in \cite[Prop.21]{ICERM}; (ii) is proved in \cite[Lem.15]{ICERM}.
 \end{proof}

 We observed in \cite[\S2.2]{MM} that the degree coordinates of  all  quasi-perfect classes have the following  more precise form
\begin{align}\label{eq:pqt}
d &= \tfrac 18\bigl(3(p+q) + \eps t\bigr),\quad m = \tfrac 18\bigl(p+q + 3\eps t\bigr), \quad \eps \in \{\pm 1\},
\end{align}
where the positive integer $t$ is defined by $t: = \sqrt{p^2 - 6pq + q^2 + 8}$, and $\eps =1$ if and only if $m/d>1/3$.\footnote
{
No perfect class has  $m/d= 1/3$ by \cite[Lem.2.2.13]{MM}.}   For example if $p/q = [7;4] = \frac{29}{4}$ then $\bbm= (4^{\times 7}, 1^{\times 4})$, $t = 13$ and $(d,m) = (14,9)$.  
Thus we often describe a quasi-perfect class by an ordered subset of the ordered $6$-tuple $(d,m,p,q,t, \eps)$. We will also use the notation $\bE_{CF(p/q)}$, where $CF$ refers to the continued fraction. Conflating the notions of perfect classes, centers, and step centers, we will also refer to quasi-perfect classes as steps.

 By \cite[Prop.2.2.9]{MM}, a quasi-perfect class $\bE$ with center\footnote
{Note that $3+2\sqrt 2$ is the minimum value of $b\mapsto\acc(b)$.}
 $p/q> 3+2\sqrt 2=:a_{\min}$ is always a {\bf blocking class}, i.e. the  corresponding obstruction $\mu_{\bE,b}(z)$ 
is always nontrivial at its center point  $z = p/q$ when $b =  \acc_\eps^{-1}(p/q)$.\footnote
{
Here we take the branch of the inverse $\acc^{-1}$ given by the value of $\eps$; thus  
 $\acc_\eps^{-1}(p/q)>1/3$ exactly if $m/d>1/3$.} 
  This implies that the interval $$
  J_{\bE}: = \bigl\{b \ \big|\ \mu_{\bE,b}(\acc(b)) > V_b(\acc(b)\bigr\}
  $$  is an open neighborhood of $\acc_\eps^{-1}(p/q)$. We
  call this the {\bf $b$-blocked interval} of $\bE$, and
   define $\p^{+}(J_\bE):=\sup J_\bE$ and $\p^-(J_\bE):=\inf J_\bE.$  The interval $I_\bE:=\acc(J_\bE)$ is the corresponding {\bf $z$-blocked interval}.

We prove the following result  in \S\ref{ss:disj}. 
 
 \begin{prop}\label{prop:1}  ${\it Block}\subset [0,1)$ is the disjoint union of the intervals $J_{\bE}$ as $\bE$ ranges over the set of all perfect classes with centers $> a_{\min} = 3+2\sqrt2$.
 \end{prop}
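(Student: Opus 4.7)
The plan is to split the statement of Proposition~\ref{prop:1} into three parts: (a) the containment $\bigcup_\bE J_\bE \subseteq {\it Block}$; (b) disjointness of the intervals $J_\bE$; and (c) the reverse containment ${\it Block} \subseteq \bigcup_\bE J_\bE$.  Part (a) is immediate from \eqref{eq:capmu}: if $b \in J_\bE$ then $c_{H_b}(\acc(b)) \geq \mu_{\bE,b}(\acc(b)) > V_b(\acc(b))$, so $b \in {\it Block}$.

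For part (c), the main content, fix $b \in {\it Block}$.  By \eqref{eq:capmu} there is some exceptional class $\bE'$ with $\mu_{\bE',b}(\acc(b)) > V_b(\acc(b))$.  I would then promote $\bE'$ to a \emph{perfect} class by appealing to the structural results of \cite{MM} summarized in Remark~\ref{rmk:main0}(vi), which assert that at a blocked parameter the strongest obstruction is always realized by a perfect class $\bE$; thus $b \in J_\bE$.  The center $p/q$ of $\bE$ lies in the image of $\acc$, so $p/q \geq a_{\min}$; since $p/q \in \Q$ while $a_{\min} = 3+2\sqrt{2}$ is irrational, automatically $p/q > a_{\min}$.

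For part (b), the boundary of $J_\bE$ consists of the solutions of $\mu_{\bE,b}(\acc(b)) = V_b(\acc(b))$.  Combining the explicit formula \eqref{eq:muEb} with the parametrisation \eqref{eq:pqt} and the quadratic \eqref{eq:accb0} defining $\acc(b)$ yields a polynomial description of $\ov{J_\bE}$ in terms of $(d,m,p,q)$.  To rule out an overlap $b \in J_{\bE_1} \cap J_{\bE_2}$ for distinct perfect classes I would use the rigidity of \eqref{eq:pqt}: a perfect class is determined by its center $p/q$ (via the formula for $t$, with $\eps$ fixed by the sign of $m/d - 1/3$), so an overlap would force the two obstruction lines of $c_{H_b}$ near their respective break points to match, contradicting distinctness of centers.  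An alternative, and probably cleaner, route is to exploit the recursive organisation of perfect classes by mutations and symmetries developed in \cite{M1, MM} and \S\ref{ss:genTt}, reducing disjointness to a base case in $\acc(b) \in [6,8]$, where it is tied to the Cantor-complement structure of Theorem~\ref{thm:main}(iii).

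The main obstacle is the refinement step in~(c): extracting a perfect blocking class from an arbitrary blocking exceptional class is not formal and relies on the Cremona reduction analysis of \cite{MM}.  The disjointness (b) is comparatively combinatorial and, once the parametrisation \eqref{eq:pqt} is in hand together with the generating-triple organisation of perfect classes, should follow from a direct case analysis.
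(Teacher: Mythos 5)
Your part (a) is fine, but both substantive steps have genuine gaps. In part (c), the ``promotion'' of an arbitrary obstructive exceptional class to a perfect one by citing Remark~\ref{rmk:main0}~(vi) is circular: that remark announces the present paper's own conclusions, it is not a quotable structural result of \cite{MM}. No external result says that a blocked $b$ is blocked by a perfect class; in the paper the inclusion ${\it Block}\subseteq\bigcup_\bE J_\bE$ is obtained only indirectly, by showing that the complement of $\bigcup_\bE J_\bE$ (taken over the explicitly constructed families $(S^iR^\de)^\sharp(\Cc\Ss^U)$, which block a dense set of parameters by Proposition~\ref{prop:dense}) consists entirely of unobstructed points: endpoints of blocked intervals (Corollary~\ref{cor:perfATF2}), accumulation points of the further staircases (Proposition~\ref{prop:liveZ}, Corollary~\ref{cor:liveZ}), and the special rational points, all unobstructed via \eqref{eq:accb1}. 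This rests on the liveness machinery of \S\ref{ss:live}--\S\ref{ss:uncount} and on the input from \cite{M1} (through Lemma~\ref{lem:live2} and Corollary~\ref{cor:perfATF}) that the quasi-perfect classes in $\Cc\Ss^U$ are actually perfect. Your side claim that the center of the blocking class ``lies in the image of $\acc$'', hence exceeds $a_{\min}$, is also unjustified as stated.

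In part (b), the rigidity argument does not work: two distinct perfect classes with distinct centers could a priori both satisfy $\mu_{\bE,b}(\acc(b))>V_b(\acc(b))$ at a common $b$, and nothing forces their obstruction lines to coincide. Ruling this out is exactly Proposition~\ref{prop:disj}, which is not a combinatorial case analysis: its hypothesis that $\p J_\bE$ is unobstructed is the same deep input as above, and its proof uses liveness of $\mu_{\bE,b}$ on part of $I_\bE$ at the endpoint value of $b$, the derivative-in-$b$ comparison of Lemma~\ref{lem:md} (via Lemma~\ref{lem:md1}), positivity of intersections of distinct perfect classes combined with the weight inequality $W(p/q)\cdot W(p'/q')\ge\min(pq',p'q)$ of Lemma~\ref{lem:WW}, the inequality $p'/q'<\acc(m'/d')$ from \cite{MM}, Lemma~\ref{lem:pq}, and a closing computation showing that an overlap would force $b_\infty<1/3$, a contradiction. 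Your ``alternative route'' via mutations is likewise circular: one cannot restrict to classes in the complete family until one knows that every perfect class with center $>a_{\min}$ belongs to it, and that statement (Corollary~\ref{cor:disj1}) is itself proved by combining Proposition~\ref{prop:disj} with the density of the blocked set. The paper's actual proof of Proposition~\ref{prop:1} is precisely this chain: Corollary~\ref{cor:disj1} places any perfect class with center $>a_{\min}$ in one of the families $(S^iR^\de)^\sharp(\Cc\Ss^U)$, so the endpoints of its blocked interval are unobstructed, and Proposition~\ref{prop:disj} then yields disjointness.
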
   
 
 \begin{rmk}\label{rmk:prop1} \rm The proof of Proposition~\ref{prop:1} allows us to find all perfect classes with center $>a_{\min}$; see Corollary~\ref{cor:disj1}.  In Lemma~\ref{lem:13perfect}, we show that the only perfect classes with centers $<a_{\min}$ are those appearing in the staircase of $c_{H_{1/3}}(z)$.  \hfill$\er$
 \end{rmk} 

As noted in Theorem~\ref{thm:main}, when $\bE$ is perfect and with center $> a_{\min}$ the endpoints of $J_\bE$ admit staircases. We now give an example of how the structure of the blocking classes relate to the staircases at the endpoints.

\begin{figure}[h]
    \centering
    \includegraphics[scale=0.37]{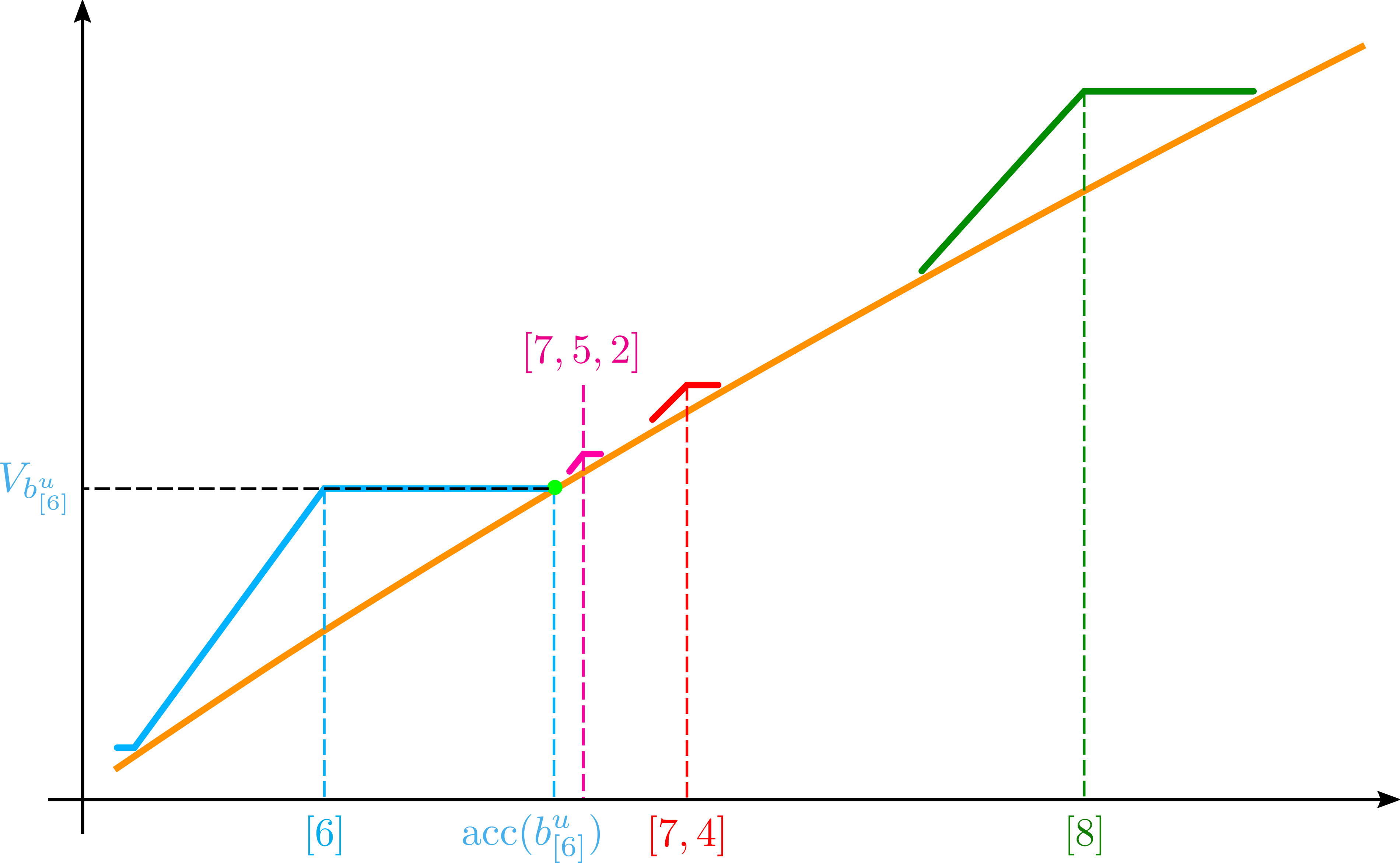}
    \includegraphics[scale=0.37]{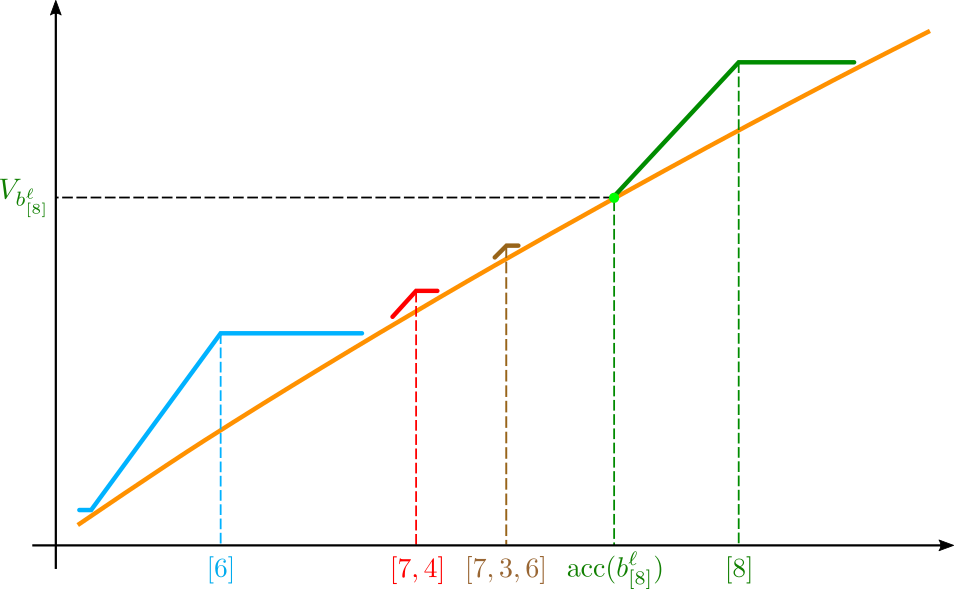}
    \includegraphics[scale=0.4]{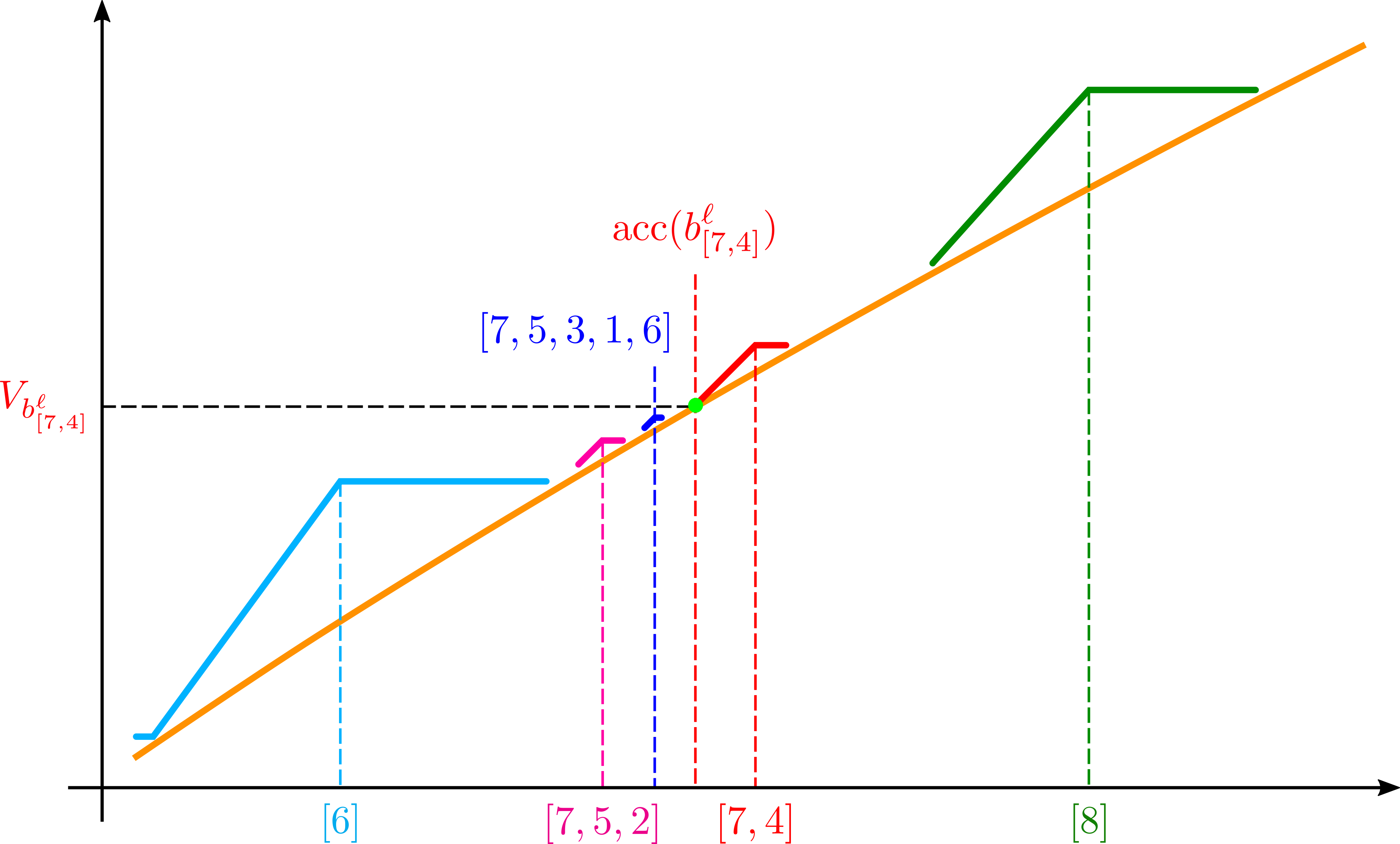}
    \caption{We have depicted part of three staircases to illustrate Example \ref{ex:mainEx}: the descending staircase with $b=\partial^+(J_{\bE_{[6]}})$ and the ascending staircases with $b=\partial^-(J_{\bB^U_1})$ and $b=\partial^-(J_{\bE_{[7;4]}})$. The orange curve is the volume obstruction and steps of the same color are given by the obstruction from the same perfect class, labeled by the continued fraction of its center. Note we only prove that the embedding function equals the solid lines on a neighborhood of the nonsmooth point, and we are not claiming that between those intervals there are no other obstructions (in fact, we know that there are).}
    \label{fig:plotsIntro}
\end{figure}

\begin{example}\label{ex:mainEx} \rm
This example is visualized in Figure~\ref{fig:plotsIntro}. As proved in \cite{ICERM}, there are two perfect classes $\bB^U_0=(d,m,p,q)=(3,2,6,1)$ with $t=3$  and $\bB^U_1=(4,3,8,1)$ with $t=5$ that both block an interval of $b$-values. The $b$-values at the left (resp. right) endpoint of these intervals have ascending (resp. descending) staircases. Let $b^u_{[6]}:=\p^+(J_{\bB^U_0})$ and $b^\ell_{[8]}:=\p^-(J_{\bB^U_1}).$ 
For each of these $b$-values, there is an infinite sequence of perfect classes $(\bE_k)_{k \geq 0}$, which are shown in \cite{ICERM} to be live for $c_b(z)$. The centers of these classes $p_k/q_k$ are determined by a recursion parameter $\nu$ and initial conditions 
$p_0/q_0,p_1/q_1$ such that for $x=p,q$: 
\[ x_{k+1}=\nu x_k-x_{k-1}.\]
For the staircase at $b^u_{[6]}$ (which is called 
 $\Ss^U_{u,0}$  in \cite{ICERM}), the initial conditions are $\bB^U_1, \bE_{[7;4]}$ with $\nu=3$ (note $\nu=3$ is the $t$-parameter of $\bB^U_0$);
and for the staircase at  $b^\ell_{[8]}$  (called $\Ss^U_{\ell,1}$ in \cite{ICERM}) the initial conditions are
$\bB^U_0,\bE_{[7;4]}$ with $\nu=5,$ (note $\nu=5$ is the $t$-parameter of $\bB^U_1$).  The obstructive functions are live and have a nonsmooth point at the center $z=p/q.$ See Figure~\ref{fig:plotsIntro} to visualize this. 

Key features of these staircases are:
\begin{itemize}
\item The step centers $\bB^U_1,\bE_{[7;4]},\dots$ of $\Ss^U_{u,0}$ decrease to $\acc(b^u_{[6]})$. When $b = b^u_{[6]}$, the class $\bB^U_0$ is live to the left of the accumulation point. 
\item The step centers $\bB^U_0,\bE_{[7;4]},\dots$ of $\Ss^U_{\ell,1}$ increase to $\acc(b^\ell_{[8]})$. When $b = b^\ell_{[8]}$, the class $\bB^U_1$ is live to the right of the accumulation point.
\item The staircases share the step $\bE_{[7;4]}$.
\end{itemize}
The features can be described using ``adjacency'' and ``$t$-compatibility'' properties. When such classes satisfy these properties, we say they form a generating triple and notate this as
$\Tt:=(\bB^U_0,\bE_{[7;4]},\bB^U_1)$. These properties are defined and discussed in \S\ref{ss:genTt}. 
This triple contains all of the necessary information to prove there are staircases at $b^u_{[6]}$ and $b^\ell_{[8]}$. For the notation in Theorem~\ref{thm:main1}, we let $\Ss^\ell_\Tt:=\Ss^U_{\ell,1}$ (resp. $\Ss^u_\Tt:=\Ss^U_{u,0}$) denote the staircase at $b^\ell_{[8]}$ (resp. $b^u_{[6]}$).

It turns out that the shared step $\bE_{[7;4]}$ is also a blocking class, and by using the formulas for the $x$-mutation and $y$-mutation defined above Proposition~\ref{prop:gener}, we can mutate the triple to get two new generating triples: 
\[
x\Tt:=(\bB^U_0,\bE_{[7;5,2]},\bE_{[7;4]}) \mbox{ and } y\Tt:=(\bE_{[7;4]},\bE_{[7;3,6]},\bB^U_1).
\]
Here, the middle entry $\bE_{[7;5,2]}$ in $x\Tt$ is the third step in the staircase $\Ss^U_{u,0}$ at $b^u_{[6]}$,  while the middle entry $\bE_{[7;3,6]}$ in $y\Tt$ is the third step in the staircase at 
$b^\ell_{[8]}$.
Just  as before, each of these new triples determines two staircases
that share one step, 
 and this allows us to propagate each triple to two new triples. This process continues forever. 
 
 Finally, we explain the effect of the symmetries. Applying the shift $S:p/q \mapsto (6p-q)/p$ to the centers of each of the classes in the triple $\Tt=(\bB^U_0,\bE_{[7;4]},\bB^U_1)$ gives three new classes: $S^\sharp(\Tt):=(S^\sharp(\bB^U_0),S^\sharp(\bE_{[7;4]}),S^\sharp(\bB^U_1))$. In fact, $S^\sharp(\Tt)$ is a generating triple as the symmetry preserves the compatibility conditions of a triple. As described above, this generating triple has two associated staircases, which were first shown to be live in \cite[Cor.60(iii)]{ICERM}. By repeatedly applying the shift, the triples $(S^i)^\sharp(\Tt)$ are also generating triples for all $i \geq 0$. There is another symmetry, the reflection $R$, that also sends generating triples to generating triples,
 but this relationship is a little more complicated to explain since $R$ is not globally defined and acts on $z$ by an order reversing transformation; see \S\ref{ss:symm} for more details. \hfill$\er$
\end{example}

We now make some definitions which are generalizations from the example to state Theorem~\ref{thm:main1}.

 \begin{definition}\label{def:prestair} A sequence of quasi-perfect classes $\bE_k = \bigl((d_k,m_k,p_k,q_k)\bigr)_{k\ge 0}$ is said to form a {\bf pre-staircase} if there are numbers $z_\infty>1$ and $b_\infty\in [0,1)$ such that
 $p_k/q_k\to z_\infty$ and $m_k/d_k\to b_\infty$.  The pre-staircase is said to be {\bf live} if, for some $k_0\ge 0$, the obstructions from the classes $\bE_k, k\ge k_0,$ are live at $b_\infty$, so that they
form a staircase in $c_{H_{b_\infty}}$.
\end{definition}

Thus a staircase is a live pre-staircase.    
A pre-staircase is called {\bf fake} if its classes are known not to be perfect; for  examples
see
 Remark~\ref{rmk:fakestair}. As noted in Lemma~\ref{lem:pseudst}, $z_\infty=\acc(b_\infty)$ for any pre-staircase.\MS

As explained in Example~\ref{ex:mainEx}, the staircases found it \cite{ICERM,MM} consist of a sequence $\Ss: = \big(\bE_k\bigr)_{k\ge 0}$ of perfect classes (with $\eps$ constant) whose coefficients $x_k: = p_k,q_k,d_k,m_k,t_k$ satisfy a recursion of the form
 $$
 x_{k+1} = \nu x_k - x_{k-1},\quad k\ge 1,\;\; \nu\ge 3.
 $$
 Thus the steps are determined by the two initial classes (called {\bf seeds}) together with the recursion coefficient $\nu$.  We proved that these pre-staircases are live for $b: = b_\infty = \lim_k m_k/d_k$ and have accumulation point  $\acc(b_\infty): = z_\infty= \lim_k p_k/q_k$. 
 Finally, it turned out that each staircase $\Ss$  is {\bf associated to a blocking class} $\bB$ with blocked $b$-interval $J_\bB$ and corresponding blocked $z$-interval $I_{\bB} = \acc(J_\bB)$ in the following way:
 \begin{itemlist}\item[-] 
for an ascending staircase, the limit point $(b_\infty, z_\infty)$ has $z_\infty$ equal to the lower endpoint $\p^-(I_\bB)$ (so that, if $m/d>1/3$, $b_\infty = \p^-(J_\bB)$, while if $m/d<1/3$, $b_\infty = \p^+(J_\bB)$);
\item[-] 
 for a descending staircase, the limit point $(b_\infty, z_\infty)$ has $z_\infty$ equal to the upper endpoint $\p^+(I_\bB)$ (so that, if $m/d>1/3$, $b_\infty = \p^+(J_\bB)$, while if $m/d<1/3$, $b_\infty = \p^-(J_\bB)$);
 \item[-]  the recursion parameter $\nu$ of the staircase equals the
 $t$-coordinate of  $\bB$.
 \end{itemlist}
Here $\p^+$, resp. $\p^-$, denotes the supremum (resp. infimum) of an open interval $J_\bB$ or $I_\bB$. Notice that  if $J_\bB\subset(1/3,1)$ then $\acc$ sends $\p^+(J_\bB)$ to $\p^+(I_\bB)$ and $\p^-(J_\bB)$ to $\p^-(I_\bB)$; otherwise it switches them. (For no $\bB$ do we have $J_\bB\ni1/3$ because $c_{H_{1/3}}$ has a staircase \cite{AADT}).
 
In \cite[Cor.3.2.3]{MM} we defined the {\bf staircase family} $\Ss^U$ to consist of the blocking classes $\bB^U_n: = (n+3,n+2,2n+6,1, 2n+3)$, for $n\ge0$, together with the two seeds\footnote
{
These seeds satisfy the  conditions in \eqref{eq:pqt} and should be considered as formally perfect, though clearly $\bE^U_{u,seed}$ does not correspond to a geometric class; see also Remark~\ref{rmk:quasitrip}.}  
 $\bE^U_{\ell,seed}: = (1,1,1,1,2)$, 
 $\bE^U_{u,seed}: = (-2,0,-5,-1,2)$ and the associated staircases:

 \begin{align}\label{eq:seed}
- & \mbox{ for each $n\ge 1$, the ascending staircases $\Ss^U_{\ell,n}$ with seeds 
 $\bE^U_{\ell,seed}, \bB^U_{n-1}$}; \\ \notag
 - &\mbox{ for each $n\ge 0$, the descending staircases $\Ss^U_{u,n}$ with seeds 
 $\bE^U_{u,seed}, \bB^U_{n+1}$}.
 \end{align}

As observed in \cite[Rmk.3.2.4(ii)]{MM} for each $n$ the staircases $\Ss^U_{u,n}$ and $\Ss^U_{\ell,n+1}$ share exactly one step $\bE_{[2n+7;2n+4]}$ with center at $[2n+7;2n+4]$. This is the next step after the two seeds. These are the classes seen in Example \ref{ex:mainEx} for $n=0$ and $n=1$.

The main result  in \cite{MM} was that there is a set of symmetries $S^iR^\de, i\ge 0, \de\in \{0,1\}$ that act on  $z=p/q$  
by fractional linear transformations and fix $t$, so that the action extends to the $d,m$ coordinates of perfect classes.  
Here $S$ is the shift $p/q\mapsto (p-6q)/q$ that fixes the point $a_{\min}= 3+2\sqrt2$, and $R$ is the reflection $p/q\mapsto (6p-35q)/(p-6q)$ that fixes $7$. Both $S$ and $R$ change the sign of $\eps$.
We showed that the image of $\Ss^U$ under each of these transformations $T$ is another staircase family; in particular $T$ takes blocking classes to blocking classes, and staircases to staircases.
(For example, the reflection $R$  takes the descending stairs $\Ss^U_{u,0}$ to the Fibonacci stairs in $\CP^2 = H_0$; see \cite[Rmk.3.2.4,Cor.3.2.7]{MM}.) This reduces the study of staircases to those with $6<\acc(b)$ and $b>1/3$.
\MS

 \begin{figure}[htbp]\label{fig:2} 
 \vspace{-1.1in}  
 \includegraphics[width=7in]{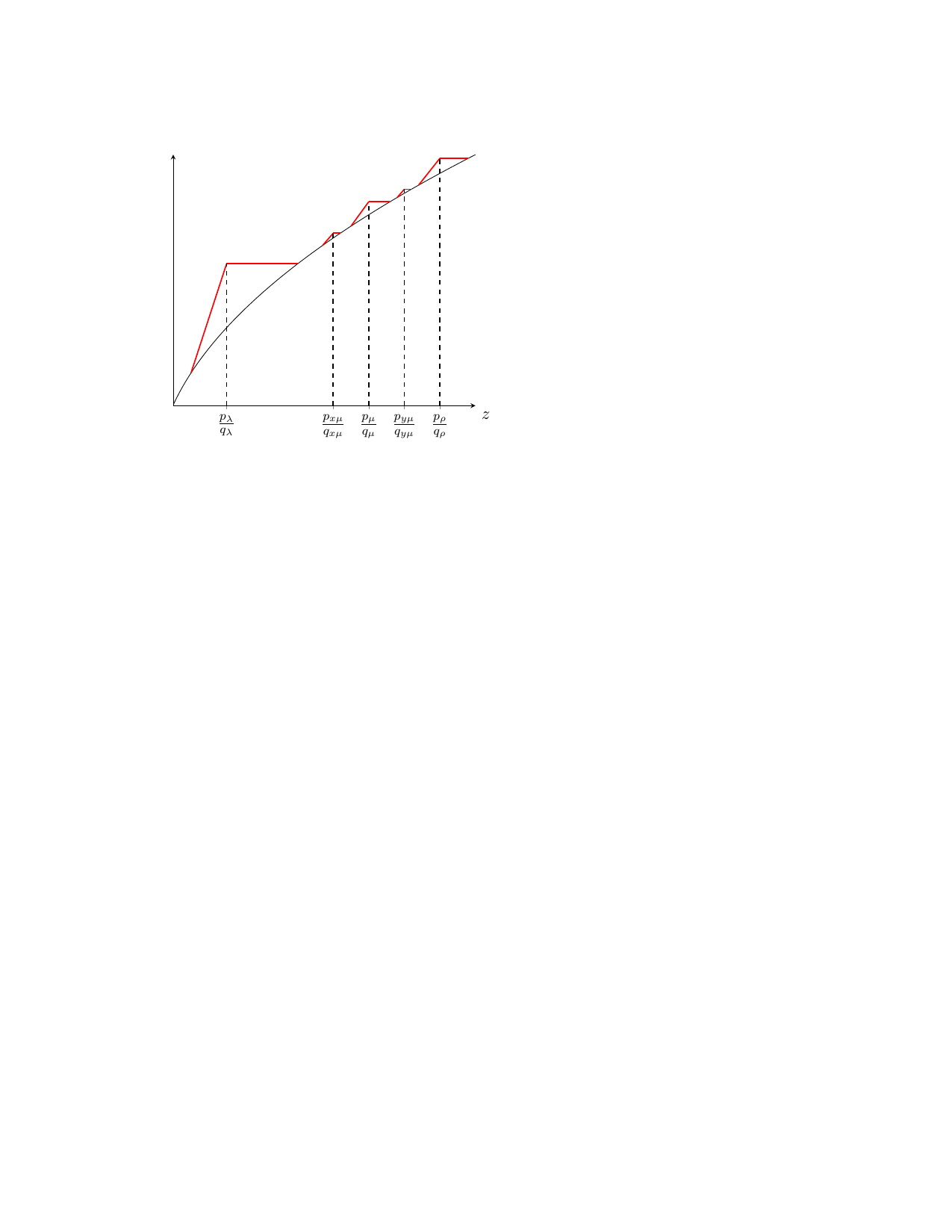} 
 \vspace{-5.7in}
    \caption{This schematic figure shows the graph of the function $z\mapsto  V_b(z)$ together with the
 obstruction functions given by a triple $\Tt$ and its left and right mutations for some appropriate value of $b$.
 }
 \end{figure}

A central result of the current paper can be stated as follows; the required definitions 
and the proof of (i), (ii) may be found in \S\ref{ss:genTt} and \S\ref{ss:symm}, while (iii) is proved  in \S\ref{ss:disj}.

\begin{thm} \label{thm:main1}\begin{itemlist}\item[{\rm (i)}]  Each triple\footnote
{
Here the subscripts $\la,\mu,\rho$ stand for \lq left', \lq middle' and \lq right'.} of classes $$
\Tt^n_*: = \bigl(\bE_\la, \bE_\mu, \bE_\rho\bigr): =\bigl(\bB^U_{n},  \bE_{[2n+7;2n+4]}, \bB^U_{n+1}\bigr)
$$
is a generating triple and generates two other triples called its \textbf{left and right mutations} 
 $$
x\Tt^n_*: = \bigl(\bE_\la, \bE_{x\mu},  \bE_\mu\bigr),\qquad y\Tt^n_*: = \bigl(\bE_\mu, \bE_{y\mu} \bE_\rho\bigr),
$$
which also are generating triples, and hence can be mutated further to form new triples $\Tt$. 
Moreover, each such  triple $\Tt$ determines two staircases, 
 \begin{itemlist}\item[-] an ascending staircase $\Ss^\Tt_\ell$  with seeds
$\bE_\la, \bE_\mu$ and blocking class $\bE_\rho$, 
\item[-] a descending staircase $\Ss^\Tt_u$ with seeds
$\bE_\rho,  \bE_\mu$ and blocking class $\bE_\la$.
 \end{itemlist}

\item[{\rm (ii)}]   The symmetries act on these triples.  Moreover the symmetries are compatible with mutation.  More precisely, if a symmetry takes $\Tt$ to $\Tt'$  and preserves (resp. reverses) the order on the $z$-axis, then it takes $x\Tt, y\Tt$  to  $x\Tt', y\Tt'$ (resp. $y\Tt', x\Tt'$).

\item[{\rm (iii)}]   Every perfect class with center $> a_{\min}$    is a step in at least one ascending and one descending  staircase that is the image under an appropriate symmetry of one of the staircases $\Ss^\Tt_\ell, \Ss^\Tt_u$ in (i) above.
    \end{itemlist} 
 \end{thm}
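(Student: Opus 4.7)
The plan is to verify (i) and (ii) by direct arithmetic using the parametrization~\eqref{eq:pqt}, and to deduce (iii) by showing that iterated mutation of the initial triples $\Tt^n_*$, combined with the symmetries from (ii), generates every perfect class with center $>a_{\min}$ as a step of some staircase.

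For (i) I would first isolate a precise list of \emph{compatibility conditions} on a triple $(\bE_\la,\bE_\mu,\bE_\rho)$: Farey-like numerical relations among the $(p,q)$-coordinates, linear identities tying the $t$-values of the three classes, and sign coherence in $\eps$---essentially the data making the triple an orthogonal configuration in the intersection form of a suitable blowup, so that it models a $v$-mutation triangle in the sense of \cite{M1}. For the initial triples $\Tt^n_*$, each condition reduces via the explicit coordinates of $\bB^U_n$ and $\bE_{[2n+7;2n+4]}$ to a linear identity in $n$ that can be checked by inspection. The mutated classes $\bE_{x\mu}$ and $\bE_{y\mu}$ are defined by applying the recursion $x_{k+1}=t_\mu x_k - x_{k-1}$ coordinatewise to the appropriate pair of neighbors, and closure of the compatibility conditions under mutation then follows algebraically from the same recursion. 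For the staircase claim, the sequences $\Ss^\Tt_\ell$ and $\Ss^\Tt_u$ are pre-staircases by construction, accumulating at the appropriate endpoints of the blocked interval $I_{\bE_\rho}$ or $I_{\bE_\la}$; liveness is then obtained by the bootstrap outlined in~\S\ref{ss:outline}, namely the almost toric fibration results from \cite{M1} give that the accumulation point is unobstructed, Lemma~\ref{lem:live2} upgrades each quasi-perfect step to a perfect class, and Proposition~\ref{prop:live} converts the pre-staircase into a genuine staircase.

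For (ii), since the symmetries $S^iR^\de$ act on $(p,q,t)$ by explicit fractional linear formulas fixing $t$, and extend to $(d,m)$ by the same formulas coming from~\eqref{eq:pqt}, they preserve every numerical compatibility condition from (i). The orientation-reversal statement is then a direct computation: both $S$ and $R$ reverse the cyclic order of Farey neighbors (and flip $\eps$), so an odd composition swaps the roles of the left and right mutation while an even composition preserves them.

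For (iii), the plan is to induct on $t_\mu$. Each mutation strictly increases the $t$-coordinate of the middle class, and the initial triples have middle class with small explicit $t$. Given a perfect class $\bE$ with center $p/q>a_{\min}$, Proposition~\ref{prop:1} localizes $b_\bE := \acc^{-1}_\eps(p/q)$ inside a unique connected component $J_{\bE'}$ of $\textit{Block}$; if $\bE=\bE'$, then $\bE$ is the outer class of some triple in the tree generated from the $\Tt^n_*$ by mutation and symmetry, and (i) exhibits it as a step of the staircase associated to the parent triple. Otherwise $\bE$ lies deeper in the tree, and one descends by choosing at each stage the mutation ($x$ or $y$) whose blocked subinterval contains $b_\bE$. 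The main obstacle will be to show that this descent terminates at $\bE$ and not strictly below it; I expect to handle this by matching the sequence of $x/y$ choices to a continued-fraction expansion of $p/q$ in the spirit of Remark~\ref{rmk:main0}(ii), and by controlling the descent via a monotone complexity built from the $t$-coordinates of the nested triples.
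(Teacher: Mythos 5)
Your proposal contains several concrete errors and gaps. First, in (i) the mutation recursion is wrong: the new middle classes are obtained coordinatewise as $\bE_{x\mu}=t_\la\bE_\mu-\bE_\rho$ and $\bE_{y\mu}=t_\rho\bE_\mu-\bE_\la$, i.e.\ the recursion parameter is the $t$-coordinate of the class that becomes the blocking class (as in Proposition~\ref{prop:gener} and Lemma~\ref{lem:2}), not $t_\mu$; with parameter $t_\mu$ you produce different tuples and lose the adjacency/$t$-compatibility structure that makes $\bE_\rho$ (resp.\ $\bE_\la$) block the associated pre-staircase. Second, closure of the triple conditions under mutation is not ``purely algebraic'': condition (e) of Definition~\ref{def:gentr} is an inequality among accumulation values $\acc(m/d)$, and its persistence requires the analytic estimate of Lemma~\ref{lem:accmd} (together with the ordering of centers, which uses conditions (b) and (d)). Third, your liveness bootstrap is incomplete: Proposition~\ref{prop:live} applies to a descending pre-staircase only under the slope hypothesis \eqref{eq:MDTineq0}, and verifying it is a substantial part of the paper (Lemmas~\ref{lem:slopeest1}--\ref{lem:slopeest3}, Appendix~B, Corollary~\ref{cor:live1}); without it you only get pre-staircases, not staircases. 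In (ii) you assert that both $S$ and $R$ reverse the order of neighbors, but $S$ preserves the $z$-order (it is $z\mapsto 6-1/z$) and commutes with mutation, while only $R$ reverses order and swaps $x$ with $y$; your parity rule would predict that $S$ alone interchanges $x\Tt$ and $y\Tt$, contradicting Corollary~\ref{cor:main1iipf}. Moreover, invariance of condition (e) under the symmetries is again not automatic and is exactly the computation carried out in Proposition~\ref{prop:symm}.

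For (iii) your route diverges from the paper's and is not closed. The paper does not descend through the mutation tree: it proves that the blocked intervals of the classes already constructed are dense (Proposition~\ref{prop:dense}), that blocked intervals of distinct perfect classes are disjoint once one endpoint interval is known to have unobstructed endpoints (Proposition~\ref{prop:disj}, itself a delicate argument via Lemma~\ref{lem:md1}), and that a perfect class is determined by its center; density plus disjointness force $J_{\bE'}=J_\bE$ for some $\bE$ in the family, and uniqueness of the center then gives $\bE'=\bE$ (Corollary~\ref{cor:disj1}), after which (iii) is immediate since every non-blocking class in $\Cc\Ss^U$ is the middle entry of a unique triple. Your localization of $b_\bE$ in a component of ${\it Block}$ does not by itself place $\bE$ in the mutation tree (that is precisely what Corollary~\ref{cor:disj1} proves), and the termination of your proposed descent --- which you acknowledge as the main obstacle --- is exactly the missing content; as written, the argument could terminate at a class whose blocked interval merely contains $J_\bE$, and ruling this out needs the disjointness and shortest-weight-length/uniqueness ingredients you have not supplied.
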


  In view of this result, we make the following definition. 
 
 \begin{definition}\label{def:complete} The {\bf complete family} $\Cc\Ss^U$ consists of the staircase family $\Ss^U$ defined as above, plus all staircases $\Ss^\Tt_\ell, \Ss^\Tt_u$ determined as in Lemma~\ref{lem:2} by any triple
$\Tt$ derived from one of the basic triples $\bigl(\bB^U_n, \bE_{[2n+7;2n+4]}, \bB^U_{n+1}\bigr)$.
Any staircase that is the image of 
$\Ss^\Tt_\ell$ or $ \Ss^\Tt_u$ under a symmetry $S^iR^\de$, where $i\ge 0, \de\in \{0,1\}$, is called a {\bf principal staircase.}
\end{definition}

Theorem~\ref{thm:main1} parts~(ii) and ~(iii) imply that each  perfect class with center $> a_{\min}$ belongs to a unique family $(S^iR^\de)^{\sharp}(\Cc\Ss^U)$.  We will see later that the principal staircases are distinguished from those described in Remark~\ref{rmk:decimal}~(ii) below by the fact that each is recursively defined and has a blocking class.  

\begin{rmk} \label{rmk:decimal}  \rm (i)
Instead of thinking of the complete family as a set of interconnected staircases, we can think of it as a countable family of classes ordered according to the position of their centers, labelled by ternary decimals; for details see \S\ref{ss:ternary}.  As explained in \cite{M1}, one can 
associate to each triple $\Tt$ in $\Cc\Ss^U$ an almost toric fibration whose base diagram is a quadrilateral $Q_\Tt$ in such a way that
 there is direct correspondence between the left/right derived triples $x\Tt, y\Tt$ and two  of the possible mutations of $Q_\Tt$.  This fact is a key step in the proof that the tuples $\bE = (d,m,p,q,t)$ we consider do in fact correspond to perfect classes that are represented by exceptional spheres.
\MS

\NI (ii) {\bf (Further staircases)} 
Once we give finite decimal  labels to the classes in $\Cc\Ss^U$,  it  becomes clear that they  can be organized into different convergent sequences, one for each infinite ternary decimal whose expansion does not contain the digit  $1$.    For a precise description, see Definition~\ref{def:newstair}.    The principal staircases $\Ss^\Tt_\ell, 
\Ss^\Tt_u$ described above are those that exist at the endpoints of the intervals in {\it Block}, while these new staircases correspond to all of the uncountably many other unblocked $b$-values in the Cantor-type set $[0,1)\less {\it Block}$ except the special rational points. 
  We will prove in \S\ref{ss:uncount}  that all the steps in the new ascending pre-staircases are live for the limiting $b$-value, while in the descending case all but finitely many are live for the limiting $b$-value.
 \hfill$\er$
  \end{rmk}

  \begin{rmk}{\bf (Properties of Staircases)}\label{rmk:main0} \rm 
(i)  The capacity functions $c_{H_b}(z), b\in [0,1),$  are not entirely determined by perfect classes, since there
 are other obstructions given by exceptional classes that are not perfect. For instance, 
the exceptional class $B=6L-3E_0-\sum_{i=1}^7 2E_i$ is not a perfect class even though the coefficients of the $E_i$ are multiples of the weights of $7$. One can check that this class is live at $z=7$ for some values of $b$. Furthermore, it is a blocking class, but the $b$-interval that it blocks is contained in the interval blocked by the perfect class $\bB^U_0=
3L-2E_0-\sum_{i=1}^6 E_i$. See Remark~\ref{rmk:except} for more discussion. Because it is not clear how non-perfect exceptional classes relate to the symmetries, we cannot claim that  the capacity functions themselves are invariant under the symmetries,  but  only that the images of the perfect staircase classes remain live.  This question can also phrased in terms of ECH capacities: is it possible to understand the action of the symmetries from the point of view of ECH capacities?  

\MS

\NI (ii)  It turns out that if $H_b$  has a staircase then it has one whose steps are given by  perfect classes, as long as $b$ is not a special rational point.  Moreover, if $b$ is blocked, it is blocked by a perfect class.  As we show in Proposition~\ref{prop:stab}, the
 argument in \cite{CGHM} then applies to show that such staircases  stabilize, at least to dimension $6$.  Thus  the function $$
c_{H_b,1}(z) = \inf\ \bigl\{ \la\ \Big| \ E(1,z)\times \R^{2} \se \la H_b\times \R^{2}\bigr\} 
$$
has a staircase for each $b\notin {\it Block}$ that is not one of the special rational points, and has no staircase if $b\in {\it Block}$.  It would be interesting to explore how this fits in with the stabilized folding construction in \cite{H,CGHS} for example.  In the case of the ball or the monotone polydisk, the staircases ascend and the stabilized embedding function is conjectured to exhibit a \lq phase change' at the accumulation point of  the staircase. For example in the case of the ball 
it is conjectured to agree with the folding curve $z\mapsto \frac{3z}{1+z}$ for $z> \tau^4$.  Notice that in this case the folding curve crosses the volume constraint precisely at the accumulation point.   However, there are some 
 $H_b$ with  descending stairs that stabilize, and so something rather different must happen in this case. \hfill$\er$
\end{rmk}

\begin{remark}{\bf (Further developments and possible generalizations)} \label{rmk:fdgen} \rm
(i) Since this paper was first posted, there have been further developments about the Hirzebruch surface. In \cite{MPW}, the authors prove that $b=1/3$ admits only an ascending staircase and that the special rational points do not have descending infinite staircases. These points complete the leftover cases of Theorem~\ref{thm:main} (ii) and (iv). Furthermore, they prove the only quadratic irrational numbers that might be staircase accumulation points are the endpoints of connected components of $Block$. By the fact that the accumulation point function is the root of a quadratic equation, this proves that $b=0, 1/3$ are the only rational values of $b$ with infinite staircases, proving \cite[Conj.~1.20]{AADT} for Hirzebruch surfaces. \MS
   
   \NI(ii) The connections between the Hirzebruch surface and the polydisk have also become more clear. In \cite{spur} following work of \cite{Usher}, the authors provide evidence that the structure of Theorem \ref{thm:main} also occurs for $S^2(1)\times S^2(\beta)$.  In \cite{Usher}, Usher considered ellipsoid embeddings into the polydisk $P(1, b)$ (which by \cite[Thm 1.4]{AADT} is equivalent  to $S^2(1) \times S^2(\beta)$).  While he did not use the language of blocking classes, we have verified that the staircases that he found do lie at the endpoints of blocked intervals determined by  perfect classes. Looking at a few examples, it seems that the other endpoint of the blocked interval has a corresponding descending staircase. (Note that  Usher only considered ascending staircases.)   He also found symmetries arising from the relevant Diophantine equation that are closely related to those of \cite{MM} that are discussed in \S\ref{ss:symm} below. The further work about the polydisk in \cite{spur} suggests: 
 \begin{itemize}
 \item Given a staircase in $c_{H_b}$,
 there should be  an infinite staircase for $S^2(1)\times S^2(\beta)$, where
 \[
 \beta=\acc_{S^2}^{-1}\circ CF^-\circ\acc.
 \]
 Here $\acc_{S^2}^{-1}$ denotes the accumulation point function for $S^2(1)\times S^2(\beta)$ and $CF^-$ denotes the operation which sends a number with continued fraction $[s_0,\dots,s_n]$ to $[s_0-1,\dots,s_n-1]$.
 \item  In the cases that he studied, Usher conjectured in \cite[Conj.~4.23]{Usher} a complete description of the ellipsoidal capacity function of $S^2(1)\times S^2(\beta)$ below the accumulation point.
 The method of proof in \cite{spur} via almost toric mutations suggests that 
 this conjecture is true, and that a similar description holds for the ascending principal staircases of $H_b$.
 \end{itemize}
 This provides evidence that the work done here to find all perfect classes is easily generalizable to the case of the polydisk. \MS

     \NI(iii)
In \cite{M2}, a similar structure is  found for 
embeddings into $X_{a,b}: = $
$ \CP^2(1) \# \oCP^2(a) \# \oCP^2(b)$   where $0 < a \leq b$ and $a+b < 1$. In this example, rather than having blocked intervals, there are blocked regions of $a,b$ values, and staircases have been found on their boundaries. Now, the regions blocked by two different  perfect classes may intersect, in contrast with the disjointness found in Proposition~\ref{prop:disj}. Thus, the curve that bounds a particular blocked region is not entirely made up of $b$-values with staircases: indeed it seems that the set of such parameter values with  staircases has much the same structure that we find for $z$-intervals such as $[6,8]$. Preliminary work suggests that there is an analogous definition of a generating triple, which we can mutate to find more generating triples resulting in a similar fractal of staircases. \MS

   \NI (iv) More generally, we expect our results to generalize to the examples in \cite{AADT} that compactify to a $k$-fold blowup of $\CP^2$, where $k\le4$ and we allow for irrational blowups. 
Simple extensions of these situations are still mysterious. It remains unclear if there are any staircases in a five-fold blowup of $\CP^2$ 
or in a convex toric domain such as an irrational  ellipsoid $E(1,x), x\in \R\less \Q,$ with irrational normal vectors.  For forthcoming work on these questions, see Cristofaro-Gardiner--Magill--McDuff~\cite{CGMM}.

\NI(v) It would also be interesting to know whether echoes of the combinatorial structures discovered here can be perceived in  other situations.  For example,  Casals--Vianna  develop in  \cite{CV} another approach to some of the staircases considered in \cite{AADT} via almost toric fibrations, emphasizing the relation to tropical geometry and quiver combinatorics.  Even though we are not working in a monotone manifold, it is possible that the structure of the generating triples developed here, with their tight connections to
the almost toric fibrations in \cite{M1},  have repercussions in these areas.
\hfill$\er$

\end{remark}

\subsection{Nature of the proofs}\label{ss:organ}
The proofs involve two kinds of arguments; algebraic/arithmetic to show that the classes $\bE$ do satisfy all the required compatibility conditions, and geometric  to show that the corresponding obstruction functions $\mu_{\bE,b}$ are live for the appropriate $b$-value. The point here is that a quasi-perfect class  $(d,m,p,q,t)$ may not have $\bE\cdot \bE'\ge 0$ for every class $\bE'\ne \bE$ that is represented by an exceptional sphere; see Corollary~\ref{cor:pos}.   If $\bE$ fails this test,\footnote{
 Although there certainly are plenty of fake classes (i.e. quasi-perfect classes that are not perfect),  
Remarks~\ref{rmk:fakestair}  and~\ref{rmk:fakesymm}  give intriguing hints  that one might be able to express the distinction  in purely  arithmetic terms.  For example, there may be no set of (integral)  fake classes that satisfy all the compatibility conditions    required of a generating triple.}
  then although the function 
 $\mu_{\bE,b}$ is obstructive at its center $p/q$  for $b = m/d$  (i.e. $\mu_{\bE,m/d}(z)> c_{H_{m/d}}(p/q)$) there is no guarantee that 
 \begin{itemize}\item[(a)]  it gives the maximal obstruction at $b = m/d, z=p/q$, or that
 \item[(b)]  it still gives the maximal obstruction at $z=p/q$ when $b = b_\infty$, the limiting $b$-value of the
 proposed  staircase.
 \end{itemize}
In fact, once we know (a) for all its steps,
 then (at least in the case of the principal pre-staircases) results that can be quoted from \cite{MM} imply that
 the pre-staircase is live provided that
 the slope estimate \eqref{eq:MDTineq0} holds. 
Moreover, by Lemma~\ref{lem:basic} we can establish (a) by showing that  $\bE$ is perfect.
The standard way to do this
 is to show as in~\cite{ICERM,MM} that 
 its components  reduce to those of a known exceptional class under Cremona transformations.  It turns out (see \cite[\S4.1]{MM}) that the symmetries act very nicely on these components, so that it suffices to prove this for the staircases in the family $\Cc\Ss^U$.   This was done by hand in \cite{ICERM} for the staircases 
 with blocking classes $\bB^U_n, n\ge 0$.  
However, this approach   seems impractical in the current context,  since the coefficients of the staircase classes in the complete family $\Cc\Ss^U$ increase so rapidly and the reduction process does not seem to behave nicely under mutations.

Instead we first  develop a more efficient way to determine when a quasi-perfect class in the complete family 
$\Cc\Ss^U$ is perfect.  Namely, we show in Lemma~\ref{lem:live2}  that a class $\bE$ with $m/d>1/3$ is perfect provided that  the lower endpoint $z^-_\infty: = \p^-(I_{\bE})$ is unobstructed, i.e.
$c_{H_{b^-_\infty}}(z^-_\infty) = V_{b^-_\infty}(z^-_\infty)$, where $\acc({b^-_\infty}) = z^-_\infty$.   
We then apply the main result in Magill~\cite{M1} which shows that these points are unobstructed by   using almost toric fibrations to construct a full filling for these values of $b$ and $z$. 
This is possible because there is an ascending  sequence of quasi-perfect classes in $\Cc\Ss^U$ (i.e. a  pre-staircase)  with limit 
$(b^-_\infty, z^-_\infty)$.   To do this, one starts with the toric model of  $H_b$ and performs a sequence of mutations at three of its corners.  There are three possibilities here: when suitably iterated, a mutation at the corner in the first quadrant (called a $v$-mutation) corresponds to the translation $[6,8]\to [2n+6,2n+8]$,  
 while mutations at the points on the $x$- and $y$- axis turn out to correspond precisely to the corresponding moves for the triples $\Tt$ that are described in Proposition~\ref{prop:gener}.

The upshot is that we use the existence of the whole  interweaving family of quasi-perfect classes to prove inductively that all the classes in the family are in fact perfect.  One then needs additional arguments to show that the staircase steps are live at the limiting $b$-value.  This turns out to be straightforward in the case of the ascending  staircases, but more problematic for the descending ones; see \S\ref{ss:live} and \S\ref{ss:uncount}.  It is important in several places of our arguments that the limiting values $(b_\infty, z_\infty)$ of the principal  staircases are both irrational.
\MS

  \NI {\bf Acknowledgements}    We thank all the other members of our  original group 
  M. Bertozzi, T. Holm, E. Maw,  G. Mwakyoma, and  A. R. Pires for helping us initiate this work, as well as AWM and ICERM for providing the initial organization.  Nicki Magill also thanks Tara Holm as her research advisor for introducing her to almost toric fibrations and encouraging her to use them as a proof method. We also thank the anonymous referee for very helpful comments.
  
\section{Description of the classes}\label{sec:triple}

This is the algebraic heart of the paper.
We  define the notion of a generating triple in Definition~\ref{def:gentr} and show that they propagate under left/right mutations in Proposition~\ref{prop:gener}.  Along with Lemma \ref{lem:2}, this allows us to prove Theorem \ref{thm:main1} (i). In \S\ref{ss:ternary} we describe the structure of the ensuing set of interwoven classes, and prove in Proposition~\ref{prop:dense} that they block a dense set of $z$-values (and hence a dense set of $b$-values).  The non-recursive pre-staircases 
are described in Definition~\ref{def:newstair}.  Finally, we show in Proposition~\ref{prop:symm} that 
 the symmetries from \cite{MM}  act on the set of generating triples, and hence on the set of pre-staircases. 
 
\subsection{Generating triples}\label{ss:genTt}

All staircase classes  $\bE$ are quasi-perfect; that is they are given by tuples $(d,m,p,q)$  where $$
\bE = d L - mE_0 -\sum_{i\ge1} m_iE_i, \qquad (m_1,m_2,\dots ) = W(p/q),
$$
where $W(p/q)$ is the integral weight expansion of $p/q$; see \cite[Ex.8]{ICERM} or \eqref{eq:Wpq} below.
It is easy to check  that if $\bx: = (p,q,t)$ satisfies the equation $p^2-6pq + q^2 +8= t^2$ and $t>0$, then the quantities $(d,m)$ defined by 
 \begin{align}\label{eq:formdm}
 d: = \tfrac 18( 3(p + q) +  \eps t), \quad  m: =  \tfrac 18((p + q) + 3\eps t),\quad \eps\in \{\pm 1\}
\end{align}
 satisfy the basic identities $3d = m+p+q$ and $d^2-m^2 = pq-1$ 
 that correspond to $c_1(\bE)=1, \bE\cdot\bE=-1$; see \cite[\S2.1]{ICERM}
   Moreover, by  \cite[Prop.2.2.9]{MM} the class is uniquely determined by $p/q$.  In other words,   given $p,q,t$ satisfying $p^2-6pq + q^2 +8= t^2$,
  the expressions for $d,m$ in \eqref{eq:formdm} are integral for at most one value of $\eps$.  
Thus we can think of $\bE$ as an integral point on the quadratic surface $$
 X_\Z: =  \bigl\{\bx \in \Z^3 \ | \ \bx^T A \bx = 8\bigr\},
 $$
  where
 $A$ is the symmetric matrix
 \begin{align}\label{eq:matrix}
&A : = \left(\begin{array}{ccc} -1&3&0\\3&-1&0\\0&0&1\end{array}\right),\quad  \bx: = 
 \left(\begin{array}{c} p\\q\\t\end{array}\right).
 \end{align}
Such classes $\bE$  
are  center-blocking\footnote
{
that is, $\mu_{\bE,b}(p/q) > V_b(p/q)$ when $\acc(b) =p/q$.}
 if $p/q> 3+2\sqrt2 = a_{\min}$ by \cite[Prop.2.2.9]{MM}.  Moreover, because the case $\eps=1$ corresponds to values of $m/d>1/3$ while $\eps = -1$ corresponds to  $m/d<1/3$, we often omit $\eps$ from the notation.\footnote
{
By \cite[Lem.2.2.13]{MM}, there are no perfect classes with $m/d=1/3$.}

 \begin{definition}\label{def:Tt}    Two quasi-perfect classes $\bE : = (d,m,p,q,t, \eps), \bE': =(d',m',p',q',t', \eps')$ 
 are said to be
 {\bf adjacent} if $\eps=\eps'$ and if after renaming so that $p/q < p'/q'$ (if necessary), 
 the following relation  holds:
   \begin{align}\label{eq:adjac}
 (p+q)(p'+q') - tt' = 8 pq'.
\end{align}
Further,  they are called
 {\bf $t''$-compatible} if $\eps = \eps'$ and
  \begin{align}\label{eq:tcompat}
&tt'-4t''= p p' - 3(pq'+qp') + qq', \qquad \mbox{ i.e. }\ \  \bx^T A \bx' = 4t''.
\end{align} 
We make corresponding definitions for a pair
 $\bx, \bx' \in X_\Z$ to be $t''$-compatible or  adjacent.
Further we write $ \bx<\bx'$ (or $\bE< \bE'$) to denote that\footnote
{
Note that this condition has nothing to do with the relative size  of the ratios $p/q, p'/q'$.}
 $p<p',  q<q', t<t'$; and
 say that $\bx\in X_\Z$ is {\bf positive} if $p,q, t>0$.
 \end{definition}

 Here is a useful result about the relation between adjacency and $t$-compatibility.

  \begin{lemma}\label{lem:recur0} \begin{itemlist}\item[{\rm (i)}]
 Suppose that the points $\bx_0, \bx_1 \in X_\Z$ are $t$-compatible for some $t\ge 3$ and  have $ \bx_0<\bx_1$.
Then  $\bx_2: = t \bx_1 - \bx_0$ is positive and in $X_\Z$.  Also, $\bx_1<\bx_2$ and the pair $\bx_1,\bx_2$  is $t$-compatible.
Further, if $\bx_0, \bx_1$ are adjacent, so are $\bx_1, \bx_2$.
Thus,  if  $\bE_0, \bE_1$ satisfy $p_0<p_1,q_0<q_1, t_0<t_1$ and are adjacent and  $t$-compatible, then so are the components of all successive pairs in the sequence obtained from $\bE_0, \bE_1$ by $t$-recursion.

\item[{\rm (ii)}]  The adjacency condition for classes with $p/q<p'/q'$ can also be written in  the following equivalent forms
\begin{align}\label{eq:adjac1}  
(3m'-d') d &= (m'-q')p + m'q,\quad\mbox{ or}\\ \notag
d'd-m'm &= pq'.
\end{align}

\item[{\rm (iii)}]   If $\bE, \bE'$ are adjacent, then they are $t''$-compatible exactly if
 $$
| p'q-pq'| =t''.
 $$
 \end{itemlist}
\end{lemma}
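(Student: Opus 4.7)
My plan is to translate all three statements into linear algebra for the symmetric form $\bx,\bx' \mapsto \bx^T A \bx'$ from \eqref{eq:matrix}. Recall that membership $\bx \in X_\Z$ reads $\bx^T A \bx = 8$, while $t''$-compatibility reads $\bx^T A \bx' = 4t''$. I will dispatch (ii) and (iii) first by direct manipulation, and then use (iii) to close out (i), whose only nontrivial piece is the adjacency propagation.

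For (ii), I substitute the formulas $8d = 3(p+q) + \eps t$ and $8m = (p+q) + 3\eps t$ (using that $\eps = \eps'$ for adjacent classes) into the product $dd' - mm'$; the cross terms linear in $\eps$ cancel, yielding the key identity
\[
dd' - mm' \;=\; \tfrac{1}{8}\bigl[(p+q)(p'+q') - tt'\bigr].
\]
Thus adjacency $(p+q)(p'+q') - tt' = 8pq'$ is equivalent to $dd' - mm' = pq'$, the second listed form. The first listed form, $(3m'-d')d = (m'-q')p + m'q$, rearranges to $m'(3d - p - q) = d'd - pq'$; and since the defining relations in \eqref{eq:formdm} give $3d - p - q = m$, this becomes $m'm = d'd - pq'$, matching the second form. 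For (iii), I insert $tt' = 4t'' + pp' - 3(pq'+qp') + qq'$ (from compatibility) into the adjacency relation $(p+q)(p'+q') - tt' - 8pq' = 0$; after expanding, everything cancels except $4t'' = 4(p'q - pq')$, so adjacent classes are $t''$-compatible iff $p'q - pq' = t''$, which equals $|p'q - pq'|$ under the convention $p/q < p'/q'$.

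For (i), bilinearity together with $\bx_0^T A \bx_0 = \bx_1^T A \bx_1 = 8$ and $\bx_0^T A \bx_1 = 4t$ immediately yields
\[
\bx_2^T A \bx_2 \;=\; 8t^2 - 2t(4t) + 8 \;=\; 8, \qquad \bx_1^T A \bx_2 \;=\; 8t - 4t \;=\; 4t,
\]
so $\bx_2 \in X_\Z$ and $(\bx_1,\bx_2)$ is $t$-compatible. From $t \ge 3$ and $\bx_0 < \bx_1$ componentwise I obtain $p_2 = tp_1 - p_0 \ge 3p_1 - p_0 > 2p_1$, and likewise for $q_2, t_2$, so $\bx_2$ is positive and $\bx_1 < \bx_2$. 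It remains to verify adjacency, the one place where a short computation is unavoidable. Assume WLOG $p_0/q_0 < p_1/q_1$; then $p_2 q_1 - p_1 q_2 = p_1 q_0 - p_0 q_1 > 0$, so $p_1/q_1 < p_2/q_2$ and the claim to prove is $(p_1+q_1)(p_2+q_2) - t_1 t_2 = 8 p_1 q_2$. Substituting $p_2 = tp_1 - p_0$, $q_2 = tq_1 - q_0$, $t_2 = tt_1 - t_0$, and replacing $(p_1+q_1)^2 - t_1^2$ by $8p_1 q_1 - 8$ (the relation $\bx_1^T A \bx_1 = 8$ rewritten), then applying the adjacency of $(\bx_0, \bx_1)$, reduces the difference between the two sides to $8(p_1 q_0 - p_0 q_1 - t)$, which vanishes by (iii). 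This final bookkeeping step is the main --- though short --- obstacle; everything else falls out of the bilinear form.
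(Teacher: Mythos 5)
Your proposal is correct, and parts (ii) and (iii) are handled essentially as in the paper: the identity $64(dd'-m'm)=8\bigl((p+q)(p'+q')-tt'\bigr)$ obtained from \eqref{eq:formdm}, the use of $3d-m=p+q$ to pass between the two forms in \eqref{eq:adjac1}, and the substitution of the adjacency expression for $tt'$ into \eqref{eq:tcompat} are exactly the computations given there. Where you genuinely diverge is the adjacency propagation in (i). The paper introduces the auxiliary asymmetric matrix $B$ with $A=\tfrac12(B+B^T)$, re-encodes quasi-perfection, adjacency, and $t''$-compatibility as $\bx^TB\bx=8$, $\bx^TB\bx'=0$ (for the ratio-ordered pair), and $\bx^T(B+B^T)\bx'=8t''$, and then gets $\bx_1^TB\bx_2=0$ by pure bilinearity, never invoking part (iii). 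You instead prove (iii) first and verify $(p_1+q_1)(p_2+q_2)-t_1t_2=8p_1q_2$ by direct substitution, using $(p_1+q_1)^2-t_1^2=8p_1q_1-8$ (from $\bx_1\in X_\Z$) and the identity $p_1q_0-p_0q_1=t$ supplied by (iii); I checked the bookkeeping and the difference of the two sides is indeed $8(p_1q_0-p_0q_1-t)=0$. Both routes are sound: the paper's $B$-formalism is slicker, isolates the order-dependence of adjacency into a single bilinear condition, and needs no reordering of the parts, while your argument is more elementary (only the symmetric form plus one explicit expansion) at the cost of front-loading (iii) and, like the paper, relegating the case $p_1/q_1<p_0/q_0$ to a symmetric computation.
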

\begin{proof}
Since  $A$ is symmetric,  
$$
(t \bx_1 - \bx_0)^T A (t \bx_1 - \bx_0) = t^2 \bx_1^T A \bx_1 - 2t \bx_1^T A \bx_0 + \bx_0^T A \bx_0 = 8
$$
exactly if  $8 t^2 = 2t \bx_1^T A \bx_0$, which holds by \eqref{eq:tcompat}. Thus $\bx_2\in X_\Z$.  
Further \eqref{eq:tcompat} holds for $\bx_2: = t \bx_1 - \bx_0, \bx_1$ because
$$
(t \bx_1 - \bx_0)^T A \bx_1 = 8t - \bx_0^T A \bx_1  = 4t,
$$
again by \eqref{eq:tcompat}.  Thus $\bx_1, \bx_2$ are $t$-compatible.

To see that they are adjacent, it is useful to introduce\footnote
{
Here we are following a suggestion of Ana Rita Pires, exhibiting
the adjacency condition as an asymmetric version of the equation $\bx^T A \bx = 8$ that defines the set of quasi-perfect classes.}
the matrix
 \begin{align*}
&B : = \left(\begin{array}{ccc} -1&7&0\\-1&-1&0\\0&0&1\end{array}\right).
 \end{align*}
Then $A = \frac 12(B+B^T)$ and it is easy to check the following:
\begin{itemlist}\item[-]  the class represented by $\bx: = (p,q,t)^T$ is  quasi-perfect iff $\bx^T B \bx = 8$;
\item[-]  the classes  represented by $\bx, \bx'$ are adjacent iff, when ordered so that $p/q< p'/q'$ we have
$\bx^T B \bx' = 0$; and
\item[-] the classes  represented by $\bx, \bx'$ are  $t''$-compatible iff
$\bx^T (B+ B^T) \bx' = 8t''$. 
\end{itemlist}
Thus
if  $p_0/q_0 < p_1/q_1$, the points $\bx_0, \bx_1$ are adjacent precisely if
$\bx_0^T B \bx_1 = 0$.   But then $p_1/q_1<p_2/q_2$ and the adjacency condition for $\bE_1,\bE_2$ is  
$\bx_1^T B \bx_2 = 0$, which holds because 
$$\bx_1^T B \bx_2 = \bx_1^T B (t\bx_1 - \bx_0) =  8t -\bx_1^T B \bx_0= 8t - \bx_0^T (B^T + B)\bx_1 =0
$$
since $\bx_0<\bx_1$ and  $\bx_0,\bx_1$ are adjacent and $t$-compatible.
This proves (i).
\MS

Now consider (ii).  To see that the two conditions in \eqref{eq:adjac1} are equivalent use the relation $p+q=3d-m$ to obtain
$$
(3m'-d') d = (m'-q')p + m'q = m'(3d-m) - pq',
$$
 and then simplify.   Next use the formulas in~\eqref{eq:formdm} to obtain
\begin{align*}
64(d'd-m'm)& =  \bigl(3(p+q)+\eps t\bigr) \bigl(3(p'+q')+\eps t'\bigr)-  \bigl((p+q)+3\eps t\bigr) \bigl((p'+q')+3\eps t'\bigr)\\
& =
8(p+q)(p'+q') - 8tt'.
\end{align*}
It follows easily that \eqref{eq:adjac} is equivalent to the conditions in \eqref{eq:adjac1}.
\MS

Finally, to prove (iii), we may assume without loss of generality  that $p/q<p'/q'$.   Then
rewrite the LHS $tt'-4t''$ of \eqref{eq:tcompat}, using the expression for $tt'$ from \eqref{eq:adjac} and simplify.
\end{proof}

The following remarks explain the geometric significance of the  $t$-compatibility and adjacency conditions.

 \begin{rmk}\rm\label{rmk:recur0} {\bf (Pre-staircases)} (i)
  If $\bE_0, \bE_1$ are $t'$-compatible for some $t'\ge 3$ and $\bE_0< \bE_1$, 
  then by  Lemma~\ref{lem:recur0} the tuples obtained from them by the recursion
\begin{align}\label{eq:recurx}
 x_{k+1} = t' x_k - x_{k-1},  \qquad\mbox{ where }\  x \ =d,\ m,\ p,\ q,\ t 
 \end{align}
are integral, and
 also satisfy the Diophantine identities;\footnote
 {The linear identity $3d-m = p+q$ is automatic for any such recursively defined sequence, but the quadratic identity $d^2-m^2 = pq-1$ holds only under the  $t$-compatibility condition; see \cite[Lem.65]{ICERM} for a similar result.}
 moreover, all successive pairs $\bE_k, \bE_{k+1}$ are $t'$-compatible.    Thus, this collection of classes forms a {\bf pre-staircase} in the sense of Definition~\ref{def:prestair}.\footnote
 {
 In \cite[Def.46]{ICERM}, a pre-staircase  is defined to
  consist of a recursively defined sequence of quasi-perfect classes that satisfy a linear relation (or equivalently are all adjacent to a fixed class called its blocking class).  The definition in \cite[\S1.2]{MM} is very similar, though the 
  requirement for the linear relation is stated separately.  Our current use of the word is 
  more general.}
   and we say that $\bE_0, \bE_1$ are its {\bf seeds}.
 Further, by \cite[Lem.3.1.4]{MM}, 
 there is $\la = \la(t')>1$ such that for each $x \ =d,\ m,\ p,\ q,\ t$  there are  constants $X \ = D, M,P,Q,T$  such that 
 $x_k = X\la^k + \ov X \la^{-k}$.  Here  we write
 $X: = X'+X''\sqrt \si\in \Q(\sqrt\si)$, where $\si = (t')^2-4$, 
 and define  $\ov{X}: = X'-X''\sqrt \si$.    Moreover $X',X''\in \Q$ are  determined by $x_0,x_1$ via the formulas
\begin{align}\label{eq:XX'}
X' = \frac{x_0}{2}, \quad X'' =  \frac{2x_1 -  t' x_0}{2\si}.
\end{align}
In particular this implies that the ratios   $p_k/q_k,m_k/d_k$ converge $O(\la^{-2k})$ to the limits  $P/Q, M/D$.  Further, in the limit the Diophantine equations satisfied by $(d_k,m_k,p_k,q_k)$ simplify to
 \begin{align}\label{eq:DiophPQ}
  3D-M = P+Q,\qquad  D^2-M^2 = PQ.
 \end{align}
Note also that, if $t'\ge 3$ then $\si = (t')^2-4$ is not a perfect square,  and it follows from \cite[Cor.3.1.5]{MM} 
that the limits $z_\infty = P/Q, b_\infty = M/D$ are   irrational provided only that the sequences   $p_k/q_k, m_k/d_k$ are not constant. This condition holds for all the  sequences considered here: indeed  $p_k/q_k$ is always assumed to be nonconstant, while the adjacency condition \eqref{eq:adjac1} that we impose on the initial terms  $\bE_0, \bE_1$ implies that  $m_k/d_k$ is nonconstant.
 \MS
 
 \NI (ii)
 Note that for the staircase at $b=1/3$ there are three separate strands, which cannot be combined into one recursion. Each of these strands has $t \in \{1,2\}$ constant and has alternating values for $\eps$.  Thus the definitions of adjacency and $t$-compatibility do not apply, so that this staircase is of rather different nature from the others. Furthermore,  
 for these strands $d,m$  do not satisfy homogeneous recursions, and hence, \eqref{eq:recurx} does not hold. Its properties are discussed in Section~\ref{ss:13stair}.  See also \cite[Ex.2.3.7]{MM}. 
  \hfill$\er$
\end{rmk}

 \begin{rmk}\rm\label{rmk:adjac} {\bf (The adjacency condition)} (i)
In the language of \cite{MM}, the condition
  \begin{align*}
 (3m'-d') d = (m'-q')p + m'q
 \end{align*}
 in \eqref{eq:adjac1} 
says that $\bE$ satisfies the lower (i.e. the one for ascending stairs) linear relation given by $\bE'$.
 Notice that by \cite[Lem.3.3.5(ii)]{MM} if $p/q < p'/q'$ then $\bE$ satisfies the lower linear relation given by $\bE'$ iff 
 $\bE'$ satisfies the upper linear relation given by $\bE$.
\MS

 \NI (ii)  We saw in \cite[Prop.52]{ICERM} that if all the classes $\bE_k = (d_k,m_k,p_k,q_k, t_k, \eps)$ in an ascending pre-staircase are adjacent to $\bE' = (d',m', p',q', t',\eps)$ and if $p_k/q_k<p'/q' $ for all $k$
then the limit point 
$(b_\infty, z_\infty) = (\lim m_k/d_k, \lim p_k/q_k) = (M/D, P/Q)$ has the property that $z_\infty = \acc(b_\infty)$ is   the lower endpoint $\p^-(I_{\bE'})$ of the $z$-interval blocked by $\bE'$.  
  Similarly, if $\bE''_k$ is a descending pre-staircase with $p_k/q_k>p'/q' $ for all $k$ that
 consists of classes that are adjacent to $\bE'$, then its  accumulation point $P''/Q''$  is the  upper endpoint  $\p^+(I_{\bE'})$ of the  $\bE'$-blocked $z$-interval. 
 In each case, we call $\bE'$ the {\bf associated blocking  class} of the pre-staircase.  
 \MS

\NI (iii) We will see in \S\ref{ss:arith} that the adjacency condition has quite different consequences as well.
In fact, if we look only at perfect classes whose centers $p/q$ are not integral (i.e. if $q>1)$, then every pair of adjacent classes  $\bE, \bE'$ is orthogonal with respect to the intersection  pairing, i.e. $\bE\cdot \bE' = 0$; see Proposition~\ref{prop:adjac}.  This gives us information about the continued fraction expansions of the centers of the steps.
\MS

\NI (iv)  Finally, notice that the adjacency relation for the classes $\bE, \bE'$ was phrased so that the order of the pair is irrelevant, though the condition itself depends on the relation of the centers $p/q, \ p'/q'$.  However, although the  $t''$-compatibility condition does not depend on the order of the inputs,
we are interested in the staircase classes generated by $t''$-recursion with the (ordered!) seeds $\bE, \bE'$. Note that the ordering here is numerical, depending only on the size of entries in $\bx, \bx'$, not on their ratios. Thus  the  staircase classes themselves may ascend or descend. \hfill$\er$
\end{rmk}
 
 \begin{rmk}\label{rmk:misc}\rm
(i)  
Since the basic inequalities involve the $z$-variable, while the function $b\mapsto \acc(b)$ can reverse orientation, we will often  work below with the $z$-interval $I_{\bE'}$ blocked by $\bE'$ rather than the possibly more natural blocked $b$-interval $J_{\bE'}$.  Note also that the center $p'/q'$ of $\bE'$ always lies in $I_{\bE'}$, while $m'/d'$ never lies in $J_{\bE'}$.  In fact,  \cite[Lem.2.2.11]{MM} shows that $\acc(m'/d') > \p^+(I_{\bE'})$ in all cases.   
 
 \MS
 
 \NI (ii)  If  $x_k,y_k, k\ge 0,$ are  recursively defined as in \eqref{eq:recurx} then the difference $x_{k+1}y_k - x_ky_{k+1}$ is constant. Thus one can tell if the ratios $x_k/y_k$ increase or decrease with $k$ by looking at the first two terms.    \hfill$\er$
  \end{rmk}

Here is our main definition.

\begin{definition}\label{def:gentr} The quasi-perfect classes $$
\bE_\la  = (d_\la,m_\la,p_\la,q_\la,t_\la,\eps),  \bE_\mu  = (d_\mu ,m_\mu ,p_\mu ,q_\mu ,t_\mu ,\eps),
\bE_\rho =(d_\rho ,m_\rho ,p_\rho ,q_\rho ,t_\rho ,\eps),
$$
 with $p_\la/q_\la<p_\mu /q_\mu  < p_\rho /q_\rho $ and $t_\la, t_\mu, t_\rho \ge 3$, are said to form a {\bf generating triple} $\Tt$ if 
 \begin{itemlist}\item[{\rm (a)}]  $\bE_\la, \bE_\rho$ are adjacent,
 \item[{\rm (b)}]  $\bE_\la, \bE_\mu $ are  adjacent and $t_\rho$-compatible, i.e. $t_\rho  = q_\la p_\mu  - p_\la q_\mu $;
 \item[{\rm (c)}] $\bE_\rho , \bE_\mu $ are  adjacent and $t_\la $-compatible, i.e.  $t_\la  = q_\mu p_\rho -p_\mu q_\rho $;
 \item[{\rm (d)}]
 $t_\la t_\rho  - t_\mu  =  q_\la p_\rho -p_\la q_\rho$,
  \item[{\rm (e)}]  $\acc(m_\rho /d_\rho ),\ \acc(m_\la /d_\la ) > \acc(m_\mu /d_\mu )$.
  \end{itemlist}
 \end{definition}
 
Here the letters $\la,\mu, \rho$ stand for \lq left', \lq middle', and \lq right'.

\begin{example}\label{ex:SsU} \rm The basic examples of generating triples are the triples $$
\Tt^n_*: = 
(\bB^U_n,  \bE^1_n, \bB^U_{n+1}), \quad n\ge 0,
$$
 where 
$\bB^U_n = (n+3,n+2,2n+6,1,2n+3), \quad \eps: = 1,
$
 are the blocking classes for the $\Ss^U$ family and, for each $n$, $\bE^1_n$ denotes the shared first step of the staircases\footnote
 {
 The convention is that staircases with label $\ell = $ `lower' are ascending and converge to the lower endpoint of the corresponing blocked interval, while those 
 label $u = $ `upper' are descending.  In the current paper the steps are indexed by $k$ (while \cite{MM} used both $k$ and $\ka$ for reasons explained in \cite[\S3.1]{MM}.)  The other decorations in the label 
 of $\Ss^U_{\ell,n}$ for example describe the family (namely $U$) and the blocking class, in this case $\bB^U_n$.}
$\Ss^U_{\ell, n+1}$ and $\Ss^U_{u,n}$.  As we see from \eqref{eq:SsUsteps}, $\bE^1_n$ has center $p^1_n/q^1_n = [2n+7; 2n+4]$ with parameters 
\begin{align}\label{eq:SsUparam}
&(d_n^1,m_n^1, p^1_n,q^1_n, t^1_n): \\  \notag
&\qquad = \bigl(2n^2 + 11n + 14, \ 2n^2 + 9n + 9,\ 4n^2 + 22n + 29, 2n+4, \ 4n^2 + 16n + 13\bigr);
\end{align}
we sometimes denote it by  $\bE_{[2n+7; 2n+4]}$.
Further both  the ascending staircase $\Ss^U_{\ell, n+1}$ and descending staircase $\Ss^U_{u, n}$ have decreasing ratios $m_k/d_k$, and, by Remark~\ref{rmk:adjac}~(ii) these ratios converge to the appropriate endpoint of the blocked $b$-intervals. 
  In particular, the ratios for the  ascending staircase $\Ss^U_{\ell, n+1}$ satisfy   $m_n^0/d_n^0> m_n^1/d_n^1 > \p^-(J_{\bB^U_{n+1}})$.  Therefore because $b\mapsto \acc(b)$ preserves orientation  for $b>1/3$ the required inequalities
  $$
  \acc(m_{n+1}^0/d_{n+1}^0), \acc(m_n^0/d_n^0) > \acc(m_n^1/d_n^1)
$$
hold.  For later reference, the centers of the steps of staircases $\Ss^U_{\ell, n}$ and $\Ss^U_{u, n}$ and their accumulation points  are
\begin{align}\label{eq:SsUsteps}
p^U_{\ell,n,i}/q^U_{\ell,n,i} & =  [\{2n+5,2n+1\}^i,\eend_n], \;  \ i\ge 0,\\ \notag
& \qquad \mbox{ where } \  \eend_n = 2n+4\;\;\mbox{ or } (2n+5,2n+2),\\  \notag
p^U_{u,n,i}/q^U_{u,n,i} & = [2n+7;\{2n+5,2n+1\}^i,\eend_n], \;  \ i\ge 0,\\ \notag  \mbox{ with limits } 
z^U_{\ell,n,\infty} &= [\{2n+5,2n+1\}^\infty], \qquad z^U_{u,n,\infty} = [2n+7; \{2n+5,2n+1\}^\infty].
\end{align}
Note that here we use $i$ as the indexing label for the staircase steps because, for ease of writing,  we have written each of the two strands\footnote
{
As remarked earlier, we use the word \lq staircase' rather loosely to refer to any sequence of classes, infinitely many of which are live for some limiting $b$-value.  A subset of these classes whose continued fraction can be described by a single formula is called a  {\bf strand}; see \cite[\S3.1]{MM} for further discussion. 
In the case of the staircases $\Ss^U_{\ell,n}, \Ss^U_{u,n}$, these strands are distinguished by the ending of the continued fractions of their centers $p/q$.}
 of the staircase separately.  These strands are intertwined; thus the steps in  
$\Ss^U_{\ell, 1}$ when written in ascending order have centers at
$$
[6],\ [7;4], \ [7; 3,6],\ [7;3,7,4],\ [7; 3,7,3,6],\dots.
$$ 
(For an explanation of how to order continued fractions see Appendix~\ref{app:arith}.)   \hfill$\er$
\end{example}

\begin{rmk}\label{rmk:triple0}\rm (i)  An important fact about quasi-perfect classes $(d,m,p,q)$ with $m/d>1/3$ is that
$p/q < \acc(m/d)$; see \cite[Lem.2.2.11]{MM}.   This is relevant in a variety of contexts, for example in Lemma~\ref{lem:md1} that analyzes the relations between two different perfect classes.  Notice also that the definition of a generating triple requires control over the relative positions  of the points $\acc(m_\la, d_\la),  \acc(m_\mu/d_\mu)$ and $\acc(m_\rho/d_\rho)$.  One consequence of this hypothesis is that the ratios $m/d$ decrease  for all our staircases with $b>1/3$, which, as pointed out in Proposition~\ref{prop:live},  is important in the proof that these pre-staircases are live.

 We also showed
 in \cite[Lem.2.3.5]{MM}  that for the $\Ss^U$ 
family we have
\begin{align}\label{eq:accn}
\acc(m_n^0/d_n^0) < p^0_{n+1}/q^0_{n+1} = 2n+8,\qquad \forall \ n\ge0.
\end{align}
Because $m_n^1/d_1^1<m_n^0/d_n^0$ we also have $\acc(m_n^1/d_1^1)< p^0_{n+1}/q^0_{n+1}$.
In fact, it follows from Lemma~\ref{lem:accmd} below (applied to the quasi-triples\footnote
{See Remark~\ref{rmk:quasitrip}.}
 $(\bE^U_{\ell,seed}, \bB^U_n, \bB^U_{n+1})$ and their images under the symmetries) that all the generating triples that we encounter have the property that
 $\acc(m_\mu/d_\mu)<p_\rho/q_\rho$.  
 We did not put  this property in the definition for the sake of simplicity.

\MS

\NI (ii)  The  triples $\Tt^n_*$ all have  $t_\la<t_\rho$ and $\eps = 1$.    However this inequality is reversed when we consider triples with $b<1/3$.  For example, the reflection $R$ defined in \S\ref{ss:symm} below takes the $\Ss^U$ staircase family (with $b>1/3, \ \eps = 1$)  to the $\Ss^L$ family (with $b<1/3, \ \eps= -1$).  It fixes $t$ but reverses the $z$-orientation.  We will prove in Proposition~\ref{prop:symm} that the action of $R$ preserves generating triples.  For example, the three classes
\begin{align*}
\bB^L_2 = R^\sharp(\bB^U_2)& =  \bigl(10,1,25,4,7,-1\bigr), \quad R^\sharp(\bE^1_2) = \bigl(48,5,  120,19,33, -1
\bigr),\\
&  \bB^L_1 = R^\sharp(\bB^U_1) =  \bigl(5,0,13,2, 5,-1\bigr)
\end{align*}
also form a generating triple.  Note that the image by $R$ of the basic triple $(\bB^U_0, \bE^1_1, \bB^U_1)$
whose classes have centers $(6, [7;4], 8)$ requires special treatment because the point $R(6)$ is undefined; see Remark~\ref{rmk:symm0}.  
\hfill$\er$
\end{rmk}

  We show below that any generating triple $\Tt$ has two associated {\bf principal pre-staircases}, a descending pre-staircase $\Ss^\Tt_u$ with seeds (i.e. first two steps)  $\bE_\rho, \bE_\mu$, and an ascending pre-staircase $\Ss^\Tt_\ell$ 
with seeds $\bE_\la , \bE_\mu$. In order to prove
 this we will use our  main result, which states that if $\Tt: = (\bE_\la ,\bE_\mu,\bE_\rho)$ is a generating triple, there are two associated generating triples $x\Tt: = (\bE_\la , \bE_{x\mu}, \bE_\mu)$  and $y\Tt: = (\bE_\mu,  \bE_{y\mu}, \bE_\rho)$.  Here the new class $ \bE_{x\mu}$ in the first triple $x\Tt$
is what will be the third step of the descending pre-staircase $\Ss^\Tt_u$  while the middle entry in $y\Tt$ is the third step $ \bE_{y\mu}$  of 
$\Ss^\Tt_\ell$.
We sometimes call these the {\bf left (resp. right) derived triples}, with $x$ denoting a move to the left and $y$ a move to the right.  We will also call $x\Tt$ (resp. $ y\Tt$) the left (resp. right) {\bf mutation} of $\Tt$.   See Figure~\ref{fig:decimal} to visualize the left versus right move.

\begin{prop}\label{prop:gener}  Suppose that  $\bE_\la  : = (d_\la ,m_\la ,p_\la ,q_\la ,t_\la , 1),\ 
 \bE_\mu  = (d_\mu ,m_\mu ,p_\mu ,q_\mu ,t_\mu ,1)$ and $\bE_\rho  : =(d_\rho,m_\rho,p_\rho  ,q_\rho  ,t_\rho  ,1)$ form a generating triple $\Tt$.
   Then  
    \begin{itemlist}
 \item[{\rm (i)}]
If $\bE_{x\mu} = \bigl(d_{x\mu},m_{x\mu},p_{x\mu},q_{x\mu},t_{x\mu},1\bigr)$ is the class obtained by one $t_\la $-iteration from $\bE_\rho  , \bE_\mu $ then
the classes  $(\bE_\la ,  \bE_{x\mu}, \bE_\mu )$ form a generating triple $x\Tt$.
 \item[{\rm (ii)}]   Similarly, if 
 $\bE_{y\mu} =  \bigl(d_{y\mu},m_{y\mu},p_{y\mu},q_{y\mu},t_{y\mu},1\bigr)$ is the class obtained by one $t_\rho$-iteration from $\bE_\la , \bE_\mu $,
 then $(\bE_\mu , \bE_{y\mu}, \bE_\rho)$ form a generating triple $y\Tt$.
  \end{itemlist}
 \end{prop}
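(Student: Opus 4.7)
The plan is to verify each of the five conditions (a)--(e) of Definition~\ref{def:gentr} for $x\Tt$; the argument for $y\Tt$ is symmetric upon replacing $\bx_{x\mu} = t_\la\bx_\mu - \bx_\rho$ by $\bx_{y\mu} = t_\rho\bx_\mu - \bx_\la$ and swapping the roles of $\bE_\la$ and $\bE_\rho$. Throughout, I would work in the matrix formalism introduced in Lemma~\ref{lem:recur0}: adjacency of a pair with $p/q < p'/q'$ amounts to $\bx^T B \bx' = 0$, and $t''$-compatibility to $\bx^T A \bx' = 4t''$. Since $\bx_{x\mu}$ is a linear combination of $\bx_\mu$ and $\bx_\rho$, every pairing involving $\bx_{x\mu}$ reduces to pairings among the three classes of $\Tt$.

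Conditions (a) and (c) of $x\Tt$ come essentially for free: (a) is precisely condition (b) of $\Tt$, while (c) for the pair $\bE_\mu, \bE_{x\mu}$ follows directly from Lemma~\ref{lem:recur0}(i) applied to $\bE_\rho, \bE_\mu$, which is adjacent and $t_\la$-compatible by conditions (a) and (c) of $\Tt$. For condition (b) of $x\Tt$, I would expand
\[
\bx_\la^T B \bx_{x\mu} = t_\la\, \bx_\la^T B \bx_\mu \;-\; \bx_\la^T B \bx_\rho \;=\; 0,
\]
using conditions (b) and (a) of $\Tt$, and
\[
\bx_\la^T A \bx_{x\mu} \;=\; t_\la\,(4t_\rho) \;-\; 4(t_\la t_\rho - t_\mu) \;=\; 4t_\mu.
\]
Here $\bx_\la^T A \bx_\rho = 4(t_\la t_\rho - t_\mu)$ is obtained by combining adjacency of $\bE_\la, \bE_\rho$ with Lemma~\ref{lem:recur0}(iii) and condition (d), which together identify the $t''$-compatibility constant as $p_\rho q_\la - p_\la q_\rho = t_\la t_\rho - t_\mu$. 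Condition (d) of $x\Tt$ is then immediate from the $t$-coordinate of the recursion: $t_\la t_\mu - t_{x\mu} = t_\rho = q_\la p_\mu - p_\la q_\mu$, with the last equality from condition (b) of $\Tt$ and Lemma~\ref{lem:recur0}(iii).

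For condition (e), since $\eps = 1$ every ratio $m/d$ lies in $(1/3,1)$, on which $\acc$ is strictly monotonic, so (e) reduces to direct comparison of $m/d$-ratios. The $d,m$-recursions yield
\[
m_\mu d_{x\mu} - m_{x\mu} d_\mu \;=\; m_\rho d_\mu - m_\mu d_\rho,
\]
which is positive precisely when $m_\rho/d_\rho > m_\mu/d_\mu$, and this is condition (e) of $\Tt$ translated via monotonicity of $\acc$. The inequality $m_\la/d_\la > m_{x\mu}/d_{x\mu}$ then follows by transitivity using condition (e) of $\Tt$ once more. In parallel I would verify the center ordering $p_\la/q_\la < p_{x\mu}/q_{x\mu} < p_\mu/q_\mu$ so that the $B$-based adjacency formula has the correct orientation; the two differences compute to $t_\mu/(q_\la q_{x\mu})$ and $(q_\mu p_\rho - p_\mu q_\rho)/(q_\mu q_{x\mu}) > 0$ respectively, using conditions (b), (d) of $\Tt$ and Lemma~\ref{lem:recur0}(iii).

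I expect the main obstacle to be bookkeeping the sign conventions and ordering assumptions---specifically, reconciling the tuple ordering (needed so that Lemma~\ref{lem:recur0}(i) produces a legitimate positive new point in $X_\Z$) with the center ordering (needed for the adjacency sign convention). Once one observes that the basic triples $\Tt^n_*$ satisfy $\bE_\la, \bE_\rho < \bE_\mu$ in the tuple ordering and that this property is preserved under left/right mutations, positivity of $d_{x\mu}, q_{x\mu}$ and the bound $t_{x\mu} \ge 3$ fall out automatically from the recursion $\bx_{x\mu} = t_\la \bx_\mu - \bx_\rho$ with $t_\la \ge 3$.
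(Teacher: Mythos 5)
Your treatment of conditions (a)--(d) is essentially the paper's own proof recast in the matrix formalism of Lemma~\ref{lem:recur0}: the center-ordering check reducing to $q_\la p_\rho-p_\la q_\rho=t_\la t_\rho-t_\mu<t_\la t_\rho$, the linearity argument for adjacency of $\bE_{x\mu}$ to $\bE_\la$, the identification $q_\la p_{x\mu}-p_\la q_{x\mu}=t_\mu$, and the appeal to Lemma~\ref{lem:recur0}(i) for the pair $\bE_\mu,\bE_{x\mu}$ all coincide with the published argument (your pairing computation $\bx_\la^TA\bx_{x\mu}=4t_\mu$ is the same calculation in disguise). Where you genuinely diverge is condition (e): the paper proves $\acc(m_{x\mu}/d_{x\mu})<p_\mu/q_\mu$ via the analytic estimate of Lemma~\ref{lem:accmd} (whose hypothesis $\sqrt2\,p'>p$ must be checked) and then chains this with the inequality $p_\mu/q_\mu<\acc(m_\mu/d_\mu)$ quoted from \cite[Lem.2.2.11]{MM}, whereas you note that with $\eps=1$ every ratio $m/d$ lies in $(1/3,1)$, where $\acc$ is strictly increasing, so (e) reduces to comparing $m/d$-ratios, and the identity $m_\mu d_{x\mu}-m_{x\mu}d_\mu=m_\rho d_\mu-m_\mu d_\rho$ together with (e) for $\Tt$ finishes the argument. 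This is correct and more elementary (it avoids both Lemma~\ref{lem:accmd} and the external citation); its only cost is that it does not produce the stronger conclusion $\acc(m_{x\mu}/d_{x\mu})<p_\mu/q_\mu$, which the paper's route yields and later exploits in Corollary~\ref{cor:accmd}, so that lemma is still needed elsewhere. Your explicit attention to the componentwise ordering and positivity required to invoke Lemma~\ref{lem:recur0}(i) (and to make sense of the ratio comparisons, e.g. $d_{x\mu},q_{x\mu}>0$) addresses a point the published proof passes over silently; resolving it by induction from the triples $\Tt^n_*$ technically restricts the statement to triples in the derived family, but that is all that is ever used, and it parallels how the paper itself proves, say, Lemma~\ref{lem:prelim0}(v). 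Two harmless slips: adjacency and $t_\la$-compatibility of $\bE_\rho,\bE_\mu$ is condition (c) alone, not (a) and (c); and the second center-ordering difference, whose numerator is $q_\mu p_\rho-p_\mu q_\rho=t_\la$, comes from condition (c) rather than (b) and (d).
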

\begin{proof}  To prove (i), we must check that conditions (a) through (e) hold for $x\Tt$.  Condition (a)  states that $E_\la, E_\mu$ are adjacent, which holds by hypothesis.

 The first step in the proof of (b) is to check that  the classes $\bE_\la , \bE_{x\mu},  \bE_\mu $ satisfy the order condition $p_\la/q_\la < p_{x\mu}/q_{x\mu} < p_\mu/q_\mu$.  Now  $p_{x\mu}/q_{x\mu} < p_\mu/q_\mu$
 because $p_\mu/q_\mu< p_\rho/q_\rho$; see Remark~\ref{rmk:misc}~(ii).
Further $p_\la/q_\la < p_{x\mu}/q_{x\mu}$  if $p_\la(t_\la q_\mu - q_\rho)  < (t_\la p_\mu - p_\rho)q_\la$, that is, if 
$$
q_\la p_\rho  - p_\la q_\rho < t_\la\bigl(q_\la p_\mu  - p_\la q_\mu\bigr) = t_\la t_\rho,
$$
where the equality holds by  condition (b) in Definition~\ref{def:gentr}.
But $$
q_\la p_\rho  - p_\la q_\rho  = t_\la t_\rho - t_\mu
$$
 by (d), and $t_\mu \ge 3$ by hypothesis.
Therefore this order condition holds.    

It follows that the new class $\bE_{x\mu}$ is adjacent to $\bE_\la$, because it is a linear combination of 
the two classes $\bE_\rho, \bE_\mu$ which are both adjacent to $\bE_\la$, and the condition in \eqref{eq:adjac} for classes with centers $> p_\la/q_\la$ to be adjacent to $\bE_\la$ is linear in the variables $p'/q'$.
Further $\bE_\la, \bE_{x\mu}$ are $t_\mu$-compatible because 
\begin{align*}
q_\la p_{x\mu} - p_\la q_{x\mu} & = q_\la(t_\la p_\mu-p_\rho) - p_\la (t_\la q_\mu-q_\rho) \\
& = t_\la t_\rho - (t_\la t_\rho - t_\mu) = t_\mu
\end{align*}
by conditions (b) and (d) for $\Tt$.
Therefore (b) holds for $x\Tt$.
\MS

Condition (c) requires that  $\bE_{x\mu},  \bE_\mu $ be adjacent and $t_\la$-compatible, which holds by
 Lemma~\ref{lem:recur0}~(i).  
 
 Condition (d) requires that $t_\la t_\mu - t_{x\mu} = q_\la p_\mu - p_\la q_\mu $.  But
 \begin{align*}
 t_\la t_\mu - t_{x\mu} = t_\la t_\mu - (t_\la t_\mu - t_\rho) = t_\rho =  q_\la p_\mu - p_\la q_\mu,
 \end{align*}
 where the last equality holds by condition (c) for the initial triple $\Tt$.
 
Finally we must check the inequalities for $\acc(m_{x\mu}/d_{x\mu})$.   Lemma~\ref{lem:accmd} below shows that $\acc(m_{x\mu}/d_{x\mu}) < p_\mu /q_\mu $. (Note that $p_{x\mu} > 2p_\mu$ because $p_{x\mu} = t p_\mu - p_\rho$ for some $t\ge 3$.)
Because $\bE_\mu$ is a quasi-perfect class, \cite[Lem.2.2.11]{MM} implies that $p_\mu /q_\mu  < \acc(m_\mu /d_\mu )$. Further  $\acc(m_\mu /d_\mu)< \acc(m_\la /d_\la )$ by assumption (e) for $\Tt$.  
Therefore we  have $\acc(m_{x\mu}/d_{x\mu}) < \acc(m_\la /d_\la ), \acc(m_\mu /d_\mu )$.  This completes the proof of (i).
\MS

The proof of (ii) is very similar.  Again condition (a) is automatic, while to prove (b) we must first check that the centers of the classes are correctly ordered, i.e. we need
$p_\mu/q_\mu < p_{y\mu}/q_{y\mu} < p_\rho/q_\rho$.  The first inequality holds by construction, while the second
holds if
$ (t_\rho p_\mu-p_\la) q_\rho < (t_\rho q_\mu-q_\la) p_\rho,$ that is if
$$
q_\la p_\rho - p_\la q_\rho < t_\rho (q_\mu p_\rho - p_\mu q_\rho)  = t_\rho t_\la.
$$
But this holds because $q_\la p_\rho - p_\la q_\rho  = t_\la t_\rho - t_\mu$ by hypothesis.

The rest of the proof follows as before; again we use Lemma~\ref{lem:accmd} to check (e).
\end{proof}

 The next lemma applies to any pair of classes $\bE, \bE'$ that are adjacent and $t''$-compatible for some $t''\ge 2$ with decreasing centers $p/q>p'/q'$ and with $p<p'$. Hence, in particular, it
 applies to any pair of adjacent steps in a descending pre-staircase. 
  
 \begin{lemma}\label{lem:accmd}   
 Let $\bE = (d,m,p,q,t,1), \bE'= (d',m',p',q',t',1)$ be quasi-perfect classes with $3+2\sqrt{2} <p'/q' < p/q$ that are adjacent and $t''$-compatible for some $t''\ge 2$.  Assume further that $\sqrt{2}p'>p$.  Then  
 $$
w': = \acc\Bigl(\frac{m'}{d'}\Bigr) < \frac {p}{q}.
$$
\end{lemma}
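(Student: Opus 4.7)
The plan is to translate $w' := \acc(m'/d') < p/q$ into a clean algebraic inequality in the coordinates $p,q,p',q',t''$ and then exploit the hypotheses to verify it.

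Since $\acc(b')$ is the larger root of $z^2 - \mu(b')z + 1 = 0$ with $\mu(b') = (3-b')^2/(1-b'^2) - 2$, and the function $z \mapsto z + 1/z$ is strictly increasing on $z > 1$, the condition $w' < p/q$ is equivalent to $(p+q)^2/(pq) > (3-b')^2/(1-b'^2)$. Applying the Diophantine identities $3d' - m' = p'+q'$ and $d'^2 - m'^2 = p'q' - 1$, satisfied by any quasi-perfect class, the right-hand side rewrites as $(p'+q')^2/(p'q'-1)$, so the inequality becomes $(p+q)^2(p'q'-1) > (p'+q')^2 pq$. Expanding and collecting terms yields the factorization $(p+q)^2 p'q' - (p'+q')^2 pq = (pq' - p'q)(pp' - qq')$, so after transposing $(p+q)^2$ the target reduces to $(pq'-p'q)(pp'-qq') > (p+q)^2$. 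Since $p/q > p'/q'$ gives $pq' > p'q$, Lemma~\ref{lem:recur0}(iii) yields $pq' - p'q = t''$, and the inequality to prove simplifies to $t''(pp'-qq') > (p+q)^2$.

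Next, I normalize by dividing through by $p^2$ and introduce the dimensionless quantities $x := p/p'$, $y := q/p$, $y' := q'/p'$. A direct calculation reduces the inequality to $t'' > F(x,y,y')$, where $F(x,y,y') := x(1+y)^2/(1-yy')$. The three hypotheses translate into strict bounds: $p/q, p'/q' > 3+2\sqrt{2}$ give $y, y' < 3-2\sqrt{2}$, and $\sqrt{2}p' > p$ gives $x < \sqrt{2}$; in particular $yy' < (3-2\sqrt{2})^2 < 1$, so the denominator $1-yy'$ is positive. On the region $\{x > 0,\ 0 < y, y' < 1\}$ the function $F$ is strictly increasing in each argument; for instance $\partial_y F = x(1+y)(2+y'(1-y))/(1-yy')^2 > 0$ since $1-y > 0$, and monotonicity in $x$ and $y'$ is immediate.

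The final step is to evaluate $F$ on the closure of the parameter region at the corner $(\sqrt{2}, 3-2\sqrt{2}, 3-2\sqrt{2})$. Using the identity $(3-2\sqrt{2})(3\sqrt{2}+4) = \sqrt{2}$, a short calculation yields $F(\sqrt{2}, 3-2\sqrt{2}, 3-2\sqrt{2}) = 2$. Combining this with monotonicity and strictness of the hypotheses on $(x,y,y')$ gives $F(x,y,y') < 2 \le t''$, proving the claim. The main obstacle is the sharpness of this estimate: the supremum of $F$ on the closed parameter region equals exactly the lower bound on $t''$, so the specific constants $3+2\sqrt{2}$ and $\sqrt{2}$ in the hypotheses, together with $t'' \ge 2$, are precisely what make the argument go through.
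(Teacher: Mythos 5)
Your proof is correct, and while it starts from the same place as the paper's, its main technical step is different. Both arguments first use monotonicity of $z\mapsto z+1/z$ on $(1,\infty)$ together with the quasi-perfect identities $3d'-m'=p'+q'$, $(d')^2-(m')^2=p'q'-1$ to reduce $w'<p/q$ to the comparison \eqref{eq:p'q'<pq}, and both then feed in exactly the same three hypotheses ($t''=pq'-p'q\ge 2$ via Lemma~\ref{lem:recur0}~(iii), the lower bound $3+2\sqrt2$ on both centers, and $p<\sqrt2\,p'$). The difference is in the middle: the paper rewrites \eqref{eq:p'q'<pq} as $\frac{(p+q)^2}{pp'qq'}<h(p/q)-h(p'/q')$ with $h(z)=z+1/z$ and then applies a tangent-line (convexity) lower bound $h(z)-h(z')\ge h'(p'/q')(z-z')$, which after coarse bounding reduces to $\sqrt2>p/p'$; you instead clear denominators exactly, using the identity $(p+q)^2p'q'-(p'+q')^2pq=(pq'-p'q)(pp'-qq')$ to obtain the loss-free equivalent form $t''(pp'-qq')>(p+q)^2$, and then verify $t''>F(x,y,y'):=x(1+y)^2/(1-yy')$ by monotone maximization over the box determined by the hypotheses, with corner value exactly $2$. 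What your route buys is transparency: the reduction is an exact equivalence (indeed slightly weaker than the paper's sufficient condition, since $1-yy'>1-(y')^2$ when $y<y'$), it makes visible that the constants $3+2\sqrt2$, $\sqrt2$, and $t''\ge2$ are used sharply, and it avoids the calculus estimate altogether (in the paper that estimate is stated with $h'(z)$ where the argument actually uses $h'$ at the smaller point $p'/q'$, as convexity requires). What the paper's route buys is brevity at the end: after the tangent-line bound there is no three-variable optimization, just a one-line numerical check. Both are complete proofs of the stated lemma.
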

\begin{proof}
To prove this it suffices to show that $$
w' + \frac 1{w'}< \frac{p}{q} +\frac{q}{p}.
$$
Since  $(d,m,p,q)$ is quasi-perfect,  $w': = \acc(m'/d')$ satisfies the  equation 
\begin{align*}
w' + \frac 1{w'} &= \frac{(3-m'/d')^2}{1-(m'/d')^2} - 2\\
& = \frac{(3d'-m')^2 - 2((d')^2-(m')^2)}{(d')^2-(m')^2} = \frac{(p')^2+(q')^2+2}{p'q'-1}.
\end{align*}
Thus we must show that 
\begin{equation}\label{eq:p'q'<pq}
  \frac{(p')^2+(q')^2+2}{p'q'-1} < \frac{p}{q} + \frac{q}{p},
\end{equation}
or equivalently
\[
\frac{(p+q)^2}{pp'qq'} <  \frac{p^2 + q^2}{pq}- \frac{(p')^2 + (q')^2}{p'q'}.
\]
Consider the function $h(z) = z + 1/z$.  This is an increasing function for $z>1$ whose derivative increases to $1$ as $z\to \infty$.  Therefore, for $z>z'$ we have  $$
h(z) - h(z') > h'(z) (z-z'),
$$
 so that it suffices to show that
$h'(p'/q')(p/q - p'/q') >  \frac{(p+q)^2}{pp'qq'}$, or equivalently
\begin{align*}
\Bigl(1 - \bigl(\frac {q'}{p'}\bigr)^2\Bigr)(pq'-p'q)  >  \frac{(p+q)^2}{pp'} = \Bigl(1 + \frac {q}{p}\Bigr)^2 \ \frac{p}{p'}.
\end{align*}
But $pq'-p'q = t''\ge 2$, so that  because $p/q, p'/q' > 3+2\sqrt{2}$ it suffices to check that 
\[
2\left(1-\frac{1}{(3+2\sqrt{2})^2}\right)>\left(1+\frac{1}{3+2\sqrt{2}}\right)^2\frac{p}{p'}\;\Leftrightarrow\;\sqrt{2}>\frac{p}{p'},
\] which holds by hypothesis.
\end{proof}

\begin{cor}\label{cor:accmd} In any  generating triple $(\bE_\la, \bE_\mu , \bE_\rho  )$ 
that is derived by mutation from one of the basic  triples $(\bB^U_n, \bE_n^1, \bB^U_{n+1})$ in Example~\ref{ex:SsU}, we
 have $\acc(\frac{m_\mu }{d_\mu }) < \frac{p_\rho  }{q_\rho  }$.
\end{cor}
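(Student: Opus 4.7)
The plan is to apply Lemma~\ref{lem:accmd} to the pair $(\bE_\rho,\bE_\mu)$, taking $\bE := \bE_\rho$ and $\bE' := \bE_\mu$; then the lemma's conclusion $\acc(m'/d') < p/q$ reads exactly as $\acc(m_\mu/d_\mu) < p_\rho/q_\rho$. The ordering $3+2\sqrt 2 < p_\mu/q_\mu < p_\rho/q_\rho$ is part of the definition of a generating triple, once one checks (by a one-line induction on mutation depth) that every center appearing in a derived triple is trapped between centers already exceeding $a_{\min}$: the basic triple $(\bB^U_n,\bE^1_n,\bB^U_{n+1})$ has smallest center $2n+6 \ge 6 > a_{\min}$, and mutations only insert new centers between existing ones. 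The adjacency of $\bE_\rho$ and $\bE_\mu$ together with their $t_\la$-compatibility and the inequality $t_\la\ge 3\ge 2$ are precisely condition~(c) of Definition~\ref{def:gentr}.

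The only nontrivial hypothesis to verify is $\sqrt 2\, p_\mu > p_\rho$, and the plan is to deduce this from the stronger symmetric inductive invariant
\[
p_\mu \;>\; \max\{p_\la,\, p_\rho\}
\]
for every generating triple $(\bE_\la,\bE_\mu,\bE_\rho)$ obtained from a basic triple by a finite sequence of left/right mutations. The base case is immediate from \eqref{eq:SsUparam}, since in $(\bB^U_n,\bE^1_n,\bB^U_{n+1})$ one has $p_\la=2n+6$, $p_\rho=2n+8$, and $p_\mu = 4n^2+22n+29$. For the inductive step, given the invariant for $\Tt$, in the left mutation $x\Tt = (\bE_\la,\bE_{x\mu},\bE_\mu)$ the new middle center is $p_{x\mu} = t_\la p_\mu - p_\rho \ge 3p_\mu - p_\rho \ge 2p_\mu$, using the inductive inequality $p_\mu > p_\rho$ and $t_\la \ge 3$; hence $p_{x\mu}$ dominates both $p_\mu$ (the new right center) and $p_\la$ (the unchanged left center, which is $<p_\mu$ by the inductive hypothesis). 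The right mutation $y\Tt = (\bE_\mu,\bE_{y\mu},\bE_\rho)$ is handled symmetrically: $p_{y\mu} = t_\rho p_\mu - p_\la \ge 3p_\mu - p_\la \ge 2p_\mu$, so $p_{y\mu}$ dominates both $p_\mu$ (the new left center) and $p_\rho$ (the unchanged right center).

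The main subtlety is simply choosing the right inductive invariant. The bare inequality $\sqrt 2\, p_\mu > p_\rho$ does not propagate on its own under a $y$-mutation, because the new middle class involves the old $p_\la$, so one needs simultaneous control of $p_\mu$ relative to $p_\la$. Once the symmetric invariant $p_\mu > \max(p_\la,p_\rho)$ is in hand, the factor-of-two slack coming from $t_\la,t_\rho\ge 3$ makes both mutation cases go through identically, and the trivial weakening $\sqrt 2\, p_\mu > p_\mu > p_\rho$ furnishes the last hypothesis of Lemma~\ref{lem:accmd} in every derived triple.
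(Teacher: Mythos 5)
Your proof is correct, and it rests on the same key ingredient as the paper's — Lemma~\ref{lem:accmd} plus an induction over mutations — but the induction is organized differently. The paper carries the conclusion itself as the inductive statement and treats the two mutations asymmetrically: for $x\Tt$ it applies Lemma~\ref{lem:accmd} to the consecutive steps $\bE_{x\mu},\bE_\mu$ of the descending pre-staircase (where $\sqrt2\,p_{x\mu}>p_\mu$ is automatic because $p_{x\mu}>p_\mu$), while for $y\Tt$ it avoids the lemma altogether, combining the inductive hypothesis $\acc(m_\mu/d_\mu)<p_\rho/q_\rho$ with the decreasing ratios $m_k/d_k$ along the ascending pre-staircase and the order-preservation of $\acc$ on $(1/3,1)$. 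You instead carry the purely numerical invariant $p_\mu>\max(p_\la,p_\rho)$ through the mutations — which propagates easily from $p_{x\mu}=t_\la p_\mu-p_\rho$, $p_{y\mu}=t_\rho p_\mu-p_\la$ and $t_\bullet\ge3$ — and then apply Lemma~\ref{lem:accmd} uniformly to the middle--right pair of every derived triple, whose adjacency and $t_\la$-compatibility (with $t_\la\ge3\ge2$) come from Definition~\ref{def:gentr}~(c) once Proposition~\ref{prop:gener} guarantees the derived triples are generating triples. Your route buys a symmetric treatment of the two mutation types, dispenses with any appeal to the monotonicity of $m_k/d_k$ or to the behaviour of $\acc$ as a function of $b$, and makes explicit a size fact the paper uses only implicitly (the parenthetical claim $p_{x\mu}>2p_\mu$ in the proof of Proposition~\ref{prop:gener} tacitly presumes $p_\rho<p_\mu$, which is exactly your invariant); the paper's route buys the absence of any auxiliary invariant, using only facts already recorded about the pre-staircases. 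Your remark that the bare inequality $\sqrt2\,p_\mu>p_\rho$ does not propagate on its own, so one must strengthen to the symmetric invariant, is precisely the right diagnosis, and the argument as written is sound.
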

\begin{proof}  We observed in  Remark~\ref{rmk:triple0}~(i) that this holds for the triples $(\bB^U_n, \bE_n^1, \bB^U_{n+1})$.  We can also deduce this from Lemma~\ref{lem:accmd} because $\bB^U_{n+1}, \bE_n^1$ are adjacent steps in the descending staircase $\Ss^U_{\ell,n}$.  It holds for all derived triples  by induction.   In the case of 
$(\bE_\la, \bE_{x\mu}, \bE_\mu )$ this is immediate because $\bE_\mu ,\bE_{x\mu}$ are adjacent steps in a descending pre-staircase. 
 In the case of $(\bE_\mu , \bE_{y\mu}, \bE_\rho  )$ it holds because $\acc(m_\mu /d_\mu )< p_\rho  /q_\rho  $ by the inductive hypothesis, and $\bE_\mu , \bE_{y\mu}$ are steps in an ascending pre-staircase 
with decreasing ratios $m_k/d_k$ so that $\acc(m_{y\mu}/d_{y\mu})<\acc(m_\mu /d_\mu )$.
\end{proof}

 \begin{lemma}\label{lem:2}  If $(\bE_\la , \bE_\mu ,\bE_\rho )$ form a generating triple $\Tt$ then:
  \begin{itemlist}
  \item[{\rm (i)}] The pre-staircase $\Ss^\Tt_\ell$  with recursion parameter $t_\rho$ and seeds $\bE_\la , \bE_\mu $ ascends and has blocking class $\bE_\rho  $.
   \item[{\rm (ii)}]  The pre-staircase $\Ss^\Tt_u$ with recursion parameter $t_\la$ and seeds $\bE_\rho, \bE_\mu $ descends and has blocking class $\bE_\la $.
      \item[{\rm (iii)}] In both cases the sequence $\acc(m_k/d_k)$ decreases and the limits $z_\infty = \lim p_k/q_k$ and $b_\infty = \lim m_k/d_k$ are irrational.
  \end{itemlist}
  \end{lemma}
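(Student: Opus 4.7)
The plan is to assemble the three claims from the compatibility and ordering conditions encoded in Definition~\ref{def:gentr}, using Lemma~\ref{lem:recur0}(i), Remark~\ref{rmk:misc}(ii), and Remark~\ref{rmk:adjac}(ii).

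For~(i), I would apply Lemma~\ref{lem:recur0}(i) to the seeds $\bE_\la, \bE_\mu$ with recursion parameter $t_\rho$: condition~(b) supplies adjacency and $t_\rho$-compatibility, and the componentwise inequality $\bE_\la < \bE_\mu$ (immediate from Example~\ref{ex:SsU} for the basic triples $\Tt^n_*$ and propagated through the explicit formulas in Proposition~\ref{prop:gener}, since a derived middle entry is always componentwise larger than the pair from which it is built) allows the lemma to be iterated, producing an infinite sequence $(\bE_k)$ of pairwise-adjacent, $t_\rho$-compatible, quasi-perfect classes. By Remark~\ref{rmk:misc}(ii) the quantity $p_{k+1}q_k - p_k q_{k+1}$ is constant and, at $k=0$, equals $p_\mu q_\la - p_\la q_\mu > 0$, so $p_k/q_k$ strictly increases and the staircase ascends. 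Next, conditions~(a) and~(c) say that both $\bE_\la$ and $\bE_\mu$ satisfy the lower linear relation \eqref{eq:adjac1} of $\bE_\rho$; since this relation is linear in $(d,m,p,q)$, it is inherited by every $\bE_k$ under the recursion, making each $\bE_k$ adjacent to $\bE_\rho$. Combined with the bound $p_k/q_k < p_\rho/q_\rho$ (addressed below), Remark~\ref{rmk:adjac}(ii) identifies $\bE_\rho$ as the associated blocking class.

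Part~(ii) is strictly parallel: apply Lemma~\ref{lem:recur0}(i) to seeds $\bE_\rho, \bE_\mu$ with parameter $t_\la$ using condition~(c) and $\bE_\rho < \bE_\mu$ componentwise; now $p_\mu q_\rho - p_\rho q_\mu < 0$ (from $p_\mu/q_\mu < p_\rho/q_\rho$), so $p_k/q_k$ strictly decreases; conditions~(a),~(b) and linearity of \eqref{eq:adjac1} give adjacency of every $\bE_k$ to $\bE_\la$, and the descending case of Remark~\ref{rmk:adjac}(ii) identifies $\bE_\la$ as the blocking class. For~(iii), I apply Remark~\ref{rmk:misc}(ii) to the $(m,d)$-recursion: in case~(i) with $\eps=1$, condition~(e) forces $m_\la/d_\la > m_\mu/d_\mu$ (both above $1/3$), so $m_1 d_0 - m_0 d_1 < 0$ and $m_k/d_k$ decreases; since $\acc$ is increasing on $(1/3,1)$, $\acc(m_k/d_k)$ strictly decreases. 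For $\eps=-1$ the ratios lie below $1/3$ and increase while $\acc$ is decreasing on $(0,1/3)$, so the same conclusion holds; case~(ii) is analogous. Irrationality of $z_\infty, b_\infty$ then follows from \cite[Cor.3.1.5]{MM} via Remark~\ref{rmk:recur0}(i), since $t_\la, t_\rho \ge 3$ and both the $p/q$- and $m/d$-sequences are nonconstant (the latter because \eqref{eq:adjac1} combined with a nonconstant $p/q$-sequence forces $m/d$ to be nonconstant).

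The point that requires genuine care is establishing $p_k/q_k < p_\rho/q_\rho$ for all $k$ in~(i) (and symmetrically $p_k/q_k > p_\la/q_\la$ in~(ii)) without circular reasoning, since a direct appeal to Remark~\ref{rmk:adjac}(ii) to locate the limit would already presuppose this ordering. I plan to handle this via the closed-form expression from Remark~\ref{rmk:recur0}(i): the recursion's limit $P/Q$ satisfies both the lower linear relation of $\bE_\rho$ and the Diophantine equations \eqref{eq:DiophPQ}, which together cut out exactly the endpoints $\p^\pm(I_{\bE_\rho})$; the ascending direction from the initial data $p_\mu/q_\mu < p_\rho/q_\rho$ selects the smaller root $\p^-(I_{\bE_\rho}) < p_\rho/q_\rho$, supplying the required bound on every term and thereby closing the argument for blocking class identification.
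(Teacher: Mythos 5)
Your overall architecture (running the recursion via Lemma~\ref{lem:recur0}, monotonicity via Remark~\ref{rmk:misc}(ii), condition (e) for part (iii), irrationality via Remark~\ref{rmk:recur0}(i)) matches the paper's, and your arguments for the ascent/descent of the centers and for part (iii) are fine. The gap is exactly at the step you flag yourself: the bound $p_k/q_k<p_\rho/q_\rho$ (and dually $p_k/q_k>p_\la/q_\la$). Your proposed fix --- that the limit $(D,M,P,Q)$ satisfies the lower linear relation of $\bE_\rho$ together with \eqref{eq:DiophPQ}, that this system ``cuts out exactly'' the endpoints $\p^\pm(I_{\bE_\rho})$, and that ``the ascending direction selects the smaller root'' --- is not justified. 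First, the only statement of this kind available in the paper, Remark~\ref{rmk:adjac}(ii) (i.e.\ \cite[Prop.52]{ICERM}), takes the ordering $p_k/q_k<p_\rho/q_\rho$ as a hypothesis, so it cannot be quoted to characterize the possible limits; the claimed characterization would have to be proved separately. Second, and more seriously, even granting it, nothing in your argument excludes that the increasing sequence crosses $p_\rho/q_\rho$ and converges to the upper endpoint $\p^+(I_{\bE_\rho})>p_\rho/q_\rho$: at this stage the classes are only known to be quasi-perfect, so a step center landing inside $I_{\bE_\rho}$ produces no contradiction (disjointness statements such as Proposition~\ref{prop:disj} come much later and depend on this very lemma). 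So ``the initial data selects the smaller root'' is an assertion, not an argument.

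The paper closes this point much more cheaply, with machinery you already invoke for componentwise estimates: the steps of $\Ss^\Tt_\ell$ after $\bE_\la,\bE_\mu$ are precisely the middle entries $\bE_{y^k\mu}$ of the iterated right mutations $y^k\Tt$ (the right entry of $y\Tt$ is again $\bE_\rho$, so the recursion parameter stays $t_\rho$ at every stage). By induction, Proposition~\ref{prop:gener}(ii) says each $y^k\Tt$ is a generating triple; its defining order condition gives $p_{y^k\mu}/q_{y^k\mu}<p_\rho/q_\rho$, and its condition (c) gives adjacency of $\bE_{y^k\mu}$ to $\bE_\rho$ --- no limit analysis and no circularity. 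With these two facts Remark~\ref{rmk:adjac}(ii) applies verbatim and identifies $\bE_\rho$ as the blocking class, and the symmetric argument with $x$-mutations identifies $\bE_\la$ for $\Ss^\Tt_u$. Replace your limit argument with this inductive appeal to Proposition~\ref{prop:gener} and the rest of your proof goes through.
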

  \begin{proof}  The pre-staircase $\Ss^\Tt_\ell$ has first two steps $\bE_\la , \bE_\mu$ with subsequent steps $\bE_{y^k \mu}, k\ge 1.$     It follows from Proposition~\ref{prop:gener}~(ii) that all these steps lie below $\bE_\rho$,
  and are adjacent to $\bE_\rho$.  Hence, as explained in Remark~\ref{rmk:adjac}~(ii), $\bE_\rho$ is the blocking class for this pre-staircase. 
   In particular, this implies that this pre-staircase 
   accumulates at $\bigl(b_\infty, z_\infty = \acc(b_\infty)\bigr)$ where $b_\infty = \p^- J_{\bE_\rho}$.
  Similarly, the subsequent steps $\bE_{x^k \mu}, k\ge 1,$ of the descending pre-staircase  
  $\Ss^\Tt_u$  lie above $\bE_\la$, and  are all adjacent to it.    Thus  this pre-staircase 
  accumulates at the upper endpoint $\p^+ J_{\bE_\la}$ of   the interval blocked by $\bE_\la$.  Finally to check  (iii) notice that the first two terms of these sequences decrease because
  we assume $\acc(m_\rho /d_\rho ), \acc( m_\la /d_\la ) > \acc(m_\mu /d_\mu )$.  Therefore the sequences decrease by Remark~\ref{rmk:misc}~(ii). Finally 
   the irrationality claims follow from  Remark~\ref{rmk:recur0}~(i). 
       \end{proof}

\begin{proof}[Proof of Theorem \ref{thm:main1} part (i)] This follows immediately from Example \ref{ex:SsU}, Proposition \ref{prop:gener}, and Lemma \ref{lem:2}.
\end{proof}

\begin{rmk}\rm (i)  In view of Corollary~\ref{cor:accmd} we could have sharpened  the last condition  
$\acc(m_\rho  /d_\rho  ), \acc(m_\la/d_\la) > \acc(m_\mu/d_\mu)$ in the definition of a generating triple by adding the requirement
$\acc(\frac{m_\mu}{d_\mu}) < \frac{p_\rho  }{q_\rho  }$.  We did not do this to keep the definition as simple as possible. 
Note however that, as we shall show later (for example in Lemma~\ref{lem:md}), the value of $m/d$ is relevant to the behavior of the corresponding obstruction $\mu_{\bE,b}$ as $b$ varies.  

\MS

\NI (ii)
It follows from Lemma~\ref{lem:2} that all the pre-staircases in the complete family $\Cc\Ss^U$ have decreasing ratios $m_k/d_k$.  This is a crucial ingredient in the proof that these pre-staircases are all live; see Proposition~\ref{prop:live}.
\MS

\NI (iii) The fact that all the pre-staircases in $\Cc\Ss^U$, 
whether ascending or descending, have ratios $m_k/d_k$ that decrease with $k$ is rather paradoxical, since one would naively expect that these ratios would increase for ascending pre-staircases. 
 Indeed, given two classes $\bE = (d,m, p,q,t)$ and $\bE' = (d',m', p',q',t')$ we have
\begin{align*}
\frac md<\frac{m'}{d'} & \ \Longleftrightarrow\ \frac{(p+q) + 3t}{3(p+q) + t} < \frac{(p'+q') + 3t'}{3(p'+q') + t'}\\
& \ \Longleftrightarrow\ t(p'+q') < t'(p+q)\  \ \Longleftrightarrow\ \frac t{p+q} < \frac {t'}{p'+q'}.
\end{align*} 
However, if we ignore the integer $8$ that appears in the definition of $t$ and denote $z: = p/q$, we have
$$
\frac {t^2}{(p+q)^2} \approx \frac{p^2 - 6pq + q^2}{(p+q)^2} = \frac {z^2 - 6z + 1}{(z+1)^2},
$$
which is an increasing function of $z$ for $z>1$.  Given $p/q<p'/q'$, this rough estimate suggests that we should also have $ \frac t{p+q} < \frac {t'}{p'+q'}$ and hence $m/d<m'/d'$; but this  is not the case.
  Thus, even though the number $8$ is very small compared to the eventual size of $p,q$ its influence in the relevant formulas cannot be ignored.
\hfill$\er$
\end{rmk}

\begin{rmk}\label{rmk:quasitrip} \rm {\bf(Quasi-triples)}\ 
As noted in \eqref{eq:seed} above, in \cite{MM} the staircase $\Ss^U_{\ell,n}$ was defined to have blocking class $\bB^U_n$ and seeds $\bE^U_{\ell,seed}=(1,1,1,1,2)$ and $\bB^U_{n-1}.$ Using the recursion parameter for $\bB^U_n,$ the next step in the staircase following $\bE^U_{\ell,seed},\bB^U_{n-1}$ is $\bE_{[2n+7,2n+4]}$ as expected from Example~\ref{ex:SsU}. For all $n$, this first seed $\bE^U_{\ell,seed}$ is independent of $n.$ Similarly, for $\Ss^U_{u,n},$ the seed 
$\bE^U_{u,seed}=(-2,0,-5,-1,2)$ plays a similar role. In \cite{MM}, these seeds were especially useful because they behave well under symmetries and also, because their entries are independent of $n,$ they made many computations simpler. 

Because the  two seeds $\bE^U_{\ell,seed}$ and $\bE^U_{u,seed}$ together with the blocking class $\bB^U_n$  generate recursive staircases as described above,
the triads
\begin{align*}
\Tt_{\ell,seed}^n:=& (\bE^U_{\ell,seed},\bB^U_{n},\bB^U_{n+1}),\qquad 
\Tt_{u,seed}^n:=  (\bB^U_{n},\bB^U_{n+1},\bE^U_{u,seed}), 
\end{align*}
satisfy the adjacency and $t$-compatibility conditions (a),(b),(c) for a generating triple. It is also easy to check that (d) holds. 
 Further if we define their $x$- and $y$-mutations  as in Proposition~\ref{prop:gener}, we have 
 \begin{align*}
y\Tt_{u,seed}^n=\Tt_{u,seed}^{n+1} \quad \text{and} \quad x\Tt^n_{\ell,seed}=\Tt_{\ell,seed}^{n-1}
\quad\mbox{ while}\\
x\Tt_{u,seed}^n=\Tt^n_* \quad \text{and} \quad y\Tt^n_{\ell,seed}=\Tt^{n}_*.
\end{align*}
We will say that triples  such as 
  $\Tt_{\ell,seed}^n, \Tt_{u,seed}^n$ that satisfy conditions (a), (b), (c), and (d) of Definition~\ref{def:gentr} are  {\bf quasi-triples}.   They do not satisfy condition  (e).  Also neither of the seeds are blocking classes; indeed the upper seed has negative entries, so it is not even a geometric class.

As explained in \cite[\S3.3]{MM}, both  $\bE^U_{\ell,seed}$ and $-\bE^U_{u,seed} = S^\sharp\bE^U_{\ell,seed}$ are  steps in the third strand of the staircase at $b=1/3$; indeed this strand has the single seed
$\bE^U_{\ell,seed}$ and has steps given the images of this seed  under the shifts $S^k, k\ge 1.$  Further, as we show in \S\ref{ss:13stair}, the classes in this strand obstruct the existence of  
 ascending staircases at the  special rational $b$.   The other two strands of this staircase  
are given  by the images by the shift $S^k, k\ge 1,$ of the classes $\bB^U_{-2} = (1,0,2,1,1,-1)$ and $
 \bB^U_{-1} = (2,1,4,1,1,1)$.  
 Moreover, by replacing the $t$ entries in $\bB^U_{-2}$ and $\bB^U_{-3}$ by $\varepsilon t$, we can think of $\bB^U_{-3}= (0,-1,0,1,3,-1)$ and $\bB^U_{-2}$ as the first two terms of the sequence $\bB^U_n, n\ge -3$ with recursion parameter $t=2$.    The results in \cite{M1} show that this iteration corresponds to the notion of $v$-mutation in the toric model.
 Thus the whole configuration of perfect classes found in this paper is generated by the staircase at $b=1/3$ together with mutations and symmetries.  \hfill$\er$
\end{rmk}

\begin{rmk}\label{rmk:fakestair}\rm {\bf (Fake pre-staircases and pseudo-triples):}  
To give perspective on the problem, we now explain some properties of a family of Diophantine classes that share many of the properties of the classes described above but which are \lq fake', i.e. they do NOT reduce properly under Cremona moves,  and are not live.\footnote{
 In fact, the very first pre-staircase that we found when working on \cite{ICERM} actually was fake, and is the image under $S$ of one of the pre-staircases described here. See \cite[Ex.28(iii)]{ICERM}.
Also see  \cite[\S4.1]{MM} for an explanation of Cremona reduction.}
These classes   have the formulas described in Example~\ref{ex:SsU} but with parameters in $ \frac 12\Z \less \Z$, so that only half of the them have integral entries and hence correspond to obstructive classes.  
 For example, the tuple $\bB_{1/2}^U = \bigl(\frac 72, \frac 52, 7,1,4\bigl)$ has center $7$,  and of course does not represent an obstructive class.  For clarity we will call such a tuple a {\bf pseudo-class}.  Similarly none of the tuples $\bB_{n-1/2}^U,n\in \N,$ 
 have integral values for $(d,m)$.  
 On the other hand, the tuples $ \bE_{n-1/2}^1$ are integral for all integers $n$, though  
one can check that none of them are  exceptional divisors; for example they do not 
Cremona reduce correctly and also have negative intersection with 
  exceptional classes such as $\bB^U_{0}$.  

Note that the triples $\Tt'_n: =  \Tt_{n-1/2}: = \bigl(\bB^U_{n-1/2}, \bE^1_{n-1/2}, \bB^U_{n+1/2}\bigr)$  satisfy all the numeric conditions  to be a 
 generating triple (except the requirement to be integral!), and half of the tuples in the ascending  \lq staircase' generated by $\bB^U_{n-1/2}, \bE^1_{n-1/2}$ by $t^0_{n+1/2}$-recursion are integral.  
   For example, when $n=0$, the class $$
 \bE_{-1/2}^1 = \bigl(9,5,19,3,6\bigr),\quad p/q = [6;3] 
 $$
is a step in an ascending fake pre-staircase with centers $[6;\{2,6\}^k, 3]$.
 These are  the odd-placed terms in the pre-staircase with seeds 
 $ \bB_{-1/2}^U,  \bE_{-1/2}^1$ and recursion parameter $\rho = t^0_{1/2} = 4$ and hence form one of the strands of this ascending  pseudo pre-staircase.  As explained in \cite[Lem.3.3.1]{MM},  we can consider this sequence to have recursion parameter $\nu = \rho^2 - 2 = 14$, and to be generated\footnote
 {
 In fact we can take the initial seed to be the class  $\bE^U_{\ell,seed} = (1,1,1,1,2)$ in Remark~\ref{rmk:quasitrip}.}
  by $ \bE_{-1/2}^1$ and the third term in this sequence which is
 $$ 
 \bE' = \bigl(125, 69,265,41,83\bigr),\quad p/q = [6;2,6,3]. 
 $$
Thus this fake pre-staircase 
has only one strand.  It satisfies the linear relation given by the pseudo-class $\bB^U_{-1/2}$.  
By Lemma~\ref{lem:pseudst} below, for sufficiently large $k$ the classes $\bE_k = (d_k,m_k,p_k,q_k,t_k)$ in this fake pre-staircase 
 are obstructive when $b=\lim m_k/d_k$.  However, they are not live because the obstruction from $\bB^U_0$ is larger.  Further, one can check that $\bB^U_0\cdot \bB^U_{-1/2} = -1$; in fact   
$\bB^U_0\cdot \bE<0$ for all elements $\bE$ in this sequence.

Similarly, only one strand of the corresponding descending sequence of tuples is integral, and it also consists of fake classes.  For example, when $n=1$  this strand of the  descending sequence generated by  $\bE^0_{3/2}$ and $\bE^1_{1/2}$ with recursion parameter $t^0_{1/2}$ and  pseudo-blocking class $\bE^0_{1/2}$ can be taken to have
seeds 
$\bE^U_{u,seed} = (-2,0,-5,-1,2)$ and $\bE^1_{1/2} = (20,14,41,5,22)$ with recursion parameter $(t^0_{1/2})^2-2 = 14$ and hence next step
 $$ 
 \bE = \bigl(282,196,579,71,306  \bigr),\quad p/q = [8;6,2,5]. 
 $$
Thus, the general  step has centers at $[8;\{6,2\}^k,5], k\ge 0$, and lies in the interval  blocked by $\bB^U_1$, though it satisfies the linear relation given by the pseudo-blocking class    $\bE^0_{1/2}$.
Again $\bB^U_1\cdot\bE^1_{1/2}<0$.
\MS

The reader can check that there is an analogous descending pseudo-staircase with centers in the interval blocked by 
 $\bB^U_0$.  This one satisfies the linear relation determined by $\bE^0_{-3/2}$ with $t=2$ and is generated numerically 
 by $\bE^U_{u,seed}$ and $\bE^1_{-1/2}$ with recursion parameter $t^2 -2= 2 = \nu$.
 This sequence consists of the tuples
 $$
\bE''_{\ell,k}: =  \bigl(-2 + 11k, 5k, -5+24k, -1+4k, 2+4k\bigr), \ k\ge 1,  p_k/q_k = [6; 4k-1],
 $$
and its continued fraction expansion is \lq degenerate' in the sense that it does not contain repeated digits.\footnote{
This happens because this sequence converges too slowly: indeed, $\nu=2$ is too small for the argument outlined in Remark~\ref{rmk:recur0} to apply.}
Further, one can check that at the limiting $b$-value $b_\infty = \acc_U^{-1}(6) = 5/11$ all these classes 
$\bE''_{\ell,k}$ fail to be obstructive at their centers since the requirement $|d_kb_\infty - m_k|< \sqrt{1-b_\infty^2}$
evaluates to  $\frac{10}{11}  < \sqrt{96}/{11}$, which is false.
\hfill$\er$
\end{rmk}

\begin{lemma}\label{lem:pseudst} Let $\bE_k = (d_k,m_k,p_k,q_k,t_k), k\ge 0,$ be a sequence of quasi-perfect classes with $\bE_0< \bE_1$ and
whose entries satisfy a recursion $x_{k+1} = \nu x_k- x_{k-1}$ with $\nu \ge 3$.  Then, if $b_\infty: = \lim m_k/d_k$, there is $k_0$ such that
$$
 \mu_{\bE_k, b_\infty}(p_k/q_k) > c_{H_{b_\infty}}(p_k/q_k), \quad k\ge k_0.
 $$
 Hence $\acc(b_\infty) = \lim_k p_k/q_k.
 $
\end{lemma}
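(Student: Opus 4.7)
The plan is to reduce the obstructiveness claim to the criterion in Lemma~\ref{lem:basic}~(ii), using the closed-form solution of the recursion recalled in Remark~\ref{rmk:recur0}, and then to identify $\lim_k p_k/q_k$ with $\acc(b_\infty)$ by matching the limiting Diophantine identities~\eqref{eq:DiophPQ} against the quadratic~\eqref{eq:accb0} that defines $\acc$.

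First I would apply Remark~\ref{rmk:recur0}~(i): since $\nu \ge 3$, the polynomial $\Lambda^2 - \nu\Lambda + 1$ has two positive real roots $\lambda > 1 > \lambda^{-1}$, and each entry satisfies $x_k = X\lambda^k + \ov{X}\lambda^{-k}$ with $X, \ov{X} \in \Q(\sqrt{\nu^2 - 4})$ given by~\eqref{eq:XX'}. The ordering hypothesis $\bE_0 < \bE_1$ together with $\nu \ge 3$ forces $X > 0$ for every coordinate, so $b_\infty = \lim m_k/d_k = M/D$, and a direct computation gives
\[
b_\infty d_k - m_k \;=\; \frac{M}{D}\bigl(D\lambda^k + \ov{D}\lambda^{-k}\bigr) - \bigl(M\lambda^k + \ov{M}\lambda^{-k}\bigr) \;=\; \frac{M\ov{D} - \ov{M}\,D}{D}\,\lambda^{-k},
\]
so $|b_\infty d_k - m_k| = O(\lambda^{-k}) \to 0$. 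On the other hand, dividing the quasi-perfect identity $d_k^2 - m_k^2 = p_k q_k - 1$ by $d_k^2$ and letting $k \to \infty$ gives $1 - b_\infty^2 = PQ/D^2 > 0$, so $\sqrt{1 - b_\infty^2}$ is a strictly positive constant. Consequently the criterion $|b_\infty d_k - m_k| < \sqrt{1 - b_\infty^2}$ of Lemma~\ref{lem:basic}~(ii) holds for every $k \ge k_0$, which establishes that $\mu_{\bE_k, b_\infty}$ is strictly obstructive at its center $p_k/q_k$, yielding the first claim. (Since the $\bE_k$ are only assumed quasi-perfect, they need not contribute to the supremum defining $c_{H_{b_\infty}}$, so the formal value $\mu_{\bE_k, b_\infty}(p_k/q_k)$ can indeed exceed $c_{H_{b_\infty}}(p_k/q_k)$ as stated.)

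For the second claim, I would substitute the limiting identities $3D - M = P+Q$ and $D^2 - M^2 = PQ$ from~\eqref{eq:DiophPQ} into the quadratic~\eqref{eq:accb0}. The coefficient of $z$ simplifies to $(P+Q)^2/(PQ) - 2 = (P^2+Q^2)/(PQ)$, so the equation becomes $PQ\,z^2 - (P^2+Q^2)z + PQ = 0$, which factors as $(Qz - P)(Pz - Q) = 0$. Its two reciprocal roots are $P/Q$ and $Q/P$; since each $\bE_k$ is quasi-perfect with $p_k \ge q_k$, one has $P/Q \ge 1$, so the root above $1$ is $P/Q = \lim_k p_k/q_k$, giving $\acc(b_\infty) = \lim_k p_k/q_k$. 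The main technical step is the closed-form asymptotic analysis of the recursion, which simultaneously controls $b_\infty d_k - m_k$ and supplies the Diophantine relations between $(D, M, P, Q)$ needed for the factorization; everything else is direct algebra.
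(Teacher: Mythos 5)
For the first claim your argument is essentially the paper's: both rest on the closed form $x_k=X\la^k+\ov X\la^{-k}$, the resulting decay of $|d_kb_\infty-m_k|$, and the test $|db-m|<\sqrt{1-b^2}$. The only divergence is what that test certifies. As stated in Lemma~\ref{lem:basic}~(ii) it characterizes $\mu_{\bE,b}(p/q)>V_b(p/q)$, and that is all your computation establishes; your parenthetical remark that the formal value ``can indeed exceed'' $c_{H_{b_\infty}}$ is a consistency observation, not a proof of the inequality with $c_{H_{b_\infty}}$ that the lemma literally asserts. The paper obtains that version by quoting \cite[Lem.15(iii)]{ICERM} in a form phrased with $c_{H_b}$. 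Note, however, that the strict inequality against $c_{H_{b_\infty}}$ cannot hold for every sequence satisfying the hypotheses: the steps $\bB^U_0, \bE_{[7;4]},\dots$ of the live staircase $\Ss^U_{\ell,1}$ satisfy them (quasi-perfect, $\nu=5$, $\bE_0<\bE_1$) and have $\mu=c_{H_{b_\infty}}$ at their centers. So the operative content is obstructiveness relative to $V_{b_\infty}$, which is exactly what you prove and exactly what the paper says is needed to invoke \cite[Thm.1.11]{AADT}; just do not present the parenthetical as settling the $c_{H_{b_\infty}}$ form.

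For the second claim you take a genuinely different route. The paper deduces $\acc(b_\infty)=\lim_k p_k/q_k$ from \cite[Thm.1.11]{AADT}, i.e.\ from the geometric fact that break points of classes that remain obstructive at $b_\infty$ can only accumulate at $\acc(b_\infty)$ --- which is why its ``Hence'' leans on the first claim. You instead argue algebraically: dividing $3d_k-m_k=p_k+q_k$ by $\la^k$ and $d_k^2-m_k^2=p_kq_k-1$ by $\la^{2k}$ gives \eqref{eq:DiophPQ} for the leading coefficients, and substituting into \eqref{eq:accb0} yields $PQz^2-(P^2+Q^2)z+PQ=(Pz-Q)(Qz-P)$. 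This is correct and self-contained (in the spirit of Remark~\ref{rmk:recur0}~(i)), and it makes the second claim independent both of the first claim and of the external input from \cite{AADT}, at the price of using the recursion and the closed form more heavily. Two small repairs: the hypothesis $\bE_0<\bE_1$ only orders $p,q,t$, so positivity of the leading coefficients $D$ (and $M\ge0$) should come from $d_k\ge1$, $m_k\ge0$ rather than from the ordering; and you need $P/Q>1$ strictly, not just $\ge1$ --- this does follow, since the two roots $P/Q$ and $Q/P$ are reciprocal and \eqref{eq:accb0} has no double root at $z=1$ for any $b\in[0,1)$ (its middle coefficient always exceeds $2$), so $P\ne Q$.
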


\begin{proof}  By \cite[Lem.15(iii)]{ICERM},  a quasi-perfect class $\bE=(d,m,p,q)$ is obstructive at $p/q$ for some $b$ (that is, has
$\mu_{\bE,b}(p/q)> c_{H_b}(p/q)$) if and only if $|db-m|< \sqrt{1-b^2}$.

As we noted in Remark~\ref{rmk:recur0}, when $\nu\ge3$ there is $\la>1$ such that
 the ratios $m_k/d_k, p_k/q_k$ converge to $M/D, P/Q$ at the rate  $O(\la^{-2k})$.  Further,
$$
|d_k \tfrac MD - m_k| = \tfrac 1D |M\ov D - D\ov M| \la^{-2k} < \sqrt{1-\tfrac {M^2}{D^2}},\quad \mbox{ for suff large } k.
$$
Thus, for sufficiently large $k$ the function $ \mu_{\bE_k, b_\infty}$ is obstructive at $p_k/q_k$, which proves the first claim.

The second claim then follows from \cite[Thm.1.13]{AADT}. Notice that the statement of this theorem assumes the existence of a staircase at $b_\infty$, i.e. a sequence of live classes --- however the proof given there only uses the fact the classes are obstructive at $b_\infty$. 
\end{proof} 

\subsection{The pattern of derived classes}\label{ss:ternary}

We now describe the set $\Ee_{[6,8]}$ formed by all the blocking classes that are derived from the foundational triple 
$$
\Tt^0_*: 
= \bigl(\bB^U_0, \bE_{[7;4]}, \bB^U_1\bigr)
$$
defined in Example~\ref{ex:SsU}. The main result is Proposition~\ref{prop:dense} stating that the union of the corresponding blocked $z$-intervals intersects the interval $[6,8]$ in a dense, open subset.   
Finally, we assign an infinite ternary decimal label to each point $z\in [6,8]$ that is not blocked, and state in Proposition~\ref{prop:liveZ} a more detailed version
of the  claims in Theorem~\ref{thm:main}~(ii) about  the associated pre-staircases. 

 For simplicity, we will only discuss  in detail the case when $z\in [6,8]$.  However, the pattern formed by the classes with centers in the intervals $[2n+6, 2n+8], n>0$ is precisely the same; see Remark~\ref{rmk:n>0}.  In particular, the density result holds for all $n$.

We first introduce a ternary decimal label $\de$ for the  classes $E_\de \in \Ee_{[6,8]}$. 

 \begin{figure}[htbp] 
 \vspace{-1.1in}   \centering
\hspace{1.3in}    \includegraphics[width=7in]{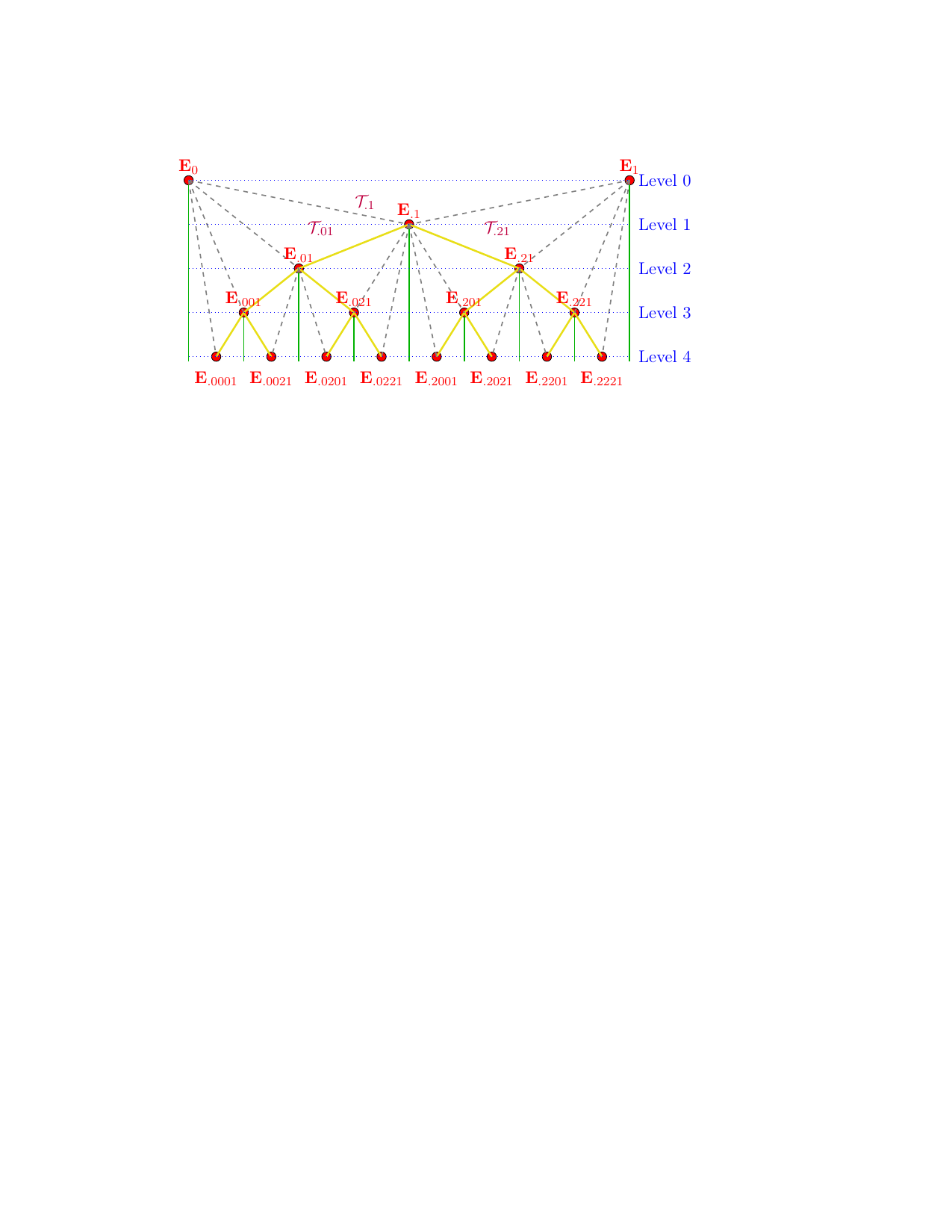} 
 \vspace{-6.3in}
  \caption{The edges of the main recursion are in  yellow (given by $\de(w) \to \de(xw),\ \de(yw)$), with the other edges of the triples in dotted grey; none of these edges  cross the vertical green lines that represent 
  the blocked intervals $I_{\bE}$.}
 \label{fig:decimal}
 \end{figure}

\begin{lemma} \label{lem:label} Each class in $\Ee_{[6,8]}$ can be given a ternary decimal label $\de$ 
such that the following holds:
\begin{itemlist}
\item[{\rm(i)}] We define $\bE_0: = \bB^U_0,  \bE_{.1}: =  \bE_{[7;4]},  \bE_1: = \bB^U_1$, and call the corresponding triple either $\Tt_{.1}$ or $\Tt_*^0$. 
All other classes have labels of the form $\de = .a_1\cdots a_{k-1} 1$ where $a_i\in \{0,2\}$.
\item[{\rm(ii)}] If $\de = .a_1\cdots a_{k-1} 1$ then $\bE_\de$ lies on the $k$th \textbf{level}, 
i.e. it is the middle class $\bE_\mu$ in a generating triple $\Tt_\de$ formed from $\Tt_{.1}$ by $k-1$ steps;
\item[{\rm(iii)}] the labelling respects order:  if $\de< \de'$ then the $z$-blocked interval  $I_{\bE_\de}$ lies to the left of $I_{\bE_{\de'}}$.
\end{itemlist}
\end{lemma}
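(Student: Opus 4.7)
My plan is to define labels recursively by following the mutation tree of Proposition~\ref{prop:gener} starting from $\Tt^0_*$, and then to verify (i)--(iii) by induction on mutation depth. First, label the three classes $\bE_0 := \bB^U_0$, $\bE_{.1} := \bE_{[7;4]}$, $\bE_1 := \bB^U_1$ of the initial triple $\Tt_{.1} := \Tt^0_*$ with $0, .1, 1$. Inductively, given a generating triple $\Tt_\de = (\bE_{\de_\la}, \bE_\de, \bE_{\de_\rho})$ with middle label $\de = .a_1\cdots a_{k-1}1$, assign the new middle of the left mutation $x\Tt_\de$ the label $.a_1\cdots a_{k-1}01$ and the new middle of $y\Tt_\de$ the label $.a_1\cdots a_{k-1}21$; both labels are of the required form with exactly one more digit than $\de$. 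Since $\Ee_{[6,8]}$ consists by definition of all classes reachable from $\Tt^0_*$ via iterated mutation, every class receives a label. Part (i) is immediate from the construction, and part (ii) follows by induction on $k$, since each application of Proposition~\ref{prop:gener} produces a genuine generating triple whose middle has a label one digit longer than its parent's.

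For part (iii), I would maintain two invariants simultaneously as I descend the mutation tree: within each triple $\Tt_\de = (\bE_{\de_\la}, \bE_\de, \bE_{\de_\rho})$, the labels satisfy the ternary order $\de_\la < \de < \de_\rho$, and the corresponding centers (hence, by Proposition~\ref{prop:1}, the corresponding blocked $z$-intervals) satisfy the matching geometric order $p_{\de_\la}/q_{\de_\la} < p_\de/q_\de < p_{\de_\rho}/q_{\de_\rho}$. The geometric inequality is preserved under mutation by the center inequalities derived during the proof of Proposition~\ref{prop:gener}. For the ternary inequality, a simple arithmetic induction gives the quantitative gap estimates $\de - \de_\la \geq 1/3^k$ and $\de_\rho - \de \geq 2/3^k$, where $k$ is the length of $\de$; combined with the exact shifts $\de - x\de = 2/3^{k+1}$ and $y\de - \de = 4/3^{k+1}$ produced by the mutation rule, these estimates leave precisely enough room for the newly inserted middle to lie strictly between the old neighbors at the next level.

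To conclude the full statement in (iii) for an arbitrary pair of distinct labels $\de < \de'$, I would identify the latest common ancestor triple $\Tt_{\de_0}$ in the mutation tree: after that node, one label belongs to (or equals the middle of) the $x$-subtree of $\Tt_{\de_0}$ while the other belongs to the $y$-subtree. The invariants above then confine $I_{\bE_\de}$ and $I_{\bE_{\de'}}$ to disjoint $z$-regions on either side of $I_{\bE_{\de_0}}$, placing them in the correct left-to-right order. I expect the main obstacle to be showing that the blocked intervals genuinely stay nested within their parent triple's $z$-gap at arbitrary depth, so that no descendant on one side of a given triple can ever overtake a descendant on the other side; this reduces to the disjointness assertion of Proposition~\ref{prop:1} applied uniformly across the subtree, together with the strict center inequalities from Proposition~\ref{prop:gener}.
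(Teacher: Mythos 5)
Your construction is exactly the paper's: the paper's proof consists of precisely this recursive assignment (replace the trailing $1$ of $\de$ by $01$ under an $x$-mutation and by $21$ under a $y$-mutation) and then declares the properties evident, so parts (i) and (ii), and your ternary gap bookkeeping for the order of the labels, are fine and in the same spirit.

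There is, however, a genuine problem with how you justify part (iii): you pass from the ordering of the \emph{centers} to the ordering of the \emph{blocked intervals} by invoking Proposition~\ref{prop:1} (disjointness of the $J_\bE$, equivalently of the $I_\bE$). In the paper's logical architecture that is circular: Proposition~\ref{prop:1} is only proved in \S\ref{ss:disj} via Proposition~\ref{prop:disj} and Corollary~\ref{cor:disj1}, whose proof uses the density statement of Proposition~\ref{prop:dense} together with the perfectness and liveness results of \S\ref{sec:main} --- and all of these rest on the labelled tree of classes being set up in Lemma~\ref{lem:label} and Proposition~\ref{prop:label1}. Moreover, at this stage the classes are only known to be quasi-perfect, so disjointness of their blocked intervals is not yet available in any form. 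The non-circular route uses only what precedes the lemma: by Lemma~\ref{lem:2} and Remark~\ref{rmk:adjac}~(ii), the endpoints $\p^\pm(I_{\bE})$ of the blocked interval of any class in the tree are the accumulation points of the two principal pre-staircases having that class as blocking class, and the steps of those pre-staircases are themselves classes of the tree. Thus for a triple $\Tt=(\bE_\la,\bE_\mu,\bE_\rho)$ one gets $\p^+(I_{\bE_\la}) < p_{x\mu}/q_{x\mu} < \p^-(I_{\bE_\mu})$ and $\p^+(I_{\bE_\mu}) < p_{y\mu}/q_{y\mu} < \p^-(I_{\bE_\rho})$ directly from the monotone convergence of the staircase centers to these endpoints, which gives the strict nesting $I_{\bE_\mu}\subset\bigl(\p^+(I_{\bE_\la}),\p^-(I_{\bE_\rho})\bigr)$ and, inductively down the tree, confines every descendant's blocked interval to the gap of its parent triple. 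With that replacement your latest-common-ancestor argument closes (iii) without appealing to any later result.
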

\begin{proof}  The label is assigned inductively:  if the triple 
$\Tt_\de: = \bigl(\bE_\la,\bE_\mu = \bE_\de,\bE_\rho\bigr)$ with $\de: =  .a_1\cdots a_{k-1} 1$ is already labelled, then the middle classes $\bE_{x\mu}, \bE_{y\mu}$ of its two derived triples are labelled
$$
\bE_{x\mu} = \bE_{.a_1\cdots a_{k-1} 01}, \qquad \bE_{y\mu} = \bE_{.a_1\cdots a_{k-1} 21},
$$
so that the corresponding triples are
\begin{align}\label{eq:labelT}
x\Tt_{.a_1\cdots a_{k-1} 1}: = \Tt_{.a_1\cdots a_{k-1} 01},\qquad 
y\Tt_{.a_1\cdots a_{k-1} 1}: = \Tt_{.a_1\cdots a_{k-1} 21}.
\end{align}
It is evident that this labelling has the properties claimed: see Fig.\ref{fig:decimal}. Note that for $k\geq1$ there are $2^{k-1}$ classes at level $k$. 
\end{proof}
 
We next consider the properties of the graph whose vertices $\de = \de(w)$ consist of finite ternary decimals in $(0,1)$ with precisely one occurrence of $1$ at the end, together with the edges joining $\de(w)$ to $\de(xw)$ and 
$\de(yw)$.  
The edges of this graph are colored  yellow in Figure~\ref{fig:decimal}; the initial vertex is $E_{.1}$ at level $1$.   The {\bf augmented graph} is obtained from this one by adding the remaining edges in the triples; these are dotted grey in the figure.
 
 \begin{prop}\label{prop:label1}  This labelling has the following properties.
 \begin{itemlist} \item[{\rm (i)}] Each class $\bE_\de$ (where $\de\notin \Z$) is the middle entry in a unique triple $\Tt_\de$. 
  \item[{\rm (ii)}]  The entries in $\de$ record  in reverse order the sequence of moves $x,y$ needed to mutate from $\Tt_{.1}$ to $\Tt_\de$. 
 For example, $\Tt_{.0022201} = xyyyxx\Tt_{.1}$. 
 \item[{\rm (iii)}] Except in the initial case $\de =.1$,  the other two elements $\bE_{\de(w),\la}, 
 \bE_{\de(w),\rho}$ in $\Tt_\de$ lie on different levels with exactly one of them on the immediately preceding level.
Further, $\bE_{\de(w),\la}$ lies on a higher level than $\bE_{\de(w),\rho}$  if and only if $\de$ ends in $01$, so that the last mutation was via $x$.
\item[{\rm (iv)}] 
Let $w: = x^{m_1}y^{n_1}\cdots x^{m_j}y^{n_j}$ be a word in $x,y$ with all $m_i,n_i>0$ except possibly for $m_1, n_j$ which may be $0$, and let $\de(w): =  .2^{n_j} 0^{m_j}\cdots 2^{n_1}0^{m_1}1$ be the corresponding ternary decimal.  Then, assuming $w$ is nonempty,  
the other entries in the triple $w\Tt_{.1}: = \Tt_{\de(w)}$ are $\bE_{\de(w),\la}, \ 
\bE_{\de(w),\rho}$ where 
\begin{align}\label{eq:backlevel}
\bE_{\de(w),\la}&= \begin{cases} \bE_{\de(w'),\la}  & \mbox{ if } w = xw'\\
                             \bE_{\de(w')} =: \bE_{\de(w'),\mu}  & \mbox{ if } w = yw'\end{cases}\\ \notag
 \bE_{\de(w),\rho}&= \begin{cases} \bE_{\de(w')} = : \bE_{\de(w'),\mu}  & \mbox{ if } w = xw'\\
 \bE_{\de(w'),\rho}  & \mbox{ if } w = yw'.
                             \end{cases}
\end{align} 
\item[{\rm (v)}] In particular, in the triple $w\Tt_{.1}$ the words in $x,y$ that describe the other two elements consist of suitable initial segments of $w$.                         
More precisely, there is a triple with vertices $\de_\la,\de_\mu,\de_\rho$ iff one of the following conditions hold:\footnote
{Here we assume $w$ is not empty, but otherwise allow empty words as necessary.} 
\begin{itemize}\item[-] If $\de_\mu = \de(w)$ where $w= x^{m_1}\cdots y^{n_j}$ and $m_1>0$,  then 
$\de_\rho = \de(x^{m_1-1}\cdots y^{n_j})$ and $\de_\la =  \de(y^{n_1-1}\cdots y^{n_j})$. 
\item[-] If $\de_\mu = \de(w)$ where $w= y^{n_1}\cdots y^{n_j}$ and $n_1>0$,  then 
$\de_\la = \de(y^{n_1-1}\cdots y^{n_j})$ and $\de_\rho =  \de(x^{m_2-1}\cdots y^{n_j})$.
\end{itemize}
\item[{\rm (vi)}] The two  pre-staircases generated by the triple $\Tt_\de = \bigl(\bE_{\de,\la}, \bE_\de,\bE_{\de,\rho}\bigr)$, where $\de = \de(w)$ consist of the classes
$\bE_{\de(x^jw)}, j\ge 0$ (descending)  with recursion parameter $t_{\de,\la}$, and $\bE_{\de(y^jw)}, j\ge 0,$ (ascending) with recursion parameter $t_{\de,\rho}$. 
\item[{\rm (vii)}] Each {\bf edge} $\vareps$  in the augmented graph is a lower edge (i.e. one with an endpoint at the middle vertex $\bE_\mu$)   in a unique triple $\Tt_\vareps$, and can be associated to the unique vertex in $\Tt_\vareps$ that is not one of its endpoints; thus the two classes at the endpoints of $\vareps$  are part of a recursion whose recursion parameter is the $t$ component of   the third class in $\Tt_\vareps$. 
\end{itemlist}
\end{prop}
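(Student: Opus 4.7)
The proof proceeds by strong induction on the level $k \geq 1$, equivalently on the length of the defining word $w$ with $\de = \de(w)$. The base case is $\bE_{.1} = \bE_{[7;4]}$, which is the middle of the generating triple $\Tt_{.1} = \Tt_*^0 = (\bB^U_0, \bE_{[7;4]}, \bB^U_1)$ verified in Example \ref{ex:SsU}. For the inductive step, a class at level $k+1$ has label of the form $.a_1\cdots a_{k-1}a_k 1$ with $a_k \in \{0,2\}$; by the labelling rule \eqref{eq:labelT} it arises as the middle of $x\Tt_{.a_1\cdots a_{k-1}1}$ if $a_k = 0$, or of $y\Tt_{.a_1\cdots a_{k-1}1}$ if $a_k = 2$. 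Proposition \ref{prop:gener} then immediately shows that the derived triple is generating, establishing existence in (i). Uniqueness in (i) follows from the order-preservation of Lemma \ref{lem:label}(iii): since the centers $p_\de/q_\de$ are all distinct and determined by $\de$, the triple in which $\bE_\de$ sits as middle is uniquely determined. Part (ii) is then the direct translation of \eqref{eq:labelT}: each $x$-move inserts a $0$ and each $y$-move inserts a $2$ just before the terminal $1$.

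Part (iv) comes directly from Proposition \ref{prop:gener}: an $x$-mutation sends $(\bE_\la, \bE_\mu, \bE_\rho)$ to $(\bE_\la, \bE_{x\mu}, \bE_\mu)$, while a $y$-mutation sends it to $(\bE_\mu, \bE_{y\mu}, \bE_\rho)$; identifying these triples with the labelled triples via \eqref{eq:labelT} yields the formulas \eqref{eq:backlevel}. Part (v) is obtained by iterating (iv) on the leftmost letters of $w$: each $x$ at the front of $w$ strips one letter from the word for $\bE_{\de(w),\rho}$ while leaving the word for $\bE_{\de(w),\la}$ unchanged, and crossing into a $y$-block switches which of $\la,\rho$ is carried forward. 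Part (iii) is then immediate from (iv): if $w = xw'$ then $\bE_{\de(w),\rho} = \bE_{\de(w')}$ lies on the level exactly one below $\bE_{\de(w)}$, while $\bE_{\de(w),\la} = \bE_{\de(w'),\la}$ lies on a strictly lower level by the inductive hypothesis, and the $y$ case is symmetric.

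For (vi), the descending pre-staircase $\Ss^{\Tt_\de}_u$ with seeds $\bE_\rho, \bE_\mu$ and recursion parameter $t_\la$ (by Lemma \ref{lem:2}(ii)) has successive steps $\bE_{x\mu}, \bE_{x^2\mu}, \ldots$ given precisely by the middles of the triples $x\Tt_\de, x^2\Tt_\de, \ldots$, and these are labelled $\bE_{\de(x^j w)}$ by (ii); the ascending case is symmetric with parameter $t_\rho$. For (vii), by (i) every class $\bE$ appears as middle of a unique triple, so any lower edge of an augmented-graph triple is uniquely determined by its middle endpoint. A yellow edge $\bE_{\de(w)} - \bE_{\de(xw)}$ is a lower edge of $x\Tt_{\de(w)} = \Tt_{\de(xw)}$, whose third vertex $\bE_\la$ supplies the recursion parameter $t_\la$ used to pass from $\bE_\mu, \bE_\rho$ to $\bE_{x\mu}$ by Proposition \ref{prop:gener}(i). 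A grey edge $\bE_\la - \bE_\mu$ of $\Tt_\de$ has third vertex $\bE_\rho$, and $t_\rho$-compatibility of $\bE_\la, \bE_\mu$ holds by Definition \ref{def:gentr}(b); the edge $\bE_\mu - \bE_\rho$ is handled analogously.

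The main point requiring genuine work, beyond the bookkeeping above, is the verification that the mutation rules of Proposition \ref{prop:gener} really produce \emph{new} classes distinct from all previously labelled ones, so that the labelling is unambiguous. This reduces to observing that the new middle $\bE_{x\mu}$ has center strictly between $p_\la/q_\la$ and $p_\mu/q_\mu$ (and $\bE_{y\mu}$ strictly between $p_\mu/q_\mu$ and $p_\rho/q_\rho$), which is part of the proof of Proposition \ref{prop:gener}, combined with Lemma \ref{lem:label}(iii) on order-preservation. Everything else in the proposition is a recursive unpacking of Proposition \ref{prop:gener} and the labelling rule \eqref{eq:labelT}.
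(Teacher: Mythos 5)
Your proposal is correct and is exactly the argument the paper has in mind: the paper's own ``proof'' of Proposition~\ref{prop:label1} is simply the remark that it is a straightforward induction left to the reader, and your write-up carries out that induction on the level (equivalently, the length of $w$), unpacking the labelling rule \eqref{eq:labelT} together with Proposition~\ref{prop:gener}, Lemma~\ref{lem:label}, Lemma~\ref{lem:2}, and the compatibility conditions of Definition~\ref{def:gentr} in the same way the authors intend.
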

\begin{proof}  The proof is a straightforward induction that is left to the reader.\end{proof}

\begin{example} \label{ex:backlevel}
\rm Here are some examples of the recursive formula in \eqref{eq:backlevel}.

\begin{itemlist}\item[-] 
if $w = x^3yx$, then 
\begin{align*}& \bE_{\de(x^3yx),\la} = \bE_{\de(yx),\la} = \bE_{\de(x)}, \;\; i.e.\ \ \  
\bE_{.020001,\la} = \bE_{.01},\quad\mbox{  while }\\
& \bE_{\de(x^3yx),\rho} = \bE_{\de(x^{2}yx)},\;\;  i.e. \ \bE_{.020001,\rho}=\bE_{.02001}.
\end{align*}
\item[-]  if $w = x^3y$, then
\begin{align*}
&\bE_{\de( x^3y),\la} = \bE_{\de(y),\la} = \bE_{\de(\emptyset)}, \ \ \  i.e.  \ \ \ 
\bE_{.20001,\la} = \bE_{.1}, \quad\mbox{ while}\\
& \bE_{\de( x^3y),\rho} = \bE_{\de(x^{2}y)}, \ \ \ i.e.\  \bE_{.20001,\rho} = \bE_{.2001}.
\end{align*}
\end{itemlist}
\end{example}

The above notation allows us to recognize the generating triples and pre-staircases. 
Notice first that the set of triples $\Tt$ (or equivalently, the set of classes $\bE_\de$ that form their middle entries) is partially ordered, with order relation generated by the elementary steps $\bE_{\de(w)} < \bE_{\de(xw)} $ and $
\bE_{\de(w)} < \bE_{\de(yw)} $.  This order relation has the property that each element $\bE_{\de(w)}$ has precisely two successors, $\bE_{\de(xw)},\bE_{\de(yw)}$ 
that are the middle elements in  the triples $x\Tt, y\Tt$ respectively (where $\Tt: = \Tt_{\de(w)}$),  but (when $w\ne \emptyset$) exactly one predecessor $\pre(\bE_{\de(w)})$ that lies in $\Tt$.  Further,
$\pre(\bE_{\de(w)})$ is the middle entry of   a triple $\pre(\Tt)=:\Tt'$ that
has one other vertex, called $ \ppre(\bE_{\de(w)})$,  in common with $\Tt$.  Indeed, the construction implies that $\Tt$ equals either $x\Tt'$ or $y\Tt'$;
and the two triples $\Tt', x\Tt'$ share the vertices $\Tt'_\la, \Tt'_\mu$, while $\Tt', y\Tt'$ share the vertices $\Tt'_\rho, \Tt'_\mu$. Notice that in both  cases, 
the three classes $\bE, \pre(\bE),\ppre(\bE)$ form a triple (when appropriately ordered) with $\bE$ as the middle entry, and that these three classes lie on different levels, first  $\ppre(\bE)$, then $ \pre(\bE)$, and  then $\bE.$ Further, the classes $\pre(\bE)$ and $\bE$ belong to a pre-staircase with recursion parameter 
$t_{\ppre(\bE)}$. Note also that the two classes $\ppre(\bE), \pre(\pre(\bE))$ are usually different.

We then  inductively assign an integer  $\ell_{\Cc\Ss}(\bE_\de)$ (called its {\bf $\Cc\Ss$-length})   to each vertex $\bE_\de$ as follows:
\begin{itemize}\item $\ell_{\Cc\Ss}(E_0) = \ell_{\Cc\Ss}(E_1) = 1,\ \ell_{\Cc\Ss}(E_{.1}) = 2;$
\item if  $\Cc\Ss$-lengths are already assigned to all the vertices on  level $k$, then we assign  $\Cc\Ss$-length  to those on level $k+1$ as follows:
\begin{align}\label{eq:wtrecur0}
\ell_{\Cc\Ss}(\bE_{\de}) = \ell_{\Cc\Ss}(\pre(\bE_{\de})) + \ell_{\Cc\Ss} (\ppre(\bE_\de)).
\end{align}
\end{itemize}
Thus the classes  $\bE_{.01}, \bE_{.21}$ on level three have  $\Cc\Ss$-length $3$, while those on level four divide into two types:
we have
\begin{align*} 
\ell_{\Cc\Ss}(\bE_{.001})& = \ell_{\Cc\Ss}(\bE_{.01}) +\ell_{\Cc\Ss}(\bE_{0}) = 
\ell_{\Cc\Ss}(\bE_{.1}) + 2\ell_{\Cc\Ss}(\bE_{0}) = 4,\\
\ell_{\Cc\Ss}(\bE_{.021}) &  = 
 \ell_{\Cc\Ss}(\bE_{.01}) +\ell_{\Cc\Ss}(\bE_{.1}) = 
\ell_{\Cc\Ss}(\bE_{0}) + 2\ell_{\Cc\Ss}(\bE_{.1}) = 5.
\end{align*} 
Another way to understand the formula~\eqref{eq:wtrecur0} is to assign  $\Cc\Ss$-lengths recursively  to the edges $\vareps$ of the 
augmented graph giving them the same  $\Cc\Ss$-length as the corresponding vertex (see Proposition~\ref{prop:label1}~(vii)).  In this language, it reads
\begin{align}\label{eq:wtrecur}
\ell_{\Cc\Ss}(\bE_{\de}) = \ell_{\Cc\Ss}(\pre(\bE_{\de})) + \ell_{\Cc\Ss}(\vareps),
\end{align}
where $\vareps$ is the edge joining the two vertices $\pre(\bE_{\de}), \ \bE_{\de}$.
\MS

\begin{conjecture} \label{conj:1}
We conjecture that the following rules hold.\footnote{The notion of $CF$-length is defined in \eqref{eq:WCFlength}.} 

\begin{itemlist}\item[{\rm(i)}]   The continued fraction expansion of the center $p/q$ of the class $\bE_\de$ has $CF$-length $\ell_{\Cc\Ss}(\bE_\de)$.
\item[{\rm(ii)}] The steps of the two staircases associated to $\bE_\de$ have periodic continued fractions with periodic part of length $2(\ell_{\Cc\Ss}(\bE_\de))$.  Moreover,  these periodic parts have reverse cyclic order.
\end{itemlist}
\end{conjecture}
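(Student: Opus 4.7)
The overall approach is induction on the level of $\bE_\de$ in the generating-triple tree. For part~(i), the base cases $\bE_0, \bE_{.1}, \bE_1$ have centers $6$, $[7;4]$, $8$ with continued fraction lengths $1,2,1$, matching the prescribed $\Cc\Ss$-lengths. For the inductive step, any $\bE_\de$ at level $k+1$ arises as the middle entry of either an $x$- or $y$-mutation of a triple $\Tt_{\pre(\de)} = (\bE_\la, \bE_\mu, \bE_\rho)$ at lower level; in the $x$-case, Proposition~\ref{prop:gener} gives
\[
(p_\de, q_\de) = t_\la\,(p_\mu, q_\mu) - (p_\rho, q_\rho),
\]
with $\pre(\bE_\de) = \bE_\mu$ and $\ppre(\bE_\de) = \bE_\la$, and symmetrically for the $y$-case. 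The goal is to show that the CF-length of the new center equals $\ell_{\Cc\Ss}(\bE_\mu) + \ell_{\Cc\Ss}(\bE_\la)$, which by \eqref{eq:wtrecur0} and induction is exactly $\ell_{\Cc\Ss}(\bE_\de)$.

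The key ingredient is what I will call a \emph{CF concatenation lemma}: under the $t$-compatibility relations of a generating triple, the continued fraction of $p_\de/q_\de$ is built by concatenating the CFs of $\bE_\mu$ and $\bE_\la$, possibly modified by a ``gluing'' digit whose value is dictated by $t_\la, t_\mu, t_\rho$ and by the boundary CF digits. I would first verify this in a battery of low-level cases (e.g.\ $\bE_{.01}$, $\bE_{.21}$, $\bE_{.001}$) and then attempt a uniform proof via the matrix $M_\Tt = \begin{pmatrix} p_\rho & p_\mu \\ q_\rho & q_\mu \end{pmatrix}$, whose determinant $p_\rho q_\mu - p_\mu q_\rho = -t_\la$ encodes the Farey distance between the two rationals. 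The $t_\la$-iteration becomes right multiplication by $\begin{pmatrix} -1 & 0 \\ t_\la & 1 \end{pmatrix}$, and one must show that this factors, in the Farey groupoid, as a fixed sequence of elementary transvections whose coefficients are the new CF digits.

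For part~(ii), the two pre-staircases generated by $\Tt_\de$ have recursion parameters $t_{\rho_\de}$ (ascending) and $t_{\la_\de}$ (descending) by Lemma~\ref{lem:2}. Applying the CF concatenation lemma iteratively, each step of the recursion appends a fixed block of CF digits to the prior center. The accumulation points $z_\infty^\pm = \acc(b_\infty^\pm)$ are quadratic irrationals by Remark~\ref{rmk:recur0}(i), so Lagrange's theorem gives eventually periodic CFs; the period of each is precisely this per-iteration block. I would then show that the ascending-block length plus the descending-block length equals $2\ell_{\Cc\Ss}(\bE_\de)$, the factor $2$ reflecting that the triple contributes two staircases. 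The ``reverse cyclic order'' statement should follow from the classical palindromic duality for reduced quadratic surds---the CF of $-1/\bar\alpha$ is the reverse period of that of $\alpha$---applied to the pair $(z_\infty^-, z_\infty^+)$ via the $t_\la \leftrightarrow t_\rho$ symmetry that swaps the two staircases of a generating triple.

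The principal obstacle is the CF concatenation lemma itself. Because the $t$-recursion is subtractive (of Hirzebruch--Jung type rather than Euclidean), the relation between the CF of $(t_\la p_\mu - p_\rho)/(t_\la q_\mu - q_\rho)$ and the CFs of $p_\mu/q_\mu, p_\la/q_\la$ is not a naive concatenation: several ``gluing'' digits typically appear at the junction and must be computed from $t_\la$ and from the terminal CF digits of the pieces. Moreover, the parameters $t_\la, t_\rho$ vary across levels of the mutation tree, and $x$- and $y$-mutations can be composed in either order, so a uniform combinatorial argument will require substantial bookkeeping. A practical first step is to tabulate the gluing digits across many examples and look for a pattern; a conceptual proof may ultimately require reformulating the $t$-recursion as a walk on the Farey graph, or relating it to the action of specific elements of $\PSL(2,\Z)$ whose CF expansions are known.
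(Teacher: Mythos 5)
You should first note that the statement you have attempted is Conjecture~\ref{conj:1}: the paper does not prove it. It offers only numerical evidence (the worked example after the conjecture and the table in Remark~\ref{rmk:Farey}) together with adjacent partial results --- Corollary~\ref{cor:WW} and Proposition~\ref{prop:adjac} constraining the continued fractions of adjacent perfect classes, Lemma~\ref{lem:CFlength} showing CF-lengths strictly increase along a staircase, and the observation in Remark~\ref{rmk:n>0}~(ii) that $\ell_{\Cc\Ss}$ equals the degree of $p,t$ as polynomials in $n$. Your proposal likewise is not a proof: its entire mathematical content is the ``CF concatenation lemma,'' which you state as a goal and explicitly leave open, proposing to tabulate gluing digits and look for patterns. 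That lemma is precisely the hard part. What the paper's machinery already gives is that adjacency forces equality in \eqref{eq:weight}, so by Corollary~\ref{cor:WW} and Lemma~\ref{lem:CFlength}~(i) the CF of the mutated center $\bE_{x\mu}$ (or $\bE_{y\mu}$) extends that of $\bE_\mu$ and is strictly longer; what is missing, and what your sketch does not supply, is the exact count that the junction contributes $\ell_{\Cc\Ss}(\ppre(\bE_\de))$ new digits, so the induction via \eqref{eq:wtrecur0} does not close.

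For part~(ii) there are two further problems beyond the unproved lemma. First, ``the two staircases associated to $\bE_\de$'' are the ones having $\bE_\de$ as blocking class, i.e.\ $\Ss^{x\Tt_\de}_\ell$ and $\Ss^{y\Tt_\de}_u$, which accumulate at the two endpoints of $I_{\bE_\de}$ and both have recursion parameter $t_\de$; you instead work with $\Ss^{\Tt_\de}_\ell,\Ss^{\Tt_\de}_u$ (parameters $t_\rho,t_\la$, blocking classes $\bE_\rho,\bE_\la$), which are different staircases. Second, the conjecture asserts that \emph{each} periodic part has length $2\ell_{\Cc\Ss}(\bE_\de)$ (in the example for $\bE_{.1}$, with $\ell_{\Cc\Ss}=2$, both periods have length $4$), whereas you aim to show the ascending and descending block lengths \emph{sum} to $2\ell_{\Cc\Ss}(\bE_\de)$, which contradicts the paper's own example. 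Finally, the ``reverse cyclic order'' claim does not follow merely by citing the palindromic (Galois) duality for reduced quadratic surds: one would still have to prove that the two accumulation points $\p^\pm(I_{\bE_\de})$ stand in the required conjugate--reciprocal relation (equivalently, that their period matrices are cyclic rearrangements of reversals of one another), and nothing in the proposal establishes this. In short, the proposal is a plausible research plan, consistent with the experimental evidence the paper itself reports, but it contains no complete argument for either part of the conjecture.
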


For example, the class $\bE_{.1}$ has $\Cc\Ss$-length  $2$ and center $[7;4]$.  Its two staircases are periodic of period $4$ with steps
\begin{align*}
&[\{7,5,3,1\}^k, \eend],\ k\ge 0, \quad\mbox{ where }\eend = 6,\mbox{ or } (7,5,2),\ \ \mbox{ ascending} \\
&[7,3, \{5,7,1,3\}^k, \eend], \ k\ge 0, \quad\mbox{ where }\eend = 6,\mbox{ or } (5,7,2),\ \ \mbox{ descending}
\end{align*}
Further, the reverse of the periodic part $7,5,3,1$ of the ascending staircase  is $1,3,5,7$, which agrees with  
the periodic part $5,7,1,3$ of the descending staircase, modulo a cyclic permutation. 
See Remark~\ref{rmk:Farey} for a different perspective on these results.

Proposition~\ref{prop:adjac} explains what we know about the continued fraction expansions of the centers of a pair of adjacent steps $\bE_\de,\ \bE_{x\de}$ or $\bE_\de, \ \bE_{y\de}$, while Proposition~\ref{prop:short} establishes what we can prove about the related notion of the weight length of the step centers; see \eqref{eq:WCFlength}.

\begin{rmk}\label{rmk:n>0}\rm  (i) To keep things simple, above we only considered the classes with centers in the interval $[6,8]$. However, we saw in Example~\ref{ex:SsU}
that for each $n$ the classes $(\bB^U_n, \bE_{[2n+7, 2n+4]}, \bB^U_{n+1}\bigr)$   form a generating triple.
It follows that, for each $n>0$, the set $\Ee_{[2n+6,2n+8]}$  of all quasi-perfect classes with centers in $[2n+6,2n+8]$ has precisely the same structure as $\Ee_{[6,8]}$. 
Indeed,
we may assign to the classes  in the initial triple  $\Tt^n_*: = (\bB^U_n, \bE_{[2n+7; 2n+4]}, \bB^U_{n+1})$ the labels $0^n,\ .1^n,\, 1^n$, and then label the derived classes  by
$\de^n$ where $\de$ is a ternary decimal as before.  Thus the center classes of the first two derived triples
$x \Tt^n_*$ and $y\Tt^n_*$ have labels $\de^n_{.01},\ \de^n_{.21}$, and so on.  Conjecture~\ref{conj:1} should hold for all $n$.\MS

\NI (ii) 
Finally we observe that the $\Cc\Ss$-length of a vertex can be given the following interpretation.    The coefficients of the classes are  polynomials in $n$:
by \eqref{eq:SsUsteps}, the classes $\bB^U_n$ are at level zero and have coefficients $p,t$ that are linear in $n$, while $q=1$ is constant.  
Further the $p,t$ coefficients of the classes at level $1$ are quadratic in $n$.  Thus in these initial cases 
the degree of $p,t$ as a function of $n$ is  just the $\Cc\Ss$-length assigned to this class.  One can then easily prove by induction that  the $\Cc\Ss$-length of a vertex is the degree of $p,t$ as a function of $n$.   The $\Cc\Ss$-length of an edge (see \eqref{eq:wtrecur}) can then be understood as the degree of the recursion variable for the pre-staircase 
with first two steps given by its endpoints.

This interpretation fits in with the above conjecture since in the cases we have calculated the periodic part of the ascending stairs has entries of the form $2n+i$ for $i\in \{1,3,5,7\}$, and the degree of the corresponding recursion is half the length of this periodic part.  For example, by
 \eqref{eq:SsUparam} the ascending staircase with blocking class $\bE^n_{.1} = [2n+7; 2n +4]$ has periodic part $(2n+7, 2n+5, 2n+3, 2n+1)$ with recursion variable $t = 4n^2 + 16n + 13$. \MS

\NI (iii)  For further comments on these matters see Remark~\ref{rmk:Farey}.  In the table given there, we write the staircase accumulation points in a  form that is slightly different (but equivalent) to that conjectured above, in order to emphasize the relation to the blocking class.
\hfill$\er$
\end{rmk}

We are now in a position to prove that the blocked $z$-intervals form a dense subset of $[6,\infty)$.
Recall that for each quasi-perfect class $\bE$, the interval $I_\bE: = \acc(J_\bE)$ is defined to be the set of $z$ such that
$\mu_{\bE, \acc_\eps^{-1} (z)} (z) > V_{\acc_\eps^{-1} (z)}(z)$,  where $\acc_\eps^{-1}$ is the appropriate inverse to the accumulation function.  (In the current situation we take the inverse with values in $(1/3,1)$ since all the classes $\bE$ of relevance here have $m/d>1/3$.) Note that the  claim in the following proposition about the length of the blocked interval $I_{\bB^U_n}$  was proved in \cite{ICERM} by direct calculation.  The current proof is computationally much easier.

\begin{prop}\label{prop:dense}  Let $\Ee_{[6,\infty)}$ be the set of all quasi-perfect classes with centers in $[6,\infty)$.
Then they block a dense subset of $[6,\infty)$; more precisely 
$$
{\it Block}_{[6,\infty)}: = \bigcup_{\bE\in \Ee_{[6,\infty)}}  I_\bE \cap [6,\infty) \quad\mbox{ is a dense subset of } \ [6,\infty).
$$
Moreover the length of the  interval blocked by the class $\bB^U_n$ converges to $2$ as $n\to \infty$.
\end{prop}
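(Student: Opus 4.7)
The length claim is a direct computation. By Lemma~\ref{lem:2} applied to the generating triple $\Tt^n_*$ of Example~\ref{ex:SsU}, the two endpoints of $I_{\bB^U_n}$ coincide with the accumulation points of the principal staircases $\Ss^U_{\ell,n}$ (ascending) and $\Ss^U_{u,n}$ (descending), both of which have $\bB^U_n$ as blocking class. By \eqref{eq:SsUsteps} these are
$$
\p^-(I_{\bB^U_n}) = [\{2n+5,2n+1\}^\infty] =: \alpha_n, \qquad \p^+(I_{\bB^U_n}) = 2n+7 + 1/\alpha_n.
$$
The self-referential identity $\alpha_n = (2n+5) + 1/(2n+1 + 1/\alpha_n)$ gives $\alpha_n - (2n+5) = 1/(2n+1) + O(1/n^2)$ and $1/\alpha_n = 1/(2n+5) + O(1/n^2)$, so
$$
|I_{\bB^U_n}| = 2 + 1/\alpha_n - (\alpha_n - (2n+5)) = 2 + O(1/n) \to 2.
$$

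For density, it suffices to prove density in each interval $(2n+6, 2n+8)$, since these cover $(6,\infty)$. Fix $n$ and let $U \subset (2n+6, 2n+8)$ be a nonempty open subinterval. My plan is to argue by contradiction: suppose $U$ meets no blocked interval. By Proposition~\ref{prop:gener} and Lemma~\ref{lem:label}, the foundational triple $\Tt^n_*$ generates an infinite binary tree of generating triples under mutation, with associated blocked intervals $I_{\bE_{\de^n}}$ indexed by ternary decimals $\de^n$. Since $U$ is disjoint from every $I_{\bE_{\de^n}}$, at each level $k$ the set $U$ is trapped in a single gap $G_k$ between two adjacent level-$k$ blocked intervals. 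Passing from level $k$ to level $k+1$ inserts a new middle-class blocked interval into $G_k$ by Proposition~\ref{prop:gener}, so $U$ is forced into one of the two resulting sub-gaps.

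The remaining (and main) step is to show $|G_k|\to 0$ as $k \to \infty$, which contradicts $U\subset G_k$. By Lemma~\ref{lem:2}, the two endpoints of the gap $G_k$ are accumulation points of principal pre-staircases whose step centers $p_j/q_j$ lie inside blocked intervals contained in $G_k$; by Remark~\ref{rmk:recur0}(i), these centers converge to the corresponding gap endpoint geometrically at rate $O(\lambda^{-2j})$, with $\lambda = \lambda(t) > 1$ depending only on the recursion parameter $t\ge 3$. Each such center lies inside its own blocked interval (since the class is perfect and hence center-blocking by \cite[Prop.2.2.9]{MM}), which pinches the residual gap from the appropriate side. The pincer is completed by alternating $x$- and $y$-mutations as needed to ensure that the sub-gap chosen at each step still contains $U$, so that both sides of $G_k$ shrink.

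The main obstacle is making this pincer argument quantitatively uniform: one must verify that the alternation strategy really does squeeze both endpoints of $G_k$ toward each other, and not merely one side at a time. However, since $\lambda(t)$ is bounded below for $t\ge 3$ and the recursion parameters $t$ are nondecreasing along the mutation tree (as is evident from the structure of the derived triples in Proposition~\ref{prop:gener} and the fact that $t_{x\mu} = t_\la t_\mu - t_\rho$, $t_{y\mu} = t_\rho t_\mu - t_\la$), the geometric decay is in fact uniform or better at deeper levels. This forces $|G_k|\to 0$, completing the proof.
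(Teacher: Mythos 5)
Your treatment of the length claim is correct, and it is in fact more direct than the paper's: you compute $|I_{\bB^U_n}|=2n+7+1/\alpha_n-\alpha_n$ straight from the continued--fraction formulas \eqref{eq:SsUsteps} for the limits of $\Ss^U_{\ell,n}$ and $\Ss^U_{u,n}$, using the (quoted) fact that these limits are the endpoints of $I_{\bB^U_n}$, whereas the paper deduces the same statement as a by-product of its level-by-level density estimates. That part stands, modulo the small imprecision that the lower endpoint comes from the triple $\Tt^{n-1}_*$ (or from the association of $\Ss^U_{\ell,n}$ to $\bB^U_n$ quoted from \cite{MM}), not from $\Tt^n_*$ itself.

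The density argument, however, has a genuine gap at exactly the step you flag as the ``main obstacle,'' and the justification you offer does not close it. The nested-gap reduction is fine, but everything hinges on $|G_k|\to 0$, and Remark~\ref{rmk:recur0}~(i) cannot deliver this: the rate $O(\la^{-2j})$ is an asymptotic statement in the step index $j$ of one \emph{fixed} staircase, with a multiplicative constant determined by the seeds via \eqref{eq:XX'}. At level $k$ the seeds are the (level-dependent) classes of $\Tt_k$, and that constant is comparable to the very gap width $|I_{\Tt_k}|$ you are trying to bound, so the appeal is circular; knowing $\la(t)$ is bounded below, or that the $t$-parameters grow under mutation, controls how fast a single staircase converges to its limit \emph{relative to its own scale}, but gives no absolute bound on the width of the level-$k$ gap. (Also, along an eventually-all-$x$ path the descending staircases bounding the gaps all keep the same recursion parameter $t_\la$, so ``nondecreasing $t$'' is not even available for the staircases that matter; and the ``alternation strategy'' is moot, since the path is dictated by $U$, not chosen by you.) What is needed, and what the paper supplies, is a per-level estimate: since every recursion parameter is $\ge 3$, each recursively defined coordinate at least doubles from one level to the next, so the $q$-coordinate of a level-$\ell$ middle class is at least $2^{\ell+1}$; combining this with Lemma~\ref{lem:recur0}~(iii), which gives $|p/q-p'/q'|=t''/(qq')$ for adjacent, $t''$-compatible steps, and with the geometric tail estimate \eqref{eq:distell}, one finds that the center of the middle class of a level-$\ell$ triple lies within $2^{-\ell}$ of \emph{both} endpoints of its gap, so every level-$\ell$ gap has length at most $2^{-\ell+1}$. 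That is the uniform statement that forces $|G_k|\to 0$ and completes your contradiction; without it (or some substitute estimate on denominators), your proof of density is incomplete.
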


In fact all the above intervals $I_\bE$ lie entirely  in $[6,\infty)$ except for  $\bE = \bB^U_0$. 

\begin{proof}  Since it suffices to prove the first claim for each $n\ge 0$, we will begin with the case $n=0$.  Thus, consider the subset $\Ee_{[6,8], {\ell}}$ of classes in  $\Ee_{[6,8]}$ with level $\le {\ell}$.  We will show by induction that the set
$$
{\it Block}_{[6,8],{\ell}}: = \bigcup_{\bE\in \Ee_{[6,8]},{\ell}}  I_\bE \cap [6,8],\quad {\ell}\ge 0,
$$ 
is $2^{-({\ell}+1)}$-dense\footnote
{This constant could definitely be improved;  but it is all we need in the current context.}
    in $[6,8]$.  When ${\ell}=0,1$ this is clear since $7\frac 14 = [7;4]$ is the center of $\bE_{[7;4]}$  and all the points  $<7$ or $> 7\frac12$ are blocked by either $\bE_6$ or ${\bE_8}$.
We will prove the following: 
\begin{itemlist} \item Any recursively defined parameter at least doubles as one moves from one level to the next.  In particular, the $q$ coefficient of all classes at level ${\ell}\ge 1$ is at least $2^{{\ell}+1}$.
\item  if $\Tt$ is any triple with $\bE_\mu$ at level ${\ell}$, then in each of the two associated pre-staircases 
$\Ss^\Tt_\ell, \Ss^\Tt_u$ 
(see Lemma~\ref{lem:2})
\begin{itemize}
\item[-]  the distance between the centers of the second\footnote
{
This step is $\bE_\mu$ itself.}  and third steps  is at most $1/2^{{\ell}+1}$;
\item[-]  the distance from the center of the third step to the pre-staircase limit is $<1/2^{{\ell}+1}$. 
\end{itemize}
\end{itemlist}
Since the pre-staircase 
limit is in the closure of ${\it Block}_{[6,8],{\ell}}$,  it follows that for any point in the interval between the centers of $\bE_\la$ and $\bE_\rho$ the distance between that point and
${\it Block}_{[6,8],{\ell}}$ is at most $1/2^{\ell+1}$.

We argue by induction.  The first point is clear since all recursion parameters are at least $3$ (which is the $t$ coefficient for $\bB^U_0$).  Since
the $q$-coefficient of $\bE_{.1} = \bE_{[7;4]}$ at level one is $4$, it must at least double as one goes from level to level. 
To prove the second point, notice that if $p/q$ and $p'/q'$ are the centers of successive steps in a pre-staircase 
with recursion parameter $t''$, then because these steps are adjacent and $t''$-compatible we have
$|p/q-p'/q'| = t''/(qq')$  by Lemma~\ref{lem:recur0}~(iii).    Both pre-staircases $\Ss^\Tt_\ell, \Ss^\Tt_u$ 
have second step $\bE_{\mu}$, which lies on level ${\ell}$, and so by hypothesis has $q\ge 2^{{\ell}+1}$.  
Further, because the third step lies on a deeper level than the second we  can estimate
$$
\frac{t''}{qq'} = \frac{t''}{q (t'' q-q_0)}< \frac 1 q \le \frac 1{2^{{\ell}+1}},
$$
where $q_0$ comes from the previous step and we have used the obvious fact that $t''q - q_0> t''$, or equivalently 
$t''(q-1) > q_0$.   

Finally we need to check the distance from the center $p_3/q_3: = p'/q'$ of the third step to the limit $a_\infty$. This distance decomposes as a sum that we estimate as follows:
\begin{align}\label{eq:distell}
|p_3/q_3 - a_\infty| &= \sum_{k\ge 3} |p_{k+1}/q_{k+1} - p_k/q_k| 
\le \sum_{k\ge 3}  \frac{t''}{q_k q_{k+1}}\\ \notag
&\le   \frac {t''}{q_3^2} \Bigl(\frac 12 + \frac 1{2^2} +  \frac 1{2^3} + \dots\Bigr) \mbox{ since } q_{k+1}>2q_k\\ \notag
& \le \frac {t''}{q_3 (t'' q_2-q_1)} \le \frac 1{q_3} \le \frac 1{2^{{\ell}+1}}.
\end{align}
This completes the proof when $n=0$.

The same argument works for general $n$, except that now we can get better estimates since the recursion parameter is at least $n+3$, so that   at each stage
each recursively defined parameter increases by a factor of at least $n+2$.  
In particular, because the $q$-coefficient of the level one class $\bE^n_{.1}$ is $2n+4$, 
the $q$-coefficient of each class at level $\ell$ in $\Ee_{[2n+6, 2n+8]}$ is at least $2(2+n)^\ell$.
Hence the distance $|p_3/q_3 - a_\infty|$ estimated in \eqref{eq:distell} has upper bound 
$\frac 1{2(2+n)^{{\ell}}}$ and hence converges to $0$  for each fixed $\ell\ge 1$   as $n\to \infty$.
In particular, when $\ell = 1$ this says that the distance between the centers of the third steps $\bE^n_{.01}, 
\bE^n_{.21}$ and the limits $\p^+ I_{[2n+6]}, \ \p^-I_{[2n+8]}$ converges to zero.  
But the joint second step has center at $[2n+7; 2n+4]$ which converges to $2n+7$ as $n\to \infty$.  
Further the distance between this 
joint second and the third step of each pre-staircase also converges to zero.  Hence the 
distances $\p^+ I_{[2n+6]}- 2n+7$ and $\p^- I_{[2n+8]}- 2n + 7$ both converge to zero, which implies that
 the length of each interval $I_{[2n+6]}$ must converge to $2$.
 \end{proof}

We now define the rest of the staircases mentioned in Theorem~\ref{thm:main}, using the notation for symmetries introduced in \S\ref{ss:symm}; see in particular \eqref{eq:SR}, \eqref{eq:acceps}.
For $\eps \in \{\pm1\}$ and $\de\in \{0,1\}$,  write
\begin{align}\label{eq:whZ}
& \widehat{Z}: = \widehat{Z}_{+1} \cup \widehat{Z}_{-1},\qquad \widehat{Z}_\eps: = \bigcup_{\{(i,\de)\,:\,  (-1)^{i+\de}\, =\, \eps\}}\ \whZ_{i,\de},\\ \notag
&\whZ_{i,\de}: =  \bigl\{ S^iR^\de(z)\ \big| \ z\in [6, \infty), \acc_{+1}^{-1}(z)\notin {\it Block} \bigr\},
\end{align}
and define $Z_\eps\subset \widehat{Z}_\eps$ to consist of all points that are not endpoints of blocked intervals.  Thus $\widehat{Z}$ is a countable union of Cantor sets, while $Z$ is the complement of the set of endpoints.
It is convenient to assign ternary decimal labels to the points in $\whZ$ in the following way, where as before we simplify notation by describing it for the steps in $[6,8]$, with the understanding that $n$ is added if $z\in [2n+6,2n+8]$, and extra decorations $(i,\de)\in \N^+\times \{0,1\}$ are added for the points in $\whZ\cap \bigl(S^iR^\de([6,\infty))\bigr)$.

Consider the triple $\Tt_\de$ with $\bE_\mu = \bE_\de$, where 
$\de = .a_1a_2\dots a_i 1$.
The ascending principal pre-staircase 
$\Ss^\Tt_\ell$ has steps  $\bigl(\bE_{\de^+_k}\bigr)_{k\ge 0},$ where 
$\de_{\ell,k} = 0.a_1a_2\dots a_i 2^{\times k}1$ and hence has limit with infinite decimal label
$\de_{\ell,\infty} = 0.a_1a_2\dots a_i 2^{\times \infty}$.  Similarly, the limit of the descending 
principal pre-staircase 
can be given the  decimal label $\de_{u,\infty} = 0.a_1a_2\dots a_i 0^{\times \infty}$.
More generally if $\al_\infty = 0.a_1a_2\dots a_i \cdots$ is any infinite decimal with entries from $\{0,2\}$ 
 then we claim that there is a corresponding sequence of classes  $\Ss_{\al_\infty} = \bigl(
 \bE_{\al_k}\bigr)_{k\ge 1}, $ where $\al_k = 0.a_1a_2\dots a_k1$.  Note that, unless $\al_\infty$ ends in repeated $0$s or $2$s, the corresponding sequence of step centers contains both increasing and decreasing subsequences because  $\al_k < \al_\infty$ (resp. $\al_k > \al_\infty$)  for all $k$ such that $a_{k+1} = 2$ 
 (resp. $a_{k+1} = 0$).   Further, there is a bijection between these decimals and sequences of 
 vertices in the graph illustrated in Fig.~\ref{fig:decimal} that go down one level at each step.

 The next definition defines the monotonic pre-staircases $\Ss^{\pm 1}_{\al_\infty}$ by picking out particular   subsequences of steps.  Note that there are many ways of doing this, all of which would work equally well.
\MS

\begin{definition}\label{def:newstair}
For each  $\al_\infty \in Z$ we define $b_{\al_\infty}$ to be the limit of the ratios $m_{\al_k}/d_{\al_k}$ of the degree coordinates of $ \bE_{\al_k}$.     Further we define the {\bf ascending pre-staircase $\Ss^+_{\al_\infty}$} to have steps at $\al_{n_k}$ where $(\al_{n_k})_{k\ge 0}$ is the maximal subsequence such that $a_{n_k} = 0, a_{n_k+1} = 2$.   Similarly, the 
{\bf descending pre-staircase $\Ss^-_{\al_\infty}$} has steps at $\al_{n_k}$ where $(\al_{n_k})_{k\ge 0}$ is the maximal subsequence such that $a_{n_k} = 2, a_{n_k+1} = 0$. 
\end{definition}

In this definition (as in Definition~\ref{def:prestair}) we use the word {\bf pre-staircase}  to refer to this sequence of perfect classes whose center points $p_k/q_k$ converge to a limit $z_\infty$, since 
 the word staircase should be reserved for a sequence of classes that are live at the limiting $b$-value.  
Thus this notion of pre-staircase does not assume that when $b = \acc^{-1}(z_\infty)$ the corresponding functions $\mu_{\bE_k, b}$ are even obstructive at the center points $p_k/q_k$, let alone live.   
Note also that it is considerably more general than that in \cite[\S1.2]{MM}, since the steps are no longer required to be recursively defined.

Here is our main result about these pre-staircases.  

\begin{prop}\label{prop:liveZ}  \begin{itemlist} \item[{\rm {(i)}}]
For each ${\al_\infty}\in Z$, all the  steps in the ascending pre-staircase 
$\Ss^+_{\al_\infty}$ are live at $b=b_{\al_\infty}$.   
\item[{\rm {(ii)}}]  For each staircase family $(S^iR^{\de})^\#\bigl(\Cc\Ss^U\cap[2n+6,2n+8]\bigr)$, there is a constant $D_0$ such that each  step in each
descending pre-staircase $\Ss^-_{\al_\infty}$ of degree $\ge D_0$ is 
 live at $b=b_{\al_\infty}$. 
\end{itemlist}
\end{prop}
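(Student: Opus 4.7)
The plan proceeds in two stages: (A) confirm that every class $\bE_{\al_k}$ arising in the complete labelling is perfect, and (B) verify that the obstruction $\mu_{\bE_{\al_{n_k}},b_{\al_\infty}}$ dominates all competing obstructions at its centre.

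For (A), I argue by induction on the level of $\bE_{\al_k}$. The base case consists of the classes in the initial triple $\Tt^n_* = (\bB^U_n, \bE_{[2n+7;2n+4]}, \bB^U_{n+1})$, all of which are perfect by the results quoted in Example~\ref{ex:SsU}. For the inductive step, given a generating triple $\Tt = (\bE_\la, \bE_\mu, \bE_\rho)$ whose entries are already known to be perfect, Lemma~\ref{lem:live2} reduces perfectness of the new middle classes $\bE_{x\mu}, \bE_{y\mu}$ of $x\Tt$ and $y\Tt$ to the assertion that the lower endpoints of their blocked intervals are unobstructed. These endpoints are accumulation points of principal ascending pre-staircases (Lemma~\ref{lem:2}(i)), so the full-filling construction via almost toric fibrations of Magill~\cite{M1} applies. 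Proposition~\ref{prop:symm} then propagates the conclusion to the images under each $S^iR^\de$.

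For part (i), fix $\al_\infty\in Z$ and write $\bE^+_k := \bE_{\al_{n_k}}$ for the $k$-th step of $\Ss^+_{\al_\infty}$. By (A) and Lemma~\ref{lem:basic}(i), $\mu_{\bE^+_k,b}$ is live at $p^+_k/q^+_k$ whenever $b\approx m^+_k/d^+_k$, and the ratios $m^+_k/d^+_k$ decrease to $b_{\al_\infty}$ by Remark~\ref{rmk:triple0}(ii). The class $\bE^+_k$ moreover launches a principal ascending pre-staircase whose limit $b^-_k$ lies in $[b_{\al_\infty}, m^+_k/d^+_k)$ and which is live at $b^-_k$ by Proposition~\ref{prop:live}. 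The exponential decay $d^+_k(m^+_k/d^+_k - b^-_k) = O(\la^{-k})$ from Remark~\ref{rmk:recur0}(i) lets me verify the obstructiveness condition $|b_{\al_\infty}d^+_k - m^+_k| < \sqrt{1-b_{\al_\infty}^2}$ of Lemma~\ref{lem:basic}(ii) for all large $k$, while a direct computation handles the finitely many small $k$. Finally, maximality among perfect classes at $(p^+_k/q^+_k, b_{\al_\infty})$ follows from the disjointness of blocked $b$-intervals in Proposition~\ref{prop:1}, ruling out competing dominant obstructions.

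For part (ii), the strategy is parallel but quantitatively more delicate. The principal descending pre-staircase through $\bE^-_k := \bE_{\al_{n_k}}$ has its limiting $b$-value on the opposite side of $b_{\al_\infty}$ from $m^-_k/d^-_k$, so liveness cannot be transferred by monotonicity. Instead, I would show that the widths of the nested blocked $z$-intervals containing $p^-_k/q^-_k$ decay geometrically (extending the $2^{-({\ell}+1)}$ estimate of Proposition~\ref{prop:dense}), yielding a bound $d^-_k(m^-_k/d^-_k - b_{\al_\infty}) \to 0$ with rate depending only on the base triple $\Tt^n_*$ and the symmetry $(i,\de)$. Choosing $D_0$ so that this quantity is below $\sqrt{1-b_{\al_\infty}^2}$ for $d\geq D_0$ gives obstructiveness, and maximality again uses Proposition~\ref{prop:1}. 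The main obstacle lies exactly here: because the steps of $\Ss^-_{\al_\infty}$ are drawn from different principal subfamilies, the clean recursive formulas of \eqref{eq:XX'} do not apply directly, and the required rate must be extracted from the fractal nesting of blocked intervals uniformly over all $\al_\infty$ within a single staircase family --- which is precisely the reason why only the weaker form with a degree threshold, rather than liveness of every step, is asserted.
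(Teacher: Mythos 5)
There is a genuine gap, and it lies at the heart of both parts: you conflate \emph{obstructiveness} with \emph{liveness}. Verifying $|b_{\al_\infty}d_k-m_k|<\sqrt{1-b_{\al_\infty}^2}$ (Lemma~\ref{lem:basic}~(ii)) only shows that $\mu_{\bE_k,b_{\al_\infty}}$ exceeds the volume at its center; liveness means it equals $c_{H_{b_{\al_\infty}}}$ there, i.e.\ it dominates the obstruction from \emph{every} exceptional class, perfect or not. Your appeal to Proposition~\ref{prop:1} cannot close this: disjointness of the blocked $b$-intervals $J_\bE$ constrains where each perfect class obstructs the accumulation point $\acc(b)$, but says nothing about which class gives the largest obstruction at a fixed pair $(p_k/q_k, b_{\al_\infty})$, and in any case the capacity function also receives contributions from non-perfect exceptional classes (e.g.\ $3L-E_0-2E_1-E_{2\dots6}$), which Proposition~\ref{prop:1} does not see at all. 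This is exactly the difficulty the paper's \S\ref{ss:uncount} is organized around: the three failure modes listed before Proposition~\ref{prop:live} (slow convergence, infinitely many low-degree obscuring classes, an overshadowing class) must each be ruled out, and none of them is addressed by your maximality claim.

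For the ascending case the paper does not argue via obstructiveness at all; instead Lemma~\ref{lem:md} (a $b$-derivative comparison governed by $m/d$) feeds into Lemma~\ref{lem:5T}, which shows each middle class $\bE_\mu$ of a triple is live at its center for \emph{all} $b$ in the closed interval between $\p^-(J_{\bE_\mu})$ and $m_\mu/d_\mu$ (hence on $[\p^-(J_{\bE_\mu}),\p^-(J_{\bE_\rho})]$ when $b>1/3$), and Corollary~\ref{cor:5T} observes that $b_{\al_\infty}$ lies in that interval for every step of $\Ss^+_{\al_\infty}$. For the descending case, your rate estimate would (at best) reproduce the obstructiveness statement of Lemma~\ref{lemcor:1T}, which the paper gets from the slope inequality \eqref{eq:MDTineq0} propagated through mutations; the substantive work you omit is (a) the uniform degree bound of Lemma~\ref{lem:2T}, obtained from the monotonicity of the quantity $A(m,d,x_0)$ along the staircase with uniform constants $x_0,x_1$ and a level cutoff (this, not the convergence rate, is the source of the threshold $D_0$), and (b) Proposition~\ref{prop:6T}, the first- and second-derivative comparison of the crossing point $w(b)$ with $\acc(b)$ showing that any overshadowing class through the accumulation point would have to block a one-sided neighborhood of $b_{\al_\infty}$, which is impossible since unobstructed $b$-values accumulate on both sides. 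Without (a) and (b), obstructiveness of the steps gives no control on $c_{H_{b_{\al_\infty}}}$. Your part (A) on perfectness is fine, but it is not new to this proposition: it is the content of Corollary~\ref{cor:perfATF}, already established in \S\ref{ss:live}.
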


Here is an immediate consequence.

\begin{cor}\label{cor:liveZ}  Every $b$ that is not in the special rational sequence  is either blocked or has a  staircase.
Moreover, unless $b$ is in the closure of a blocked interval, it admits both an ascending and a descending staircase.
\end{cor}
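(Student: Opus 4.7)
The plan is to deduce both claims from Proposition~\ref{prop:liveZ}, Lemma~\ref{lem:2}, and the ternary-decimal bookkeeping of \S\ref{ss:ternary}, after a standard symmetry reduction. First I would fix $b$ not in the special rational sequence and, using the symmetries $S^iR^\de$ (which by Theorem~\ref{thm:main1}(ii) take staircases to staircases and preserve the special rational orbit), reduce to the case where $b > 1/3$ and $\acc(b) \in [2n+6, 2n+8]$ for some $n \ge 0$. If $b \in {\it Block}$ there is nothing to prove for the first statement and the second is vacuous, so I would then assume $b \notin {\it Block}$.

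Under this assumption the decimal labeling of Lemma~\ref{lem:label}, extended to infinite decimals, assigns $b$ a unique ternary label $\al_\infty$ with digits in $\{0, 2\}$, and one sets $b = b_{\al_\infty}$ as in Definition~\ref{def:newstair}. I would then split into two cases. In the first, $\al_\infty$ eventually stabilizes at $0^\infty$ or $2^\infty$; geometrically this is the case where $b$ coincides with an endpoint of a blocked $b$-interval $J_\bB$, and so $b$ lies in the closure of a blocked interval. Here Lemma~\ref{lem:2} directly provides a principal staircase---ascending if the tail is $2^\infty$ and descending if the tail is $0^\infty$---whose accumulation point is $b$, giving at least one staircase as required for the first statement. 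In the second case, $\al_\infty$ contains infinitely many $0$'s and infinitely many $2$'s, so both the ascending subsequence $\Ss^+_{\al_\infty}$ and the descending subsequence $\Ss^-_{\al_\infty}$ of Definition~\ref{def:newstair} have infinitely many steps; Proposition~\ref{prop:liveZ}(i) shows every step of $\Ss^+_{\al_\infty}$ is live at $b_{\al_\infty}$, and Proposition~\ref{prop:liveZ}(ii) shows all but finitely many steps of $\Ss^-_{\al_\infty}$ are live, so $b$ admits both an ascending and a descending staircase.

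Combining the two cases yields the first statement, and the second follows because the second case is exactly the set of $b$-values that are not in the closure of any single component of ${\it Block}$---the closure of a component consists of the open interval together with its two endpoints, corresponding to labels with eventual tail $0^\infty$ or $2^\infty$. The substantive work is not here but in Proposition~\ref{prop:liveZ}: the main obstacle is establishing liveness of the non-recursive pre-staircases $\Ss^\pm_{\al_\infty}$, especially in the descending case where only eventual (rather than total) liveness can be guaranteed. Given that input, the role of the present corollary is mostly bookkeeping: the only thing to verify carefully is that the decimal parametrization really exhausts $[2n+6, 2n+8] \setminus {\it Block}$ away from the special rationals, which follows from Proposition~\ref{prop:dense} together with the Cantor-set model of ${\it Block}_{[2n+6, 2n+8]}$ provided by Theorem~\ref{thm:main}(iii).
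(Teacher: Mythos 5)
Your proposal is correct and takes essentially the same route as the paper, which treats this corollary as an immediate consequence of Proposition~\ref{prop:liveZ} (for the points of $Z$, via the ternary labelling of Definition~\ref{def:newstair}) together with the principal staircases already available at the endpoints of the blocked intervals. The one small imprecision is that in the endpoint case Lemma~\ref{lem:2} only yields a \emph{pre}-staircase; its liveness at the limiting $b$-value comes from Proposition~\ref{prop:live} and Corollary~\ref{cor:live1}, which your argument implicitly relies on.
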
 
The special rational  $b$ are described  in \eqref{eq:specialb}.
The proofs are given in \S\ref{ss:uncount}.

 \subsection{Effect of symmetries}\label{ss:symm}
 
We explain how two symmetry operations from \cite{MM} act on perfect classes and hence on triples, and prove several key facts about this action, including Theorem~\ref{thm:main1}~(ii) in Corollary~\ref{cor:main1iipf}.
The action of these symmetry operations $S^\sharp,R^\sharp$ 
 is illustrated in Fig.~\ref{fig:symm}.   Since this is rather complicated, we begin with a brief recap of some results in \cite{MM}.

  The symmetries act on the $(p,q,t)$ coordinates by  $\bx: = (p,q,t)^T\mapsto \bx': = (p',q',t')^T$ via the matrices $S,R$ given by
\begin{align}\label{eq:SR}
 S: = \left(\begin{array}{ccc} 6&-1&0\\1&0&0\\0&0&1\end{array}\right), \quad R: = \left(\begin{array}{ccc} 
 6&-35& 0\\1&-6&0\\0&0&1\end{array}\right).
\end{align}
Thus $t$ is fixed, while $S,R$ induce the following action on the $z$-coordinate:
\begin{align*}
 &S: (a_{\min},\infty) \to (a_{\min},6), \quad p/q \mapsto (6p-q)/p,\\
& R: (6, \infty)\to (6,\infty), \quad p/q\mapsto (6p-35 q)/(p-6q).
\end{align*}
Thus $S$ is a shift to the left, while $R$ is a partially defined reflection that fixes the point $7$.
If we define the $y_i$ and $v_i$ recursively  by 
\begin{align}\label{eq:symmvi}
v_i  = y_i/y_{i-1}, \quad y_0=0, \ y_1 = 1, \ y_{i+1} = 6y_i-y_{i-1}, \ i\ge 1
\end{align}
we have $S(v_i) = v_{i+1}$; note that  $v_i\to a_{\min} = 3 + 2\sqrt2$.   
It is also useful to consider the points $w_i: = \frac{y_{i+1}+y_{i}}{y_i+y_{i-1}} = S(w_{i-1}), i\ge 1,$ given by the sequence $7, 41/7, \dots$.  
 \begin{figure}[htbp] 
 \centering
\vspace{-1 in}
\centerline{\includegraphics[width=6in]{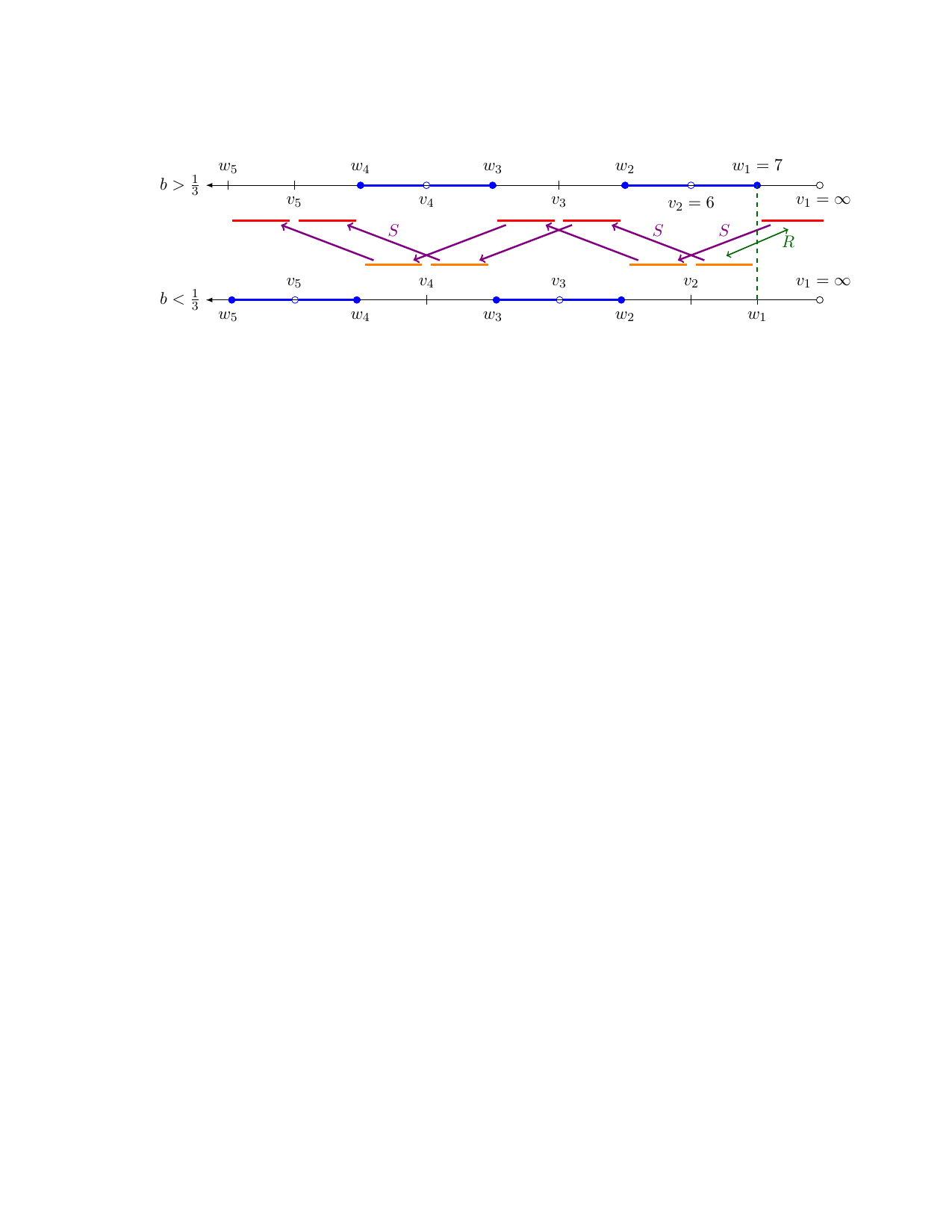}}
\vspace{-5.8 in}
 \caption{The complete family $\Cc\Ss^U$ consists of classes with centers in $(w_1,\infty)$; the reflection $R$ takes this to a complete family with centers in $(v_2,w_1)$ and $b<1/3$, and $S^i$ shifts every such family $i$ units to the left, changing the sign of $\eps$ by $(-1)^i$.  Note that $R$ reverses $z$-order, while $S$ preserves it.  The blue intervals $[w_{i+2}, w_{i+1}]$ are blocked by  the blocking classes $(S^i)^{\sharp}(\bB^U_0)$ for the appropriate $b$-value.  
 }
\label{fig:symm}
\end{figure}

    By \cite[Lem.2.1.1]{MM} we have
\begin{align}\label{eq:acceps}
b_{\eps,i+1}: =     \acc_\eps^{-1}\left(\frac{y_{i+1}}{y_i}\right)=\frac{y_{i+1}+y_i+3\eps}{3y_{i+1}+3y_i+\eps}, 
\end{align}
where $\eps = \pm1$, and we write $ \acc_\eps^{-1}$ for the appropriate branch of the inverse; thus  
$\acc_{\eps}^{-1} $ has image in $(1/3,1)$ if $\eps=1$ and in $(0,1/3)$ if $\eps = -1$.
It is shown in \cite[Rmk.~1.2.5]{MM} that for all $i\ge 0$, when $b>1/3$, the class $(S^{2i})^{\sharp} (\bB^U_0)$ blocks $v_{2i+2}$, while when $b<1/3$,
$(S^{2i+1})^{\sharp} (\bB^U_0)$ blocks $v_{2i+3}$ (we define the action of a symmetry on a quasi-perfect class below). Thus $b_{\eps,i+1}$ is blocked when either  $\eps = 1$ and $i$ is odd or when  $\eps = -1$ and $i$ is even.  However,  the following $b$-values are not blocked, because they are in the closure of $Stair$ by \cite[Thm.1.1.1]{MM}: 
\begin{align}\label{eq:specialb}
1/5, \ 59/179,\dots, b_{-1,2i}, \dots  \qquad  11/31, \ 349/1045, \dots, b_{1,2i+1}, \dots
\end{align}
These values of $b$ with $\eps=(-1)^i$ will be called the {\bf special rational $b$}, and denoted by
\begin{equation}\label{eq:bi}
b_i:=b_{(-1)^{i-1},i} \mbox{ so that } b_2, b_3, b_4, \dots = 1/5, 11/31, 59/179, \dots
\end{equation}
 We will see that each such $b$ is the limit point of both ascending and descending sequences of $b$-values with staircases and hence is unobstructed; see Remark~\ref{rmk:symm0}~(iii).  However, none of the steps of these staircases remain live at one of these special $b$-values; indeed, according to Conjecture~1.2 in \cite{AADT},  these special $b$-values should not admit any staircases at all. 

\MS

 We extend the action of $S,R$  to quasi-perfect classes $\bE = (d,m,p,q,t, \eps)$ via the formulas in \eqref{eq:formdm} that define $d,m$ as functions of $p,q,t, \eps$, noting that both $S$ and $R$ preserve $t$ and act on $\eps$ by $\eps\mapsto -\eps$.    We denote this image by $(S^iR^\de)^\sharp(\bE)$.   
 The main result of \cite{MM} is that for each $i\ge 0, \de\in \{0,1\}$ the transformation $S^iR^\de$ takes the seeds and blocking classes in the staircase family $\Ss^U$ to the seeds and blocking classes of a staircase family that we denote 
 $(S^iR^\de)^\sharp(\Ss^U)$.  (See Remark~\ref{rmk:symm0} for a discussion of the slightly anomalous case $R^{\sharp}(\Ss^U) =: \Ss^L$.)   Moreover, for all such $i,\de$ the image of a perfect tuple by $(S^iR^\de)^\sharp$  is perfect.   

We now extend this result by showing that these symmetries take each generating triple with centers in $(7,\infty)$ to another generating triple.  

\begin{lemma}\label{lem:Tgen}  Let $T = S^iR^\de$ for some $i\ge 0, \de\in \{0,1\}$, and let
$$
\bE_0: = (d_0,m_0,p_0,q_0,t_0, \eps),\quad  \bE_1: = (d_1,m_1,p_1,q_1,t_1,\eps).
$$

\begin{itemlist} \item[{\rm(i)}]   If $\bE_0 , \bE_1 $ are adjacent, then so are $T^\sharp(\bE_0), \ T^\sharp(\bE_1)$.

 \item[{\rm(ii)}]  If $\bE_0, \bE_1$ are $t$-compatible, then so are $T^\sharp(\bE_0), \ T^\sharp(\bE_1)$.
 \end{itemlist}
\end{lemma}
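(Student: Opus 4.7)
The plan is to reduce both statements to matrix identities involving the symmetric form $A$ and the non-symmetric form $B$ from the proof of Lemma~\ref{lem:recur0}. Recall that, in terms of $\bx = (p,q,t)^T$, the adjacency condition for a pair with $p/q < p'/q'$ is $\bx^T B \bx' = 0$ and the $t''$-compatibility condition is $\bx^T A \bx' = 4t''$. Since both $S$ and $R$ preserve the $t$-coordinate and send $\eps \mapsto -\eps$, the common-$\eps$ hypothesis is automatically preserved by any composition $T = S^iR^\de$, and the value of $t''$ on the right-hand side of the compatibility condition is unchanged by $T^\sharp$.

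The heart of the argument is the direct verification of the four $3\times 3$ matrix identities
\begin{equation*}
S^T A S = A,\qquad R^T A R = A,\qquad S^T B S = B,\qquad R^T B R = B^T,
\end{equation*}
each of which reduces to a $2\times 2$ computation since the last row and column of each matrix act trivially. Iterating and using $S^T B S = B$, these give, for $T = S^iR^\de$,
\begin{equation*}
T^T A T = A, \qquad T^T B T = \begin{cases} B & \text{if } \de = 0, \\ B^T & \text{if } \de = 1.\end{cases}
\end{equation*}

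Part (ii) then follows immediately: if $\bx_0^T A \bx_1 = 4t$, then $(T\bx_0)^T A (T\bx_1) = \bx_0^T (T^T A T) \bx_1 = \bx_0^T A \bx_1 = 4t$, and the $t$-coordinate of the third class in the triple (which equals $|p'q - pq'|$ by Lemma~\ref{lem:recur0}(iii)) is fixed by $T$.

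For part (i), we must keep careful track of the ordering, since $S$ preserves the order of centers $p/q$ on the $z$-axis while $R$ reverses it (being a partially defined reflection). Assume $p_0/q_0 < p_1/q_1$, so that the original adjacency reads $\bx_0^T B \bx_1 = 0$. If $\de = 0$, the images still satisfy $T(p_0/q_0) < T(p_1/q_1)$, and the condition to check, $(T\bx_0)^T B (T\bx_1) = 0$, follows directly from $T^T B T = B$. If $\de = 1$, the images satisfy $T(p_1/q_1) < T(p_0/q_0)$, so the adjacency condition to verify is $(T\bx_1)^T B (T\bx_0) = 0$; using $T^T B T = B^T$ this equals $\bx_1^T B^T \bx_0 = \bx_0^T B \bx_1 = 0$. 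Thus the switch from $B$ to $B^T$ under $R$ exactly compensates for the reversal of order, and adjacency is preserved in all cases. The only real work is the matrix arithmetic, and the single conceptual point to notice is that $R^T B R = B^T$ rather than $B$, which is precisely the algebraic shadow of $R$'s orientation reversal.
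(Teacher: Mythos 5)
Your proof is correct and follows essentially the same route as the paper: part (ii) is exactly the paper's observation that $S^TAS=R^TAR=A$, and your $B$-matrix identities $S^TBS=B$, $R^TBR=B^T$ (which do check out) are just a repackaging of the paper's direct polynomial verification of the adjacency condition, with the passage from $B$ to $B^T$ under $R$ playing the same role as the paper's switch from $8p_0'q_1'$ to $8p_1'q_0'$ to account for the reversal of the order of the centers.
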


\begin{proof}  Suppose first that $T = S$ and that $p_0/q_0<p_1/q_1$.  
Write $S^{\sharp}(\bE_i): = (d_i',m_i',p_i',q_i',t_i')$, $i=0,1$.  Since $S$ preserves order, to prove (i)   we must show that
$$
(p_0+q_0)(p_1+q_1) - t_0t_1 = 8p_0q_1\Longrightarrow (p_0'+q_0')(p_1'+q_1') - t_0't_1' = 8p_0'q_1'.
$$
But $ t'_0t'_1= t_0t_1$, and 
\begin{align*}
(p_0'+q_0')(p_1'+q_1')  - 8p_0'q_1' & = (7p_0-q_0)(7p_1-q_1)  - 8(6p_0-q_0)p_1\\
& = p_0p_1 + q_0p_1 - 7 p_0q_1 + q_0q_1\\
& = (p_0+q_0)(p_1+q_1) - 8 p_0q_1
\end{align*}
as required.  Thus the action of $S$, and hence of $S^i$, preserves adjacency.

Since $R$ reverses order, we must show that, with $R^{\sharp}(\bE_i): = (d_i',m_i',p_i',q_i',t_i')$, we have
$$
(p_0+q_0)(p_1+q_1) - t_0t_1 = 8p_0q_1\Longrightarrow (p_0'+q_0')(p_1'+q_1') - t_0't_1' = 8p_1'q_0'.
$$
This holds because
\begin{align*}
(p_0'+q_0')(p_1'+q_1')  - 8p_1'q_0' & = (7p_0-41q_0)(7p_1-41q_1)  - 8(6p_1-35q_1)(p_0-6q_0)\\
& = (p_0+q_0)(p_1+q_1) - 8 p_0q_1.
\end{align*}

Now consider (ii).  This holds because $S^TAS = R^TAR = A$ for $A$ defined in \eqref{eq:matrix}.  In other words, this holds because $S,R$ preserve $t^2$.
\end{proof}

\begin{prop}\label{prop:symm} Let $\bigl(\bE_{\la}, \bE_{\mu},\bE_{\rho}\bigr)$ be a generating triple whose classes have centers in $(7,\infty)$.
Then for any $i\ge 1$, the classes
$$
\bigl((S^i)^\sharp(\bE_{\la}), (S^i)^\sharp(\bE_{\mu}),(S^i)^\sharp((\bE_{\rho})\bigr),\quad\mbox{and} \quad
\bigl((S^iR)^\sharp(\bE_{\rho}), (S^iR)^\sharp(\bE_{\mu}),(S^iR)^\sharp(\bE_{\la})\bigr)
$$
also form generating triples.
\end{prop}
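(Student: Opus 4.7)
The plan is to verify the five conditions (a)--(e) of Definition~\ref{def:gentr} for each of the two new triples. It suffices to treat $S$ and $R$ separately since $S^iR^\de$ is a composition; moreover, because $R$ reverses $z$-order while $S$ preserves it, the new $\la,\rho$ labels on an $R$-image are automatically the old $\rho, \la$, which is exactly the reindexing recorded in the statement.

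Conditions (a), (b), and (c), which assert adjacency and $t''$-compatibility of the three pairs in the triple, follow directly from Lemma~\ref{lem:Tgen}. For condition (d), the equality $t_\la t_\rho - t_\mu = q_\la p_\rho - p_\la q_\rho$ has its $t$-coordinates fixed by both $S$ and $R$, so only the right-hand side needs attention. Under $S$ with $(p', q') = (6p-q, p)$, a direct expansion gives $q'_\la p'_\rho - p'_\la q'_\rho = p_\la(6p_\rho - q_\rho) - (6p_\la - q_\la)p_\rho = q_\la p_\rho - p_\la q_\rho$, and under $R$, after the $\la \leftrightarrow \rho$ swap, the analogous computation $(p_\rho - 6q_\rho)(6p_\la - 35q_\la) - (6p_\rho - 35q_\rho)(p_\la - 6q_\la) = q_\la p_\rho - p_\la q_\rho$ again returns the original value. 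The order of centers demanded in Definition~\ref{def:gentr} is immediate from the strict monotonicity of $S(z) = 6 - 1/z$ on $z > 0$ and of $R(z) = (6z - 35)/(z - 6)$ on $(6, \infty)$ (decreasing), and it is precisely the latter that forces the $R$-reindexing.

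The crucial step is condition (e), which I would reformulate as an inequality depending only on the $(p, q, t)$-coordinates. For any quasi-perfect class $\bE$, combining the identities $3d - m = p + q$, $d^2 - m^2 = pq - 1$, and $(p + q)^2 = 8(pq - 1) + t^2$ (the last being a restatement of $\bx^T A \bx = 8$) yields
\[
\acc\!\left(\tfrac{m}{d}\right) + \acc\!\left(\tfrac{m}{d}\right)^{-1} \ = \ \frac{(3d - m)^2}{d^2 - m^2} - 2 \ = \ 6 + \frac{t^2}{pq - 1}.
\]
Since $x \mapsto x + x^{-1}$ is strictly increasing on $x > 1$, condition (e) is equivalent to the $\eps$-independent inequality
\[
\frac{t_\la^2}{p_\la q_\la - 1}, \quad \frac{t_\rho^2}{p_\rho q_\rho - 1} \ > \ \frac{t_\mu^2}{p_\mu q_\mu - 1}.
\]
Because $t$ is fixed by both $S$ and $R$, what remains is to control how $pq - 1$ transforms; one finishes by combining the explicit transformation rule (for instance $p'q' - 1 = 6p^2 - pq - 1$ under $S$) with the already-established adjacency and $t''$-compatibility relations (b), (c) for the image, which supply enough algebraic identities among $p_\la, p_\mu, p_\rho$ and $q_\la, q_\mu, q_\rho$ to reduce the image inequality back to the original one.

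The main obstacle is precisely this last reduction: unlike $t$, the quantity $pq - 1$ does not transform by any tidy universal rule under $S$ or $R$, so preservation of (e) does not follow from a single identity but must be extracted from the full combinatorial data of the generating triple. A practical fallback is induction on the mutation depth: (e) for an image $(S^iR^\de)^\sharp(\Tt^n_*)$ of a basic triple is a finite polynomial check in $n$ (the coefficients of $\Tt^n_*$ being explicit polynomials, see Example~\ref{ex:SsU}), and Proposition~\ref{prop:gener} then propagates (e) through all subsequent mutations once one knows that mutations and symmetries intertwine --- which is the content of Theorem~\ref{thm:main1}~(ii) that the present proposition is intended to establish.
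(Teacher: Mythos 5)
Your treatment of (a)--(d) and of the center ordering is sound and essentially matches the paper: adjacency and $t''$-compatibility are preserved by Lemma~\ref{lem:Tgen}, condition (d) follows from the $2\times2$ determinant computation (the paper phrases this as $\det S=1=-\det R$ together with the $\la\leftrightarrow\rho$ swap for $R$), and your identity $\acc(m/d)+\acc(m/d)^{-1}=6+t^2/(pq-1)$ is correct and gives a valid $\eps$-independent reformulation of (e). The problem is that you never actually prove (e) for the image triple. Your direct route stops exactly at the hard point: $pq-1$ is quadratic in $(p,q)$ and, as you say yourself, does not transform by any usable rule under $S$ or $R$, and no argument is given for how relations (b), (c) ``reduce the image inequality back to the original one.'' Your fallback is circular as stated: propagating (e) from the images of the basic triples through mutations requires knowing that symmetries intertwine with mutations, which is Theorem~\ref{thm:main1}~(ii), and in the paper that theorem is deduced (Corollary~\ref{cor:main1iipf}) from the very proposition you are proving. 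Even setting circularity aside, the claimed ``finite polynomial check in $n$'' for $(S^iR^\de)^\sharp(\Tt^n_*)$ ignores the dependence on $i$: the coefficients involve the recursively defined $y_i$, so one would need an additional induction in $i$.

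The paper closes (e) by a different, linear reformulation of the same criterion. Writing out $m/d$ versus $m'/d'$ in terms of $(p,q,t)$ and using that $\acc$ reverses orientation exactly when $\eps=-1$, one gets the $\eps$-independent equivalence $\acc(m/d)>\acc(m'/d')\iff t(p'+q')>t'(p+q)$; this is equivalent to your $t^2/(pq-1)$ criterion via $(p+q)^2=t^2+8(pq-1)$, but it is linear in $(p,q)$. The key input is then that every symmetry $S^iR^\de$ sends $p+q$ to $rp-sq$ for some integers $r,s>0$, while fixing $t$. Hence, for instance,
$$
\frac{\ovp_{\la}+\ovq_{\la}}{\ovp_{\mu}+\ovq_{\mu}}=\frac{rp_{\la}-sq_{\la}}{rp_{\mu}-sq_{\mu}}<\frac{p_{\la}+q_{\la}}{p_{\mu}+q_{\mu}},
$$
the inequality holding because $q_{\la}p_{\mu}-p_{\la}q_{\mu}>0$ (condition (b) for the original triple), and similarly for the $\rho$-entry. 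Since $t_{\la}/t_{\mu}>(p_{\la}+q_{\la})/(p_{\mu}+q_{\mu})$ encodes (e) for the original triple and $t$ is unchanged, the image inequality follows at once, with no case analysis and no induction over mutations or over $i$. This monotonicity-of-$(p+q)$ step is the missing idea in your argument.
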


Notice here that because the action of $R$ on the $z$-axis is orientation reversing, the order of the three entries in the second triple above is reversed. 

\begin{proof}    
By Lemma~\ref{lem:Tgen} the action of $T = S^iR^\de$ preserves $t$ and hence the required adjacency and $t$-compatibility conditions.  Since the $t$ values are unchanged by these symmetries, the equalities among them are preserved.   Next observe that $\det S = 1 = -\det R$.  Hence, expressions such as $ q_{\mu}p_{\rho}-p_{\mu}q_{\rho}$ are unchanged by $S$.  Thus $(S^i)^\sharp$ preserves the three identities expressing $t$ as a function of the relevant $p,q$.  
The reader can check that $R^\sharp$ also preserves these identities since the minus sign in the determinant is compensated for by the fact that $R^\sharp$ interchanges the first and last elements of the triple.
\MS

It remains to check that the image triple $
\bigl(\underline{\bE}_{\la},\underline{\bE}_{\mu},\underline{\bE}_{\rho}\bigr)$   satisfies the last condition, namely
\begin{align}\label{eq:order}
 \acc(\ovm_{\rho}/\ovd_{\rho}), \acc(\ovm_{\la}/\ovd_{\la}) > \acc(\ovm_{\mu}/\ovd_{\mu}).
\end{align}
 Notice that 
\begin{align*}
 &\frac md = \frac{p+q+3\eps t}{3(p+q) + \eps t} >  \frac {m'}{d'} = \frac{p'+q'+3\eps t'}{3(p'+q') + \eps t'}\\
 &\qquad \Longleftrightarrow \eps t (p'+q') > \eps t'(p+q).
 \end{align*}  
 Therefore, because the function $b\mapsto \acc(b)$ reverses orientation precisely if $\eps = -1$, we may conclude that
\begin{align}
 \acc\Bigl(\frac md \Bigr)> \acc\Bigl(\frac {m'}{d'}\Bigr) \Longleftrightarrow  t (p'+q') >  t'(p+q).
\end{align}
In particular, in the initial triple  $(\bE_{\la}, \bE_{\mu},\bE_{\rho})$ we have $\acc(m_{\la}/d_{\la}),\acc(m_{\rho}/d_{\rho}) > \acc(m_{\mu}/d_{\mu})$ so that
 $$
 \frac{t_{\la}}{t_{\mu}}>\frac{p_{\la}+q_{\la}}{p_{\mu}+q_{\mu}} , \quad 
    \frac{t_{\rho}}{t_{\mu}}>\frac{p_{\rho}+q_{\rho}}{p_{\mu}+q_{\mu}} ,\quad p_{\la}/q_{\la} < p_{\mu}/q_{\mu} < p_{\rho}/q_{\rho}.
 $$
 
  It follows from the formulas in \cite{MM}  that if $(\ovp, \ovq)^T = S^iR^\de(p,q)^T$, then $\ovp + \ovq = rp-sq$ for some integers $r,s>0$.  Therefore, because the symmetries preserve the $t$-coordinate, 
 $$
\frac {p_{\la}+q_{\la}}{p_{\mu}+q_{\mu}} > \frac {\ovp_{\la}+\ovq_{\la}}{\ovp_{\mu}+\ovq_{\mu}} \Longrightarrow \ \acc(\frac {\ovm_{\la}}{\ovd_{\la}} )> \acc\bigl(\frac {\ovm_{\mu}}{\ovd_{\mu}}\bigr),
$$
But 
$$
  \frac {\ovp_{\la}+\ovq_{\la}}{\ovp_{\mu}+\ovq_{\mu}} =  \frac {r p_{\la}-s q_{\la}}{r p_{\mu}-s q_{\mu}}\  < \ \frac {p_{\la}+q_{\la}}{p_{\mu}+q_{\mu}}
  $$
  if  $s(p_{\la}q_{\mu} - q_{\la} p_{\mu} ) < r(q_{\la}p_{\mu} - p_{\la} q_{\mu})$ which always holds because 
  $q_{\la} p_{\mu} - p_{\la} q_{\mu}>0$ by hypothesis.
  A similar argument proves that $ \acc(\ovm_{\rho}/\ovd_{\rho}) > \acc(\ovm_{\mu}/\ovd_{\mu})$.
This completes the proof.
 \end{proof}

\begin{cor} \label{cor:symm}  
 Let $\Ss$ be a pre-staircase for $b = b_\infty$ that is the image by some symmetry $S^iR^\de$ of a pre-staircase in the complete family $\Cc\Ss^U$.  If $b_\infty>1/3$ then the ratios $m_k/d_k$ decrease, while if 
 $b_\infty<1/3$ then the ratios $m_k/d_k$ increase.  In all cases, $\acc(m_k/d_k)$ decreases.
 \end{cor}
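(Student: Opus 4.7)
The plan is to derive both monotonicity statements from Lemma~\ref{lem:2}(iii) and Proposition~\ref{prop:symm}. Write $\Ss = T^\sharp \Ss'$ with $\Ss' \in \Cc\Ss^U$ and $T = S^iR^\de$; let $\bE'_k$ denote the original steps (all with $\eps' = +1$ since $\Cc\Ss^U$ lies in the region $b > 1/3$) and $\bE_k = T^\sharp\bE'_k$ the image steps. Each of $S, R$ flips $\eps$, so the image has $\eps = (-1)^{i+\de}$, which equals $+1$ exactly when $b_\infty > 1/3$ and $-1$ exactly when $b_\infty < 1/3$.

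First I would show that $\acc(m_k/d_k)$ decreases along $\Ss$. Each consecutive pair $(\bE'_k, \bE'_{k+1})$ occurs as the $(\la,\mu)$ part of a generating triple $y^{k-1}\Tt$ (when $\Ss'$ is ascending with generating triple $\Tt$) or as the $(\rho,\mu)$ part of $x^{k-1}\Tt$ (when $\Ss'$ is descending), both lying inside $\Cc\Ss^U$. Applying Proposition~\ref{prop:symm} to each such triple produces an image generating triple $T^\sharp\Tt_k$; condition~(e) of Definition~\ref{def:gentr} applied to this image triple yields precisely $\acc(m_k/d_k) > \acc(m_{k+1}/d_{k+1})$, giving the final claim of the corollary.

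To pass from the $\acc(m_k/d_k)$-ordering to the $m_k/d_k$-ordering, I would use the fact that $b \mapsto \acc(b)$ is order-preserving on $(1/3,1)$ and order-reversing on $(0,1/3)$. Hence a decreasing sequence $\acc(m_k/d_k)$ forces $m_k/d_k$ to decrease when $\eps=+1$ (i.e., $b_\infty > 1/3$) and to increase when $\eps=-1$ (i.e., $b_\infty < 1/3$), exactly as stated.

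The main technical point is that Proposition~\ref{prop:symm}'s hypothesis requires the three centers of the triple to lie in $(7,\infty)$. This fails only when $\bB^U_0$ (center $6$) appears in the triple, which occurs solely for the initial triples $\Tt_{.0^j 1}$ feeding descending pre-staircases whose blocking class is $\bB^U_0$. Here I would inspect the proof of Proposition~\ref{prop:symm} and observe that the verification of condition~(e) for the image triple relied only on the ordering $p_\la/q_\la < p_\mu/q_\mu$ (through the positivity of $q_\la p_\mu - p_\la q_\mu$) and on positivity of the integers $r,s$ describing the action of $T$ on $p+q$; no geometric restriction on the centers was used in that step. Since this ordering holds for every triple we consider, the $\acc$-inequality for the image triple persists, and the argument goes through in these exceptional cases as well.
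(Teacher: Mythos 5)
Your proposal follows essentially the same route as the paper's proof: that proof also rests on Lemma~\ref{lem:2}(iii) for the pre-staircases of $\Cc\Ss^U$, on condition~(e) for the image triples (this is exactly \eqref{eq:order}, established in Proposition~\ref{prop:symm}), and on the fact that $\acc$ preserves order on $(1/3,1)$ and reverses it on $(0,1/3)$; your pairwise application of Proposition~\ref{prop:symm} to the triples $y^k\Tt$, $x^k\Tt$ containing each consecutive pair of steps is just a more explicit phrasing of that appeal to \eqref{eq:order}.

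Two caveats. First, the paper's proof deliberately extracts a slightly stronger formulation -- if the levels of the classes $\bE_k$ strictly increase, then $m_k/d_k$ decreases when $>1/3$ and increases when $<1/3$ -- precisely so that it also covers the non-recursive pre-staircases $\Ss^{\pm}_{\al_\infty}$ of Definition~\ref{def:newstair}, which is how the corollary is later invoked (e.g.\ in Corollary~\ref{cor:2T}). Your argument as written treats only the principal (recursively defined) pre-staircases; the extension is immediate with your own mechanism, since any two classes $\bE_{\al_j},\bE_{\al_{j+1}}$ at successive levels form the $(\rho,\mu)$ or $(\la,\mu)$ pair of the triple $\Tt_{\al_{j+1}}$, but you should say so. Second, your dismissal of the $(7,\infty)$ hypothesis is fine for the shifts $S^i$, but is too quick for symmetries containing $R$ applied to the triples $x^k\Tt^0_*$ containing $\bB^U_0$: the equivalence between the $\acc$-inequality and $t(p'+q')>t'(p+q)$ presupposes that the image degrees are positive, whereas $R^\sharp(\bB^U_0)=(0,-1,1,0,3,-1)$ has $d=0$ (so $m/d$ is undefined), and the $R$-image of the descending stair blocked by $\bB^U_0$ is the Fibonacci staircase, all of whose steps have $m_k=0$, so the claimed strict monotonicity genuinely degenerates there. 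This anomaly (Remark~\ref{rmk:symm0}(i)) is passed over in the paper's own proof as well, so it is not a defect peculiar to your argument, but your assertion that ``the argument goes through in these exceptional cases as well'' is not literally correct; the honest statement is that the pairwise inequalities hold for all pairs of genuine classes with centers $>7$, and the remaining cases are the acknowledged anomalies of $R^\sharp(\Cc\Ss^U)$.
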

 \begin{proof}  If $\Ss$ is a principal pre-staircase in $\Cc\Ss^U$, then this holds by Lemma~\ref{lem:2}~(iii).
   It therefore holds for all principal pre-staircases by \eqref{eq:order} and the fact that 
   the function $\acc: [0,1)\to [a_{\min},\infty)$ preserves order on the interval  $(1/3,1)$ and reverses it on $(0,1/3)$.  We can interpret these order relations as saying that if the levels (in the sense of Lemma~\ref{lem:label})  of the classes $\bE_k$ strictly increase, then the corresponding ratios $m_k/d_k$ decrease when $m_k/d_k> 1/3$  (resp. increase when $m_k/d_k< 1/3$).   This suffices to prove the lemma,  since the levels of the classes in any of the new pre-staircases in 
 Definition~\ref{def:newstair} strictly increase.
  \end{proof}

\begin{cor} \label{cor:main1iipf} Theorem~\ref{thm:main1}~(ii) holds.
Moreover, for every class $\bE \in \Cc\Ss^U$, and every symmetry $T = S^iR^\de$ we have $I_{T^\sharp(\bE)} = T(I_{\bE})$.
\end{cor}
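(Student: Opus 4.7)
The statement consists of two claims: the content of Theorem~\ref{thm:main1}~(ii), and the blocked-interval identity $I_{T^\sharp(\bE)} = T(I_\bE)$ for every $\bE \in \Cc\Ss^U$ and every symmetry $T = S^iR^\de$.

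For Theorem~\ref{thm:main1}~(ii), the first clause (symmetries take generating triples to generating triples) is exactly Proposition~\ref{prop:symm}. For the mutation-compatibility clause, my plan is to unpack the definitions from Proposition~\ref{prop:gener}: for $\Tt = (\bE_\la, \bE_\mu, \bE_\rho)$, the class $\bE_{x\mu}$ is a single $t_\la$-recursion step applied to the rightmost ordered pair $(\bE_\rho, \bE_\mu)$, i.e.\ $\bE_{x\mu} = t_\la \bE_\mu - \bE_\rho$ in the $(p,q,t)$-coordinates, while $\bE_{y\mu} = t_\rho \bE_\mu - \bE_\la$ uses the leftmost pair $(\bE_\la, \bE_\mu)$. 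Because $T^\sharp$ acts linearly on $(p,q,t)$ and preserves the $t$-entry by Lemma~\ref{lem:Tgen}, the identity $T^\sharp(t_\star \bE - \bE') = t_\star\, T^\sharp\bE - T^\sharp\bE'$ holds for any $\star \in \{\la,\mu,\rho\}$. When $T = S^i$ preserves $z$-order, the image triple $T^\sharp\Tt$ carries over the $\la,\mu,\rho$ labels, the rightmost pair maps to the rightmost pair, and we conclude $T^\sharp(x\Tt) = x(T^\sharp\Tt)$ and $T^\sharp(y\Tt) = y(T^\sharp\Tt)$. When $T = S^iR$ reverses $z$-order, the image triple written in $z$-order is $(T^\sharp\bE_\rho, T^\sharp\bE_\mu, T^\sharp\bE_\la)$, so the previously rightmost pair is now leftmost; applying the same linear identity swaps the roles of $x$ and $y$, giving $T^\sharp(x\Tt) = y(T^\sharp\Tt)$ and $T^\sharp(y\Tt) = x(T^\sharp\Tt)$.

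For the blocked-interval identity, my plan is to use the endpoint description from Remark~\ref{rmk:adjac}~(ii): $\partial^-I_\bE$ and $\partial^+I_\bE$ are the $z$-accumulation points of ascending and descending principal pre-staircases whose classes are all adjacent to $\bE$. For any $\bE \in \Cc\Ss^U$, such pre-staircases exist because $\bE$ is the middle entry of some triple $\Tt_\bE$, and appears as the right entry in $x\Tt_\bE$ (whose associated ascending pre-staircase $\Ss^{x\Tt_\bE}_\ell$ has $\bE$ as blocking class) and as the left entry in $y\Tt_\bE$ (whose descending pre-staircase $\Ss^{y\Tt_\bE}_u$ is adjacent to $\bE$). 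By the mutation-compatibility just proved together with the linearity of $T^\sharp$ on $(p,q,t)$-coordinates, $T^\sharp$ sends these pre-staircases to ascending/descending principal pre-staircases adjacent to $T^\sharp\bE$ (with ascending/descending swapped precisely when $T$ reverses $z$-order). Since $T$ is continuous and $z_\infty = \lim p_k/q_k$, the accumulation points transform as $T(\partial^\pm I_\bE)$, which must therefore equal the endpoints of $I_{T^\sharp\bE}$. Because an open interval is determined by its endpoints, $T(I_\bE) = I_{T^\sharp\bE}$. The main subtlety is bookkeeping the order-reversing case: verifying that an ascending pre-staircase adjacent to $\bE$ maps to a descending one adjacent to $T^\sharp\bE$, with the lower endpoint of $I_\bE$ sent to the upper endpoint of $I_{T^\sharp\bE}$. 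This is consistent because $R$ interchanges the two branches $\acc_{\pm 1}^{-1}$ (see \eqref{eq:acceps}), and the sign flip $\eps \mapsto -\eps$ induced by every symmetry is precisely what realigns the labels.
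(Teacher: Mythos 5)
Your proposal is correct and follows essentially the same route as the paper: Proposition~\ref{prop:symm} handles the triple-to-triple claim, the compatibility with mutation is the same linearity-of-$T^\sharp$-on-$(p,q,t)$ computation (the paper just writes it out in coordinates for $S$), and the interval identity is proved exactly as in the paper by identifying $\p^\pm I_{\bE}$ with the accumulation points of the principal pre-staircases having blocking class $\bE$ and noting that $T$ carries their steps, hence limits, to those of the pre-staircases blocked by $T^\sharp(\bE)$.
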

\begin{proof} In light of Proposition \ref{prop:symm}, the first claim will follow once we prove that $S$ commutes with mutation and $R$ switches $x$-mutation to $y$-mutation and vice versa. This is straightforward, e.g. to show $S^\sharp(\bE_{x\mu})$ is the middle entry of $xS^\sharp(\Tt)$, we compute
\begin{align*}
S^\sharp(t_\la p_\mu-p_\rho,t_\la q_\mu-q_\rho)&=(6(t_\la p_\mu-p_\rho)-(t_\la q_\mu-q_\rho),t_\la p_\mu-p_\rho)
\\&=(t_\la(6p_\mu-q_\mu)-(6p_\rho-q_\rho),t_\la p_\mu-p_\rho).
\end{align*}
The remaining cases, including the analogous facts for the left and right entries, are left to the reader.

To prove the second claim, suppose that $\bE$ is the middle entry $\bE_\mu$ in the triple $\Tt$.  Then the  end points of the blocked $z$-interval $I_{\bE}$ are the accumulation points of the principal pre-staircases $\Ss^{x\Tt}_\ell$ and  $\Ss^{y\Tt}_u$.   The symmetry $T$ takes the steps, and hence accumulation points, of these pre-staircases  
to the steps and  accumulation points of the pre-staircases 
with blocking class $T^{\sharp}(\bE)$.
Hence it takes the end points of  $I_{\bE}$ to those of $I_{T^\sharp(\bE)}$.\footnote
{
For further discussion of the action of the symmetries on ${\it Block}$ see the end of \cite[\S1.2]{MM}.}  
\end{proof}

\begin{cor} \label{cor:qtrip}  The image of each quasi-triple $\Tt^n_{\ell,seed}, T^n_{u,seed}$ under a symmetry $S^iR^\de$ is another quasi-triple.
\end{cor}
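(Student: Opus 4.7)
The plan is to observe that the proof of Proposition \ref{prop:symm} naturally splits into two independent parts: verifying the algebraic compatibility conditions (a)--(d) of Definition \ref{def:gentr}, and separately verifying the accumulation-point inequality (e). Since a quasi-triple, as defined in Remark \ref{rmk:quasitrip}, requires only (a)--(d), the first part of that argument applies directly, and the hypothesis that the centers lie in $(7,\infty)$ (which was used only for (e)) is irrelevant here.

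Concretely, I would proceed in three short steps. First, cite Lemma \ref{lem:Tgen} to conclude that adjacency and $t''$-compatibility are preserved by $S^\sharp$ and $R^\sharp$, which immediately handles condition (a) and the adjacency/compatibility parts of (b) and (c). Second, reuse the determinant computation from the proof of Proposition \ref{prop:symm} to verify condition (d): the symmetries preserve the $t$-coordinate by the matrix definitions in \eqref{eq:SR}, so $t_\la t_\rho - t_\mu$ is invariant, while the $2 \times 2$ minor $q_\la p_\rho - p_\la q_\rho$ is invariant under $S^\sharp$ (since $\det S = 1$) and is negated under $R^\sharp$ (since $\det R = -1$). The negation is precisely compensated by the reversal of the triple's order under $R$ (the image triple is $(R^\sharp(\bE_\rho), R^\sharp(\bE_\mu), R^\sharp(\bE_\la))$), so in each case (d) is preserved for the image triple. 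Composing then yields the claim for an arbitrary symmetry $S^i R^\de$.

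The one subtlety worth flagging is the degenerate case of $\Tt^0_{\ell,seed}$ (and $\Tt^0_{u,seed}$) when $R$ is involved: the class $\bB^U_0$ has center $6$, at which the rational map $R$ is undefined, and the matrix action on $(p,q,t) = (6,1,3)$ yields the tuple $(1,0,3)$ with $q = 0$. This is the \lq special treatment' referenced in Remark \ref{rmk:triple0}(ii). Away from this degeneracy, the ordering of centers $p_\la/q_\la < p_\mu/q_\mu < p_\rho/q_\rho$ is preserved by $S$ (monotone increasing on positive centers, since $S(z) = 6 - 1/z$) and reversed by $R$ on its domain of definition $(6,\infty)$ (where $R'(z) = -1/(z-6)^2 < 0$), so after relabeling the image is a properly ordered quasi-triple. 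I expect the main obstacle to be handling this degenerate image cleanly, for instance by formally allowing tuples with $q = 0$ among the objects permitted at the left-end of a quasi-triple.
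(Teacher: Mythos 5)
Your proposal is correct and follows essentially the same route as the paper: the paper's proof simply notes that a quasi-triple requires only conditions (a)--(d) and then cites the first paragraph of the proof of Proposition~\ref{prop:symm}, i.e.\ exactly the combination of Lemma~\ref{lem:Tgen} with the determinant argument ($\det S = 1 = -\det R$, with the sign compensated by the order reversal under $R$) that you spell out. Your additional remarks on ordering and on the degenerate image of $\bB^U_0$ under $R$ are extra care the paper leaves implicit (cf.\ Remark~\ref{rmk:symm0}(i)), not a different argument.
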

\begin{proof}
By the definition in  Remark~\ref{rmk:quasitrip}, a quasi-triple satisfies all the condition of a triple except for (e).  Therefore, this holds by the first paragraph of the proof of Proposition~\ref{prop:symm}.
\end{proof}

\begin{rmk}\rm\label{rmk:symm0}  (i)  The complete family $R^{\sharp}(\Cc\Ss^U)$ is somewhat anomalous because the class $R^{\sharp}(\bB^U_0)$  is not  geometric, since the reflection $R: z\mapsto (6z-35)/(z-6)$ is not defined at $z=6$.  On the other hand, $R(6/1) = 1/0$, and if we define $d,m$ via \eqref{eq:formdm} we obtain the tuple $(d,m,p,q,t,\eps) = (0,-1,1,0,3,-1)$.  
As pointed out  in \cite[Rmk~2.3.4(ii)]{MM}, 
this is a numerically meaningful replacement for the class $R^{\sharp}(\bB^U_0)$.  For example, 
there is a descending staircase with blocking class $$
R^{\sharp}(\bB^U_1) = R^{\sharp}(\bE_{[8]}) = (5,0,13,2, 5, -1)
$$
(and hence recursion parameter $5$) that is generated by the tuples $$
R^{\sharp}(\bB^U_0) = R^{\sharp}(\bE_{[6]}) = (0,-1,1,0,3,-1), \qquad R^{\sharp}(\bE_{[7;4]}) =(13, 0, 34,5,13, -1).
$$
Its third step has center $\frac{169}{25}$ and degree variables $(d,m) = (65,1)$.
This is the descending staircase $\Ss^L_{u,1}$ associated to the triple $R^{\sharp}(\Tt^0_*)$. 
The corresponding ascending staircase $\Ss^L_{\ell,0}$ with first two steps $R^{\sharp}(\bB^U_1), R^{\sharp}(\bE_{[7;4]})$ is the Fibonacci staircase.  All the steps of the latter staircase have $m=0$, and there is no geometric blocking class.  However, because by 
Proposition~\ref{prop:symm} the three classes in $R^{\sharp}(\Tt^0_*)$ satisfy all the numeric conditions to be a generating triple, we may define the 
 complete family $R^{\sharp}(\Cc\Ss^U)$ as before.  Note that all its elements except for the steps in the
 Fibonacci staircase have
 $m>0$.  Further, with the sole exception of  
$R^{\sharp}(\bB^U_0)$, they all have have nonnegative entries with $p/q> a_{\min}$ and so are blocking classes. 

However, it is important to note that $R^\sharp(\bB^U_0)$ is not geometrically meaningful, and so certain results do not apply to it. For example, the analysis of the steepness of a staircase following Proposition \ref{prop:live} (used to rule out potentially overshadowing classes) cannot apply to this class because it does not form an actual step for any $c_{H_b}$. In particular, any technique relying on $m/d$ cannot be used with this class, because $m/d=-1/0$.

\MS

\NI (ii)  Fig.~\ref{fig:symm} illustrates the fact that the intervals $[w_{2i+2}, w_{2i+1}], i\ge 0,$ are blocked when $b>1/3$ while their complements $[w_{2i+1}, w_{2i}], i\ge 1,$ 
are blocked when $b<1/3$.  
To see that $\bB^U_0$ blocks $[w_{2}, w_{1}]$  one only needs to observe that the accumulation point $z_\infty$ of the descending staircase  $\Ss^U_{u,0} $ is $ [7;\{1,5\}^\infty] > 7 = w_1$, and that the reflection $SR^{\sharp}$ fixes the class $\bB^U_0$.    It follows that $w_2 = SR(w_1)$ is greater than the limit point of the ascending staircase $(SR)^{\sharp}(\Ss^U_{u,0})$, so that $[w_2,w_1]$ is blocked by $\bB^U_0$.  Then the rest of this claim follows by applying the symmetries.  

\MS

\NI (iii)   The fact that the centers of the blocking classes $\bB^U_n$ converge to $v_1: = \infty$ as $n\to \infty$ implies that the images of these classes under $R$ and $S$ have centers converging to  $v_2: = 6$. 
These classes have ratios $m/d< 1/3$ that converge to $1/5$ as $n\to \infty$. 
However, they do not form staircases at $b =1/5$ because (as is easy to check),\footnote
{
Formulas for the $(p,q,t)$ entries for the blocking classes $\bB^L_n=R^\sharp(\bB^U_n)$ are given in \eqref{eq:baL}; they have $(d,m) =(5n,n-1)$. Using \eqref{eq:pqt} allows us to similarly compute the $d,m$ coordinates of $S^\sharp(\bB^U_n)$.} the step corners are not visible at $b=1/5$ since they fail the test  $|bd-m|< \sqrt{1-b^2}$ at $b = 1/5$; see Lemma~\ref{lem:basic}.   Similarly, 
none of the classes in the complete families 
$S^\sharp(\Cc\Ss^U), R^\sharp(\Cc\Ss^U)$ are live at $b=1/5$,  since for each such class both $d$ and the quantity $|b-m/d|$ 
are larger than they are for the appropriate blocking class.
However, these classes do organize into 
  staircases whose accumulation points $(b_\infty, z_\infty)$ converge  to $(1/5,v_2)$, both from the left (in the family
$S^{\sharp}(\Cc\Ss^U)$) and from the right  (in the family
$R^{\sharp}(\Cc\Ss^U)$). Because the shifts $S^i$ act on these staircases, a similar claim holds for the points $v_{2i}$ with $b< 1/3$ and for $v_{2i+1}$ for $b>1/3$.  These are the only rational points on the $z$-axis that might be accumulation points of staircases, since all the others are blocked by Corollary~\ref{cor:short2}. In Section~\ref{ss:13stair}, we show that  these rational points $v_{i}, i\ge 3,$ do not have ascending staircases, but it remains unknown if they have descending staircases.  
Similarly, we do not know if there is a descending staircase at $b=1/3$.
\hfill$\er$
\end{rmk}

We will show in Proposition~\ref{prop:adjac} that for any pair of  adjacent classes  $\bE, \bE'$ we have
 $\bE\cdot \bE' = 0$.  Since the symmetries preserve adjacency, this implies that any pair of successive steps in any principal pre-staircase 
 have zero intersection. In fact, as seen in the following lemma, regardless of adjacency, the symmetries $T:=S^iR^\de$ preserve intersection number. 

\begin{lemma} \label{lem:inters}
\begin{itemlist}\item[{\rm (i)}] For quasi-perfect classes $\bE_1$ and $\bE_2$ with center greater than $5$, we have $S^\sharp(\bE_1) \cdot S^\sharp(\bE_2)=\bE_1 \cdot \bE_2$.
\item[{\rm (ii)}] For classes $\bE_1$ and $\bE_2$ with centers greater than $7$, $R^\sharp(\bE_1) \cdot R^\sharp(\bE_2)=\bE_1 \cdot \bE_2.$
\end{itemlist}
\end{lemma}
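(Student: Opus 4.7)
The plan is to expand $\bE_1 \cdot \bE_2$ using the standard formula
$$ \bE_1 \cdot \bE_2 = d_1 d_2 - m_1 m_2 - \sum_{k\ge 1} w_{1,k} w_{2,k}, $$
where $(w_{i,k})_{k\ge 1}$ is the integral weight expansion of $p_i/q_i$ (padded with zeros), and then to check directly that each piece changes in a compensating way under $S^\sharp$ and $R^\sharp$. The formulas \eqref{eq:formdm} give by direct expansion (extending the computation from the proof of Lemma~\ref{lem:recur0}(ii) to allow $\eps_1\ne \eps_2$) the identity $d_1 d_2 - m_1 m_2 = \tfrac18\bigl((p_1+q_1)(p_2+q_2) - \eps_1\eps_2 t_1 t_2\bigr)$. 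Both $S$ and $R$ fix $t$ and flip the sign of $\eps$, so $\eps_1\eps_2 t_1 t_2$ is preserved and the problem reduces to showing
$$ 8\bigl(\sum_k w'_{1,k} w'_{2,k} - \sum_k w_{1,k} w_{2,k}\bigr) = (p'_1+q'_1)(p'_2+q'_2) - (p_1+q_1)(p_2+q_2), $$
where primes denote the action of $S^\sharp$ or $R^\sharp$.

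The core input is a structural description of how $W$ behaves under $S$ and $R$. Writing $p = aq + r$ with $0\le r < q$, so that $W(p/q) = (q^{\times a}, W(q/r))$, one obtains by unrolling the recursive definition of $W$ that
$$ W(S(p/q)) = W((6p-q)/p) = (p^{\times 5},\ p-q,\ q^{\times (a-1)},\ W(q/r)) \quad \text{when } p/q > 5, $$
$$ W(R(p/q)) = W((6p-35q)/(p-6q)) = (s^{\times 6},\ q^{\times (a-6)},\ W(q/r)) \quad \text{when } p/q > 7, $$
where $s := p - 6q$. In the $S$ case the new expansion has exactly $5$ more entries than the old one and shares the same trailing $W(q/r)$-block, shifted five positions to the right; in the $R$ case the two expansions have identical length and differ only in the first six entries, with $q$ replaced by $s$. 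The hypotheses $p/q > 5$ (resp.\ $p/q > 7$) guarantee $a \ge 5$ (resp.\ $a \ge 7$), which is precisely what is needed for these formulas to hold.

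With these explicit forms the change in $\sum_k w_{1,k} w_{2,k}$ is a direct computation. For $R$, positions $\ge 7$ contribute identically in both sums, so the change equals $\sum_{j=1}^{6}(s_1 s_2 - q_1 q_2) = 6\bigl((p_1-6q_1)(p_2-6q_2) - q_1 q_2\bigr)$, which is exactly $\tfrac18$ of $(7p_1-41q_1)(7p_2-41q_2) - (p_1+q_1)(p_2+q_2)$ by expansion. For $S$ one assumes without loss of generality $a_1 \le a_2$ and splits both sums according to the four natural regions (leading $p$/$q$ block, the middle entry, the interior $q$-block, and the $W(q/r)$ tail). Because both tails are shifted by the same amount ($5$), the contribution from positions at or beyond the start of the shorter expansion's $W(q/r)$ block is identical in the primed and unprimed sums, as is the mixed contribution from positions where one expansion is in its tail while the other is still in its $q$-block. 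The entire change therefore comes from the first $5+a_1$ positions and reduces to $5 p_1 p_2 + (p_1-q_1)(p_2-q_2) + (a_1-1)q_1 q_2 - a_1 q_1 q_2 = 6 p_1 p_2 - p_1 q_2 - q_1 p_2$, which again equals $\tfrac18$ of $(7p_1-q_1)(7p_2-q_2) - (p_1+q_1)(p_2+q_2)$.

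The main obstacle is the alignment argument in the $S$ case when $a_1 \ne a_2$: even though both expansions get $5$ new entries prepended, their tails live at different absolute positions, and one must verify that the common shift $5$ preserves the overlap pattern between the two tails. The argument does not rely on any adjacency or $t$-compatibility relation between $\bE_1$ and $\bE_2$, which matters since Lemma~\ref{lem:inters} is invoked elsewhere in contexts where no such structural relation is assumed; the only hypotheses used are the size conditions on the centers, which ensure the leading $q^{\times a}$ block is long enough for the transformation formulas to apply.
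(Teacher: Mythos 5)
Your proposal is correct and follows essentially the same route as the paper's proof: both reduce $\bE_1\cdot\bE_2$ to $d_1d_2-m_1m_2-W(p_1/q_1)\cdot W(p_2/q_2)$, use the identity $d_1d_2-m_1m_2=\tfrac18\bigl((p_1+q_1)(p_2+q_2)-\eps_1\eps_2t_1t_2\bigr)$ together with the fact that $S,R$ fix $t$ and flip $\eps$, and then apply the same description of $W(S(p/q))$ and $W(R(p/q))$ (which the paper quotes from \cite[Lem.2.1.5]{MM} rather than re-deriving) to match the change in the weight dot product against the change in $(p+q)(p'+q')$. Your treatment is if anything slightly more complete, since you verify the tail-alignment in the $S$ case and carry out part (ii) explicitly, which the paper leaves to the reader; the only small inaccuracy is the remark that $a\ge 5$ (resp.\ $a\ge 7$) is ``precisely'' what is needed --- for $S$ a weaker bound suffices, though the stated hypothesis certainly does.
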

\begin{proof}
For (i), consider
\begin{align*}
S^\sharp(\bE_1) &=S^\sharp(d_1,m_1,p_1,q_1)=(D_1,M_1,P_1,Q_1)\\
S^\sharp(\bE_2)&=S^\sharp(d_2,m_2,p_2,q_2)=(D_2,M_2,P_2,Q_2).
\end{align*}
 Then, by \cite[Lemma 2.1.5]{MM}, for $i=1,2,$ 
we have\footnote
 {
 See \eqref{eq:Wpq} for the definition of the
 weight decomposition $W(p,q):= W(p/q)$.
 }
$$
W(P_i,Q_i)=W(6p_i-q_i,p_i)=(p_i^{\times 5},p_i-q_i) \sqcup W(p_i-q_i,q_i),
$$
 where by definition $W(p_i-q_i,q_i)$ is $W(p_i,q_i)$ with the first entry $q_i$ removed.
As
\begin{align*}
\bE_1 \cdot \bE_2=d_1d_2-m_1m_2-q_1q_2-W(p_1-q_1,q_1)\cdot W(p_2-q_2,q_2),
\end{align*} 
we want to show
\[ d_1d_2-m_1m_2-q_1q_2=D_1D_2-M_1M_2-5p_1p_2-(p_1-q_1)(p_2-q_2).\]
To check this, we use 
 the formulas for $d_i,m_i,D_i,M_i$ in terms of $p_i$ and $q_i$, except that 
 rather than the usual notation of including $\eps$ explicitly, we account for $\eps$  by letting $t_1,t_2$ be either positive or negative.
 Thus, we have $S: t_1 \mapsto -t_1$ and $S: t_2 \mapsto -t_2$. 
We then obtain: 
\begin{align*} 
d_1d_2-m_1m_2-q_1q_2&=\tfrac18\bigl((p_1+q_1)(p_2+q_2)-t_1t_2\bigr)-q_1q_2 \\
&=\tfrac18(p_1p_2-7q_1q_2+q_1p_2+q_2p_1-t_1t_2) \\
&=\tfrac18\bigl((7p_1-q_1)(7p_2-q_2)-t_1t_2\bigr)-5p_1p_2-(p_1-q_1)(p_2-q_2) \\
&=D_1D_2-M_1M_2-5p_1p_2-(p_1-q_1)(p_2-q_2).
\end{align*}
This completes (i).  The proof of (ii) is similar and left to the reader. 
\end{proof}

 \begin{rmk}\label{rmk:fakesymm}\rm 
 (i) Let $X_\Z = \{(p,q,t) \in \Z^+ \ | \ p^2 - 6pq + q^2 + 8 = t^2\}$.  
 It is not known whether there is an element $A\in {\rm GL}(3, \Z)$ that induces a map $X_\Z\to X_\Z$ that does not fix $t$.  
 Since there are integral triples $(p,q,t)\in X_\Z$ for which neither pair of numbers $(d,m)$ defined by \eqref{eq:pqt} are integral, one
might also require that $A$ preserve the integrality of the appropriate pair $(d,m)$. 
The symmetries considered above are the only elements of this group that fix $t$ (or equivalently fix the quadratic form $p^2-6pq + q^2$); see \cite[Lem.2.1.3]{MM}.  Further, by \cite[Lem.2.2.4]{MM}  they also preserve integrality. 

 \MS

 \NI (ii)  We now discuss the properties of a \lq symmetry' $A$ that is not in ${\rm GL}(3, \Z)$ but has some interesting features.

   Denote by $\Cc$ the set of integral tuples $(d,m,p,q,t)$ with $t>0$ (but possibly with some  negative entries) that satisfy the numeric conditions to be a quasi-perfect class with $\eps = 1$, i.e. 
 $p^2-6pq + q^2 +8= t^2$ and $d,m$ satisfy \eqref{eq:formdm}.  
   The subset with all entries nonnegative  is denoted $\Cc^+$.\footnote
   {
   Although $\Cc^+\subset X_\Z$, the two sets are different since  $X_\Z$ contains elements for which $d,m$ are not integers.}
Consider the
 transformation
\begin{align*}
 A^\sharp:&\ \Cc^+\to \Cc,\quad  (d,m,p,q,t) \to \bigl(m+3Q,d+Q,p-q+5Q,Q, p+q\bigr),\\
 &\hspace{1.8in} \quad\mbox{ where }\  Q: = \tfrac 12 (p-q-t).
\end{align*}
It is not hard to check that $p+q+t$ is even for all $\bE\in \Ee'$ so that $A(\bE)$ is always integral. One can check that  it also  satisfies the requirements to be in $\Cc$. 
Notice that
\begin{align*}
&A^\sharp(1,1,1,1,2) = (-2,0,-5,  -1,2),\quad \mbox{ i.e. } \ A^\sharp(\bE^U_{\ell,seed}) = \bE^U_{u,seed},\\
&A^\sharp(n+3,n+2, 2n+6,1,2n+3) = (n+5,n+4, 2n+10,1,2n+7),\;\\
&\hspace{2in}\mbox{ i.e. } A^\sharp(\bB^U_n)= \bB^U_{n+2}.
\end{align*}
Thus $A^\sharp$ acts on the seeds and blocking classes of the staircase family $\Ss^U$.  Because it is linear it therefore takes all steps in the ascending staircase $\Ss^U_{\ell,n}$ (that has blocking class $\bB^U_n$ and seeds
$\bE^U_{\ell,seed}, \bB^U_{n-1}$) to the corresponding descending staircase $\Ss^U_{u,n}$ with blocking class $\bB^U_n$ and seeds $\bE^U_{u,seed}, \bB^U_{n+1}$.    In particular, for all $n$ it takes the step
$ \bE_{[2n+7,2n+4]}$  that
$\Ss^U_{\ell,n}$ shares with $\Ss^U_{u,n+1}$ to the corresponding shared step 
$ \bE_{[2n+9,2n+6]}$  of the staircases
$\Ss^U_{\ell,n+1}$ and $\Ss^U_{u,n+2}$. 
However, although  $A^\sharp$  takes each of the three classes in the generating triple  $(\bB^U_n,  \bE_{[2n+7,2n+4]}, \bB^U_{n+1})$ to another perfect class, the image classes  
$\bB^U_{n+2},  \bE_{[2n+9,2n+6]}, \bB^U_{n+3}$ do  not form a generating triple.   Correspondingly there are many classes descended from $(\bB^U_n,  \bE_{[2n+7,2n+4]}, \bB^U_{n+1})$  whose image under $A^\sharp$  is not perfect.   
For example, consider the step $\bE_{[7;5,2]}$ after $\bE_{[7;4]}$ in the descending staircase $\Ss^U_{u,0}$.
Since $\bE_{[7;5,2]} = \bigl(38,24,79,11,34\bigr)$ we have $$
A^\sharp\bigl(38,24,79,11,34\bigr) = \bigl(75, 55, 153, 17, 90\bigr),
$$
with center $153/17 = 9$ and hence corresponding quasi-perfect class
$\bE = 75 L - 55 E_0 - 17 E_{1\dots9}$.  This class is not perfect because $\bE \cdot \bB^U_1 =
\bE\cdot (4L-3E_0 - E_{1\dots 8}) = -1.$  See also Remark~\ref{rmk:fakestair}. 
   \hfill$\er$
 \end{rmk}

  \section{The pre-staircases are live} \label{sec:main}
  In \S\ref{ss:live} we develop a criterion for a quasi-perfect class to be perfect and then apply it, together with the results from \cite{M1} to prove in Corollary~\ref{cor:perfATF} that all the classes in the complete family $\Cc\Ss^U$, as well as their images under the symmetries  are perfect. 
   Proposition~\ref{prop:live} then explains why the principal pre-staircases are live. The proof here needs a new slope estimate that is proved in Lemmas~\ref{lem:slopeest1} and~\ref{lem:slopeest3}. We extend these arguments in \S\ref{ss:uncount} to show that there are both ascending and descending staircases at every unblocked $b$, except possibly for the special rational $b$.

 \subsection{The principal pre-staircases are live}\label{ss:live}

We first establish some useful   sufficient conditions for a quasi-perfect class to be an exceptional class and hence perfect.
The following result is extracted from the proof of \cite[Prop.42]{ICERM}.
 
 \begin{lemma}\label{lem:live1}  Let $\bE = (d,m,p,q)$ be a quasi-perfect class  such that $\mu_{\bE, b}(p/q)$ is live for all $b$ in an open set $J\subset [0,1)$.  Then $\bE$ is perfect.
 \end{lemma}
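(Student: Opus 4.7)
The plan is to argue by contradiction: assume $\bE = (d,m,p,q)$ is quasi-perfect but not perfect, and derive a contradiction from the live hypothesis. The strategy is to exhibit an actual perfect class $\bE^*$ that realizes the capacity on an open subinterval of $J$, and then show that the rational identity $\mu_{\bE,b}(p/q) = \mu_{\bE^*,b}(p/q)$ forces $\bE = \bE^*$.

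First, I would observe that the capacity function $b \mapsto c_{H_b}(p/q)$ is a supremum of obstructions $\mu_{\bE',b}(p/q)$ over all perfect classes $\bE'$. A degree-bound argument, using that only classes with $\mu_{\bE',b}(p/q) > V_b(p/q)$ can exceed the volume bound and that the coefficients are integral, shows that on any compact subinterval of $J$ (bounded away from $b=0,1$), only finitely many perfect classes contribute. Hence $c_{H_b}(p/q)$ is piecewise rational in $b$, and there is an open $J' \subseteq J$ on which a single perfect class $\bE^* = (d^*,m^*,p^*,q^*)$ realizes the capacity: $c_{H_b}(p/q) = \mu_{\bE^*,b}(p/q)$ for $b \in J'$.

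Combined with the hypothesis, $\mu_{\bE,b}(p/q) = \mu_{\bE^*,b}(p/q)$ as an identity of rational functions of $b$ on $J'$, and hence identically in $b$. Using the explicit formulas in \eqref{eq:muEb}, this identity reads either $p/(d-mb) = p^*/(d^*-m^*b)$ (when $p^*/q^* \le p/q$) or $p/(d-mb) = q^*(p/q)/(d^*-m^*b)$ (when $p^*/q^* \ge p/q$). Cross-multiplying and matching coefficients of $b^0$ and $b^1$ yields that $(d,m,p)$ is proportional to $(d^*,m^*,p^*)$ in the first case, or $(d,m,q)$ proportional to $(d^*,m^*,q^*)$ in the second. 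The quadratic Diophantine constraint $d^2-m^2 = pq - 1$ that both tuples satisfy forbids any non-trivial scaling, so the proportionality constant must be $1$; the linear relation $3d = m+p+q$ then determines the remaining entry. Combined with the uniqueness of perfect classes at a given center (\cite[Prop.2.2.9]{MM}), we conclude $\bE = \bE^*$, which is perfect, contradicting the assumption.

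The main obstacle will be rigorously establishing the first step, namely extracting an open subinterval $J' \subseteq J$ on which a single perfect class $\bE^*$ realizes the capacity. This requires the finiteness statement controlling which perfect classes give non-trivial obstructions at $p/q$ for $b$ in a compact region; the bound comes from combining $\mu_{\bE',b}(p/q) > V_b(p/q)$ with integrality and the form \eqref{eq:pqt} of perfect classes. Once $J'$ is in hand, the rest is an elementary algebraic manipulation of explicit rational functions and a Diophantine rigidity argument.
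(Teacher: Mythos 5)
Your overall architecture (finiteness of contributing classes on a compact $b$-interval, hence a single class realizing the capacity on an open subinterval $J'$, then matching rational functions of $b$) is the same skeleton as the paper's proof, but there is a genuine gap at the decisive step. By \eqref{eq:capmu} the capacity $c_{H_b}(p/q)$ is the supremum of obstructions over \emph{all} exceptional classes, not over perfect classes only; exceptional classes that are not perfect (e.g.\ $3L-E_0-2E_1-E_{2\dots 6}$) genuinely contribute to $c_{H_b}$. Consequently the class $\bE^*$ that you extract on $J'$ is merely an exceptional class $(d^*,m^*;\bbm^*)$, and you are not entitled to write its obstruction in the form \eqref{eq:muEb}, nor to assume it has a ``center'' $p^*/q^*$, satisfies $3d^*=m^*+p^*+q^*$ and $(d^*)^2-(m^*)^2=p^*q^*-1$, or falls under the uniqueness statement \cite[Prop.2.2.9]{MM}. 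All a general exceptional class gives you at the fixed point $z=p/q$ is $\mu_{\bE^*,b}(p/q)=\bigl(\bbm^*\cdot \bw(p/q)\bigr)/(d^*-m^*b)$, and matching rational functions of $b$ then yields only $d^*=\la d$, $m^*=\la m$, $\bbm^*\cdot\bw(p/q)=\la p$ for some $\la>0$. Your ``Diophantine rigidity'' step, which forces $\la=1$, uses exactly the quasi-perfect relations for $\bE^*$ that you have not established, so it does not close the argument: a priori there could be a non-perfect exceptional class with $(d^*,m^*)$ proportional to $(d,m)$ with $0<\la<1$ that overtakes $\mu_{\bE,b}$ at $p/q$.

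The missing ingredient is the geometric input the paper uses to rule out $0<\la<1$. Writing $\bbm^*=\la\,\bbm+\bn$ with $\bbm=W(p/q)$ and $\bbm\cdot\bn=0$, the identities $(d^*)^2-(m^*)^2=\bbm^*\cdot\bbm^*-1$ and $d^2-m^2=pq-1$ give $\|\bn\|^2=1-\la^2$, so either $\bn=0$ and $\bE^*=\bE$, or $0<\la<1$; and then the intersection number $\bE\cdot\bE^*=d^*d-m^*m-\bbm^*\cdot\bbm=-\la$ must be an integer, which forces $\la=1$ and hence $\bE^*=\bE$, so $\bE$ is represented by an exceptional class and is perfect. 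So your proposal is repairable, but only by replacing the ``sup over perfect classes'' and the coefficient-matching against \eqref{eq:muEb} with this orthogonal-decomposition-plus-integrality argument (or something equivalent); as written, step 2 and the rigidity step in step 4 do not hold.
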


\begin{proof}  Because, by \cite[Lem.15(ii)]{ICERM}, $c_{H_b}(p/q)$ is the maximum 
of the obstructions given by exceptional  classes, for each $b\in J$ there must be a exceptional  class $ \bE_b'$ such that $\mu_{\bE, b}(p/q)=\mu_{\bE'_b, b}(p/q)$. 
We saw in
\cite[Lem.15(i)]{ICERM} that if  the degree coordinates of $\bE'_b$ are $(d',m')$ 
where $|bd' - m'|<\sqrt{1-b^2}$,
then
$$
 V_b(z) < \mu_{\bE'_b,b}(z) \le V_b(z) \sqrt{1 + \frac{1}{(d')^2 - (m')^2}},
$$
One can get from this an upper bound for $d'$ in terms of the ratio $
\mu_{\bE, b}(p/q)/V_b(p/q).
$
Therefore, there can only be finitely many such classes $\bE'_b$, which implies that there is  an exceptional class $\bE' = (d', m';\bbm')$
and an open subset  $J_b$ of $J_\bE$  such that  
\begin{align*}
\mu_{\bE',b}(p/q) = \mu_{\bE,b}(p/q), \quad  \forall\ b\in J_b. 
\end{align*}
Then we may write
  $\bbm' = \la \bbm + \bn$ where $\bbm = W(p/q)$ is the weight expansion of $p/q$ (i.e. the coordinates of $\bE$) 
  and $\bbm \cdot  \bn=0$.
Since
$$
\frac{\bbm' \cdot \bw(p/q)}{d' - m'b} = 
\frac{\la p}{d' - m'b}  =
\frac{p}{d - mb} 
 $$
 for all $b\in J_b$, we must have  $d'=\la d,\ m'=\la m$ for some $\la > 0$.
 The identities 
\begin{align*}
& \bbm'\cdot \bbm' -1= (d')^2 - (m')^2 = \la^2 (d^2-m^2) = \la^2 (pq - 1),\\
& \bbm'\cdot \bbm'  -1 = \la^2 \bbm\cdot\bbm + \|\bn\|^2 - 1 = \la^2 pq + \|\bn\|^2 - 1
\end{align*}
then imply  that $ \|\bn\|^2=1-\la^2$.  Therefore, unless $\bn' = 0$ so that $\bE' = \bE$ we must have  $0<\la < 1$.
Further 
$$
\bE'\cdot \bE = d'd - m' m - \bbm'\cdot \bbm = \la (d^2-m^2 -\bbm\cdot \bbm) =  - \la.
$$
But $\bE'\cdot \bE$ is  an integer.    It follows that $\bE' = \bE$, so that $\bE$ is perfect as claimed.
\end{proof}

We next show that every quasi-perfect class that intersects nonnegatively with every exceptional class is perfect.  Although this follows from the general theory of exceptional curves in blowups of $\CP^2$, the proof below is  self-contained, using only the positivity of intersections of distinct exceptional classes. 
It is based on the following version of \cite[Prop.21(i)]{ICERM}.  
It shows that, for $b$ less than and sufficiently close to $m/d$,  the obstruction  $\mu_{\bE, b}(p/q)$ from a quasi-perfect class $\bE$ at its center  is  larger than 
any other that is defined by an exceptional class with which it intersects nonnegatively.

\begin{lemma}\label{lem:pos}  Suppose that the quasi-perfect class $\bE = (d,m,p,q)$  has nonnegative intersection with the exceptional class $\bE'$.  Then $\mu_{\bE, b} (p/q) > \mu_{\bE', b} (p/q)$ for all $b \in \bigl(\frac{m^2-1}{md} , \frac md\bigr]$.
\end{lemma}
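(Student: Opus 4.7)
The plan is to compare the two obstructions at $z = p/q$ directly, using their explicit formulas together with the intersection positivity. By Lemma~\ref{lem:basic}, $\mu_{\bE,b}(p/q) = p/(d-mb)$. For the general exceptional class $\bE' = d'L - m'E_0 - \sum_i m'_i E_i$, the standard obstruction formula (as used in the proof of Lemma~\ref{lem:live1}) gives
\[
\mu_{\bE',b}(p/q) \;=\; \frac{\bbm \cdot \bbm'}{q(d' - m'b)},
\]
where $\bbm = W(p,q)$ is the integer weight expansion (recall $|\bbm|^2 = pq$, from $d^2 - m^2 = pq - 1$ and $\bE\cdot\bE = -1$). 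The hypothesis $\bE \cdot \bE' \geq 0$ rewrites as $\bbm \cdot \bbm' \leq dd' - mm'$. If $\bbm \cdot \bbm' = 0$ then $\mu_{\bE',b}(p/q) = 0$ and the conclusion is immediate; otherwise $\bbm \cdot \bbm' > 0$ combined with the bound forces $dd' - mm' > 0$ strictly, which in particular gives $m'/d' < m/d$, so that $d' - m'b > 0$ throughout the interval in question.

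In this case it suffices to establish the stronger inequality $p/(d-mb) > (dd' - mm')/\bigl(q(d' - m'b)\bigr)$ for $b \in \bigl(\tfrac{m^2-1}{md}, \tfrac{m}{d}\bigr]$. Cross-multiplying all positive quantities and substituting the quasi-perfect identity $pq = d^2 - m^2 + 1$, the claim reduces to
\[
F(b) \;:=\; (d' - m'b) + (bd - m)(md' - dm') \;>\; 0.
\]
The algebraic heart of the argument is the identity
\[
(d^2 - m^2)(d' - m'b) - (dd' - mm')(d - mb) \;=\; (bd - m)(md' - dm'),
\]
verified by direct expansion and cancellation, from which the formula for $F$ follows.

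Since $F$ is linear in $b$, only its values at the two endpoints matter. At $b = m/d$ the second summand vanishes and $F(m/d) = (dd' - mm')/d$, which is strictly positive under our standing assumption. At $b = (m^2-1)/(md)$ a short substitution (using $pq = d^2 - m^2 + 1$ once more) collapses $F$ to $F\bigl((m^2-1)/(md)\bigr) = m'pq/(md) \geq 0$. Linearity of $F$ then yields $F(b) > 0$ on the half-open interval $\bigl(\tfrac{m^2-1}{md}, \tfrac{m}{d}\bigr]$, completing the proof. The only real obstacle is keeping the algebra tidy: the displayed identity does essentially all the work, and the two endpoint evaluations are short calculations.
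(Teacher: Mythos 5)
Your argument is correct and reaches the same conclusion from the same basic inputs (the evaluation $\mu_{\bE,b}(p/q)=p/(d-mb)$, the formula $\mu_{\bE',b}(p/q)=\bbm\cdot\bbm'/(q(d'-m'b))$, the positivity $\bbm\cdot\bbm'\le dd'-mm'$, and the quasi-perfect identity $pq=d^2-m^2+1$), but it is organized differently from the paper's proof. The paper compares both obstructions to the intermediate constant $d/q$: it bounds $\mu_{\bE',b}(p/q)\le d/q$ for all $b\le m/d$ by replacing $d'-m'b$ with its value at $b=m/d$ and using $\bbm\cdot\bbm'\le dd'-mm'$, and then shows $\mu_{\bE,b}(p/q)=p/(d-mb)>d/q$ exactly when $b>\tfrac{m^2-1}{dm}$. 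You instead cross-multiply the two obstructions directly, use your (correct) algebraic identity to reduce the desired inequality to positivity of the affine function $F(b)=(d'-m'b)+(bd-m)(md'-dm')$, and check the two endpoints, where $F(m/d)=(dd'-mm')/d>0$ and $F\bigl(\tfrac{m^2-1}{md}\bigr)=m'pq/(md)\ge0$. Both routes are of comparable length; the paper's intermediate bound $d/q$ makes the roles of the two hypotheses more transparent, while your version makes the dependence on $b$ completely explicit, which is a reasonable trade.

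One justification in your write-up is wrong as stated, though the step it supports is fine: $dd'-mm'>0$ does \emph{not} imply $m'/d'<m/d$ (e.g.\ $m/d=1/2$, $m'/d'=3/2$ gives $mm'<dd'$). What you actually need is only that $d'-m'b>0$ on the interval, and this follows directly from $dd'-mm'>0$ together with $m'\ge0$ and $b\le m/d$, since then $d'-m'b\ge d'-m'\tfrac{m}{d}=(dd'-mm')/d>0$; so replace the false implication by this one line. Note also that, like the paper's own proof, you are implicitly assuming $m'\ge0$ and $\bbm\cdot\bbm'\ge0$; the few degenerate exceptional classes violating this (such as $E_0$ or an $E_i$) have nonpositive obstruction at $p/q$ and are trivially dominated, so this is harmless but worth a remark.
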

\begin{proof}
Because $\bE\cdot \bE' \ge 0$ we have
\begin{align}\label{eq:posit}
dd' - m m' - \bbm\cdot \bbm' \ge 0.
\end{align}
Therefore if $b\le \frac{m}d$ we have
\begin{align*}
\mu_{\bE', b}(a) &= \frac{\bbm'\cdot \bw(p/q)}{d'-m'b}\;\; \le\;\; \frac{\bbm' \cdot \bbm}{q(d'-m'\frac{m}{d})}  \;\;\mbox{ since } b\le \frac{m}d\\
&\le \frac {d (dd'-mm')}{q(dd'-mm')} = \frac dq.
\end{align*}
On the other hand,  because $\bw(a)\cdot \bw(a) = \frac pq$ 
$$
\mu_{\bE, b}(p/q) = \frac {\bbm\cdot\bw(a)}{d-mb}= \frac p{d-mb} >  \frac dq \;\;\mbox{ if } \;\;  pq > d^2-dm b.
$$
Since $pq= d^2 - m^2 +1$ this will hold if also
$$
dm b > d^2-pq = m^2-1, 
$$
i.e. $  b > \frac{m^2-1}{dm}.$  Hence when  $\frac{m^2-1}{dm} \le b <  \frac{m}{d}$  
we have $\mu_{\bE, b}(p/q)>\mu_{\bE', b} (p/q)$.
\end{proof}

\begin{cor}\label{cor:pos}  A quasi-perfect class $\bE$ such that $\bE\cdot \bE'\ge 0$ for all $\bE'\in \Ee\less \bE$ is perfect.
\end{cor}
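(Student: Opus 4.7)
My plan is to combine Lemma~\ref{lem:pos} with Lemma~\ref{lem:live1} by means of a continuity argument. First I would apply Lemma~\ref{lem:pos}: for every exceptional $\bE' \in \Ee \setminus \{\bE\}$, the hypothesis $\bE \cdot \bE' \geq 0$ yields
\[
\mu_{\bE,b}(p/q) > \mu_{\bE',b}(p/q) \qquad \text{for all } b \in I_0 := \Bigl(\tfrac{m^2-1}{md}, \tfrac{m}{d}\Bigr].
\]
The crucial feature is that $I_0$ depends only on the $(d,m)$ of $\bE$ itself, so the strict inequality holds \emph{uniformly} in $\bE' \in \Ee \setminus \{\bE\}$.

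Next I would invoke Lemma~\ref{lem:basic}~(ii): $\mu_{\bE,b}(p/q) > V_b(p/q)$ holds exactly when $|bd - m| < \sqrt{1-b^2}$. At $b = m/d$ this reduces to $0 < \sqrt{1-(m/d)^2}$, which is automatic, so by continuity the inequality persists on an open neighborhood of $m/d$. Intersecting with $I_0$ produces a nonempty open subinterval $J \ni m/d$ on which $\mu_{\bE,b}(p/q)$ strictly dominates both $V_b(p/q)$ and every $\mu_{\bE',b}(p/q)$ with $\bE' \in \Ee \setminus \{\bE\}$.

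By \eqref{eq:capmu}, $c_{H_b}(p/q) = \sup_{\bE'' \in \Ee}\bigl(V_b(p/q), \mu_{\bE'',b}(p/q)\bigr)$, so on $J$ every term in this supremum other than (possibly) the $\bE$-term is strictly dominated by $\mu_{\bE,b}(p/q)$. If $\bE$ itself lies in $\Ee$, the supremum is attained at the $\bE$-term, i.e.\ $\mu_{\bE,b}$ is live throughout $J$, and Lemma~\ref{lem:live1} immediately concludes that $\bE$ is perfect.

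The main obstacle is thus ruling out the possibility $\bE \notin \Ee$. I would handle this via the standard Cremona reduction procedure for classes with $c_1 = 1$ and self-intersection $-1$ (cf.~\cite[\S4.1]{MM}): such a class either reduces to a standard exceptional divisor (forcing $\bE \in \Ee$) or, at some intermediate step, acquires a negative multiplicity in front of an exceptional divisor $E_j$, which is precisely the condition $\bE \cdot E_j < 0$ for some $E_j \in \Ee$. The latter contradicts our hypothesis, so $\bE \in \Ee$, and the previous paragraph closes the argument. Verifying that each failure mode of the reduction indeed produces such a negative intersection is the only nontrivial step, but it is algorithmic and purely arithmetic.
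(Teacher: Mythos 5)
Your steps 1--3 are, up to the level of detail, exactly the paper's proof: it consists of the two sentences ``by Lemma~\ref{lem:pos}, $\mu_{\bE,b}$ is live at $p/q$ for all $b$ in an open set; hence the claim follows from Lemma~\ref{lem:live1}'', and your use of Lemma~\ref{lem:basic}~(ii) to beat the volume term near $b=m/d$ just makes explicit what that sentence leaves implicit. The difference is that the paper makes no case split on whether $\bE\in\Ee$: it reads liveness off \eqref{eq:capmu} together with Lemma~\ref{lem:pos}, and then Lemma~\ref{lem:live1} is precisely the device that \emph{produces} an exceptional class realizing the obstruction and forces it to equal $\bE$, so that $\bE\in\Ee$ appears as output rather than input. (Your unease about whether $c_{H_b}(p/q)$ really attains $\mu_{\bE,b}(p/q)$ is a fair reaction to how terse that deduction is; but note that in your own branch ``$\bE\in\Ee$'' the corollary is immediate from the definitions --- perfect means quasi-perfect and exceptional --- so Lemma~\ref{lem:live1} is doing no work there, and in your organization the entire content of the corollary has been pushed into step 4.)

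Step 4 is thus the genuine departure from the paper, and it is also where your proposal falls short of a proof. The dichotomy ``either the class Cremona-reduces to a standard exceptional divisor, or at some stage a multiplicity goes negative'', together with the implication from a negative multiplicity to a negative pairing with an exceptional class, is exactly the ``general theory of exceptional curves in blowups of $\C P^2$'' that the paper explicitly says it is avoiding; declaring its verification ``algorithmic and purely arithmetic'' does not discharge it, since for an arbitrary quasi-perfect class this is a structural statement about the reduction algorithm, not a finite computation. Moreover your identification is imprecise: a negative coefficient appearing after several Cremona moves gives $\bE\cdot \bE'<0$ where $\bE'$ is the image of the relevant $E_j$ under the moves already performed --- an exceptional class, but in general not $E_j$ itself --- so the contradiction with the hypothesis survives only after this correction. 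If you are willing simply to cite the general theory (e.g. the Cremona-reduction characterization of exceptional classes, \cite[Prop.1.2.12]{ball}, plus the standard fact that failure of reduction yields an exceptional class pairing negatively with $\bE$), your argument becomes correct, but it is then a different proof from the paper's self-contained capacity-theoretic one, and one in which your steps 1--3 are redundant.
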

\begin{proof}   By Lemma~\ref{lem:pos} $\mu_{\bE, b}$ is live  at $p/q$ for all $b$ in an open set. Hence this follows from Lemma~\ref{lem:live1}.\end{proof}

The next result is key to the proof that all the classes considered here are perfect.

\begin{lemma}\label{lem:live2}   Let $\bE= (d,m,p,q,t)$ be a quasi-perfect class such that one of the endpoints of the associated blocked $b$-interval $J_\bE$ is unobstructed and irrational.  Then $\bE$ is perfect.
\end{lemma}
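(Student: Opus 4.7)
The plan is to argue by contraposition: assume $\bE$ is quasi-perfect but not perfect, and derive a contradiction with the unobstructedness of the irrational endpoint. By Corollary~\ref{cor:pos}, if $\bE$ is not perfect then there exists an exceptional class $\bE' \ne \bE$ with $\bE\cdot\bE' \le -1$. I would show that $\bE'$ yields an obstruction at $(b_\infty, z_\infty)$, where $z_\infty := \acc(b_\infty)$, that strictly exceeds $V_{b_\infty}(z_\infty)$, contradicting the hypothesis.

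First I would exploit the endpoint condition. Since $b_\infty \in \partial J_\bE$, continuity of $b \mapsto \mu_{\bE, b}(\acc(b)) - V_b(\acc(b))$ forces $\mu_{\bE, b_\infty}(z_\infty) = V_{b_\infty}(z_\infty)$; combined with the hypothesis $c_{H_{b_\infty}}(z_\infty) = V_{b_\infty}(z_\infty)$, the quasi-perfect obstruction from $\bE$ already saturates the capacity function at $(b_\infty, z_\infty)$. Next I would compare $\mu_{\bE', b_\infty}(z_\infty)$ with $\mu_{\bE, b_\infty}(z_\infty)$ via a reversed analog of Lemma~\ref{lem:pos}: rewriting $\bE\cdot\bE' \le -1$ as $\bbm\cdot\bbm' \ge dd' - mm' + 1$ and substituting into the piecewise linear formulas \eqref{eq:muEb}, together with the Diophantine identities $3d - m = p + q$ and $d^2 - m^2 = pq - 1$ for a quasi-perfect class, should yield a strict inequality $\mu_{\bE', b_\infty}(z_\infty) > \mu_{\bE, b_\infty}(z_\infty) = V_{b_\infty}(z_\infty)$. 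Irrationality of $b_\infty$ enters in two places: it guarantees $b_\infty \ne m'/d' \in \Q$, so the denominator $d' - m' b_\infty$ is nonzero and well-controlled; and it eliminates rational coincidences that could weaken a strict inequality to equality.

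The principal obstacle is this quantitative comparison of obstructions at $(b_\infty, z_\infty)$: the formula for $\mu_{\bE', b_\infty}(z)$ depends on whether $z_\infty$ lies below or above the center $p'/q'$ of $\bE'$, so several sub-cases must be handled, and one must show that the additive slack of $+1$ in $\bbm\cdot\bbm' \ge dd' - mm' + 1$ propagates to a uniform strict gap in the obstruction values at $z_\infty$, rather than merely in a neighborhood of $m/d$ or $m'/d'$ as in Lemma~\ref{lem:pos}. Once this quantitative comparison is in place, $\mu_{\bE', b_\infty}(z_\infty) > V_{b_\infty}(z_\infty)$ immediately forces $c_{H_{b_\infty}}(z_\infty) > V_{b_\infty}(z_\infty)$, contradicting unobstructedness of $b_\infty$, and so $\bE$ must be perfect.
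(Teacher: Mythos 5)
There is a genuine gap at the heart of your argument. The contrapositive setup is fine (Corollary~\ref{cor:pos} does give you an exceptional class $\bE'\ne\bE$ with $\bE\cdot\bE'\le-1$), but the step you call the ``principal obstacle'' is in fact unsupported and is where the argument fails: the inequality $\bbm\cdot\bbm'\ge dd'-mm'+1$ only compares the two obstruction functions at the \emph{center} $z=p/q$ of $\bE$, where $\bbm'\cdot\bw(p/q)=\tfrac1q\,\bbm'\cdot\bbm$, and even there it only yields $\mu_{\bE',b}(p/q)>\mu_{\bE,b}(p/q)$ for $b$ in a window around $m/d$ (this is the reversal of Lemma~\ref{lem:pos}, whose proof uses both the evaluation at $p/q$ and $b\approx m/d$). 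You need a strict excess over the volume at the completely different point $(b_\infty,z_\infty)$, and $b_\infty$ lies on the far side of the blocked interval from $m/d$ (recall $\acc(m/d)>\p^+(I_\bE)$, see Remark~\ref{rmk:misc}). By Lemma~\ref{lem:md}, the comparison of two obstructions at a fixed $z$ reverses as $b$ moves according to the ratios $m/d$, $m'/d'$, so domination near $m/d$ says nothing at $b_\infty$; and nothing in $\bE\cdot\bE'\le-1$ localizes where $\mu_{\bE',\,\cdot}$ is large, so there is no reason why $\mu_{\bE',b_\infty}(z_\infty)>V_{b_\infty}(z_\infty)$ --- the class $\bE'$ may simply be irrelevant at $z_\infty$. (In the fake-class examples of Remark~\ref{rmk:fakestair} the negative partner happens to block a large interval, but that is a feature of those examples, not a consequence of the intersection number.) Your stated uses of irrationality (nonvanishing of $d'-m'b_\infty$, ruling out ``rational coincidences'') are not where irrationality is actually needed, so the proposal never engages the rigidity that makes the hypothesis work.

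For comparison, the paper argues in the opposite direction: it never tries to produce an obstruction at $(b_\infty,z_\infty)$. Instead, unobstructedness of the endpoint plus the scaling property show that $\mu_{\bE,b_\infty}$ \emph{is} live on the whole segment $[z_\infty,p/q]$; then, if some exceptional $\bE'$ tied with $\bE$ at $p/q$ at $b=b_\infty$ and beat it for $b$ slightly beyond, the linear form $\mu_{\bE',b}(z)=\frac{A+Cz}{d'-m'b}$ on $[z_\infty,p/q]$ must have $A=0$ by slope matching, and then the identity $C(d-mb_\infty)=q(d'-m'b_\infty)$ together with the \emph{irrationality of $b_\infty$} forces $Cd=qd'$, $Cm=qm'$, so $\bE'$ gives the identical obstruction for every $b$. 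Hence $\bE$ is live at its center on an open set of $b$-values, and Lemma~\ref{lem:live1} concludes it is perfect. If you want to salvage your contrapositive route, you would have to prove a quantitative statement of the form ``$\bE\cdot\bE'\le-1$ implies $\mu_{\bE',b}(z)$ exceeds the volume at the endpoint of $J_\bE$,'' and no such statement is available or suggested by the tools in the paper.
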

\begin{proof}  We suppose that $m/d>1/3$, leaving the case $m/d<1/3$ to the reader.  

Suppose first that the lower endpoint $b_\infty \in \p J_\bE$ is unobstructed and irrational, and let $z_\infty: = \acc(b_\infty)$.  
Thus $c_{H_{b_\infty}}(z_\infty) = V_{b_\infty}(z_\infty)$ because $z_\infty$ is unobstructed, while 
$V_{b_\infty}(z_\infty) = \mu_{b_\infty}(z_\infty) = \frac {qz_\infty}{d-mb_\infty}$ 
where the second equality holds by \eqref{eq:muEb}, and the first holds by continuity since
$z_\infty$ is the lower endpoint of $I_{\bE} = \acc(J_{\bE})$.
 Therefore, by the scaling property\footnote
{
This says that for any target $X$ and $\la>1$, $c_X(\la z) \le \la c_X(z)$; see \cite[eq;(1.1.1)]{ICERM}.}
we must have
\begin{align}\label{eq:live2}
c_{H_{b_\infty}}(z) = \frac {qz}{d-mb_\infty}=\mu_{\bE,b_\infty}(z),\qquad z\in [z_\infty,p/q].
\end{align}
Thus $\bE$ is live on $(z_\infty,p/q]$ for $b = b_\infty$.  By Lemma~\ref{lem:live1}, it suffices to show that 
$\bE$ is live at $a=p/q$ on some interval $[b_\infty, b_\infty+\eps)$.

If this is not true, there is an exceptional class $\bE' = (d',m',\bbm')$ such that $\mu_{\bE', b_\infty}(p/q) = 
\mu_{\bE, b_\infty}(p/q)$, while $\mu_{\bE', b}(p/q) >  \mu_{\bE, b}(p/q)$ for $b\in 
(b_\infty, b_\infty+\eps)$.  By \cite[eq.(2.1.6)]{ICERM} and \cite[Prop.2.3.2]{ball} a general obstruction function has the form
$$
 \mu_{\bE', b}(z) = \frac{A +Cz}{d'-m'b},
 $$
on the closure of any interval  consisting of points $z$ such that $\ell_{wt}(z)> \ell_{wt}(\bbm')$.\footnote{
 Because in this paper we use two notions of the length of a continued fraction, we here write $\ell_{wt}(p/q)$ (rather than   $\ell(p/q)$) to denote
$\sum_{i=0}^n\ell_i$, where $p/q=[\ell_0;\dots,\ell_n]$, and call it the {\bf weight length} of $p/q$.
Further, $\ell_{wt}(\bbm')$ is simply the number of elements in the tuple $\bbm'$; see Appendix~\ref{app:arith}.}
 Since $\bE'$ is obstructive at $p/q$, we know that $\ell_{wt}(\bbm') \le \ell_{wt}(p/q)$ by 
  \cite[Lem.14.]{ICERM}, while the fact that $\bE$ is obstructive on $[z_\infty, p/q]$ implies that
  $ \ell_{wt}(p/q)\le \ell_{wt}(z)$ for all $z\in [z_\infty, p/q]$.  Therefore the above formula describes $ \mu_{\bE', b}(z)$  on the whole interval $[z_\infty, p/q]$.  
 
 Next notice
 that the slope of the function  $\mu_{\bE', b_\infty}(z) $ for $z\in (p/q-\eps,p/q)$ must agree with that of $\mu_{\bE, b_\infty}(p/q)$: if it were smaller $\mu_{\bE', b_\infty}(z) $ would overwhelm 
$\mu_{\bE, b_\infty}(z)$ for $z< p/q$, while if it were larger the scaling property would be violated when $z>b_\infty$.  Therefore, the constant $A$ above must vanish.
 
 Thus if we now fix $z = p/q$ and $b=b_\infty$ we have
 $$
  \mu_{\bE', b_\infty}(p/q) = \frac{Cp}{q(d'-m'b_\infty)} =   \mu_{\bE, b_\infty}(p/q) = \frac{p}{d-mb_\infty},
  $$
which implies that
 $$
 C(d-mb_\infty) = q(d'-m'b_\infty).
 $$
 Since $b_\infty$ is irrational by Remark~\ref{rmk:recur0}~(i), this is impossible unless $Cm =qm', \ Cd = qd'$.  
 Hence 
 $$
  \mu_{\bE', b}(p/q) = \frac{Cp}{q(d'-m'b)} =    \mu_{\bE, b}(p/q)=  \frac{p}{d-mb},\quad \mbox{ for all }\ b,
  $$
  and in particular for $b\approx b_\infty$.  
  Therefore   $ \mu_{\bE, b}(z) $ is live at $p/q$ for $b\in [b_\infty, b_\infty + \eps)$, so that  $\bE$ is perfect by Lemma~\ref{lem:live1}.
\MS

This proves the lemma when the lower endpoint is unobstructed.  The proof when $(b_\infty, z_\infty)$ is the higher endpoint   is essentially the same.  Again, we first argue that $\mu_{\bE, b_\infty}(z)$ is live on $[p/q,z_\infty)$ and then show that if $\mu_{\bE', b}(p/q)$  is live for  $b\in (b_\infty-\eps, b_\infty]$ the function $z\mapsto 
\mu_{\bE', b_\infty}(z)$ must be constant on $[p/q,z_\infty)$, and hence (by the irrationality of $b_\infty$)  be given by the same formula as $\mu_{\bE, b_\infty}(z)$. It follows that the two obstructions must be equal as $b$ varies.
\end{proof}

\begin{cor}\label{cor:perfATF}  Let $\bE$ be a quasi-perfect class in the complete family $\Cc\Ss^U$ or in one of its images under a symmetry $S^iR^\de$.  Then $\bE$ is perfect.
\end{cor}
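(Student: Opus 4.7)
The plan is to apply Lemma~\ref{lem:live2}, supplying the required unobstructedness at an endpoint of the blocked interval via the full-filling results of Magill~\cite{M1}. Fix a quasi-perfect $\bE \in \Cc\Ss^U$ with center $p/q > a_{\min}$ and $m/d > 1/3$; then $\bE$ is center-blocking by \cite[Prop.~2.2.9]{MM}, and by Proposition~\ref{prop:label1}(i) it is the middle entry $\bE_\mu$ of a unique generating triple $\Tt_\de$, where $\de$ is its ternary decimal label from Lemma~\ref{lem:label}. The $x$-mutation $x\Tt_\de = (\bE_{\de,\la}, \bE_{x\de}, \bE_\de)$ has $\bE$ as its right entry, so by Lemma~\ref{lem:2} the associated ascending principal pre-staircase $\Ss^{x\Tt_\de}_\ell$ has $\bE$ as its blocking class, and by Remark~\ref{rmk:adjac}(ii) it accumulates at the lower endpoint $(b_\infty^-, z_\infty^-)$ of $J_\bE$; by Lemma~\ref{lem:2}(iii) the value $b_\infty^-$ is irrational.

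The main input from \cite{M1} is that this lower endpoint is unobstructed, i.e. $c_{H_{b_\infty^-}}(z_\infty^-) = V_{b_\infty^-}(z_\infty^-)$. This is proved there by following a sequence of mutations of the almost toric fibration on $H_b$, prescribed by the label $\de$, to produce a full filling realizing the volume bound; crucially, the construction depends only on the numeric data of $\bE$ and not on knowing a priori that $\bE$ is perfect. With irrationality and unobstructedness of $b_\infty^-$ both in hand, Lemma~\ref{lem:live2} applies directly and yields that $\bE$ is perfect. The basic blocking classes $\bB^U_n$ are already known perfect from \cite{ICERM}, while the shared middle steps $\bE_{[2n+7;2n+4]}$ are handled by exactly the argument just given applied to the basic triple $\Tt^n_*$. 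Classes with $m/d < 1/3$ are treated symmetrically by arguing from the upper endpoint of $J_\bE$, using the second case of Lemma~\ref{lem:live2}.

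For an image $\bE' := T^\sharp(\bE)$ with $T = S^iR^\de$ and $\bE \in \Cc\Ss^U$, Proposition~\ref{prop:symm} together with Corollary~\ref{cor:main1iipf} shows that the whole structure of generating triples, principal pre-staircases and blocked intervals is transported by $T$ to the image family, with $J_{\bE'} = T(J_\bE)$. Since $S$ and $R$ act as fractional linear transformations with integer coefficients, the endpoints of $J_{\bE'}$ remain irrational, and the full-filling construction of \cite{M1} applies to the symmetric image family as well, giving unobstructedness at the appropriate endpoint; Lemma~\ref{lem:live2} then delivers perfectness of $\bE'$. The main obstacle in executing this plan is verifying that the correspondence between the mutation tree of triples $\Tt_\de$ and the mutation tree of ATFs in \cite{M1} is tight enough that every class $\bE_\de$ has its lower endpoint realized as a full-filling point of some ATF; a secondary point is the anomalous tuple $R^\sharp(\bB^U_0)$ of Remark~\ref{rmk:symm0}, which has $m/d = -1/0$, admits no geometric realization, and must be excluded from the scope of the corollary.
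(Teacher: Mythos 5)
Your treatment of the classes in $\Cc\Ss^U$ itself is essentially the paper's argument: each $\bE_\de$ is the blocking class of an ascending principal pre-staircase (obtained from the triple structure), the results of \cite{M1} give unobstructedness of the lower endpoint of $J_\bE$, irrationality comes from the recursive structure, and Lemma~\ref{lem:live2} then yields perfectness. (The paper's proof text cites Lemma~\ref{lem:live1}, but the criterion actually being used is Lemma~\ref{lem:live2}, exactly as you do.)

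Where your proposal diverges, and where it has a genuine gap, is the second half, i.e.\ the images $T^\sharp(\bE)$ for $T=S^iR^\de$. You propose to rerun the same endpoint argument inside the image family, asserting that ``the full-filling construction of \cite{M1} applies to the symmetric image family as well.'' That assertion is not available: the almost toric fibration results of \cite{M1}, as invoked in this paper, establish unobstructedness only for the accumulation points of ascending pre-staircases in $\Cc\Ss^U$, i.e.\ for $z\ge 6$ and $b>1/3$; nothing cited here produces full fillings at the corresponding points of the image families (which have $b<1/3$ or centers below $6$, e.g.\ the Fibonacci regime in $R^\sharp(\Cc\Ss^U)$). You flag this yourself as ``the main obstacle,'' but it is precisely the step that needs a proof, and without it the argument for the image classes does not close. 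The paper avoids this entirely by transporting perfectness algebraically: Lemmas~4.1.2 and~4.1.3 of \cite{MM} show that $S^\sharp$ and $R^\sharp$ take a perfect class whose coefficients satisfy \eqref{eq:pqt} to another such perfect class (Cremona reduction is compatible with the symmetries), with the restriction to centers $p/q>7$ in the case of $R$ taking care of the anomalous tuple $R^\sharp(\bB^U_0)$ that you correctly exclude via Remark~\ref{rmk:symm0}. So for the symmetric images you should replace the appeal to \cite{M1} by the appeal to \cite[Lem.~4.1.2, 4.1.3]{MM}; as written, that part of your proof rests on an unproved claim.
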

\begin{proof}   
The results of \cite{M1} show that 
for every class $\bE$ in  $\Cc\Ss^U$ 
the lower endpoint $b_\bE$ of the corrsponding blocked $b$-interval $J_\bE$ is unobstructed.    Therefore $\bE$ is perfect by Lemma~\ref{lem:live1}.   The
proof of the first claim is completed by
 Lemmas~4.1.2 and~4.1.3 in \cite{MM} that show that  the image by the shift $S$ (resp. $R$)   of a perfect class whose coefficients $(d,m,p,q)$ satisfy \eqref{eq:pqt}  is another perfect class whose coefficients $(d,m,p,q)$ satisfy \eqref{eq:pqt}.  Note that in the case of $R$ we restrict consideration to classes with $p/q>7$; see the discussion in Remark~\ref{rmk:symm0}~(i).
\end{proof}

\begin{cor}\label{cor:perfATF2}  Let $\bE$ be a perfect class  that occurs as a step in a principal pre-staircase.  Then both endpoints of the corresponding blocked $b$-interval $J_\bE$ are unobstructed.
\end{cor}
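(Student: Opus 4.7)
The plan is to reduce the claim about the two endpoints of $J_\bE$ to two applications of the result from \cite{M1} quoted in the proof of Corollary~\ref{cor:perfATF}, by identifying the two endpoints of $J_\bE$ with accumulation points of two principal pre-staircases naturally attached to $\bE$.

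First I would invoke Theorem~\ref{thm:main1}~(iii) to identify $\bE$ as the middle entry $\bE_\mu$ of a unique generating triple $\Tt = \Tt_\bE$. By Lemma~\ref{lem:2}, $\bE$ is then the blocking class for the ascending principal pre-staircase $\Ss^{x\Tt}_\ell$, which accumulates at $z^- := \p^- I_\bE$, and for the descending principal pre-staircase $\Ss^{y\Tt}_u$, which accumulates at $z^+ := \p^+ I_\bE$. Since $\acc_\eps^{-1}$ is a homeomorphism from $I_\bE$ onto $J_\bE$, unobstructedness at both endpoints of $I_\bE$ is equivalent to the desired statement. The endpoint of $J_\bE$ corresponding (under $\acc_\eps^{-1}$) to $z^-$ is unobstructed directly by applying the \cite{M1} result to $\bE$ itself, exactly as in the proof of Corollary~\ref{cor:perfATF}.

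For the remaining endpoint I would instead apply the \cite{M1} result to each step $\bE_k$ of the descending pre-staircase $\Ss^{y\Tt}_u$ rather than to $\bE$ itself. Each $\bE_k$ is a perfect class in the same complete family as $\bE$, with center $p_k/q_k \searrow z^+$, so by \cite{M1} each $\bE_k$ has one endpoint of $J_{\bE_k}$ unobstructed, corresponding (via $\acc_\eps^{-1}$) to $\p^- I_{\bE_k}$. Proposition~\ref{prop:1} forces the intervals $I_{\bE_k}$ to be pairwise disjoint and disjoint from $I_\bE$; since $p_k/q_k > z^+ = \p^+ I_\bE$, each $I_{\bE_k}$ must lie in $[z^+,\infty)$, so $z^+ \leq \p^- I_{\bE_k} \leq p_k/q_k$, and a squeeze gives $\p^- I_{\bE_k} \to z^+$. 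Continuity of $\acc_\eps^{-1}$ then transfers convergence to the $b$-coordinate, and a standard continuity argument for $(b,z) \mapsto c_{H_b}(z) - V_b(z)$, which is continuous and nonnegative with value zero at each $(b_k, \p^- I_{\bE_k})$, yields value zero at the limit, establishing unobstructedness of $z^+$.

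The hard part will be justifying the final continuity step cleanly: it relies on joint continuity of $c_{H_b}(z)$ in $(b,z)$, which is standard in the symplectic embedding literature (following from upper semi-continuity of capacity functions plus stability of symplectic embeddings under small perturbations of the target), but must be invoked with a little care. A subsidiary subtlety is that the \cite{M1} unobstructedness result, stated originally for $\Cc\Ss^U$, must also apply to classes in the symmetric images $(S^iR^\de)^\sharp\Cc\Ss^U$; this should be routine given the symmetry-preservation properties established in Proposition~\ref{prop:symm} and Lemma~\ref{lem:Tgen}, paralleling the extension already carried out at the end of the proof of Corollary~\ref{cor:perfATF}.
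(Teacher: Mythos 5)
Your identification of the two endpoints of $J_\bE$ with the accumulation points of the pre-staircases $\Ss^{x\Tt}_\ell$ and $\Ss^{y\Tt}_u$ is the right starting point, but both halves of your argument lean on statements the paper does not (and cannot at this stage) supply. For the lower endpoint you apply the result of \cite{M1} to $\bE$ itself and then wave at the symmetries to cover classes in $(S^iR^\de)^\sharp(\Cc\Ss^U)$. This is a genuine gap: the unobstructedness statement from \cite{M1} is only invoked for classes in $\Cc\Ss^U$, and the proof of Corollary~\ref{cor:perfATF} transfers only \emph{perfection} under $S^\sharp,R^\sharp$ (via \cite[Lem.4.1.2, 4.1.3]{MM}), not unobstructedness; indeed Remark~\ref{rmk:main0}~(iii) explicitly states that it is unknown whether the capacity functions are invariant under the symmetries, so ``routine'' transfer of unobstructedness is precisely what one may not assume. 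For the upper endpoint you again need the \cite{M1} statement for each step $\bE_k$ (same problem outside $\Cc\Ss^U$), and in addition you invoke Proposition~\ref{prop:1} to locate $\p^-I_{\bE_k}$; but Proposition~\ref{prop:1} is only proved in \S\ref{ss:disj}, after and on top of the liveness results (Proposition~\ref{prop:live}, Corollary~\ref{cor:live1}), the density Proposition~\ref{prop:dense}, and the disjointness Proposition~\ref{prop:disj}, whereas Corollary~\ref{cor:perfATF2} itself is used earlier (e.g.\ in Lemma~\ref{lem:5T}). Even granting that one could reorder things to avoid an outright cycle, you are importing far heavier machinery than the statement requires. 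There is also a small misattribution: Theorem~\ref{thm:main1}~(iii) does not give the ``unique triple with $\bE$ as middle entry'' (that is Proposition~\ref{prop:label1}~(i), and it fails for the classes $\bB^U_n$, which are never middle entries, though they are still blocking classes of two principal pre-staircases).

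The paper's proof is both lighter and uniform in the two endpoints and in the family: fix either endpoint $(b_\infty,z_\infty)$ of $J_\bE$ and take the principal pre-staircase of perfect classes $\bE_k$ accumulating there (perfection from Corollary~\ref{cor:perfATF}, which does cover the symmetric images). Since each $\bE_k$ is perfect it is live at its center for $b=m_k/d_k$, so $c_{H_{m_k/d_k}}(p_k/q_k)=\mu_{\bE_k,m_k/d_k}(p_k/q_k)=p_kd_k/(p_kq_k-1)$, and because $d_k\to\infty$ these values converge to the volume at $(b_\infty,z_\infty)$; the passage to the limit is exactly \cite[Lem.27]{ICERM}, which packages the continuity you were worried about. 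If you replace both of your endpoint arguments by this single limiting argument, no appeal to \cite{M1}'s unobstructedness, to symmetry-invariance of $c_{H_b}$, or to Proposition~\ref{prop:1} is needed.
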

\begin{proof}  
Our assumptions imply that
for each  endpoint $b_\infty$ of $J_{\bE}$ there is a sequence of perfect classes $\bE_k$ with $p_k/q_k \to z_\infty,\ m_k/d_k\to b_\infty$, where 
$z_\infty = \acc(b_\infty)$.   Moreover, because these classes form a staircase rather than a pre-staircase, the corresponding obstructions are live at the limiting $b$-value $b_\infty$. Then  
  \cite[Lem.27]{ICERM} implies
that  $c_{H_{b_\infty}}(z_\infty)$ is the limit of the obstructions $\mu_{\bE_k, m_k/d_k}(p_k/q_k)$.  Moreover, because $d_k\to \infty$  this limit is $V_{b_\infty}(z_\infty)$.
\end{proof}

We next turn to the proof that the principal pre-staircases are live. 
By \cite[Thm.51]{ICERM}, there are three reasons why a sequence of perfect classes whose steps $p_k/q_k$ converge to $z_\infty$ may not form a staircase at $b_\infty = \acc_\eps^{-1}(z_\infty)$.
\begin{itemize} \item [(i)]  The convergence $m_k/d_k\to b_\infty$ may be so slow that there is no $k_0$ such that the classes $\bE_k,k\ge k_0,$ are obstructive at their centers when $b=b_\infty$.
\item[(ii)] There may be a sequence  of obstructive classes each of which obscures 
a finite number of steps.
\item [(iii)] There may be an {\bf overshadowing class}, i.e. a class  $\bE'$ whose obstruction function $z\mapsto \mu_{\bE',b_\infty}$  goes through the accumulation point $\bigl(z_\infty, V_{b_\infty}(z_\infty)\bigr)$ with sufficiently steep slope to obscure the step corners at $\bigl(p_k/q_k,  p_k/(d_k-m_kb_\infty)\bigr)$ for all $k\ge k_0$.
\end{itemize}
Here we say that the class $\bE'$ {\bf obscures} the step at $p_k/q_k$ given by $\bE_k$  if there is $\eps>0$ such that $
\mu_{\bE', b_\infty}(z) > \mu_{\bE_k, b_\infty}(z) $ either for $z\in (p_k/q_k - \eps, p_k/q_k)$ or for $z\in (p_k/q_k, p_k/q_k+\eps)$. Thus if a prestaircase in $H_{b_\infty}$ is live, infinitely many of its step classes $\bE_k$  are live for $b = b_\infty$ and $z$ in some neighborhood of the step center $p_k/q_k$.

By \cite[Cor.4.2.3]{MM}, problems (i), (ii) never happen for a recursively defined pre-staircase
because there is an upper bound on the degree of a class that could 
obscure a step corner.  On the other hand there could be an overshadowing class.  In particular, 
 recall the following
identity (from \cite[(2.2.5)]{ICERM} or \cite[Lem.2.2.7]{MM})
\begin{align}\label{eq:accform}
V_b(\acc(b))=\frac{1+\acc(b)}{3-b}.
\end{align}
When $b\in (1/5,5/11)$ so that $\acc(b) < 6$,   the function  $z\mapsto\frac {1+z}{3-b}$  is the obstruction 
 from the special exceptional class $\bE'' = 3L - E_0 - 2E_0 - E_{1\dots 6}$  when $z<6$.  Therefore this obstruction always goes through the accumulation point $(\acc(b), V_b(\acc(b))$.
If $\Ss$ ascends, then this class causes no difficulties because $\frac{1+z}{3-b} < V_b(z)$ when $z< \acc(b)$.  However,   we do need to check that this slope of this function is not steep enough to overshadow the steps of a descending pre-staircase $\Ss$ with limit $z_\infty < 6$.
Now, the slope $s_k(\Ss)$ 
 of the line segment from the accumulation point $\bigl(\acc(b), V_b(\acc(b))\bigr)$ to the outer corner $\bigl(\frac{p_k}{q_k}, \frac{p_{k}}{d_{k}-m_{k}b}\bigr)$ of the $k$th step is
\[
s_k(\Ss): = \frac{\frac{p_{k}}{d_{k}-m_{k}b_\infty}-\frac{1+\acc(b_\infty)}{3-b_\infty}} {\frac{p_k}{q_k}-\acc(b_\infty)}.
\]
Therefore $\lim_k s_k(\Ss)> \frac{1}{3-b_\infty}$ exactly if
\begin{align}\label{eq:MDTineq0}
b_\infty \bigl(m_k(p_k+q_k) - p_kq_k\bigr) > d_k(p_k+q_k) - 3p_kq_k, \quad \mbox{ for } k\ge k_0.
\end{align}
If this holds and if the pre-staircase has a blocking class, then as explained in the next result, we can use an arithmetic argument from~\cite{MM} to rule out the existence of an overshadowing class.  The
case of nonrecursive $\Ss$  is more complicated and is treated in \S\ref{ss:uncount}.

\begin{prop}\label{prop:live}  Let $\Ss: = \bigl(\bE_k=(d_k,m_k,p_k,q_k,t_k)\bigr)_{k\ge0}$ be a sequence of 
recursively defined perfect classes such that $m_k/d_k$ decreases with irrational limit $b_\infty>\tfrac13$. Assume $\Ss$ is associated to a blocking class $\bB$ with $t_{\bB} \geq 3$, 
and that, if $\Ss$ descends, the inequality   \eqref{eq:MDTineq0} holds for some $k_0$.
  Then 
$\Ss$ is live, i.e. $H_{b_\infty}$ has a staircase with steps $\bigl(\bE_k\bigr)_{k\ge k_0}$.
The same result holds if the $m_k/d_k$ increase with irrational limit $b_\infty < 1/3$. \end{prop}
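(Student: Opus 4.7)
The plan is to rule out each of the three obstacles to liveness listed just before the statement. For obstacles (i) and (ii), the hypothesis that $\Ss$ is recursively defined with recursion parameter $t_\bB \geq 3$ puts us directly in the framework of \cite[Cor.~4.2.3]{MM}. Specifically, Remark~\ref{rmk:recur0}~(i) gives geometric convergence $m_k/d_k = b_\infty + O(\lambda^{-2k})$ with $\lambda>1$, from which $|d_kb_\infty - m_k| < \sqrt{1-b_\infty^2}$ for all $k\geq k_0$; by Lemma~\ref{lem:basic}~(ii) this makes each $\mu_{\bE_k,b_\infty}$ obstructive at its center $p_k/q_k$, handling (i). That same corollary of \cite{MM} provides a uniform upper bound on the degree of any exceptional class that could dominate a single step corner of a recursive pre-staircase, ruling out (ii) once we have ruled out the existence of an individual overshadowing class.

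The core of the argument is therefore obstruction (iii). By the discussion around \eqref{eq:accform}, when $z_\infty < 6$ the special class $\bE'' = 3L - E_0 - 2E_1 - E_{2\dots 6}$ has obstruction $z\mapsto (1+z)/(3-b_\infty)$ which automatically passes through the accumulation point $(z_\infty, V_{b_\infty}(z_\infty))$ with slope $1/(3-b_\infty)$. In the ascending case, this line lies below $V_{b_\infty}$ for $z < z_\infty$ and so cannot overshadow. In the descending case, the hypothesis \eqref{eq:MDTineq0} is precisely equivalent to $\lim_k s_k(\Ss) > 1/(3-b_\infty)$, so again $\bE''$ fails to overshadow.

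To rule out overshadowing by any other exceptional class $\bE' = (d',m',\bbm')$, we follow the arithmetic strategy of \cite[\S4.2]{MM}. An overshadowing $\bE'$ would have $\mu_{\bE',b_\infty}(z_\infty) = V_{b_\infty}(z_\infty)$ with slope at least $\lim_k s_k(\Ss)$. Writing $\mu_{\bE',b}(z) = (A+Cz)/(d'-m'b)$ on a neighborhood of $z_\infty$ containing the first few $p_k/q_k$, matching the value at $z_\infty$ forces a Diophantine identity of the form $q_k(d'-m'b_\infty) = C(d_k-m_kb_\infty)$ for infinitely many $k$. The irrationality of $b_\infty$ (guaranteed by Lemma~\ref{lem:2}~(iii) since $t_\bB \geq 3$) forces $q_k d' = C d_k$ and $q_k m' = C m_k$, so $\bE'$ is a positive rational multiple of some $\bE_k$; primitivity of exceptional classes then gives $\bE' = \bE_k$, a contradiction. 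Adjacency of the steps to $\bB$ enters in verifying that the limiting slope is as claimed, a computation already done in \cite[\S4.2]{MM}.

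The main technical hurdle is establishing the asymptotic slope estimate used in the previous paragraph, which the paper encapsulates in Lemmas~\ref{lem:slopeest1} and~\ref{lem:slopeest3}. These require delicate manipulation of the identities $3d_k - m_k = p_k + q_k$ and $d_k^2 - m_k^2 = p_k q_k - 1$ together with the $t_\bB$-recursion, taking care with the additive constant $8$ in $t^2 = p^2 - 6pq + q^2 + 8$ which produces lower-order corrections that must be tracked to justify \eqref{eq:MDTineq0} at finite $k\geq k_0$ rather than only in the limit. Finally, the case $m_k/d_k \nearrow b_\infty < 1/3$ follows from the $b_\infty > 1/3$ case by symmetry: the map $b \mapsto \acc(b)$ reverses orientation on $(0,1/3)$, so monotonicity of $m_k/d_k$ flips, while the slope analysis in the $z$-variable is unchanged.
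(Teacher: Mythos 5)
Your overall skeleton matches the paper's: each $\bE_k$ is live at its center for $b=m_k/d_k$ because it is perfect, failure modes (i) and (ii) are excluded by quoting \cite[Thm.4.2.1, Cor.4.2.3]{MM} for recursively defined pre-staircases, and the line $z\mapsto\frac{1+z}{3-b_\infty}$ is disposed of by the hypothesis \eqref{eq:MDTineq0} in the descending case and by its lying below the volume curve for $z<z_\infty$ in the ascending case. Where you diverge is the decisive step, ruling out an arbitrary overshadowing class $\bE'$, and there your sketch has a genuine gap. You claim that ``matching the value at $z_\infty$'' yields $q_k(d'-m'b_\infty)=C(d_k-m_kb_\infty)$ for infinitely many $k$, whence $\bE'$ is proportional to some $\bE_k$. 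But an overshadowing class is only required to satisfy $\mu_{\bE',b_\infty}(z_\infty)=V_{b_\infty}(z_\infty)$ and to dominate the step corners, i.e.\ inequalities $\mu_{\bE',b_\infty}(p_k/q_k)\ge \mu_{\bE_k,b_\infty}(p_k/q_k)$; the obstructions $\mu_{\bE_k,b_\infty}$ of the individual steps do not pass through the accumulation point, so there is no equality between $\mu_{\bE'}$ and $\mu_{\bE_k}$ anywhere to ``match,'' and no Diophantine identity per $k$ follows. The argument you are transplanting is the one in Lemma~\ref{lem:live2}, and it needs a single class whose obstruction is live on an interval abutting $z_\infty$ and equal to the volume at $z_\infty$ (there, the class whose perfection is being proved); for the present proposition the only such line is the one coming from the blocking class $\bB$, not from the steps, and equality with it would only tell you the putative overshadowing class coincides with $\bB$, which is not yet a contradiction without further analysis of slopes.

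The paper instead closes this step by citing \cite[Prop.4.3.7]{MM}: the obstruction of the overshadowing class, the obstruction of the blocking class, and the graph of $z\mapsto\frac{1+z}{3-b_\infty}$ are three distinct lines through the irrational point $(z_\infty, V_{b_\infty}(z_\infty))$, and an arithmetic argument using this, together with the fact that the ratios $m_k/d_k$ decrease when $b_\infty>1/3$ (increase when $b_\infty<1/3$, Corollary~\ref{cor:symm}), shows no such class exists. This is exactly where the hypothesis that $\Ss$ has a blocking class is used, and your proposal never brings $\bB$ into play in this role. Finally, your last paragraph misplaces the technical burden: the slope estimate \eqref{eq:MDTineq0} is a \emph{hypothesis} of the proposition (it is verified for the principal pre-staircases separately, in Lemmas~\ref{lem:slopeest1}--\ref{lem:slopeest3}), so it does not need to be established inside this proof; what does need substance is precisely the nonexistence of an overshadowing class, which your sketch does not supply.
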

\begin{proof}
Since  $\bE_k$ is perfect, it is live at its center $p_k/q_k$ for $b=m_k/d_k$; see Lemma~\ref{lem:basic}.  To prove the claim about $\Ss$, we must show that for large $k,$ $\bE_k$ remains live at the limiting value  $b_\infty.$   As explained above, by \cite[Thm.4.2.1, Cor.4.2.3]{MM} 
it suffices to rule out the existence of an overshadowing class. 
  This obstruction   must be different from $z\mapsto \frac{1+z}{3-b}$; this holds for descending pre-staircases by assumption, and holds for ascending pre-staircases  because the line $z\mapsto \frac{1+z}{3-b}$ cuts through the volume curve from below and so is too steep.  Therefore,
  the three lines given by the obstructions from the overshadowing class, the blocking class, as well as the graph of $z\mapsto \frac{1+z}{3-b_\infty}$ are distinct and
 all go through the accumulation point $(b,\acc(b)) = (b_\infty, z_\infty)$.  But 
$b_\infty, z_\infty$ are both irrational by Remark~\ref{rmk:recur0}~(i).  In \cite[Prop.4.3.7]{MM}  an arithmetic argument  is used   to show  that such an overshadowing class 
cannot exist.
This argument also crucially uses the fact that the $m_k/d_k$ decrease when $b>1/3$ and increase when $b<1/3$, which holds by Corollary~\ref{cor:symm}.
\end{proof}

It remains to prove that the principal pre-staircases satisfy \eqref{eq:MDTineq0}.   Lemma~4.2.7 in \cite{MM} 
shows that this estimate holds for all descending pre-staircases 
associated to the base triples $(S^iR^\de)^{\#}(\Tt^n_*)$, except $\Tt^0_*$.  
Rather than extending that asymptotic argument to cover more cases, we will prove that in most (but not all cases)  the inequality in \eqref{eq:MDTineq0} holds with $k_0 = 0$, since that will be useful in \S\ref{ss:uncount} where we prove Proposition~\ref{prop:liveZ}. 
Although its main steps are given below, the proof also relies on some formulas and estimates that are  established in Appendix B. To simplify our formulas we will denote the sum $p_\bullet + q_\bullet$ by $r_\bullet$.

\begin{lemma}\label{lem:slopeest1} Let $\Ss$ be a  descending principal  pre-staircase with steps 
 $(d_k,m_k,p_k,q_k,t_k, \eps),$ $ k\ge 0$, and write $r_k: = p_k + q_k$.
Then,  the inequality  
 \eqref{eq:MDTineq0} holds for a given $k_0\ge 0$ if, for all $k\ge k_0$, one of the following equivalent conditions holds:
\begin{align}\label{eq:MDTineq1} 
 \frac{t_{k+1}^2 - 8}{t_{k+1}r_{k+1}}&  > \frac{t_{k}^2 - 8}{t_k r_k}, \quad\mbox { or } \\ 
 \label{eq:MDTineq2}
\frac{r_{k+1}}{t_k} - \frac {r_{k}}{t_{k+1}}& > \tfrac 18 \bigl( t_k r_{k+1} -  t_{k+1} r_{k}\bigr).
\end{align}
\end{lemma}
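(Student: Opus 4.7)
The plan is to reduce \eqref{eq:MDTineq0} to the asymptotic inequality $\frac{t_k^2 - 8}{r_k t_k} < \frac{T}{R}$, where $T,R$ are the $\lambda^k$-coefficients of $t_k, r_k$ from Remark~\ref{rmk:recur0}. Since the left-hand sequence converges to $T/R$, whenever it is strictly increasing it is automatically bounded strictly above by this limit, and so \eqref{eq:MDTineq1} will immediately imply \eqref{eq:MDTineq0}.

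The first task is to simplify \eqref{eq:MDTineq0}. Using the Diophantine identities $p_k+q_k = 3d_k - m_k$ and $p_k q_k = d_k^2 - m_k^2 + 1$, one computes $m_k r_k - p_k q_k = 3 d_k m_k - d_k^2 - 1$ and $d_k r_k - 3 p_k q_k = 3 m_k^2 - d_k m_k - 3$. The substitutions $8d = 3r + \eps t$ and $8m = r + 3\eps t$ then yield the clean reductions $3dm - d^2 = \eps dt$ and $3m^2 - dm = \eps mt$, so that \eqref{eq:MDTineq0} collapses to $\eps\, t_k(m_k - b_\infty d_k) < 3 - b_\infty$. A second application of these substitutions, using $8(m_k - b_\infty d_k) = (1-3b_\infty) r_k + (3-b_\infty)\eps t_k$, multiplies out to $\eps(1-3b_\infty) r_k t_k + (3-b_\infty)(t_k^2 - 8) < 0$, or equivalently
\[
\frac{t_k^2 - 8}{r_k t_k} \;<\; \frac{\eps(3 b_\infty - 1)}{3 - b_\infty} \;=\; \frac{|3 b_\infty - 1|}{3 - b_\infty},
\]
after dividing by the appropriate positive quantity in each of the two cases $\eps = +1,\, b_\infty > 1/3$ and $\eps = -1,\, b_\infty < 1/3$.

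Next, $T/R$ is computed at the limit. The limiting Diophantine relations include $8D = 3R + \eps T$ and $8M = R + 3\eps T$ (compare \eqref{eq:DiophPQ}); solving $M/D = b_\infty$ for $T/R$ yields exactly $T/R = |3 b_\infty - 1|/(3 - b_\infty)$. The convergence $\frac{t_k^2 - 8}{r_k t_k} \to T/R$ is clear from the asymptotic expansions $t_k = T\lambda^k + \ov T \lambda^{-k}$ and $r_k = R\lambda^k + \ov R \lambda^{-k}$ with $\lambda > 1$ (which applies since $t_\bB \geq 3$). A strictly increasing sequence with limit $L$ is bounded strictly above by $L$, so \eqref{eq:MDTineq1} for $k \geq k_0$ gives the target inequality for all such $k$, hence \eqref{eq:MDTineq0}.

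The equivalence of \eqref{eq:MDTineq1} and \eqref{eq:MDTineq2} is a routine algebraic manipulation: multiplying \eqref{eq:MDTineq1} by the positive quantity $t_k t_{k+1} r_k r_{k+1}$ and rearranging gives
\[
t_k t_{k+1}(t_{k+1} r_k - t_k r_{k+1}) \;>\; 8(t_k r_k - t_{k+1} r_{k+1}),
\]
which is exactly $8 t_k t_{k+1}$ times \eqref{eq:MDTineq2}. The main obstacle is careful sign bookkeeping: the substitution with $\eps = -1$ naively appears to produce $t_k^2 + 8$ on the left, but the compensating flip from $1 - 3 b_\infty > 0$ in that regime restores the uniform $t_k^2 - 8$ form; similarly, one must track that $T, R > 0$ (so that $T/R > 0$) and that the division in the reduction step uses a positive denominator in both sign cases.
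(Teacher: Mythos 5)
Your proof is correct and follows essentially the same route as the paper's: both reduce \eqref{eq:MDTineq0}, via the substitutions $8d_k = 3r_k + \eps t_k$ and $8m_k = r_k + 3\eps t_k$, to the statement that $\frac{t_k^2-8}{t_k r_k}$ stays strictly below its limit (which you identify as $T/R = \eps(3b_\infty-1)/(3-b_\infty)$, while the paper identifies the threshold by showing $\frac{d_kr_k-3p_kq_k}{m_kr_k-p_kq_k}\to M/D = b_\infty$ via \eqref{eq:DiophPQ}), so that the strict monotonicity \eqref{eq:MDTineq1} yields the bound, with \eqref{eq:MDTineq2} obtained as an algebraic rearrangement. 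The points you flag but do not spell out — that $T,R>0$ (since $t_k,r_k\to\infty$), that $\eps(3b_\infty-1)>0$ because $m_k/d_k\gtrless 1/3$ according to $\eps$ and $b_\infty$ is irrational, and that $8D=3R+\eps T$, $8M=R+3\eps T$ follow by comparing leading $\lambda^k$-coefficients of the exact relations — are routine and do not affect the argument.
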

 \begin{proof}   Notice first that \eqref{eq:MDTineq0} holds exactly  if 
\begin{align}\label{eq:MDTineq1a} 
\bigl(m_k(p_k+q_k) - p_kq_k\bigr) b_\infty >  d_k(p_k+q_k) - 3p_kq_k.
\end{align}
If we substitute for $m_k, d_k$ in terms of $p_k, q_k$ using \eqref{eq:pqt} we obtain
$$
8d_k(p_k+q_k) - 24 p_kq_k = 3(t_k^2 - 8) + \eps t_k r_k, \quad
8m_k(p_k+q_k) - 8p_kq_k = (t_k^2 - 8) + 3\eps t_k r_k.
$$
Since, by Lemma~\ref{lem:prelim0}~(iii), $r_k\ge  3t_k\ge 3$ when $\eps=-1$, 
  these expressions are negative  exactly when $\eps= -1$.  
Thus the condition in  \eqref{eq:MDTineq1a} is equivalent to
\begin{align}\label{eq:binfty}
{b_\infty}& > \frac{3(t_k^2 - 8) +  t_k r_k} {(t_k^2 - 8) + 3 t_k r_k} = 3 -\frac{8t_kr_k}{(t_k^2 - 8) + 3 t_k r_k}\quad \mbox{ if } \eps = 1,\\  \notag
{b_\infty}& < \frac{t_k r_k  -3 (t_k^2 - 8)} {3 t_k r_k  -(t_k^2 - 8)} = 3 - 
\frac {8t_kr_k} {3 t_k r_k  -(t_k^2 - 8)} \quad \mbox{ if } \eps = -1.
\end{align}
As $k\to \infty$, the ratio $\frac{d_k(p_k+q_k) - 3p_kq_k}{m_k(p_k+q_k) - p_kq_k}$ converges  to
 $$
 \frac{D(P+Q) - 3PQ}{M(P+Q) - PQ} =  \frac{D(3D-M) - 3(D^2-M^2)}{M(3D-M) -  (D^2-M^2)}  = \frac MD = b_\infty,
 $$
 where we use the identities in \eqref{eq:DiophPQ}.
  Therefore, the result will hold if we prove that  for $k\ge k_0$ the sequence on RHS of \eqref{eq:binfty}  is increasing  when $\eps=1$ and decreasing when $\eps = -1$.

Thus, when $\eps=1$, we need
 the sequence $\frac{t_kr_k}{(t_k^2 - 8) + 3 t_k r_k}$ to decrease with $k$, or equivalently 
the sequence $\frac{t_k^2-8}{t_kr_k}$ to increase with $k$. 
Similarly, when $\eps=-1$ the condition still is that 
 $\frac{t_k^2-8}{t_kr_k}$ should increase with $k$. 
 This proves the claim about condition~\eqref{eq:MDTineq1}.  Finally, the inequality \eqref{eq:MDTineq2} 
is just a rearrangement of~\eqref{eq:MDTineq1}. 
  \end{proof}

 \begin{lemma}\label{lem:slopeest2} Let $\Tt = (\bE_\la, \bE_\mu, \bE_\rho)$ be any triple.
 Then
   \begin{itemlist}
     \item[{\rm (i)}] 
If  \eqref{eq:MDTineq2} holds for the first two terms $\bE_\rho, \bE_\mu$  in 
  the associated descending pre-staircase $\Ss^\Tt_u$, then it also holds for these terms 
in $\Ss^{x\Tt}_u$.
  \item[{\rm (ii)}]  
If  \eqref{eq:MDTineq1} holds for the  first two terms $\bE_\rho, \bE_\mu$ in 
  the associated descending pre-staircase $\Ss^\Tt_u$, then it also holds for these terms 
 in $\Ss^{y\Tt}_u$. 
    \end{itemlist}
\end{lemma}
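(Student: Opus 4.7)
Both parts are handled by substituting the recursion formulas defining $\bE_{x\mu}$ or $\bE_{y\mu}$ into the target inequality and reducing to a purely arithmetic condition on the $t$-coordinates of the triple.

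\textbf{Part (i).} Using $t_{x\mu}=t_\la t_\mu-t_\rho$ and $r_{x\mu}=t_\la r_\mu-r_\rho$, direct expansion shows that the RHS of \eqref{eq:MDTineq2} for $(\bE_\mu,\bE_{x\mu})$ equals $\tfrac18(t_\rho r_\mu-t_\mu r_\rho)$, exactly the RHS for $(\bE_\rho,\bE_\mu)$. Propagation thus reduces to strict increase of the LHS, which after collecting terms factors as
\[
\frac{r_\mu t_\la}{t_\mu t_\rho(t_\la t_\mu-t_\rho)}\bigl[t_\la t_\mu t_\rho-t_\mu^2-t_\rho^2\bigr].
\]
Hence (i) is equivalent to the arithmetic condition $P(\Tt):\ t_\la t_\mu t_\rho>t_\mu^2+t_\rho^2$, which I will verify for every generating triple by induction on mutations. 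For the base triples $\Tt^n_*$ the identity $t_\la t_\rho=t_\mu+2$ coming from \eqref{eq:SsUparam} gives $P(\Tt^n_*)\Leftrightarrow 2t_\mu>t_\rho^2$, holding with slack $4n^2+12n+1$. Substituting the recursion into $P$ yields the exact identity $t^{x\Tt}_\la t^{x\Tt}_\mu t^{x\Tt}_\rho-(t^{x\Tt}_\mu)^2-(t^{x\Tt}_\rho)^2 = t_\la t_\mu t_\rho-t_\mu^2-t_\rho^2$, so $P$ is preserved by $x$-mutation; the analogous computation for $y$-mutation produces $t_\la t_\mu t_\rho-t_\la^2-t_\rho^2$, which is bounded below by the $P(\Tt)$-quantity provided $t_\la\le t_\mu$. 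This auxiliary inequality is itself preserved by both mutations using $t_\bullet\ge 3$, closing the induction.

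\textbf{Part (ii).} Since \eqref{eq:MDTineq1} and \eqref{eq:MDTineq2} are equivalent by Lemma \ref{lem:slopeest1}, we may work with whichever form is convenient. Setting $f(\bE):=(t^2-8)/(tr)$, the strategy is to prove the stronger intermediate inequality $f(\bE_{y\mu})\ge f(\bE_\mu)$; combined with the hypothesis $f(\bE_\mu)>f(\bE_\rho)$, transitivity delivers the conclusion $f(\bE_{y\mu})>f(\bE_\rho)$. The key identity
\[
t_{y\mu}r_\mu-t_\mu r_{y\mu}\;=\;t_\mu r_\la-t_\la r_\mu,
\]
which follows because $(t_k)$ and $(r_k)$ satisfy a common $t_\rho$-recursion, together with the adjacency relation $r_\la r_\mu-t_\la t_\mu=8p_\la q_\mu$ between $\bE_\la$ and $\bE_\mu$, expresses $f(\bE_{y\mu})-f(\bE_\mu)$ as a positive multiple of an arithmetic expression that is a consequence of $P(\Tt)$; the same inductive framework from part (i) then finishes the argument.

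\textbf{Main obstacle.} The principal difficulty is the algebraic bookkeeping: verifying that after each substitution the target inequality collapses cleanly to the single invariant $P(\Tt)$, and that $P$ (together with the auxiliary $t_\la\le t_\mu$) is strictly preserved under both $x$- and $y$-mutation. Each reduction relies essentially on the $t$-compatibility conditions (b)--(d) of Definition \ref{def:gentr} for a generating triple.
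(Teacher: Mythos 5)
Your part (i) is correct and follows essentially the paper's route: after observing that the right-hand side of \eqref{eq:MDTineq2} is unchanged by the $x$-mutation (it equals $p_\la$ by Lemma~\ref{lem:prelim0}(i)), both you and the paper reduce the claim to the single $t$-inequality $t_\la t_\mu t_\rho > t_\mu^2+t_\rho^2$, and your factorization of the difference of left-hand sides is right. The paper disposes of this inequality by citing Lemma~\ref{lem:prelim0}(v), whose appendix proof is itself an induction over mutations; your direct induction, based on the exact invariance of $t_\la t_\mu t_\rho-t_\mu^2-t_\rho^2$ under $x$-mutation and its monotonicity under $y$-mutation, is a clean substitute, and your base-case slack $4n^2+12n+1$ checks out. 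Two caveats: the auxiliary hypothesis $t_\la\le t_\mu$ alone does not close the induction under $x$-mutation (there you need $t_\rho\le t_\la(t_\mu-1)$), so you should carry $t_\la,t_\rho\le t_\mu$ as well, or simply quote Lemma~\ref{lem:prelim0}(v); and since the lemma is applied to mutations of $(S^iR^\de)^\sharp(\Tt^n_*)$, not only of $\Tt^n_*$, you must also treat those base cases --- $S$ fixes the $t$-coordinates, but $R$ interchanges $\la$ and $\rho$, and your invariant is not symmetric in $\la,\rho$, so the mirrored inequality $t_\la t_\mu t_\rho>t_\mu^2+t_\la^2$ needs its own (easy) verification.

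Part (ii) has a genuine gap. Your strategy --- prove $f(\bE_{y\mu})\ge f(\bE_\mu)$ for $f(\bE)=(t^2-8)/(tr)$ and conclude by transitivity --- is exactly the paper's, and your identity $t_{y\mu}r_\mu-t_\mu r_{y\mu}=t_\mu r_\la-t_\la r_\mu=-8q_\rho$ is correct, but the claim that the difference is ``a positive multiple of an arithmetic expression that is a consequence of $P(\Tt)$'' is unsubstantiated and cannot work as stated. Clearing denominators and using that identity, $f(\bE_{y\mu})>f(\bE_\mu)$ is equivalent to
\begin{align*}
t_{y\mu}r_{y\mu}-t_\mu r_\mu \;>\; q_\rho\, t_\mu t_{y\mu},
\end{align*}
an inequality in which $r_\mu$, $r_{y\mu}=t_\rho r_\mu-r_\la$ and $q_\rho$ enter essentially; the pure $t$-statement $P(\Tt)$ gives no control over $r_\mu/t_\mu$, $r_\rho/t_\rho$ or $q_\rho$, so no induction on $P$ can settle it. At this point the paper runs a genuine chain of estimates: $8q_\rho=t_\la r_\mu-t_\mu r_\la<\tfrac43 r_\rho$ (from $p_\rho/q_\rho>a_{\min}>5$), $r_\la<r_\mu$, $t_\la<t_\mu$, $t_\rho\ge3$, and finally $\tfrac34\,r_\rho/t_\rho\le r_\mu/t_\mu$ from Corollary~\ref{cor:prelim0}. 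Some quantitative input of this kind relating the $r$- and $t$-coordinates across the triple is unavoidable, and none of it appears in your sketch, so part (ii) is not proved as written.
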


\begin{proof}    We must show that if the inequality
\begin{align}\label{eq:MDTineq3}
 \frac{r_\mu}{t_\rho} -  \frac{r_\rho}{t_\mu} > \tfrac 18 (t_\rho r_\mu- t_\mu r_\rho)
\end{align}
holds for $\Tt$ then it also holds for $x\Tt$.  Under this mutation
the RHS remains the same.  Hence it suffices  to check that
 $$
 \frac{t_\la r_\mu - r_\rho}{t_\mu} -  \frac{r_\mu}{t_\la t_\mu - t_\rho} >  \frac{r_\mu}{t_\rho} -  \frac{r_\rho}{t_\mu}.
 $$
When we multiply throughout by  $t_\mu (t_\la t_\mu - t_\rho) t_\rho$, the terms that are products of three factors cancel, and after dividing the remaining terms by $t_\la$, we obtain the inequality
$$
t_\la t_\mu t_\rho  r_\mu - t_\mu t_\rho r_\rho - t_\rho^2 r_\mu > t_\mu^2 r_\mu- t_\mu t_\rho r_\rho.
$$
Now cancel the term $ t_\mu t_\rho r_\rho$ from both sides and divide by $r_\mu$ to obtain the inequality in Lemma~\ref{lem:prelim0}~(v).  This proves (i).

Now consider (ii).  We must show that 
the inequality 
$$
\frac{t_{\mu}^2 - 8}{t_{\mu}r_{\mu}}> \frac{t_{\rho}^2 - 8}{t_\rho r_\rho}
$$
persists under  a $y$-mutation.
Since the RHS remains  unchanged, it suffices to show that the LHS increases, i.e. 
$$
\frac{(t_\rho t_{\mu} - t_\la)^2 - 8}{(t_\rho t_{\mu} - t_\la)(t_\rho r_{\mu} - r_\la)} > \frac{t_{\mu}^2 - 8}{t_{\mu}r_{\mu}}.
$$
After simplifying, this reduces to the inequality
$$
t_\mu(t_\rho t_{\mu} - t_\la)(t_\la r_\mu - t_\mu r_\la) < 8\Bigl(t_\rho^2 t_\mu r_\mu - (t_\la r_\mu + t_\mu r_\la) t_\rho + t_\la r_\la - t_\mu r_\mu\Bigr).
$$
Simplify and increase the LHS of this inequality by using the fact that  $$
t_\la r_\mu - t_\mu r_\la = 8 q_\rho < \tfrac 43 r_\rho.
$$
Here the equality holds by Lemma~\ref{lem:prelim0}~(iv), while
 the inequality holds because 
 $p_\rho/q_\rho > a_{\min}> 5$ so that $q_\rho < r_\rho/6$
Next simplify and decrease the RHS by ignoring the term $+t_\la r_\la$ and replacing
$- (t_\la r_\mu + t_\mu r_\la) t_\rho$ by  $- 2t_\mu r_\mu t_\rho$, which is smaller because $r_\la < r_\mu, t_\la < t_\mu$  (see Lemma~\ref{lem:prelim0}~(ii)).   
These maneuvers show that, after cancelling the common factor of $t_\mu$,  
it suffices to prove
$$
\tfrac 16  r_\rho (t_\rho t_{\mu} - t_\la) <(t_\rho^2 - 2 t_\rho -1) r_\mu.
$$
We now show that 
this inequality holds even without the term $-\tfrac 16  r_\rho t_\la$ on LHS.  Indeed, after omitting this term and then rearranging, 
we find that it suffices to prove
$$
\tfrac 16 \frac{r_\rho}{t_\rho} \Bigl(1 - \frac 2{t_\rho} - \frac 1{t_\rho^2}\Bigr)^{-1} < \frac{r_\mu}{t_\mu}.
$$
But because $t_\rho\ge 3$,  $1 - \frac 2{t_\rho} - \frac 1{t_\rho^2}> \frac 29$ so that it suffices to check that
 $ \frac 34 \frac{r_\rho}{t_\rho}\le  \frac{r_\mu}{t_\mu}$.  But this holds by Corollary~\ref{cor:prelim0}. 
\end{proof}

Finally, we claim in Lemma~\ref{lem:slopeest3}  that the inequality \eqref{eq:MDTineq1} holds for the base cases.  Its statement is
complicated by the fact that  \eqref{eq:MDTineq1} does {\it not} hold for $\Tt_*^0$ or its image by $R$.  We   give the proof now in the \lq easy' cases that do not involve these triples; the proof is completed in Lemma~\ref{lem:basecases}.

Recall from
 \eqref{eq:SsUparam} that the entries $(p,q,t)$ in each of the  classes in the base triple $\Tt^n_*, n\ge 0,$ are
\begin{align}\label{eq:baU} 
  & \qquad  \qquad\qquad \bE_\la:\ (p,q,t) : =  (2n+6,1,2n+3), \\ \notag
 & \bE_\mu:\ (4n^2+22n+29,2n+4,4n^2+16n+13), \quad    \bE_\rho:\ 
    (2n+8,1,2n+5).
\end{align}
 Since $R(p/q) = (6p-35q)/(p-6q)$ and $R$ fixes $t$ while interchanging $\bE_\la, \bE_\rho$, the corresponding entries for the triples $R^{\sharp}(\Tt^n_*), n\ge 0,$ are
\begin{align}\label{eq:baL} 
  &\qquad\qquad  \qquad\qquad    \bE_\la:\ (p,q,t) : (12n+13, 2n+2,2n+5), \\ \notag
  &\bE_\mu:\  (24n^2+62n+34,4n^2+ 10 n+5,4n^2+16n+13),\   \bE_\rho:\ 
    (12n+1,2n,2n+3).
\end{align}

 \begin{lemma}\label{lem:slopeest3} \begin{itemlist}\item[{\rm (i)}]   Let $\Tt$ be any triple of the form $(S^iR^\de)^{\sharp}(\Tt^n_*), i\ge 0, \de\in\{0,1\},$  or one of the form $yx^k\Tt^0_*$ for all $k\ge 0$.  Then,
 provided that $\Tt \ne \Tt^0_*, R^{\sharp}(\Tt^0_*)$, 
the inequality \eqref{eq:MDTineq1} holds for the first two terms in the associated descending pre-staircase $\Ss^\Tt_u$.

\item[{\rm (ii)}] The inequality \eqref{eq:MDTineq1}  holds for the second and third terms in the descending pre-staircases associated to $y^kR^\sharp(\Tt^0_*)$, $k\geq0$.
 \end{itemlist}
\end{lemma}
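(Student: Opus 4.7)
\textbf{Proof plan for Lemma~\ref{lem:slopeest3}.}
The plan is to verify the inequality \eqref{eq:MDTineq1} directly for the finitely many ``base families'' listed, by explicit computation using the closed-form expressions \eqref{eq:baU}, \eqref{eq:baL} for the coordinates of $\Tt^n_*$ and $R^\sharp\Tt^n_*$, and the transformation rules for the symmetries $S,R$ and the mutations $x,y$. A single cross-multiplication converts \eqref{eq:MDTineq1} into the equivalent polynomial statement
\[
(t_\mu^2-8)\, t_\rho r_\rho - (t_\rho^2-8)\, t_\mu r_\mu > 0,
\]
which (once $p_\mu,q_\mu,t_\mu,p_\rho,q_\rho,t_\rho$ are written as polynomials in $n$, $i$, or $k$) is easy to check by direct expansion.

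First I would handle the unshifted cases $\Tt^n_*$ and $R^\sharp\Tt^n_*$ for $n\geq 1$: substituting the explicit tuples from \eqref{eq:baU} and \eqref{eq:baL} reduces the inequality to polynomial inequalities in $n$ of bounded degree, whose positivity is immediate for $n\geq 1$ (the $n=0$ cases are precisely the excluded $\Tt^0_*, R^\sharp\Tt^0_*$, where the inequality fails). Next I would treat the shifts $(S^iR^\delta)^\sharp\Tt^n_*$ with $i\geq 1$, using that $S,R$ fix $t_\mu$ and $t_\rho$ while transforming $r$ by $S\colon (p,q)\mapsto(6p-q,p)$ and $R\colon (p,q)\mapsto(6p-35q,p-6q)$. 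After one or more applications of $S$, the ratio $r_\mu/r_\rho$ expands by a factor close to the dominant eigenvalue $3+2\sqrt 2$, while $(t_\mu^2-8)/(t_\rho^2-8)$ is unchanged; a short monotonicity argument based on the Vieta-type sequences $y_i$ of \eqref{eq:symmvi} shows that \emph{one} shift by $S$ is enough to turn the failing inequality for $\Tt^0_*, R^\sharp\Tt^0_*$ into a strict inequality, and further shifts only improve it.

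Next I would handle the family $yx^k\Tt^0_*$ in part (i). Writing $\bE^{(k)}$ for the middle entry of $x^k\Tt^0_*$, the $x$-mutation gives the linear recursion $\bE^{(k+1)} = 3\bE^{(k)} - \bE^{(k-1)}$ (with $t_\la = 3$ throughout), seeded by $\bE^{(-1)} = \bB^U_1$ and $\bE^{(0)} = \bE_{[7;4]}$, so $r^{(k)}, t^{(k)}$ admit closed forms in terms of $\lambda = \frac{3+\sqrt 5}{2}$. The seeds of $\Ss^{yx^k\Tt^0_*}_u$ are $\bE^{(k)}$ and $\bE^{(k)}_{y\mu} := t^{(k-1)}\bE^{(k)} - \bB^U_0$, so \eqref{eq:MDTineq1} becomes an inequality between quantities expressible as Chebyshev polynomials in $k$; taking the dominant term in $\lambda^k$ gives a strict asymptotic inequality, and the bounded residual terms can be controlled by checking $k=0,1$ directly. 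Part (ii) is analogous but with the roles of $x$ and $y$ interchanged: the $y$-mutation of $R^\sharp\Tt^0_*$ generates a linear recursion with parameter $t_\rho = 3$ (the $t$ of the seed $\bE_\rho = R^\sharp\bE_\la$), and the second and third terms of $\Ss^{y^kR^\sharp\Tt^0_*}_u$ are $\bE_\mu$ and $\bE_{x\mu}$, whose coordinates solve a parallel recursion.

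The \emph{main obstacle} is bookkeeping rather than depth: once each family is parameterized explicitly, the inequalities are straightforward, but one must carefully track which $(r,t)$ pairs arise at the top of each mutation tower, since $r$ (unlike $t$) is changed by both the symmetries and the mutations. The bounds from Lemma~\ref{lem:prelim0} and Corollary~\ref{cor:prelim0} on ratios $r_\bullet/t_\bullet$ (already used in the proof of Lemma~\ref{lem:slopeest2}) provide enough slack to close the estimates whenever the closed forms become unwieldy, so no new technical machinery is required.
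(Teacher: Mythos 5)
Your overall architecture is close to the paper's: direct verification for $\Tt^n_*$ and $R^{\sharp}(\Tt^n_*)$, $n\ge1$, from \eqref{eq:baU}--\eqref{eq:baL}, propagation along the shift $S$, and a separate treatment of the exceptional families $yx^k\Tt^0_*$ and $y^kR^{\sharp}(\Tt^0_*)$ via the parameter-$3$ recursions (the paper does the latter in Lemma~\ref{lem:basecases} by induction using the uniform bounds $\tfrac52<r_i/t_i<\tfrac83$, where you propose Binet-type closed forms). However, there are concrete gaps. The mechanism you give for the $S$-step is wrong: since $S$ fixes the $t$'s, \eqref{eq:MDTineq1} for the pair $(\bE_\rho,\bE_\mu)$ is equivalent to $r_\mu/r_\rho<(t_\mu^2-8)\,t_\rho/\bigl((t_\rho^2-8)\,t_\mu\bigr)$, so what one needs is that $S$ \emph{decreases} the ratio $r_\mu/r_\rho$, which follows from $\frac{7p_\mu-q_\mu}{7p_\rho-q_\rho}<\frac{p_\mu+q_\mu}{p_\rho+q_\rho}$ because $p_\mu/q_\mu<p_\rho/q_\rho$. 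Your claim that this ratio \lq\lq expands by a factor close to $3+2\sqrt2$'' is false (each $r$ grows, the ratio barely moves), and if it were true it would push the inequality the wrong way. Moreover, monotonicity alone cannot show that \lq\lq one shift suffices'' for the failing cases $\Tt^0_*$, $R^{\sharp}(\Tt^0_*)$: you must actually verify $S^{\sharp}(\Tt^0_*)$ and $(SR)^{\sharp}(\Tt^0_*)$ numerically (as in \eqref{eq:baseS0}); this is easy but is an independent check, not a consequence of the monotone behaviour.

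Two further points. In the family $yx^k\Tt^0_*$ you misidentify the seeds: writing $\bE^{(k)}$ for the middle entry of $x^k\Tt^0_*$, the triple $yx^k\Tt^0_*$ is $\bigl(\bE^{(k)},\,t^{(k-1)}\bE^{(k)}-\bB^U_0,\,\bE^{(k-1)}\bigr)$, so the first two terms of $\Ss^{yx^k\Tt^0_*}_u$ are $\bE_\rho=\bE^{(k-1)}=\bE_{x^{k-1}[7;4]}$ and $\bE_\mu=t^{(k-1)}\bE^{(k)}-\bB^U_0$; your choice of $\bE^{(k)}$ (which is the left entry, i.e.\ the blocking class) as first seed means you would verify a different inequality from the one required, and this pairing matters because the lemma feeds into Corollary~\ref{cor:live1} with $k_0=0$. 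Finally, \lq\lq dominant term in $\lambda^k$ plus checking $k=0,1$'' is not a proof for all $k$: a strict asymptotic inequality together with two initial checks does not exclude failure at intermediate indices unless you supply explicit bounds on the subdominant terms, or replace the asymptotics by an induction as in Lemma~\ref{lem:basecases}. None of these defects require a new idea to repair, but as written the argument does not close.
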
 

\begin{proof} Consider (i).   
When $\Tt = \Tt^n_*$ or $R^{\sharp}(\Tt^n_*), n> 0$, we can check that inequality  \eqref{eq:MDTineq1} holds directly from  the formulas in
\eqref{eq:baU},~\eqref{eq:baL} above.   
Next note that because 
 $S(p/q) = (6p-q)/p$, the entries in $S^{\sharp}(\Tt^0_*)$ and $(SR)^{\sharp}(\Tt^0_*)$ are
\begin{align}\label{eq:baseS0} 
\mbox{ for } S^{\sharp}(\Tt^0_*): \;\; &  \bE_\la= (35,6, 3), \quad \bE_\mu=(170, 29,13),\quad 
  \bE_\rho=(47,8,5);\\ \notag
    \mbox{ for } (SR)^{\sharp}(\Tt^0_*): \;\; &  \bE_\la= (76,13, 5), \quad \bE_\mu=(165,34,13),\quad 
  \bE_\rho=(6,1,3).
\end{align}
It is again easy to check  that 
\eqref{eq:MDTineq1} holds for $S^\sharp(\Tt^0_*)$ and $(SR)^\sharp(\Tt^0_*)$. 

We next claim that
if \eqref{eq:MDTineq1} holds for a triple $\Tt$ then it also holds for $S^{\sharp}(\Tt)$.  
Since  $S$ fixes the parameter $t$, by rearranging \eqref{eq:MDTineq1}  so that
the $t$ terms are on one side and the $r$ terms on the other, one finds that $S$ preserves \eqref{eq:MDTineq1}  provided that it decrease the ratio $r_\mu/r_\rho$.
 Thus, we need
$\frac{7p_\mu - q_\mu}{7p_\rho - q_\rho} < \frac{p_\mu + q_\mu}{p_\rho + q_\rho}$, which holds because
$\frac{p_\mu}{q_\mu} < \frac {p_\rho}{q_\rho}$. 

This completes the proof of (i)  except for the claims about the $yx^k$-mutations of the exceptional triple
$\Tt^0_*$.  For these details and the proof of (ii), see  Lemma~\ref{lem:basecases}. 
\end{proof}

\begin{cor}\label{cor:live1}  Every descending principal pre-staircase $\Ss$ except $\Ss^U_{u,0}$ and $\Ss^{y^kR^\sharp(\Tt^0_*)}_u$ satisfies 
the inequality \eqref{eq:MDTineq0} with $k_0=0$, and the pre-staircases $\Ss^{y^kR^\sharp(\Tt^0_*)}_u$ satisfy \eqref{eq:MDTineq0} with $k_0=1$. Hence, every principal pre-staircase is live, and hence is a staircase.
\end{cor}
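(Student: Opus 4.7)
The plan is to invoke Proposition \ref{prop:live}, which reduces liveness of a recursively defined pre-staircase to establishing the slope inequality \eqref{eq:MDTineq0} (for some $k_0$) in the descending case only. Ascending principal pre-staircases have no slope hypothesis in Proposition \ref{prop:live}, and the fact that each is associated to a blocking class with $t\geq 3$ is built into Definition \ref{def:gentr}; the monotonicity of $m_k/d_k$ and the irrationality of the limiting ratios come from Corollary \ref{cor:symm} and Lemma \ref{lem:2}(iii). So ascending principal pre-staircases are live immediately.

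For the descending case, Lemma \ref{lem:slopeest1} reduces \eqref{eq:MDTineq0} for a consecutive pair of steps to \eqref{eq:MDTineq1} (equivalently \eqref{eq:MDTineq2}). The $(k{+}1)$st consecutive pair of steps in $\Ss^\Tt_u$ coincides with the first pair of $\Ss^{x^k\Tt}_u$, so iterating Lemma \ref{lem:slopeest2}(i) shows that if the first-pair inequality holds for $\Ss^\Tt_u$, then it holds for every consecutive pair in $\Ss^\Tt_u$, yielding $k_0=0$. Moreover Lemma \ref{lem:slopeest2}(ii) propagates the first-pair inequality from $\Ss^\Tt_u$ to $\Ss^{y\Tt}_u$. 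Thus the property \emph{``the first pair satisfies \eqref{eq:MDTineq1}''} is preserved under both $x$- and $y$-mutations of the underlying triple.

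Every triple supporting a principal descending pre-staircase is obtained from some base triple $(S^iR^\de)^\sharp(\Tt^n_*)$ by a (possibly empty) sequence of mutations. Lemma \ref{lem:slopeest3}(i) establishes the first-pair inequality for every base triple except $\Tt^0_*$ and $R^\sharp(\Tt^0_*)$, and additionally for the auxiliary family $yx^k\Tt^0_*$, which re-seeds the chain of descendants of $\Tt^0_*$. Combined with the propagation above, this yields \eqref{eq:MDTineq0} with $k_0=0$ for every descending principal pre-staircase except $\Ss^U_{u,0}$ and those in the $R^\sharp(\Tt^0_*)$-family. For the latter, Lemma \ref{lem:slopeest3}(ii) gives \eqref{eq:MDTineq1} for the second--third pair of $\Ss^{y^kR^\sharp(\Tt^0_*)}_u$ (equivalently the first pair of $\Ss^{xy^kR^\sharp(\Tt^0_*)}_u$), and propagation via Lemma \ref{lem:slopeest2}(i) yields $k_0=1$. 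The remaining pre-staircase $\Ss^U_{u,0}$ is handled similarly: one uses the explicit verification in Lemma \ref{lem:basecases} of the first-pair inequality for $\Ss^{x\Tt^0_*}_u$ (i.e.\ for the second--third pair of $\Ss^U_{u,0}$) and propagates onward.

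Once \eqref{eq:MDTineq0} is verified with the appropriate $k_0$ in every descending case, Proposition \ref{prop:live} gives liveness, which together with the ascending case completes the proof. The main technical difficulty is the bookkeeping around the exceptional triples $\Tt^0_*$ and $R^\sharp(\Tt^0_*)$: the natural induction starting at the base triple fails for these, and one must explicitly compute the inequality one mutation deeper (which is the role of Lemma \ref{lem:basecases}) in order to bootstrap the otherwise uniform inductive argument.
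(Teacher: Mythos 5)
Most of your argument coincides with the paper's: liveness follows from Proposition~\ref{prop:live} once \eqref{eq:MDTineq0} is known in the descending case, \eqref{eq:MDTineq0} is reduced to \eqref{eq:MDTineq1} by Lemma~\ref{lem:slopeest1}, consecutive pairs of $\Ss^\Tt_u$ are first pairs of $\Ss^{x^k\Tt}_u$ so Lemma~\ref{lem:slopeest2} propagates the first-pair inequality under both mutations, and the base cases are exactly those of Lemma~\ref{lem:slopeest3} (including the re-seeding family $yx^k\Tt^0_*$ and the $k_0=1$ statement for $y^kR^\sharp(\Tt^0_*)$). Up to that point you have reproduced the paper's proof.

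The genuine gap is your treatment of $\Ss^U_{u,0}$. You propose to handle it "similarly" by citing Lemma~\ref{lem:basecases} for the first-pair inequality of $\Ss^{x\Tt^0_*}_u$ (the second--third pair of $\Ss^U_{u,0}$) and propagating. But Lemma~\ref{lem:basecases} proves no such thing: part (i) concerns the triples $yx^i\Tt^0_*$, not $x^i\Tt^0_*$, and part (ii) concerns $y^iR^\sharp(\Tt^0_*)$. Worse, the inequality you want is false: for the pair $\bE_{[7;4]},\bE_{[7;5,2]}$ one has $(t,r)=(13,33)$ and $(34,90)$, so $\tfrac{t^2-8}{tr}$ \emph{decreases} ($161/429>1148/3060$), and in fact \eqref{eq:MDTineq0} itself fails at every step of $\Ss^U_{u,0}$ at its limit $b_\infty\approx 0.6297$ (e.g.\ for $\bB^U_1$ one needs $19\,b_\infty>12$, i.e.\ $b_\infty>12/19\approx 0.6316$). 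This is precisely why the corollary excludes $\Ss^U_{u,0}$ from the slope claim altogether: its liveness cannot be obtained from the slope estimate at all, and the paper instead quotes the independent proof that $\Ss^U_{u,0}$ is live given in \cite[Ex.70]{ICERM}. Without that external input (or a substitute argument), your proposal does not establish the final sentence that \emph{every} principal pre-staircase is live. (A smaller point, shared with the paper's own wording: the lemmas also do not give $k_0=0$ for the pure-$x$ descendants $\Ss^{x^k\Tt^0_*}_u$, whose steps are tails of $\Ss^U_{u,0}$ with the same $b_\infty$; their liveness likewise rests on the \cite{ICERM} result rather than on \eqref{eq:MDTineq0}.)
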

\begin{proof}    We proved that $\Ss^U_{u,0}$ is live in \cite[Ex.70]{ICERM}.  All other descending pre-staircases are associated to some triple which is a mutation of one of the basic triples listed in Lemma~\ref{lem:slopeest3}. Therefore, it follows from that result together with Lemmas~\ref{lem:slopeest1} and ~\ref{lem:slopeest2}
that the inequality \eqref{eq:MDTineq0} holds (with $k_0=0$ for all triples except for $y^kR^\sharp(\Tt^0_*)$, where $k_0=1$).  This proves the first claim.  It now follows from Proposition~\ref{prop:live} that every principal pre-staircase (both ascending and descending) is live.
 \end{proof}

\subsection{Uncountably many staircases}\label{ss:uncount} 

We now prove Proposition \ref{prop:liveZ}. We first discuss the ascending pre-staircases, which turn out to be relatively easy to deal with.   As explained in Definition~\ref{def:newstair}, we denote by $\Ss^{\pm}_{\al_\infty}$ any (ascending or descending) pre-staircase  with limit point at $\al_\infty \in Z$, for simplicity omitting the decorations $n, i, \de$ that specify more precisely where it is.

In this case, the key to our argument is the following lemma, that explains the influence of the ratio $m/d$ on the behavior of the corresponding obstruction.  
This  result applies to any pair of obstructive classes $\bE, \bE'$. These have the form  $\bE: = dL - mE_0 - \sum m_i E_i$ (abbreviated as $(d,m,\bbm)$) where $$
3d - m - \sum_{i=1}^N m_i = 1, \qquad d^2-m^2 - \sum_{i=1}^N m_i^2 = -1.
$$
  The corresponding obstruction function $\mu_{\bE,b}$ is piecewise linear, with the form $z\mapsto \frac{A + Cz}{d-mb}$ in any interval consisting of points $z$ with $\ell_{wt}(z) > \sum_{i=1}^N m_i$; see \eqref{eq:WCFlength} and \cite[\S2.1]{ICERM}.   Moreover, if $z_0$ is fixed, there is a constant $A_0 = A(z_0)$ such that  as a function of $b$ the obstruction 
$\mu_{\bE,b}(z_0)$ has the form $b\mapsto  A_0/(d-mb)$.

\begin{lemma}\label{lem:md}  Let $\bE: =  (d,m,\bbm), \ \bE' = (d',m',\bbm')$ be obstructive classes as above.
Then:

\begin{itemlist}\item[{\rm (i)}]
 If  $\mu_{\bE,b_0}(z_0) = \mu_{\bE',b_0}(z_0) \ge V_{b_0}(z_0)$ for some $b_0, z_0$,   then
\begin{align}\label{eq:deriv}
m'/d' <m/d \;\Longleftrightarrow \frac{\p}{\p b}\Big|_{b=b_0} \mu_{\bE',b}(z_0) < \frac{\p}{\p b}\Big|_{b=b_0} \mu_{\bE,b}(z_0). 
\end{align}

\item[{\rm (ii)}]
If $m'/d' <m/d$ and $\mu_{\bE',b_0}(z_0) < \mu_{\bE,b_0}(z_0)$ for some $b_0$, then
$\mu_{\bE',b}(z_0) < \mu_{\bE,b}(z_0)$ for all $b> b_0$.
\end{itemlist}
\end{lemma}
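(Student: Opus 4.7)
The plan is to reduce both parts to a single computation of $\p_b\mu_{\bE,b}(z_0)$. As recalled in the setup preceding the lemma, for fixed $z_0$ the obstruction has the form $\mu_{\bE,b}(z_0) = A_0/(d-mb)$ for a constant $A_0 = A_0(z_0) > 0$, and likewise $\mu_{\bE',b}(z_0) = A_0'/(d'-m'b)$. Differentiating gives
\begin{align*}
\frac{\p}{\p b}\mu_{\bE,b}(z_0) = \frac{A_0\, m}{(d-mb)^2} = \mu_{\bE,b}(z_0)\cdot \frac{m}{d-mb},
\end{align*}
and similarly for $\bE'$. These are the only formulas I will need.

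For (i), evaluate at $b_0$ and use that the common value $V := \mu_{\bE,b_0}(z_0) = \mu_{\bE',b_0}(z_0) \geq V_{b_0}(z_0) > 0$. Dividing through by $V>0$, the comparison of derivatives at $b_0$ becomes a comparison of $\frac{m}{d-mb_0}$ with $\frac{m'}{d'-m'b_0}$. In the main case $m,m'>0$, positivity of $V$ forces $b_0 < d/m$ and $b_0 < d'/m'$, so both quantities can be rewritten as $1/(d/m-b_0)$ and $1/(d'/m'-b_0)$ with positive denominators, and
\begin{align*}
\frac{m'}{d'-m'b_0} < \frac{m}{d-mb_0} \iff \frac{d'}{m'}-b_0 > \frac{d}{m}-b_0 \iff \frac{m'}{d'} < \frac{m}{d}
\end{align*}
yields the equivalence in (i). The degenerate cases ($m=0$ or $m'=0$) are handled directly: the corresponding derivative vanishes, and the asserted equivalence becomes either trivial or vacuous according to the sign of the other side.

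For (ii), set $f(b) := \mu_{\bE,b}(z_0) - \mu_{\bE',b}(z_0)$. This is a continuous function of $b$ on the open range where the denominators remain positive, and $f(b_0) > 0$ by hypothesis. Suppose for contradiction that $f(b^*) \le 0$ for some $b^* > b_0$ in that range. By continuity, there is a least $b_1 \in (b_0, b^*]$ with $f(b_1) = 0$. By minimality, $f > 0$ on $(b_0, b_1)$, so $f'(b_1^-) \le 0$. On the other hand, at $b_1$ the common positive value $\mu_{\bE,b_1}(z_0) = \mu_{\bE',b_1}(z_0)$ is positive (being a left-limit of positive values), so the derivative comparison established in the proof of (i) applies with $b_0$ replaced by $b_1$, and the hypothesis $m'/d' < m/d$ forces $f'(b_1) > 0$, a contradiction. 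The main subtlety is (a) observing that the argument in (i) really only uses positivity of the common value rather than the stronger inequality $\ge V_b(z_0)$ stated there, so that (i) may be reapplied at $b_1$, and (b) keeping track of the domain on which the rational expressions remain positive, so that $f$ is well-defined and continuous and the assertion of (ii) is interpreted in that natural range.
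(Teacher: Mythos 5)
Your proof is correct and takes essentially the same route as the paper's: both rest on the identity $\frac{\p}{\p b}\mu_{\bE,b}(z_0) = \frac{1}{d/m-b}\,\mu_{\bE,b}(z_0)$ and the comparison of $\frac{1}{d/m-b_0}$ with $\frac{1}{d'/m'-b_0}$. Your first-crossing argument for (ii) simply formalizes the paper's one-line observation that when both the values and the ratios $m/d$ are ordered, $\mu_{\bE,b}(z_0)$ grows strictly faster than $\mu_{\bE',b}(z_0)$; note also that your domain hedge is unnecessary, since $d\ge m$ for the classes in question, so $d-mb>0$ on all of $[0,1)$.
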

\begin{proof}  As explained above, we may write
$$
\mu_{\bE,b}(z_0) = \frac{A_0}{d-mb},\qquad \mu_{\bE',b}(z_0) = \frac{A'_0}{d'-m'b}.
$$
But 
$$
\frac{\p}{\p b}\Big|_{b=b_0} \mu_{\bE,b}(z_0) = \frac{A_0}{d-mb_0} \ \frac m {d-mb_0} = \frac 1{d/m - b_0} \mu_{\bE,b_0}(z_0).
$$
Note here that $d/m>1$ while $b_0< 1$.  Therefore $\frac 1{d/m - b_0} > \frac 1{d'/m' - b_0} $ iff $d/m < d'/m'$, which happens iff $m'/d' < m/d$.  This proves (i).

The  calculation above also implies that if   $\mu_{\bE,b_0}(z_0)> \mu_{\bE',b_0}(z_0)$ and $m/d> m'/d'$ then 
$\mu_{\bE,b}(z_0)$ increases faster than $\mu_{\bE',b}(z_0)$ as $b$ increases.  Hence (ii) also holds.
\end{proof}

\begin{lemma}\label{lem:5T}  Let $\Tt$ be any triple 
with center class $\bE_\mu$.  Then: 
\begin{itemlist}\item [{\rm (i)}]  If $b>1/3$ the obstruction $\mu_{\bE_\mu, b}(p_\mu/q_\mu)$ is live for all $b \in [\p^-(J_{\bE_\mu}), m_\mu/d_\mu]$, and hence for all $b\in  [\p^-(J_{\bE_\mu}), \p^-(J_{\bE_\rho})]$. 
\item [{\rm (ii)}]  If $b<1/3$ the obstruction $\mu_{\bE_\mu, b}(p_\mu/q_\mu)$ is live for all $b \in  [m_\mu/d_\mu, \p^+(J_{\bE_\mu})]$, and hence for all $b\in  [\p^+(J_{\bE_\rho}), \p^+(J_{\bE_\mu})]$.
\end{itemlist}
\end{lemma}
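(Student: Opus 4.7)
Proof plan. I will prove (i) in detail; (ii) is symmetric after exchanging $\p^-$ with $\p^+$, using that $\acc$ reverses orientation for $b<1/3$ and, by Corollary~\ref{cor:symm}, that the relevant principal pre-staircases then have increasing (rather than decreasing) ratios $m_k/d_k$.

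First, I establish liveness of $\mu_{\bE_\mu,b}(p_\mu/q_\mu)$ at the two endpoints of $[\p^-(J_{\bE_\mu}),\, m_\mu/d_\mu]$. At $b=m_\mu/d_\mu$, Corollary~\ref{cor:perfATF} gives that $\bE_\mu$ is perfect, and Lemma~\ref{lem:basic}(i) concludes. At the left endpoint $b_- := \p^-(J_{\bE_\mu})$, Theorem~\ref{thm:main1}(iii) realizes $\bE_\mu$ as a step in a principal pre-staircase, so Corollary~\ref{cor:perfATF2} says both endpoints of $J_{\bE_\mu}$ are unobstructed. Hence $c_{H_{b_-}}(\acc(b_-))=V_{b_-}(\acc(b_-))=\mu_{\bE_\mu,b_-}(\acc(b_-))$, and applying the scaling property of $c_{H_{b_-}}$ together with the linear formula \eqref{eq:muEb} for $\mu_{\bE_\mu,b_-}(z)$ on $[\acc(b_-), p_\mu/q_\mu]$ (which extends by continuity to the boundary value $z=\acc(b_-)$ where $\mu$ just meets $V$) forces $c_{H_{b_-}}(p_\mu/q_\mu)=\mu_{\bE_\mu,b_-}(p_\mu/q_\mu)$.

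Second, I fill in the middle via a monotonicity argument. Suppose for contradiction that some $b_0\in(b_-,\, m_\mu/d_\mu)$ admits an obstructive class $\bE'=(d',m',\bbm')$ with $\mu_{\bE',b_0}(p_\mu/q_\mu)>\mu_{\bE_\mu,b_0}(p_\mu/q_\mu)$. Writing each obstruction at $z=p_\mu/q_\mu$ in the form $A/(d-mb)$, the ratio $R(b):=\mu_{\bE_\mu,b}(p_\mu/q_\mu)/\mu_{\bE',b}(p_\mu/q_\mu)$ satisfies $\op{sgn} R'(b)=\op{sgn}(m_\mu d'-m' d_\mu)$, matching the derivative comparison of Lemma~\ref{lem:md}(i); in particular $R$ is strictly monotonic when $m_\mu/d_\mu\ne m'/d'$ and constant otherwise. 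The liveness at both endpoints forces $R(b_-),\, R(m_\mu/d_\mu)\ge 1$, but the assumption gives $R(b_0)<1$ at an interior point --- impossible under any of the three sign regimes.

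Finally, the \lq hence\rq{} clause: by Lemma~\ref{lem:2} the ascending principal pre-staircase $\Ss^\Tt_\ell$ has seeds $\bE_\la,\bE_\mu$ and blocking class $\bE_\rho$; Corollary~\ref{cor:symm} gives that its ratios $m_k/d_k$ decrease, and they converge to $\p^-(J_{\bE_\rho})$ (since $\acc$ preserves orientation for $b>1/3$), forcing $m_\mu/d_\mu\ge \p^-(J_{\bE_\rho})$, so $[\p^-(J_{\bE_\mu}),\, \p^-(J_{\bE_\rho})]\subset[\p^-(J_{\bE_\mu}),\, m_\mu/d_\mu]$. The main obstacle is the scaling-based liveness at $b_-$: one must import from \cite{M1} (via Corollary~\ref{cor:perfATF2}) that the endpoints of $J_{\bE_\mu}$ are unobstructed, and use continuity to justify applying \eqref{eq:muEb} at the boundary $z=\acc(b_-)$.
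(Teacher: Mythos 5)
Your proof is correct and follows essentially the same route as the paper's: liveness at the lower endpoint via Corollary~\ref{cor:perfATF2} together with the scaling argument of \eqref{eq:live2}, liveness at $b=m_\mu/d_\mu$ via Lemma~\ref{lem:basic} (i.e.\ \cite[Prop.21]{ICERM}), interpolation on the intermediate $b$-values using the $b$-monotonicity of Lemma~\ref{lem:md}, and the same decreasing-$m_k/d_k$ argument for the \lq hence\rq{} clause. The only (harmless) difference is organizational: you run the monotone-ratio argument once over the whole interval $(\p^-(J_{\bE_\mu}), m_\mu/d_\mu)$, whereas the paper first handles the blocked interval $J_{\bE_\mu}$ by citing \cite[Prop.42]{ICERM} and applies the Lemma~\ref{lem:md} crossing argument only on $[\p^+(J_{\bE_\mu}), m_\mu/d_\mu)$, so your version even spares the citation of $m_\mu/d_\mu>\p^+(J_{\bE_\mu})$ from \cite[Lem.2.2.11]{MM}.
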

\begin{proof}  First suppose that $b>1/3$. 
Because $c_{H_b}$ is unobstructed for $b\in \p J_{\bE_\mu}$ by Corollary~\ref{cor:perfATF}, we can apply \cite[Prop.~42]{ICERM} to show that the obstruction $\mu_{\bE_\mu, b}(p_\mu/q_\mu)$  is live
for $b$ in the blocked interval $J_{\bE_\mu}$.    It is also live at the lower endpoint $\p^- J_{\bE_\mu}$ by \eqref{eq:live2}. Next note that $\mu_{\bE_\mu, b}(p_\mu/q_\mu)$ is live 
 for $b = m_\mu/d_\mu$  by \cite[Prop.21]{ICERM}, where $m_\mu/d_\mu> \p^+(J_{\bE_\mu})$ by \cite[Lem.2.2.11]{MM}.  
If  it were not live at some $b_0\in [\p^+(J_{\bE_\mu}), m_\mu/d_\mu\bigr)$, there would some exceptional class $\bE'$ with degree coordinates $(d',m')$ such that $\mu_{\bE', b_0}(p_\mu/q_\mu) > \mu_{\bE_\mu, b_0}(p_\mu/q_\mu)$.   Therefore there would have to be $b_1<b_2$ with $\p^+(J_{\bE_\mu}) < b_1< b_0 < b_2 < m_\mu/d_\mu$ at which the two obstructions are equal, with $\mu_{\bE', b}(p_\mu/q_\mu)$ growing faster than $ \mu_{\bE_\mu, b}(p_\mu/q_\mu)$ at $b=b_0$ and slower at $b= b_2$.    But this contradicts 
Lemma~\ref{lem:md}.   Now note that $m_\mu/d_\mu> \p^-(J_{\bE_\rho})$ because $\bE_\mu$ is a step in a pre-staircase for $b = \p^-(J_{\bE_\rho})$ with decreasing $m/d$ values.  This completes the  proof.

A similar argument follows for $b<1/3$, where the order of $b$ is reversed.    In particular, the interval $J_{\bE_\rho}$ lies to the left of $J_{\bE_\mu}$, and the sequence  $m_k/d_k$ increases for all principal pre-staircases by Lemma~\ref{lem:2}~(iii) with limit  $\p^+(J_{\bE_\rho})$.  Therefore we always have
 $m_\mu/d_\mu < \p^+(J_{\bE_\rho})$. Further details are left to the reader.
\end{proof}  

\begin{cor}\label{cor:5T}  All the steps in each ascending pre-staircase  $\Ss^+_{\al_\infty}$ are live at their centers when $b$ equals
 the limiting value
$b_{\al_\infty}$.
\end{cor}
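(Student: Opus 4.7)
The strategy is to apply Lemma~\ref{lem:5T}~(i) to each ascending step to obtain liveness on a small interval just above $b_{\al_\infty}$, and then to extend this liveness down to $b=b_{\al_\infty}$ via a two-crossing argument in the spirit of the proof of Lemma~\ref{lem:5T}~(i).

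For each $k$, the step $\bE_k = \bE_{\al_{n_k}}$ is by construction the middle class of the triple $\Tt_{\al_{n_k}}$, whose right entry I denote $\bE_{k,\rho}$. By Lemma~\ref{lem:5T}~(i), the obstruction $\mu_{\bE_k,b}(p_k/q_k)$ is live on $[\p^-(J_{\bE_k}), \p^-(J_{\bE_{k,\rho}})]$. The labelling convention of Lemma~\ref{lem:label} and Proposition~\ref{prop:label1}, combined with the defining pattern $a_{n_k}=0,\ a_{n_k+1}=2$ of the ascending subsequence, shows that the label $\al_{n_k}$ exceeds $\al_\infty$ and that $\p^-(J_{\bE_{k,\rho}})>\p^-(J_{\bE_k})>b_{\al_\infty}$, while $\p^-(J_{\bE_k})$ decreases monotonically to $b_{\al_\infty}$ as $k\to\infty$. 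Thus Lemma~\ref{lem:5T}~(i) already yields liveness of $\bE_k$ at its center on a small interval lying just above $b_{\al_\infty}$.

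To extend this liveness down to $b=b_{\al_\infty}$ itself, I would argue as follows. By Corollary~\ref{cor:symm}, the sequence $m_k/d_k$ decreases monotonically to $b_{\al_\infty}$, so by Lemma~\ref{lem:md}~(i) any class $\bE'$ overshadowing $\bE_k$ at $p_k/q_k$ on the sliver $[b_{\al_\infty},\p^-(J_{\bE_k})]$ must satisfy $m'/d'<m_k/d_k$. Using Lemma~\ref{lem:basic}~(ii) together with the fact that $m_k/d_k$ converges to $b_{\al_\infty}$ strictly faster than the active half-width of $\bE_k$ at $p_k/q_k$ shrinks, one checks that $|b_{\al_\infty}d_k-m_k|<\sqrt{1-b_{\al_\infty}^2}$ and hence $\mu_{\bE_k,b_{\al_\infty}}(p_k/q_k)>V_{b_{\al_\infty}}(p_k/q_k)$ for every $k$. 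This provides a lower anchor of obstructiveness, and the two-crossing argument of Lemma~\ref{lem:5T}~(i), applied on $[b_{\al_\infty},\p^-(J_{\bE_k})]$ with anchors of liveness at $\p^-(J_{\bE_k})$ and of obstructiveness at $b_{\al_\infty}$, precludes any such $\bE'$. The hardest part will be upgrading the strict obstructiveness at $b_{\al_\infty}$ into a genuine anchor of liveness, i.e.\ ruling out any class $\bE'$ that already dominates $\bE_k$ at $b=b_{\al_\infty}$ itself; I expect this to require the irrationality of $b_{\al_\infty}$ (an analog of Remark~\ref{rmk:recur0}~(i) for the non-recursive pre-staircases $\Ss^+_{\al_\infty}$) together with continuity of $c_{H_b}(p_k/q_k)$ in $b$, in the style of the argument used in the proof of Proposition~\ref{prop:live}.
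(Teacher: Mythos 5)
There is a genuine gap, and it starts with the ordering of the steps relative to the limit. For an ascending pre-staircase $\Ss^+_{\al_\infty}$ the steps $\al_{n_k}$ are chosen with $a_{n_k}=0$, $a_{n_k+1}=2$, and the digit $2$ in position $n_k+1$ forces $\al_{n_k}<\al_\infty$, not $\al_{n_k}>\al_\infty$ as you assert; your picture, in which $\p^-(J_{\bE_k})>b_{\al_\infty}$ and decreases to $b_{\al_\infty}$, is the one appropriate to the \emph{descending} pre-staircases $\Ss^-_{\al_\infty}$. In the ascending case the correct picture is the opposite: since the limit is unblocked, larger than the center of $\bE_k=\bE_\mu$, and no larger than $\p^-(I_{\bE_{k,\rho}})$, for $b>1/3$ one has $\p^+(J_{\bE_k})<b_{\al_\infty}\le\p^-(J_{\bE_{k,\rho}})$, so $b_{\al_\infty}$ already lies inside the closed interval $[\p^-(J_{\bE_k}),\p^-(J_{\bE_{k,\rho}})]$ on which Lemma~\ref{lem:5T}~(i) asserts liveness of $\mu_{\bE_k,b}(p_k/q_k)$. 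That one observation is the paper's entire proof: no ``extension down to $b_{\al_\infty}$'' is needed, and when $b<1/3$ one simply invokes Lemma~\ref{lem:5T}~(ii), a case your proposal never addresses.

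Beyond resting on the reversed picture, the extension step you sketch is not carried out and would not be easy to complete. The claim that $m_k/d_k\to b_{\al_\infty}$ ``strictly faster than the active half-width of $\bE_k$ shrinks'' is unsubstantiated: for the non-recursive pre-staircases of Definition~\ref{def:newstair} there is no analogue of the $O(\la^{-2k})$ convergence of Remark~\ref{rmk:recur0}, which is exactly what makes the obstructiveness check work in Lemma~\ref{lem:pseudst}; likewise the irrationality of $b_{\al_\infty}$, which you invoke to upgrade obstructiveness at $b_{\al_\infty}$ to liveness, is only established in the paper for recursively defined (principal) pre-staircases. Also, Lemma~\ref{lem:md}~(i) compares derivatives at a point where two obstructions agree; it does not by itself give the constraint $m'/d'<m_k/d_k$ on a potential overshadowing class in the way you use it. Tellingly, the genuinely hard direction in the paper --- the descending pre-staircases, whose geometry matches your picture --- requires Lemmas~\ref{lemcor:1T} and~\ref{lem:2T} and Proposition~\ref{prop:6T}, and even then yields liveness only for steps of sufficiently high degree, so the route you propose could not give the clean statement of Corollary~\ref{cor:5T} for all steps.
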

\begin{proof}   Let $\bE$ be a step in some  ascending pre-staircase $\Ss^+_{\al_\infty}$, and denote by $\Tt$ the unique triple with middle step $\bE_\mu = \bE$.  
By construction the $z$-limit point of $\Ss^+_{\al_\infty}$ is at most $\p^-(I_{\bE_\rho})$.  Therefore if $b>1/3$  the corresponding $b$-value  is $\le \p^-(J_{\bE_\rho})$, and $\mu_{\bE,b}(p_\mu/q_\mu)$ is live at $b$ by Lemma~\ref{lem:5T}~(i). On the other hand if $b<1/3$ then the $b$-value corresponding to the 
 $z$-limit point of $\Ss^+_{\al_\infty}$ is  $\ge  \p^+(I_{\bE_\rho})$, and the conclusion now follows from 
 Lemma~\ref{lem:5T}~(ii).
\end{proof}

The  descending pre-staircases $\Ss^-_{\al_\infty}$  present a  more complicated problem.
According to the discussion before Proposition~\ref{prop:live}, 
the first step in the proof is to show that the steps are obstructive at the limiting $b$-value
$b_{\al_\infty}$.  
 This is a consequence of the next lemma that shows that each class  $\bE_\mu$ is  obstructive on the whole of the $b$-interval    
\begin{align}\label{eq:JjT} 
J_\Tt: = \acc^{-1}_\eps(I_\Tt), \quad\mbox{ where }\ I_\Tt = \bigl(\p^+(I_{\bE_\la}), \ \p^-(I_{\bE_\rho})\bigr).
\end{align}
 that lies between $J_{\bE_\la}$ and 
$J_{\bE_\rho}$.

\begin{lemma}\label{lemcor:1T} Each step $\bE$ of each descending pre-staircase $\Ss^-_{\al_\infty}$ for ${\al_\infty}\in Z$ is obstructive at its center at the limiting $b$-value $
b_{\al_\infty}$.  Moreover, the obstruction $z\mapsto \frac{1+z}{3-b}$ does not overshadow  any step in $\Ss^-_{\al_\infty}$.
\end{lemma}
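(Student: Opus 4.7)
The plan is to prove the two assertions separately: first, obstructiveness of each step $\bE$ at $b=b_{\al_\infty}$; second, non-overshadowing by the line $z\mapsto(1+z)/(3-b)$. Throughout I focus on the case $b_{\al_\infty}>1/3$, the opposite case being symmetric via Lemma~\ref{lem:5T}(ii) and the orientation reversal of $\acc$ on $(0,1/3)$.

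For obstructiveness, my strategy is to localize $b_{\al_\infty}$ inside the gap interval $J_{\Tt_\bE}$ between the blocked intervals $J_{\bE_\la}$ and $J_{\bE_\rho}$ of the triple $\Tt_\bE=(\bE_\la,\bE,\bE_\rho)$ whose middle class is $\bE$, and then to show $\bE$ is obstructive at its own center $p_\mu/q_\mu$ throughout $J_{\Tt_\bE}$. The localization follows directly from Definition~\ref{def:newstair}: since the step has $a_{n_k}=2$ and $a_{n_k+1}=0$, all subsequent steps of $\Ss^-_{\al_\infty}$ lie within the sub-triple $x\Tt_\bE$ and its descendants, hence $z_\infty\in I_{x\Tt_\bE}\subset I_{\Tt_\bE}$ and so $b_{\al_\infty}\in J_{\Tt_\bE}$. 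For obstructiveness on $J_{\Tt_\bE}$ itself, the right half $[\p^-(J_\bE),\p^-(J_{\bE_\rho})]$ is handled by Lemma~\ref{lem:5T}(i), which in fact gives liveness. The subtlety lies on the left half $(\p^+(J_{\bE_\la}),\p^-(J_\bE))$. First I verify strict obstructiveness at the boundary $b=\p^-(J_\bE)$: the defining equality $\mu_{\bE,b}(\acc(b))=V_b(\acc(b))$, combined with the piecewise-linear formula $\mu_{\bE,b}(z)=qz/(d-mb)$ for $z\le p/q$ and the inequality $p_\mu/q_\mu>\acc(\p^-(J_\bE))=\p^-(I_\bE)$, yields $\mu_{\bE,b}(p_\mu/q_\mu)>V_b(p_\mu/q_\mu)$. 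Since obstructiveness is the open condition $(bd-m)^2<1-b^2$, it extends to an explicit interval $(b^-_\bE,b^+_\bE)$ with $b^{\pm}_\bE=(dm\pm\sqrt{pq})/(d^2+1)$, of half-width $\sim 1/d$ about $m/d$. The remaining task is to prove $b^-_\bE\le\p^+(J_{\bE_\la})$, which I plan to establish inductively along the mutation tree, quantitatively comparing the shrinkage of $J_\Tt$ to that of the obstructive half-width under successive $x$- and $y$-mutations.

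For non-overshadowing, I use \eqref{eq:accform} to identify $(1+z)/(3-b)$ as the unique line through the accumulation point $(\acc(b),V_b(\acc(b)))$, so that its failure to overshadow a step is equivalent to the slope inequality \eqref{eq:MDTineq0}. Each step of $\Ss^-_{\al_\infty}$ lies in a triple obtained by iterated $x,y$-mutations of one of the base triples $(S^iR^\de)^\sharp\Tt^n_*$ treated in Lemma~\ref{lem:slopeest3}, which verifies \eqref{eq:MDTineq1} (with the exceptional $\Ss^U_{u,0}$ case handled in \cite[Ex.70]{ICERM}); Lemma~\ref{lem:slopeest2} then propagates \eqref{eq:MDTineq1} through all subsequent mutations, and the equivalence from Lemma~\ref{lem:slopeest1} yields \eqref{eq:MDTineq0} at every step.

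The main obstacle is the inductive claim above that the obstructive interval of each middle class $\bE$ is wide enough to contain the entire triple gap $J_{\Tt_\bE}$, not merely the half-interval that Lemma~\ref{lem:5T} covers. Both the gap width $\p^-(J_\bE)-\p^+(J_{\bE_\la})$ and the obstructive half-width $\sqrt{pq}/(d^2+1)$ shrink as we descend the tree, and the crux will be a uniform bound showing that the ratio of these two quantities stays controlled under both $x$- and $y$-mutations.
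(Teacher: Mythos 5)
Your treatment of the second claim (non-overshadowing) invokes the right ingredients but omits the one step the lemma actually turns on. Lemmas~\ref{lem:slopeest1}--\ref{lem:slopeest3} establish \eqref{eq:MDTineq0} only at the limiting $b$-value of the \emph{principal} descending pre-staircase through the step, i.e.\ at $b_\infty=\p^+(J_{\bE_\la})$, where $\bE=\bE_\mu$ is the middle class of its unique triple $\Tt$. But \eqref{eq:MDTineq0} depends explicitly on the value of $b_\infty$, and what is needed here is its validity at $b_{\al_\infty}$, which is \emph{not} that principal limit. The missing transfer step, which is how the paper concludes, is: for $\eps=1$ the coefficient $m(p+q)-pq=\tfrac18\bigl((t^2-8)+3tr\bigr)$ on the left of \eqref{eq:MDTineq0} is positive, so the inequality persists when $b_\infty$ is increased; and $b_{\al_\infty}>\p^+(J_{\bE_\la})$ because every step of $\Ss^-_{\al_\infty}$ after $\bE$ (the next one being $\bE_{y^kx^i\mu}$ with $i,k\ge1$) has center strictly to the right of $\p^+(I_{\bE_\la})$; when $b<1/3$ both sides of \eqref{eq:MDTineq0} are negative and the order reverses accordingly. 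Without this, your statement that the lemmas ``yield \eqref{eq:MDTineq0} at every step'' is not yet an assertion at the correct $b$-value.

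Your treatment of the first claim (obstructiveness) is both incomplete and unnecessarily hard. Incomplete, because the only case that matters is $b_{\al_\infty}\in\bigl(\p^+(J_{\bE_\la}),\p^-(J_{\bE_\mu})\bigr)$ (indeed $\al_\infty<\p^-(I_{\bE_\mu})$ strictly, since $\al_\infty\in Z$ excludes endpoints of blocked intervals), which is precisely the region Lemma~\ref{lem:5T} does not cover; there your argument reduces entirely to the quantitative induction comparing the width of $J_\Tt$ with the obstructive half-width $\sim 1/d$, which you leave unproved and yourself identify as the crux. Unnecessary, because the first claim is an immediate consequence of the second: by \eqref{eq:accform} the line $z\mapsto\frac{1+z}{3-b}$ passes through the accumulation point and lies above $V_b(z)$ for $z>\acc(b)$, and the centers of a descending pre-staircase lie above $\acc(b_{\al_\infty})$, so once a step corner clears this line it a fortiori clears the volume curve, i.e.\ the class is obstructive at its center for $b=b_{\al_\infty}$. (If you do want your stronger claim that $\bE_\mu$ is obstructive on all of $J_\Tt$, note that the condition $|bd-m|<\sqrt{1-b^2}$ defines an interval of $b$-values, so it would suffice to know obstructiveness at both endpoints of $J_\Tt$, e.g.\ from liveness of the two principal staircases, rather than from a width estimate.)
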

\begin{proof}  
Since $  \frac{1+z}{3-b}> V_b(z)$ for $z>\acc(b)$, the
 first claim follows immediately from the second. To show the second, first suppose that  $b>1/3$.  We know from Lemma~\ref{lem:slopeest3} that the inequality \eqref{eq:MDTineq0} holds for all steps (except possibly the first) of all descending principal staircases. Each step $\bE$ in $\Ss^-_{\al_\infty}$ is the center step $\bE_\mu$ of a unique triple $\Tt$.  By definition, the next step in $\Ss^-_{\al_\infty}$ may be written $\bE_{y^kx^i\mu}$, where $i,k\ge 1$.  
Thus its center lies strictly to the right of the lower endpoint $\p^+(I_{\bE_\la})$ of $I_\Tt$, as do the centers of all subsequent steps. Hence the inequality \eqref{eq:MDTineq0}, which holds for $b_\infty=\p^+(I_{\bE_\la})$, continues to hold for $b_{\al_\infty}>b_\infty$.

When $b<1/3$ both sides of \eqref{eq:MDTineq0} are negative, and a corresponding argument applies.
\end{proof}

The second step in the proof is to find, for each descending pre-staircase $\Ss^-_{\al_\infty}$,  a uniform bound for the degree of a class that could obstruct any one of its steps  $\bE_k, k\ge 0$. 
We will treat the case $b>1/3$ in detail; the changes needed for the case $b<1/3$ are discussed in Remark~\ref{rmk:b<13}.  In the following  we use the notion of the {\bf level} of a step that was defined in Lemma~\ref{lem:label}.

\begin{lemma}\label{lem:2T} \begin{itemlist}\item[{\rm (i)}]  Let
$\bE = (d,m,p,q)$ be a perfect class such that for some $0<x < b_\infty < m/d$ we have
$\frac md > x(1+\frac 1{d^2})$ and
\begin{align}\label{eq:rsb}
A(m,d,x): = \frac{m(m-xd) - 1}{d(m-xd) - x} < b_\infty < m/d.
\end{align}
Then if $\bE'$ is any other perfect class with degree $d'>  1/(b_\infty - x)$, we have 
$\mu_{\bE, b_\infty}(p/q) > \mu_{\bE', b_\infty}(p/q) $.

\item[{\rm (ii)}]    If $x_0<b_\infty < y': = m'/d'<y: = m/d< x_1$ then
\begin{align}\label{eq:rsb0}
A(m,d,x_0) < A(m',d',x_0)
\end{align}
provided that, with $f(y) = \frac{1-x_0y}{y-x_0}, y>x_0$, we have
\begin{align}\label{eq:rsb1}
& md'-m'd  < \Bigl(\frac{d'}{d}  - \frac{d}{d'} \Bigr) f(x_1), \quad\mbox{ and } \\ \label{eq:rsb2}
& \frac{d'}{d} \bigl(f(y') - f(y)\bigr) < \Bigl(\frac{d'}{d}  - \frac{d}{d'} \Bigr)\bigl(f(y') - f(x_1)\bigr) .
\end{align}

\item[{\rm (iii)}]  
Let $\Ss$ be   any descending principal  staircase in $(S^iR^\de)^{\sharp}\bigl(\Cc\Ss^U\cap [2n+6, 2n+8]\bigr)$, $ n\ge 0,$ with recursion parameter $t_\la>3$ and with $i+\de$  even so that $b>1/3$. Denote by
$b_{\min}, b_{\max}$ the infimum (resp. supremum) of the $b$-values for these staircases.  Then there are constants $x_0< b_{\min}< b_{\max} < x_1$ and a level $\ell$ such that 
 conditions  \eqref{eq:rsb1},  \eqref{eq:rsb2} hold whenever
 $(d,m), (d',m')$ are the degree coordinates of a pair of adjacent steps in $\Ss$ at level $\ge \ell$.
\end{itemlist}
\end{lemma}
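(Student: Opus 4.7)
The overall strategy is to show that, outside a bounded range of degrees, no competing perfect class can obstruct the step obstruction $\mu_{\bE, b_\infty}(p/q)$, and that this bounded range is stable as one moves between adjacent steps of a descending staircase. Parts (i) and (ii) establish two algebraic criteria, and (iii) verifies that these criteria hold uniformly across a staircase family.

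For part (i), I would adapt the proof of Lemma~\ref{lem:pos}. Suppose by contradiction that $\mu_{\bE',b_\infty}(p/q) \ge \mu_{\bE,b_\infty}(p/q)$. Since $\bE'$ is obstructive at $(b_\infty, p/q)$, \cite[Lem.15]{ICERM} gives $|b_\infty d' - m'| < \sqrt{1-b_\infty^2}$. Applying Cauchy--Schwarz to $\bbm' \cdot \bw(p/q)$ using $|\bw(p/q)|^2 = p/q$ and $|\bbm'|^2 = (d')^2 - (m')^2 + 1$, one bounds $\mu_{\bE',b_\infty}(p/q)$ from above as a function of $d', m', b_\infty$. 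Rearranging the hypothesis $b_\infty > A(m,d,x)$ into the equivalent form $(m-xd)(m-b_\infty d) < 1 - b_\infty x$ and feeding this into the comparison with $\mu_{\bE,b_\infty}(p/q) = p/(d-mb_\infty)$, one forces $d'(b_\infty - x) \le 1$, contradicting the assumption $d' > 1/(b_\infty - x)$. The auxiliary hypothesis $m/d > x(1 + 1/d^2)$ is needed to keep the denominator $d(m-xd) - x$ of $A$ positive, so that the inequality $b_\infty > A(m,d,x)$ has its natural meaning.

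For part (ii), the statement is purely algebraic. Writing the difference
\[ A(m',d',x_0) - A(m,d,x_0) = \frac{N(m,m',d,d',x_0)}{(d(m-x_0 d) - x_0)(d'(m'-x_0d') - x_0)} \]
and clearing denominators, one finds that $N$ splits as a sum of two terms: one proportional to $md'-m'd$ and another involving the differences of $f(y) = (1 - x_0 y)/(y - x_0)$ evaluated at $y = m/d$, $y' = m'/d'$. Using the trivial identity $f(y) - f(y') = (y'-y) \cdot (1 - x_0^2)/((y-x_0)(y'-x_0))$ and combining with $y' < y < x_1$, the hypotheses \eqref{eq:rsb1} and \eqref{eq:rsb2} are exactly what is needed to ensure $N > 0$.

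For part (iii), I use the recursive structure from Remark~\ref{rmk:recur0}. For a principal staircase with recursion parameter $t_\la \ge 3$, each of $d_k, m_k, p_k, q_k$ has the closed form $X\lambda^k + \ov{X}\lambda^{-k}$ with $\lambda > 1$, so $m_k/d_k \to b_\infty$ at rate $O(\lambda^{-2k})$ and the ratios $m_k/d_k$ decrease monotonically by Lemma~\ref{lem:2}~(iii). Set $x_0$ slightly below $b_{\min}$ and $x_1$ slightly above $b_{\max}$. The quantity $m_{k+1}d_k - m_k d_{k+1}$ is constant along the recursion, while $d_{k+1}/d_k - d_k/d_{k+1}$ tends to $\lambda - \lambda^{-1} > 0$; thus \eqref{eq:rsb1} reduces to a fixed inequality involving $\lambda$, $x_1$, and the constants $M, D$ of the limit, which can be checked for $k$ large. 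Condition \eqref{eq:rsb2} uses that $f(y) - f(y')$ is $O(\lambda^{-2k})$ while $f(y')- f(x_1)$ converges to the positive value $f(b_\infty) - f(x_1)$, so once again the inequality holds eventually. Finally, to get uniformity across the family indexed by $(n,i,\delta)$, I use Lemma~\ref{lem:inters} and Proposition~\ref{prop:symm} to track how the symmetries act on the $(d,m)$ coordinates and observe that the constants implicit in the asymptotics depend continuously (and in controlled polynomial fashion in $n$) on the seed data of the base triples.

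The main obstacle will be in (iii): verifying the inequalities \eqref{eq:rsb1} and \eqref{eq:rsb2} uniformly in $k$ \emph{and} uniformly across the different staircase families in $(S^iR^\delta)^\sharp(\Cc\Ss^U \cap [2n+6,2n+8])$. The asymptotic argument for fixed family is routine, but choosing $x_0, x_1$ and $\ell$ so that they work simultaneously for all $(n, i, \delta)$ with $i+\delta$ even requires some care, because the intervals $[b_{\min}, b_{\max}]$ can cluster, and the constants in the recursion change as one mutates between triples. I expect one can package this by first fixing the base triple, then using Lemma~\ref{lem:slopeest2} to propagate the bounds through mutations in a uniform way.
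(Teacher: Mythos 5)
Your parts (i) and (ii) are essentially the paper's argument: (ii) is the same computation (rewrite $A(m',d',x_0)-A(m,d,x_0)>0$, separate the term proportional to $md'-m'd$ from the $f$-differences, and use \eqref{eq:rsb1}, \eqref{eq:rsb2}), and for (i) the paper simply cites \cite[Prop.21(iii), Lem.15]{ICERM} to get $m'/d'<x$ and $|b_\infty d'-m'|<1$, whence $d'(b_\infty-x)<1$; your sketch re-derives the cited input with the right ingredients (the rearrangement $(m-xd)(m-b_\infty d)<1-b_\infty x$ and the role of $m/d>x(1+1/d^2)$ are correct), though the Cauchy--Schwarz step would need care for classes of small degree.

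The genuine gap is in (iii), and it is twofold. First, your treatment of \eqref{eq:rsb1} does not work: for adjacent steps of a fixed descending staircase the left-hand side $md'-m'd$ is \emph{constant} (equal to $p_\la$, by Lemma~\ref{lem:prelim0}~(iii)), while $\frac{d'}{d}-\frac{d}{d'}$ converges to $\lambda-\lambda^{-1}\approx t_\la$; so \eqref{eq:rsb1} is asymptotically the fixed numerical inequality $p_\la\lesssim \sqrt{t_\la^2-4}\,f(x_1)$, which cannot be ``checked for $k$ large'' --- it either holds or it doesn't, depending on how $p_\la/(t_\la-1)$ compares with $f(x_1)=\frac{1-x_0x_1}{x_1-x_0}$. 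This is exactly where the real work lies: one needs the uniform bound $\frac{d'}{d}-\frac{d}{d'}>t_\la-1$ (Lemma~\ref{lem:prelim2}, valid for all $k$, not just asymptotically), the identity $md'-m'd=p_\la$, and then a verified choice of $x_0,x_1$ bracketing the \emph{entire} range of accumulation values $b_\infty$ in the family tightly enough that $p_\la/(t_\la-1)<f(x_1)$ for \emph{every} blocking class $\bE_\la$ occurring in the family (in $[6,8]$ with $b>1/3$ this is the computation $p_\la/(t_\la-1)<\sqrt7\cdot\frac54<4=f(2/3)$ with $x_0=\frac12,x_1=\frac23$; for the images under $S^iR^\de$ it is Corollary~\ref{cor:qtrip2} and Lemma~\ref{lem:rsforS}). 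Your proposal supplies none of this quantitative input, and choosing $x_0,x_1$ ``slightly'' bracketing $[b_{\min},b_{\max}]$ begs the question, since whether $f(x_1)$ beats $\sup_\la p_\la/(t_\la-1)$ depends on how the spread $b_{\max}-b_{\min}$ compares with the coefficients of the (unboundedly growing, under mutation) blocking classes. Second, you have misidentified where uniformity is needed: the lemma fixes $(n,i,\de)$ and asks for a single $x_0,x_1,\ell$ valid for \emph{all} descending principal staircases $\Ss$ in that one family --- infinitely many of them, with $t_\la,p_\la,b_\infty$ varying. Your per-staircase ``eventually in $k$'' argument for \eqref{eq:rsb2} gives a level depending on the staircase; the paper instead gets a uniform $\ell$ from the $2^{-\ell}$ separation of adjacent step data at level $\ge\ell$ (as in the proof of Proposition~\ref{prop:dense}) together with $d'/d\ge t_\la-1\ge12$ after discarding the single staircase with smallest recursion parameter. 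Your closing remarks about uniformity across $(n,i,\delta)$ via Lemma~\ref{lem:inters} address a requirement the statement does not make, while leaving the within-family uniformity --- the actual difficulty --- unresolved.
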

\begin{proof} It is shown in \cite[Prop.21(iii)]{ICERM} if this inequality for $x= r/s$ is satisfied any class $\bE'$ such that
 $\mu_{\bE, b_\infty}(p/q) < \mu_{\bE', b_\infty}(p/q) $ must have $m'/d'< r/s$.  Since in this case 
$\mu_{\bE', b_\infty}$ is 
obstructive we must have $|b_\infty d'-m'| < 1$ by \cite[Lem.15]{ICERM}, which readily gives the bound 
 on $d'$.  This proves (i).
 
 The inequality \eqref{eq:rsb0} states that even though $m/d$ decreases to $b_\infty$ along the staircase, the quantity $A(m,d,x_0)$ 
 (which also has limit $b_\infty$) increases, a fact that is key to the argument in Corollary~\ref{cor:2T}.
Now, it is straightforward to check that
\eqref{eq:rsb0} is equivalent to the inequality
$$
md'-m'd < \frac{d'-x_0m'}{m-x_0d} - \frac{d-x_0m}{m'-x_0d'},\quad\mbox{ when } \ 0<md'-m'd.
$$ 
Since  $x_0 < b_\infty < y': = m'/d' <  y:=m/d < x_1$ we have
  \begin{align*}
  \frac{d'-x_0m'}{m-x_0d} - \frac{d-x_0m}{m'-x_0d'} & = \frac{d'}{d} \frac{1-x_0y'}{y-x_0} - \frac{d}{d'} \frac{1-x_0y}{y'-x_0}\\
&  >  \frac{d'}{d} \frac{1-x_0y}{y-x_0}  -  \frac{d}{d'} \frac{1-x_0y'}{y'-x_0}.
\end{align*}
Therefore, since $f(y)$ is decreasing, we have
\begin{align*}
  \frac{d'-x_0m'}{m-x_0d} - \frac{d-x_0m}{m'-x_0d'}& > \Bigl(\frac{d'}{d}- \frac{d}{d'}\Bigr) f(y')   - \frac{d'}{d}\bigl(f(y') - f(y)\bigr)\\
& >  \Bigl(\frac{d'}{d}- \frac{d}{d'}\Bigr)f(x_1) \quad \mbox{by } \eqref{eq:rsb2}.
\end{align*}
Therefore, 
 \eqref{eq:rsb0} holds if, in addition, \eqref{eq:rsb1}  holds.
This proves (ii).

Now consider (iii).  For simplicity, we begin by considering the family
$\Cc\Ss^U\cap[6,8]$.  
 Since each  principal staircase is recursively defined, as was observed in \cite[Cor.4.2.3]{MM} there always is a constant $x< b_{\min}$ such that \eqref{eq:rsb} holds for all classes in that particular staircase.  
 As we shall see in Corollary~\ref{cor:2T}, the existence of such a constant $x$ is enough to show that the pre-staircase is live unless it is overshadowed by a class of low degree.\footnote
 {
Our earlier  proof that the principal pre-staircases are live used a  different argument.}  
The difficulty is that we want to find a single constant  that applies to  all  staircases in this family.  
It turns out that the descending staircase with blocking class $\bB^U_0$ 
(and recursion parameter $3$) is exceptional and that we get better estimates if we exclude it.    
Thus we will  find constants $x'_0, x_1, \ell$  such that
conditions~\eqref{eq:rsb1}, ~\eqref{eq:rsb2} hold for any pair of adjacent steps at level $\ge \ell$ in 
 $\Ss = \Ss^\Tt_u$  and  where $\Tt$ has
$p_\la/q_\la >7$, and then take $x_0= \max(x'_0,x''_0)$ where $x''_0$ is the lower constant for 
the exceptional staircase. 
Note that the value of  $ p_\la$  is relevant to the question at hand because, by Lemma~\ref{lem:prelim0}~(iii), the
 ratio $md'-m'd = m_\rho d_\mu -  m_\mu d_\rho > 0$ is fixed for all adjacent pairs of steps and equals $p_\la$.

We first claim that for any $x_1 > b_{\max}$ there is $\ell = \ell(x_1)$ such that \eqref{eq:rsb2} holds for all adjacent steps at levels $\ge \ell$.   This holds because 
\begin{itemlist}\item [-] the ratio $d'/d$ is $\ge t_\la - 1$, where $t_\la\ge 13$ is the recursion parameter of the staircase and $f$ decreases, so that it suffices to show $$
f(y')-f(y) < (1-(d/d')^2)\bigl(f(y') - f(x_1)\bigr) \le 143/144\bigl(f(b_\infty) - f(x_1)\bigr): = C
$$
 whenever $y, y'$ are $m/d$-values for two successive steps in the staircase;
\item[-]  for some $c_1,c_2>0$ we have $-c_1 \le f'(y)\le -c_2<0$ when $y\in [b_\infty, x_1]$ so that
$|f(y')-f(y)| < c_1|y'-y|$;
\item[-] we saw in \eqref{eq:distell} that adjacent steps $p/q$ at level $\ge \ell$ are less than a distance $\frac1{2^\ell}$ apart; a similar argument applies to the ratios $m/d$, where we use the formula in Lemma~\ref{lem:prelim0}~(iii) instead of the adjacency relation $|p/q-p'/q'|= t''/qq'$.
\end{itemlist}
Further details are left to the reader.

Next observe that by Lemma~\ref{lem:prelim2} 
$$ 
\frac {d_\mu}{d_\rho} - \frac {d_\rho}{d_\mu} > t_\la - 1.
$$ 
Therefore, by (ii),  
\eqref{eq:rsb0} will hold for a given $x_0<b_\infty$ if we also choose $x_1$ so that 
\begin{align}\label{eq:dtineq1}
p_\la/(t_\la-1) < \frac{1-x_0x_1}{x_1 - x_0}.
\end{align}
Now, for every class $\bE_\la$ under present consideration we have $7< p_\la/q_\la < 8$, so that $$
t_\la^2/p_\la^2 = 1 - 6 q_\la/p_\la + (q_\la/p_\la)^2 + 8/p_\la^2 \ge  1/7.
$$
Therefore, because we also have $t_\la \ge 5$,   
$$
p_\la/(t_\la - 1) = \frac{p_\la}{t_\la} (1- 1/t_\la)^{-1}   < \sqrt 7 \ \frac 54 < 4. 
$$  
On the other hand we know by \eqref{eq:accn} that $1/2 <\p^+(J_{\bB^U_0}) \le  b_\infty < 2/3$, where $2/3 = m/d$ for $\bB^U_0$. 
Moreover, all step  classes except $\bB^U_1$ have $m/d< 2/3$ by 
Lemma~\ref{lem:accmd}.  But if $x_0 = 1/2, x_1 = 2/3$ we have 
$\frac{1-x_0x_1}{x_1 - x_0} = 4$.  Therefore in this case  \eqref{eq:rsb1} holds with $x_0 = 1/2, x_1 = 2/3$.

 To establish (iii) in the general case (still with $b>1/3$), 
  we first need to choose suitable upper and lower bounds  $x_0,x_1$ 
for $b_\infty$,
 which is done in Corollary~\ref{cor:qtrip2}. Next notice that for any such family we always have $d'/d > t_\la$, and again we can assume $t_\la\ge 13$ by omitting the staircase with smallest $z$-accumulation point.  
 Hence 
  for given $x_0,x_1$ there always is a level $\ell$ such that  \eqref{eq:rsb2} holds for all staircases in the family.  We then show in Lemma~\ref{lem:rsforS} 
  that  
 \eqref{eq:dtineq1} holds for $x_0,x_1$ as chosen above.
  The argument given above then extends to complete the proof of (iii).
\end{proof}

\begin{rmk}\label{rmk:b<13}\rm  Lemma~\ref{lem:2T} extends to the  case $b< 1/3$ as follows.
If a staircase has $b<1/3$ then the ratios $m/d$ increase to $b_\infty$ and we have the following analogs to the claims in this lemma.
\begin{itemlist}\item[{\rm (i$^\prime$)}] If
$\bE = (d,m,p,q)$ is a perfect class such that for some $0<m/d < b_\infty <x$ we have
\begin{align*}
m/d < b_\infty < \frac{m(xd-m) + 1}{d(xd-m) + x} = : A'(m,d,x),
\end{align*}
then for any other perfect class $\bE'$ with degree $d'>  1/(x-b_\infty)$, we have 
$\mu_{\bE, b_\infty}(p/q) > \mu_{\bE', b_\infty}(p/q) $.
\item[{\rm (ii$^\prime$)}] If
 $x_0<y: = m/d<y': = m'/d' < b_\infty < x_1$ then $A(m',d', x_0) < A(m,d,x_0)$ 
provided that, with $f(y) = \frac{1-x_0y}{y-x_0}$ where $y>x_0$ as before,
\begin{align}\label{eq:md<13}
m'd-md' & < \Bigl(\frac{d'}{d}  - \frac{d}{d'} \Bigr) f(x_1),\\ \notag
\frac{d'}{d} \bigl(f(y) - f(y')\bigr)& < \Bigl(\frac{d'}{d}  - \frac{d}{d'} \Bigr)\bigl(f(y') - f(x_1)\bigr).
\end{align}
\item[{\rm (iii$^\prime$)}]
There are constants $x_0<b_{\min} < b_{\max}<x_1< 1/3$  (where $b_{\min},b_{\max}$ are the minimum, resp. maximum, of the $b$-values for these staircases) and a level $\ell$
such that, if $\Ss$ is  any descending principal  staircase in $(S^iR^\de)^{\sharp}\bigl(\Cc\Ss^U\cap [2n+6, 2n+8]\bigr)$, $ n\ge 0,$ where $t_\la > 5$ and $i+\de$ is odd so that $b<1/3$, 
the above
inequalities  hold when
 $(d,m), (d',m')$ are the degree coordinates of any two adjacent steps in $\Ss$ at level $\ge k$.
 \end{itemlist}
 The proofs of  (i$^\prime$) and (ii$^\prime$) are analogous to those in the case $b>1/3$ and are left to the reader. 
As for (iii$^\prime$), for fixed $x_0,x_1$ one can always choose $\ell$ so that 
the second inequality above holds.   Also, just as before, the inequality \eqref{eq:md<13} follows from
\eqref{eq:dtineq1}. 
Therefore, to complete the proof it remains to establish \eqref{eq:dtineq1}, which is accomplished in Lemma~\ref{lem:rsforS}. 
The fact that the values of $x_0,x_1$ in those lemmas are bounds for $b_\infty$ again follows from Corollary~\ref{cor:qtrip2}.
\hfill$\er$
\end{rmk}

\begin{cor}\label{cor:2T}  A descending pre-staircase $\Ss^-_{\al_\infty}$  is live unless it is overshadowed.
\end{cor}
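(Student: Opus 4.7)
The plan is to combine the obstructivity from Lemma~\ref{lemcor:1T} with a uniform degree bound coming from Lemma~\ref{lem:2T}, reducing the problem to finitely many candidate dominators, and then to use the absence of overshadowing to conclude that only finitely many of those candidates can obscure any step.

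By Lemma~\ref{lemcor:1T}, every step $\bE_k=(d_k,m_k,p_k,q_k)$ of $\Ss^-_{\al_\infty}$ is obstructive at its center $p_k/q_k$ when $b=b_{\al_\infty}$, and the special class giving $z\mapsto(1+z)/(3-b)$ does not obscure any step corner. So failure of liveness at $\bE_k$ must come from some other exceptional class $\bE'_k$. I would next use Lemma~\ref{lem:2T} to produce a single constant $x_0<b_{\al_\infty}$ (with the analog $x_0>b_{\al_\infty}$ of Remark~\ref{rmk:b<13} when $b<1/3$) such that hypothesis~\eqref{eq:rsb} of Lemma~\ref{lem:2T}(i) holds for every step $\bE_k$ with $k$ beyond some $k_0$. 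When $\Ss^-_{\al_\infty}$ is principal, Lemma~\ref{lem:2T}(iii) combined with the monotonicity of~(ii) and the elementary asymptotic $A(m_k,d_k,x_0)\to b_{\al_\infty}$ delivers this at once. For nonprincipal $\Ss^-_{\al_\infty}$, the same $x_0,x_1$ chosen for the ambient family $(S^iR^\de)^\sharp(\Cc\Ss^U\cap[2n+6,2n+8])$ still work: each step is the middle class of a triple derived from the base triple, so an induction along the mutation tree that tracks $A(m,d,x_0)$ through the $x$- and $y$-mutations propagates $A(m_k,d_k,x_0)<b_{\al_\infty}$ down the tree. Lemma~\ref{lem:2T}(i) then forces
\[
d'_k \ \le \ \frac{1}{b_{\al_\infty}-x_0}
\]
on any class $\bE'_k$ that could dominate $\bE_k$ at $b_{\al_\infty}$.

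Since the set $\mathcal{F}$ of exceptional classes with degree below this bound is finite, the conclusion now follows from a standard argument. Suppose $\Ss^-_{\al_\infty}$ is not overshadowed. If some $\bE'\in\mathcal{F}$ obscured the step corners $(p_k/q_k,p_k/(d_k-m_kb_{\al_\infty}))$ for infinitely many $k$, then continuity would yield $\mu_{\bE',b_{\al_\infty}}(z_{\al_\infty})\ge V_{b_{\al_\infty}}(z_{\al_\infty})$, and since $\al_\infty\in Z$ forces $b_{\al_\infty}\notin {\it Block}$ and hence $z_{\al_\infty}$ unobstructed, equality must hold: the graph of $\mu_{\bE',b_{\al_\infty}}$ passes through the accumulation point. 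Comparing slopes at this point, the only way $\bE'$ can obscure infinitely many corners is for its near-accumulation slope to match or exceed $\lim_k s_k$, making $\bE'$ an overshadowing class and contradicting the hypothesis. Hence each member of $\mathcal{F}$ obscures only finitely many corners; since $\mathcal{F}$ is finite, all but finitely many $\bE_k$ are live, so $\Ss^-_{\al_\infty}$ is indeed a staircase.

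The main obstacle is the extension of the uniform estimate in Lemma~\ref{lem:2T}(iii) from adjacent steps of principal staircases to arbitrary steps of nonprincipal pre-staircases, whose successive indices $\al_{n_k},\al_{n_{k+1}}$ typically sit at very different depths of the mutation tree. Making this precise requires showing that, at every node of the tree at level $\ge\ell$, the child triple's parameters $(d,m)$ increase enough relative to the parent's that $A(m,d,x_0)$ remains below $b_{\al_\infty}$; this is essentially a combinatorial amplification of the estimates in Lemmas~\ref{lem:slopeest1}--\ref{lem:slopeest3}. Once that amplification is in place, the finiteness/slope argument above closes the proof.
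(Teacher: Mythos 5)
Your proposal is correct and follows essentially the same route as the paper: Lemma~\ref{lemcor:1T} for obstructiveness, Lemma~\ref{lem:2T} (with Remark~\ref{rmk:b<13} when $b<1/3$) to get a uniform degree bound on any potential dominating class, and then finiteness of exceptional classes of bounded degree to conclude that failure of liveness would force a single class through the accumulation point, i.e.\ an overshadowing class. The ``main obstacle'' you flag is handled in the paper in the same spirit you sketch --- one restricts to steps of level $\ge\ell$ and sufficiently large degree and uses Lemma~\ref{lem:2T}(ii),(iii) to see that $A(m_k,d_k,x_0)$ increases with limit $b_{\al_\infty}$, so that \eqref{eq:rsb} holds --- hence your outline is essentially at the same level of detail as the published argument.
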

\begin{proof}   First suppose that $m/d>1/3$, and consider a pre-staircase $\Ss^-_{\al_\infty}$ in the family $(S^iR^\de)^{\sharp}(\Cc\Ss^U)\cap [2n+6,2n+8]$ with steps $\bE_k$ and $i+\de$ even, and choose $x_0, x_1, \ell$ as in  Lemma~\ref{lem:2T}~(iii).     Then $x_0< b_{\min}$, where $b_{\min}$ is the minimum of the $b$-values for the staircases in $(S^iR^\de)^{\sharp}(\Cc\Ss^U)\cap [2n+6,2n+8]$.  Therefore, because $m_k/d_k> b_{\al_\infty}\ge b_{\min}>x_0$ there is a constant $d_{x_0}$
that depends only on $x_0$  such that 
$\frac{m_k}{d_k} > x(1+\frac 1{d_k^2})$ whenever $d_k\ge  d_{\min}$.
Further, because there are only finitely many  classes in this whole family that have level less than any fixed number $\ell$, we may suppose that $\bE_k$ has level $\ge \ell$.  
Since the sequence $m_k/d_k$ decreases with limit $b_{\al_\infty}$, it then follows from 
 Lemma~\ref{lem:2T}~(ii), (iii) and our choice of $x_0$ that the sequence
 $A(m_k,d_k,x_0)$  
 increases, and it is easy to check that its limit is also 
 $b _{\al_\infty}$.  Therefore the inequality ~\eqref{eq:rsb} with $b_\infty = b _{\al_\infty}$ holds for all steps with $d_k\ge d_{x}$ and level $\ge \ell$.    Hence  Lemma~\ref{lem:2T}~(i) implies that the degree $d'$ of any class $\bE'$ with $\mu_{\bE', b _{\al_\infty}}(p_k/q_k) \ge 
\mu_{\bE_k, b _{\al_\infty}}(p_k/q_k)$ must be bounded above  by $1/(b _{\al_\infty} - x) \le 
1/(b_{\min} - x)$. But  there are only finitely many exceptional classes of any given degree.  Therefore $\Ss^-_{\al_\infty}$ is live unless there is one class (whose obstruction  would have  to go through the accumulation point) that obsures infinitely many of its steps.  Such a class is an overshadowing class.

The proof when $b<1/3$ is very similar, with the statements in Remark~\ref{rmk:b<13} replacing those of Lemma~\ref{lem:2T}.  Further details are left to the reader.
\end{proof}

\begin{prop}\label{prop:6T}  Every descending pre-staircase $\Ss: = \Ss^-_{\al_\infty}$  is live. Moreover, if $\Ss$ belongs to the family $(S^iR^\de)^{\sharp}(\Cc\Ss^U)\cap [2n+6,2n+8]$ there is a constant $D_0$  that depends only on $n, i, \de$ such that any step in $\Ss$ of degree $> D_0$  is live.
\end{prop}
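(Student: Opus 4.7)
My plan is to combine Corollary~\ref{cor:2T} with the uniform degree bound of Lemma~\ref{lem:2T}(iii) to reduce the proposition to a finite check, then invoke the irrationality of $b_{\al_\infty}$ and $z_{\al_\infty}$ (which holds for all $\al_\infty\in Z$ since special rational points are excluded from $Z$ by construction) to rule out an overshadowing class. The whole argument proceeds in two stages: (1) a quantitative step producing $D_0$, which by itself implies the second assertion; (2) a qualitative step extending the arithmetic argument of \cite[Prop.4.3.7]{MM} from the recursive principal setting to the non-recursive pre-staircases of Definition~\ref{def:newstair}.

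For the second assertion, I would appeal to Lemma~\ref{lem:2T}(iii) (with its analog Remark~\ref{rmk:b<13}(iii$^\prime$) when $b<\tfrac13$), which produces constants $x_0<b_{\min}\le b_{\max}<x_1$ and a level $\ell$ depending only on the family $(S^iR^\de)^\sharp(\Cc\Ss^U)\cap[2n+6,2n+8]$, such that inequalities \eqref{eq:rsb1} and \eqref{eq:rsb2} hold for every pair of adjacent steps $(d,m), (d',m')$ at level $\ge\ell$. By Lemma~\ref{lem:2T}(ii), this forces the sequence $A(m_k,d_k,x_0)$ to increase to $b_{\al_\infty}$ along the pre-staircase, so that \eqref{eq:rsb} is satisfied with $x=x_0$ for each step $\bE_k$ of level $\ge\ell$. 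Then Lemma~\ref{lem:2T}(i) forces any class $\bE'$ exceeding the obstruction of such a $\bE_k$ at $(b_{\al_\infty},p_k/q_k)$ to satisfy $d'\le 1/(b_{\al_\infty}-x_0)\le 1/(b_{\min}-x_0)$. Defining $D_0$ to be this bound, enlarged if necessary to account for the finitely many classes of level below $\ell$ appearing in any given $\Ss^-_{\al_\infty}$, gives the constant required by the second claim.

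For the first claim, Corollary~\ref{cor:2T} reduces liveness to ruling out an overshadowing class $\bE'$; necessarily $d'\le D_0$. Since $b_{\al_\infty}$ is unobstructed for $\al_\infty\in Z$ and the step corners converge to the volume curve from above at the accumulation point, $\mu_{\bE',b_{\al_\infty}}$ must pass through $(z_{\al_\infty},V_{b_{\al_\infty}}(z_{\al_\infty}))$. Lemma~\ref{lemcor:1T} has already ruled out the special class $3L-E_0-2E_1-E_{2\dots 6}$ as an overshadower, so $\bE'$ produces a third obstruction line through the accumulation point, distinct from both the volume tangent and (when relevant) the above special class.

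The hard part will be adapting the arithmetic argument of \cite[Prop.4.3.7]{MM} to the non-recursive setting. Its key inputs remain valid here: the limit pair $(b_{\al_\infty},z_{\al_\infty})$ is irrational (as $\al_\infty\in Z$ excludes the special rationals and hence any rational accumulation point is blocked, contradicting $\al_\infty\in Z$), the ratios $m_k/d_k$ are monotone in the correct direction by Corollary~\ref{cor:symm}, and consecutive steps $\bE_k,\bE_{k+1}$ are adjacent so the slope identities used in \cite[Prop.4.3.7]{MM} continue to relate adjacent pairs. I would then reformulate the original argument in terms of limiting slopes at the accumulation point rather than the recursive formulae: any third obstruction line through the irrational point $(z_{\al_\infty},V_{b_{\al_\infty}}(z_{\al_\infty}))$ with integer coefficients and slope strictly larger than the limiting slope of the step corners would, by matching the $b$-coefficient and the constant term separately over $\Q$, force relations $Cm=qm'$, $Cd=qd'$ of the type appearing in the proof of Lemma~\ref{lem:live2}, and hence $\bE'$ would be a positive rational multiple of the limiting Poincaré-dual class of the pre-staircase — which, since $d_k\to\infty$, contradicts $d'\le D_0$. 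The main technical task will be verifying that the pigeonhole-type extraction of a uniform slope estimate (automatic in the recursive case from the explicit formulae $x_k=X\la^k+\ov X\la^{-k}$ of Remark~\ref{rmk:recur0}) still holds along the non-recursive subsequences $\Ss^{\pm}_{\al_\infty}$ defined in Definition~\ref{def:newstair}, using the fact that these subsequences are extracted from the complete family $\Cc\Ss^U$ (whose steps do satisfy such formulae along each principal branch) by successive choices of $x$- or $y$-mutations.
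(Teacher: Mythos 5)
Your second assertion (the uniform degree bound $D_0$) is handled exactly as in the paper, via Lemma~\ref{lem:2T} and the argument of Corollary~\ref{cor:2T}, and that part is fine. The gap is in your first step, where you propose to rule out an overshadowing class by adapting the arithmetic argument of \cite[Prop.4.3.7]{MM}. That argument does not transfer to the non-recursive pre-staircases for two concrete reasons. First, it requires the irrationality of $b_{\al_\infty}$, which you assert follows because $\al_\infty\in Z$ excludes the special rational points; but that reasoning only gives irrationality of $z_{\al_\infty}$ (rational unblocked $z$ are special by Corollary~\ref{cor:short2}). A \emph{rational} $b$ has $\acc(b)$ a quadratic irrational, and the paper explicitly leaves open whether such $b$ can be unblocked (see Remarks~\ref{rmk:main0}(iv) and~\ref{rmk:stairrat}); so irrationality of $b_{\al_\infty}$ is not available. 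Second, even granting it, the coefficient-matching ``over $\Q$'' does not produce the relations $Cm=qm'$, $Cd=qd'$: in Lemma~\ref{lem:live2} those come from equating two obstructions of \emph{integral} classes at a single irrational $b$, whereas here the only constraint is that $\mu_{\bE',b_{\al_\infty}}$ passes through the accumulation point, i.e.\ one relation involving the algebraically dependent pair $(b_{\al_\infty},z_{\al_\infty})$ (they satisfy the quadratic accumulation equation), with no quadratic-field structure and, crucially, no blocking class to supply the third line that \cite[Prop.4.3.7]{MM} plays off against the others --- indeed, points of $Z$ are precisely those with \emph{no} associated blocking class. Your concluding step (``$\bE'$ is a rational multiple of the limiting Poincar\'e-dual class, contradicting $d'\le D_0$'') is also not meaningful as stated, since the limiting data $(D,M,P,Q)$ is not an integral class.

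The paper closes this step by a quite different, essentially calculus-based argument that uses the one piece of structure you did not exploit: since $\al_\infty\in Z$ is not an endpoint of a blocked interval, there are \emph{unobstructed} $b$-values arbitrarily close to $b_{\al_\infty}$ on both sides. Writing the candidate obstruction as $\frac{A+Cz}{d'-m'b}$ (with $C>A$ by Lemma~\ref{lemcor:1T}), one studies the $z$-coordinate $w(b)$ of its crossing with the line $z\mapsto\frac{1+z}{3-b}$ and compares $\frac{\p w}{\p b}$ with $\frac{\p}{\p b}\acc(b)$: if the derivatives differ (automatic when $3m'-d'\ge 0$), then $w(b)<\acc(b)$ on one side of $b_{\al_\infty}$, so $\bE'$ would block a one-sided neighborhood of $b_{\al_\infty}$, contradicting two-sided density of unobstructed values; in the remaining borderline case ($3m'-d'<0$, equal first derivatives) a second-derivative comparison shows $w(b)<\acc(b)$ just above $b_{\al_\infty}$, giving the same contradiction. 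If you want to salvage your route, you would have to first prove irrationality (in a suitably strong arithmetic sense) of $b_{\al_\infty}$ for all $\al_\infty\in Z$, which is precisely what the paper does not know; so the two-sided blocking argument is not just a stylistic alternative but is what makes the non-recursive case go through.
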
 
\begin{proof}  
By
Lemma~\ref{lem:2T}~(iii), $\Ss$ is live unless it is overshadowed.
We will show that any class that overshadows $\Ss$ must be a blocking class, and hence cannot exist since there are unblocked values of $b$ on both sides of $b_{\al_\infty}$.
For clarity we will consider the cases $b>1/3, b< 1/3$ separately.  Hence 
let us first suppose that  ${\al_\infty}\in Z_{+1}$ so that $\Ss$ is a pre-staircase for some $b_{\al_\infty} > 1/3$.

Suppose that at $b= b_{\al_\infty}$ the obstruction $\mu_{\bE',b}(z) = \frac{A + Cz}{d'-m'b}$ from some class $\bE'$ goes through the accumulation point $\bigl(\al_\infty, V_{b_{\al_\infty}}(\al_\infty)\bigr)$.  If $\bE'$ overshadows 
$\Ss$, 
then we must have $C>A$ since, by Lemma~\ref{lemcor:1T},  the obstruction is steeper than the function $z\mapsto \frac{1+z}{3-b}$ mentioned in \eqref{eq:accform} above.  
Let  $w = w(b)$ be the $z$-coordinate of the point of intersection of the line $\mu_{\bE', b}(z) = \frac{A + Cz}{d'-m'b}$ with the line $ \frac{1+z}{3-b}$.
 Then $w = \frac{(d'-3A) - b(m'-A)}{(3C-d') - b(C-m')}$ so that
 $$
 \frac {\p w}{\p b} = \frac{-(C-A)(3m'-d')}{\bigl((d'-3C) - b(m'-C)\bigr)^2}.
$$
If $3m'-d' \ge 0$, then $\frac {\p w}{\p b}  \le 0$ for all $b$, and so is $< \frac {\p}{\p b} (\acc(b))$ which is  $>0$ when $b>1/3$.
Therefore, because $w(b) = \acc(b)$ when $b = b_{\al_\infty}$, we must have $w(b) < \acc(b)$ when  
$b > b_{\al_\infty}$.  But then  
the class  $\bE'$ blocks all $b$ in some nonempty interval $(b_{\al_\infty}, b_{\al_\infty}+\de)$,
 since the graph of $\mu_{\bE', b}$ crosses  the line $z\mapsto \frac {1+z}{3-b}$ before this line  crosses the volume curve, so that  $\mu_{\bE', b}(\acc(b)) > V_b(\acc(b))$.    
But this is impossible, since by hypothesis there are unobstructed points arbitrarily close to $b_{\al_\infty}$ and on both sides of it. 
Similarly,  if $\frac {\p w}{\p b} > \frac {\p}{\p b} (\acc(b))$, the class $\bE'$ will block $b$ in 
an  interval of the form $(b_{\al_\infty}-\de,b_{\al_\infty})$, which is again impossible.

Therefore, it remains to consider the case when 
$\frac {\p w}{\p b} = \frac {\p}{\p b} (\acc(b))$, which can happen only if  $3m'-d' < 0$. 
In this case $\bE'$ will block some $b$ near $b_{\al_\infty}$ unless 
$w(b) \ge \acc(b)$ for all $b$ near 
$b_{\al_\infty}$.    We show below that in fact when the first derivatives agree we always have
$$
\frac {\p^2 w}{\p b^2}\Big|_{b=b_{\al_\infty}} < \frac {\p^2 \acc(b)}{\p b^2}\Big|_{b=b_{\al_\infty}}.
$$
But this implies that  $w(b) < \acc(b)$ for $b\in (b_{\al_\infty}, b_{\al_\infty}+\de)$,
so that as above  such  $\bE'$ would have to be a blocking class, and hence cannot exist.

To begin the argument, notice that we can assume that $1/3 < b < 5/11 = \acc_+^{-1}(6)$.
Indeed,  otherwise  $b>0.61$
so that  the condition $|d'b - m'|<1$ implies that $d'\le 3$, and there are no potential overshadowing classes of such low degree.  Next observe that,  by
Lemma~\ref{lem:basic},   because $\bE'$ is obstructive at $b_{\al_\infty}$
we have
 $$
0<3(d' b_{\al_\infty} - m') =  d'(3 b_{\al_\infty} - 1)+d'-3m'  < 3.
$$
Therefore, because $d' -3m'>0$ by assumption, we must have $d' -3m' = 1$ or $2$, and if we write   $\eps: = d' -3m'$ we have
$$
w(b) = \frac{(3-b)(m'-A) + \eps}{(3-b)(C-m') - \eps}.
$$
Since $m'-A, C-m'$ are integers and $w(b)> 5$ the terms $(3-b)(m'-A), (3-b)(C-m')$, if nonzero,  dominate $\eps$ and hence must have the same sign.   Further, 
because  $0\le A< C$ and  $w(b) > a_{\min}> 5$, 
 we cannot have $m' = A$.  Indeed,
if $m'=A$,  then because $b\le 5/11$ and  $\eps=1,2$, we have
$w(b) = \frac{\eps}{(3-b)(C-A) - \eps}  \le \frac{\eps}{3-b - \eps} < 5$.

Next observe that, since $\eps/(C-m') \le 2$, we have
\begin{align*}
 \frac {\p w}{\p b} &=\frac{(C-A)\eps}{\bigl((3-b)(C-m') - \eps\bigr)^2}, \quad \mbox{ and} \\
   \frac{\p^2 w}{\p b^2}& =  \frac {\p w}{\p b}\  \frac {2(C-m')}{(3-b)(C-m') - \eps} \leq  \frac {\p w}{\p b}\ \frac {2}{(3-b) - \eps/(C-m')} < \frac {\p w}{\p b}\ \frac {2}{1-b}.
\end{align*}

 On the other hand, if $z(b): = \acc(b)$, then differentiating the equation $z+1/z = \frac{(3-b)^2}{1-b^2}-2$ we obtain 
\begin{align*}  \bigl(1-\frac 1{z^2}\bigr)\frac{\p z}{\p b} = \frac{2(3b-1)(3-b)}{(1-b^2)^2}&=: F(b),  \quad \mbox{ and}\\
 \frac 2{z^3}\Bigl(\frac{\p z}{\p b}\Bigr)^2 + \bigl(1-\frac 1{z^2}\bigr)\frac{\p^2 z}{\p b^2}& = F'(b).
\end{align*}
Now solve the second equation for $\frac{\p^2 z}{\p b^2}$, and simplify the term $(1-\frac 1{z^2})^{-1}F'(b)$ by using the identity  $\frac1{(1-b^2)(1-\frac 1{z^2})}  = \frac 1{2(3b-1)(3-b)}\frac{\p z}{\p b}$, to obtain
\begin{align*}
   \frac{\p^2 z}{\p b^2}&= \Bigl( \frac{2(1-b^2)(5-3b) + 4b(3-b)(3b-1)}{(1-b^2)^2(3-b)(3b-1)} -  \frac{\p z}{\p b}\frac 2{z(z^2-1)} \Bigr)\ \frac{\p z}{\p b}.
\end{align*}
Now suppose that $\frac{\p z}{\p b} =  \frac {\p w}{\p b}$ for some value of $b < 5/11$.  Then, because $z> a_{\min} $ one can check that $  \frac{\p^2 z}{\p^2 b} > 
 \frac{\p^2 w}{\p b^2}$ because 
 \begin{align*}
& \frac {2(5-3b)}{(1-b^2)(3-b)(3b-1)} > \frac{20}{(3-b)(1-b^2)}>\frac 2{1-b}, \quad \mbox{ and } \\
& \frac {4b}{(1-b^2)^2} > \frac{\p z}{\p b}\frac 2{z(z^2-1)} = \frac{4z(3b-1)(3-b)}{(z^2-1)^2(1-b^2)^2}.
\end{align*}
This completes the proof in the case $b>1/3$.

The argument when $b< 1/3$ is similar, except that now $b_{\al_\infty}$ is smaller than
the recursively defined  $b_\infty$ for the  staircase $\Ss^{\Tt_k}_u$ that contains $\bE_k$, while both sides of \eqref{eq:MDTineq0} are negative.  Therefore, as before, this inequality continues to hold at $b_{\al_\infty}$.  Further details are left to the reader.
\end{proof}

\begin{cor}\label{cor:6T}  Proposition~\ref{prop:liveZ}~(i), (ii) holds.
\end{cor}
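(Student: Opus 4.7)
The plan is to observe that this corollary is essentially a repackaging of the two main results just established, Corollary~\ref{cor:5T} and Proposition~\ref{prop:6T}, so the argument amounts to bookkeeping in assembling the two halves.

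For Proposition~\ref{prop:liveZ}~(i), I would invoke Corollary~\ref{cor:5T} directly. Each step of the ascending pre-staircase $\Ss^+_{\al_\infty}$ is, by construction, the middle class $\bE_\mu$ of a unique triple $\Tt_\de$ with $\de$ a finite truncation of $\al_\infty$ ending in $1$. From Definition~\ref{def:newstair}, the $z$-limit point of $\Ss^+_{\al_\infty}$ lies weakly below $\p^-(I_{\bE_\rho})$ (since the maximal subsequence picked out has $a_{n_k}=0, a_{n_k+1}=2$, so the tail sits in the left half of the interval $I_\Tt$), so the corresponding $b_{\al_\infty}$ lies in $[\p^-(J_{\bE_\mu}),\p^-(J_{\bE_\rho})]$ when $b>1/3$ (and the mirror-image interval when $b<1/3$). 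Lemma~\ref{lem:5T} then delivers liveness of $\mu_{\bE_\mu,b_{\al_\infty}}$ at $p_\mu/q_\mu$, which is exactly the assertion in (i).

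For Proposition~\ref{prop:liveZ}~(ii), I would invoke Proposition~\ref{prop:6T}, which explicitly supplies the constant $D_0=D_0(n,i,\de)$ depending only on the staircase family $(S^iR^\de)^\sharp(\Cc\Ss^U)\cap[2n+6,2n+8]$ such that every step in any descending pre-staircase in that family of degree exceeding $D_0$ is live at the relevant accumulation $b$-value. Since the degree $D_0$ produced in Proposition~\ref{prop:6T} depends only on the family parameters $n,i,\de$ (through the uniform constants $x_0,x_1$ and level $\ell$ furnished by Lemma~\ref{lem:2T}~(iii) and Remark~\ref{rmk:b<13}~(iii$^\prime$)), and not on the particular $\al_\infty$, this is exactly what (ii) requires.

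The main obstacle was already dispatched inside Proposition~\ref{prop:6T}: ruling out an overshadowing class via a second-derivative comparison between $w(b)$ and $\acc(b)$ at $b_{\al_\infty}$, combined with the density of unobstructed $b$-values on both sides of $b_{\al_\infty}$ (forcing any hypothetical overshadower to be a blocking class, contradicting unobstructedness nearby). Relative to that, the corollary itself requires no new ideas — just the observation that the uniform constants produced there depend only on $(n,i,\de)$, which is already visible from the construction of $x_0,x_1,\ell$ in Lemma~\ref{lem:2T}~(iii).
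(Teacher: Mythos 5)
Your proposal is correct and matches the paper's own proof, which likewise disposes of part (i) by citing Corollary~\ref{cor:5T} and part (ii) by citing Proposition~\ref{prop:6T} (whose statement already provides the constant $D_0$ depending only on $n,i,\de$). The additional bookkeeping you supply is just a restatement of the arguments already contained in those two results.
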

 \begin{proof} Claim (i) is proved in Corollary~\ref{cor:5T}, while (ii) follows immediately from Lemma~\ref{prop:6T}.
  \end{proof}

\section{Proof of main theorems}
We first develop some arithmetic properties of continued fractions, as preparation for the proofs  of Theorems~\ref{thm:main} parts (i), (ii), (iii) and \ref{thm:main1} in \S\ref{ss:disj}.  After a short discussion 
 of stabilization in \S\ref{ss:stab}, Corollary~\ref{cor:13stair} gives the  proof of 
 Theorem~\ref{thm:main} part (iv).

\subsection{Arithmetic properties of perfect classes}\label{ss:arith}

Recall that from Lemma~\ref{lem:recur0}~(ii) that two quasi-perfect  classes $\bE = (d,m,p,q, t,\eps), \ \bE' = (d',m',p',q', t',\eps)$ (with the same $\eps$-value)
are said to be adjacent iff $dd'-mm' = \min(pq',p'q)$.   Our first aim in this section is to translate this condition into information on the continued fraction expansions of the centers $p/q, p'/q'$.  
We will use the notations and results from Appendix~\ref{app:arith}; in particular $W(p/q)$ denotes the 
weight decomposition~\eqref{eq:Wpq} of $p/q$.

\begin{lemma}\label{lem:WW} Let  $p/q, p'/q'>1$, and write
$
CF(p/q) = [s_0;\dots,s_n], \ CF(p'/q') = [s'_0;\dots,s'_{n'}].
$
\begin{itemlist} \item[{\rm(i)}]   If the inequality
 \begin{align}\label{eq:weight}
W(p/q)\cdot W(p'/q') 
\ge \min(pq', p'q),
\end{align}
holds for $p/q= [s_0;\dots,s_n]$ and $ p'/q'= [s'_0;\dots,s'_{n'}]$ then it also holds for 
 $$
P/Q: = (kp + q)/p =  [k; s_0,\dots,s_n],\;\;\mbox{and } 
P'/Q': = (kp'+q')/p' =  [k; s'_0,\dots,s'_{n'}].
$$
Moreover, there is equality in \eqref{eq:weight} for $p/q, p'/q'$ if and only if there is equality for $P/Q$ and $P'/Q'$.
\item[{\rm(ii)}] The inequality~\ref{eq:weight} holds  for all  pairs $p/q $ and $p'/q'$.
\item[{\rm(iii)}]   There is equality in \eqref{eq:weight} only if
  $s_\al = s_\al'$ for $0\le \alpha\le \min(n,n')-1$.
\end{itemlist}  
\end{lemma}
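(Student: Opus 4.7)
My plan is to handle (i) by direct computation, and then prove (ii) and (iii) together by induction on $\ell_{wt}(p/q)+\ell_{wt}(p'/q')$, with part (i) serving as the main reduction tool.

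For part (i), the weight-expansion recursion gives $W(P/Q) = (p^{\times k}) \sqcup W(p/q)$ and $W(P'/Q')=(p'^{\times k})\sqcup W(p'/q')$, so
\[ W(P/Q)\cdot W(P'/Q') = kpp' + W(p/q)\cdot W(p'/q'). \]
On the other hand, $PQ'-P'Q = (kp+q)p'-(kp'+q')p = qp'-q'p = -(pq'-p'q)$, so $\min(PQ',P'Q) = kpp' + \min(pq',p'q)$. Thus the ``excess'' $W\cdot W'-\min$ is the same for $(P,Q,P',Q')$ as for $(p,q,p',q')$. This proves both the implication and the equivalence of equality.

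For (ii) and (iii), I will do joint induction on the total weight length. The base case is when one of $p/q,p'/q'$, say $p/q$, is an integer $k$, so $W(p/q)=(1^{\times k})$ and $n=0$. Then $W(p/q)\cdot W(p'/q')=\sum_{i=1}^{\min(k,N')}w'_i$; when $k\le s'_0$ this equals $kq'=\min(pq',p'q)$, and when $k>s'_0$ the partial sum exceeds $s'_0 q'+r'_1=p'$, giving $\ge\min(pq',p'q)=p'$, with equality precisely when $k=s'_0+1$. In all these cases $\min(n,n')-1<0$, so the condition in (iii) is vacuous. For the inductive step with both $n,n'\ge 1$: when $s_0=s'_0$, applying (i) in reverse reduces to the problem $(q/r,q'/r')$ of strictly smaller total weight length, and the equality condition translated through (i) yields the claim of (iii) on indices $0,\dots,\min(n,n')-1$.

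The main obstacle is the remaining case, where $s_0\ne s'_0$, say $s_0<s'_0$ and $p/q$ is not an integer. Here $p/q<s_0+1\le s'_0\le p'/q'$, so $pq'<p'q$ and $\min=pq'$, and I must prove \emph{strict} inequality $W(p/q)\cdot W(p'/q')>pq'$. The plan is to compare with the integer $k=s_0$: by the base case applied to $(k,1,p'/q')$ one has $W(k)\cdot W(p'/q')=kq'$; then decompose
\[ W(p/q)\cdot W(p'/q') = s_0 qq' + W(q/r)\cdot \bigl((q'^{\times (s'_0-s_0)})\sqcup W(q'/r')\bigr), \]
and show the second term strictly exceeds $rq'$ (which is what is needed since $pq'=s_0 qq'+rq'$). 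For the second term, since $W(q/r)$ begins with $r$'s and $s'_0-s_0\ge 1$, pairing contributes at least $rq'$ from the first entry alone, and any further entry of $W(q/r)$ against a positive entry of the tail gives the strict extra. This handles the inductive step, completing the simultaneous proof of (ii) and (iii).
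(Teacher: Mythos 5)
Your proposal is correct and takes essentially the same route as the paper: the same excess-preserving computation for (i), followed by stripping common leading partial quotients via (i) and a direct estimate of the dot product when the first entries differ (your strict inequality in that case is just the contrapositive of the paper's observation that equality with differing first entries forces $n=0$ or $n'=0$). The only blemish is the unused parenthetical in your base case that equality holds precisely when $k=s_0'+1$, which fails when $p'/q'$ is also an integer; since (iii) is vacuous there, this does not affect the argument.
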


\begin{proof} 
If we write
$$
W([s_0,\dots,s_n]) = \bigl(q_0^{\times s_0} , \dots, q_n^{\times s_n}\bigr), \quad 
W([s'_0,\dots,s'_{n'}]) = \bigl((q_0')^{\times s_0'} , \dots, (q'_n)^{\times s'_n}\bigr)
$$
then we have
\begin{align*}
W([k, s_0,\dots,s_n])& = \bigl(p^{\times k}, q_0^{\times s_0} ,
\dots, q_n^{\times s_n}\bigr),\\
W([k, s'_0,\dots,s'_{n'}])& = \bigl((p')^{\times k}, (q_0')^{\times s_0'} , \dots, (q'_n)^{\times s'_n}\bigr).
\end{align*}
Let us suppose that $p/q < p'/q'$ so that 
$$
P/Q = (kp+q)/p > P'/Q' = (kp'+q')/(p').
$$
Then, assuming that \eqref{eq:weight} holds for $p/q, p'/q'$ we have
\begin{align*}
W([k, s_0,\dots,s_n])\cdot W([k, s'_0,\dots,s'_{n'}])& = kpp' + W([s_0,\dots,s_n])\cdot W([s'_0,\dots,s'_{n'}])\\
& \ge kpp' +pq' = p(kp'+q') = QP' = \min(QP',PQ').
\end{align*}
Thus \eqref{eq:weight} holds for $P/Q, P'/Q'$ and we either have equality in both cases or neither.   
This proves (i).

Now consider (ii).  By (i) it suffices to consider the case when $s_0 \ne s_0'$, 
and we may assume that $s_0 < s_0'$ so that 
$p_0/q_0<p'_0/q'_0$.  
Then  by \eqref{eq:Wpi}
\begin{align*}
W([s_0,\dots,s_n])\cdot W([s'_0,\dots,s'_n])&  = 
\bigl(q_0^{\times s_0} , \dots, q_n^{\times s_n}\bigr)\cdot \bigl((q_0')^{\times s_0'} , \dots, (q'_n)^{\times s'_n}\bigr)\\
& \ge s_0 q_0 q_0' + q_1 q_0' = (s_0 q_0  + q_1) q_0' = p_0 q_0'.
\end{align*}
This proves (ii).  

Now consider (iii).  Again, by (i) it suffices to consider the case when $s_0\ne s_0'$ and, by renaming if necessary, we may assume
$s_0 < s_0'$.  We must show that equality holds only if either $n=0$ or $n'=0$.
If $q_1=0$ then $p=s_0, q=1$ and $$
W([s_0])\cdot W([s'_0,\dots,]) = s_0 s_0' = pq' = \min(pq',qp').
$$ 
 Otherwise 
the weight decomposition $W([s_0,\dots,s_n])$ must have at least $s_0 + 2$ terms, which means that there is equality only  if  $s_0' = s_0 + 1$ and $s_1' = 0$.  Thus  $n'=0$, as required.
\end{proof}

\begin{cor}\label{cor:WW}  Suppose that there is equality in \eqref{eq:weight} and that 
$p/q<p'/q'$. Then $s_i = s_i'$ for $0\le i \le \min(n,n')-1$ and
\begin{itemlist}\item[{\rm (i)}] if $\min(n,n')$ is even, one of the following three possibilities occurs:
\begin{itemize}\item[-]
$n=n'$ and $s_n< s_n'$, or 
\item [-] $n<n'$ and $s_n \le s_n'$, or  
\item[-]  $n>n'$ and  $s_{n'} = s'_{n'} - 1$.
\end{itemize}
\item[{\rm (ii)}] if $\min(n,n')$ is odd, one of the following three possibilities occurs:
\begin{itemize}\item[-]
$n=n'$ and $s_{n}>s'_{n}$, or
\item[-]  $n<n'$ and $s'_{n} = s_{n} - 1$,
or
\item[-] $n>n'$ and $s_{n'}\ge s'_{n'}$. 
\end{itemize}
\end{itemlist} 
\end{cor}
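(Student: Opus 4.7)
The plan is to reduce to the case where one of the two continued fractions has length $0$ by iteratively stripping matching leading digits via Lemma~\ref{lem:WW}(i), and then to verify the base case directly. The equality part of Lemma~\ref{lem:WW}(iii) already gives $s_i = s'_i$ for $0 \le i \le \min(n,n')-1$, so matching initial segments are automatic.

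Concretely, set $k := \min(n,n')$ and apply the ``iff'' clause of Lemma~\ref{lem:WW}(i) $k$ times in the reverse direction, stripping one matching leading digit at each stage. This produces a pair
\[
\bar p/\bar q = [s_k;\,s_{k+1},\dots,s_n],\qquad \bar p'/\bar q' = [s'_k;\,s'_{k+1},\dots,s'_{n'}],
\]
of lengths $n-k$ and $n'-k$, for which equality in~\eqref{eq:weight} still holds. Since $k=\min(n,n')$, at least one of these lengths is $0$. The key bookkeeping point is that each stripping flips the order: from $[\ell;t_0,\dots] < [\ell;t'_0,\dots]$ one passes to $[t_0;\dots] > [t'_0;\dots]$ because $q/p$ decreases as $p/q$ increases. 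Thus after $k$ strippings the ordering between $\bar p/\bar q$ and $\bar p'/\bar q'$ agrees with the original when $k$ is even and is reversed when $k$ is odd.

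Next I would dispose of the base case by a direct computation from the definition $W(p,q)=(q^{\times\lfloor p/q\rfloor},W(q,p-\lfloor p/q\rfloor q))$. Writing $(\mathbf a,\mathbf b)$ for the pair with $\mathbf a<\mathbf b$:
(a) if $\mathbf a = s_0$ and $\mathbf b = s'_0$ are both integers, then equality is automatic as soon as $s_0<s'_0$;
(b) if $\mathbf a=s_0$ is an integer and $\mathbf b=[s'_0;s'_1,\dots]$ has positive length, then $W(\mathbf a)\cdot W(\mathbf b)$ is the sum of the first $s_0$ entries of $W(\mathbf b)$, which equals $s_0 q'_{\mathbf b} = \mathbf a q'_{\mathbf b}$ precisely when $s_0\le s'_0$;
(c) if $\mathbf a=[s_0;s_1,\dots]$ has positive length and $\mathbf b = s'_0$ is an integer, the analogous sum telescopes to give equality iff $s'_0 = s_0+1$ exactly.

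Finally I would combine these two steps. In case~(i), $k$ is even and the stripped ordering is the original, so the three sub-cases (a), (b), (c) applied to $(\bar p/\bar q,\bar p'/\bar q')$ give precisely the three bullets: $n=n'$ with $s_n<s'_n$; $n<n'$ with $s_n\le s'_n$; $n>n'$ with $s_{n'}=s'_{n'}-1$. In case~(ii), $k$ is odd so the stripped ordering is $\bar p'/\bar q'<\bar p/\bar q$, and re-running (a), (b), (c) with the roles swapped yields the three bullets of~(ii). The main obstacle, such as it is, is careful tracking of how parity of $k$ interacts with the order-reversal under stripping; once that is pinned down, each of the six sub-cases is a single line of checking against the base computation.
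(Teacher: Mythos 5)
Your proposal is correct and takes essentially the same route as the paper's proof: strip the $\min(n,n')$ common leading entries using the equality clause of Lemma~\ref{lem:WW}(i), track whether the order is preserved or reversed according to the parity of the number of entries stripped, and settle the resulting base cases (one tail an integer) by direct computation with weight expansions --- the paper writes out case (i) exactly this way and leaves (ii) to the reader, which you handle by the role swap. The only step to make explicit is in your base case (c): equality forces $s_0' = s_0 + 1$ because, by the convention that the last continued-fraction entry is at least $2$, the expansion $W(\bar p/\bar q)$ has at least two positive entries after the initial block $q_0^{\times s_0}$, so summing $s_0+2$ or more of its entries strictly exceeds $\bar p$.
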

\begin{proof}   Consider (i).  As in the proof of Lemma~\ref{lem:WW}, we may suppose that $\min(n,n')=0$.
Notice because we are forgetting an even number of terms, we still have $p/q<p'/q'$.
Therefore, if $n = 0\le n'$, then  $q = 1, p = s_0$ and
$$
W([s_0]) \cdot W([s_0',\dots ])  = s_0 q_0' = pq'.
$$
Thus
the first two cases are clear.     If $n> n' = 0$, then $s_0<s_0'$, $q' = 1$   and 
\begin{align*}
 W([s_0, s_1,\dots ]) \cdot W([s_0'])& = (q_0^{\times s_0}, q_1^{\times s_1},\dots])\cdot (1^{\times s_0'})\\
 & \ge  s_0 q_0 + q_1 = p = pq'.
\end{align*}
Moreover, by our conventions about continued fractions $W(p/q)$ must have at least two terms after the initial block  $q_0^{\times s_0}$, there is
 equality only if $s_0' = s_0 + 1$.  This proves (i).  The proof of (ii) is similar, and is left to the reader.
\end{proof}

\begin{prop} \label{prop:adjac} 
Let $\bE = (d,m,p,q, t,\eps), \ \bE' = (d',m',p',q', t',\eps)$ be perfect classes.
 If $\bE, \bE'$ are adjacent, then $W(p/q)\cdot W(p'/q')= \min(pq', p'q)$ and the conditions in
 Corollary~\ref{cor:WW} hold.  Further $\bE\cdot \bE' = 0$.
 \end{prop}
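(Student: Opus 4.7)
The plan is to combine the adjacency relation (in its second form from Lemma~\ref{lem:recur0}~(ii)), the weight inequality of Lemma~\ref{lem:WW}~(ii), and the positivity of intersection between two distinct exceptional classes to obtain a sandwich that forces equality everywhere.

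First I would order so that $p/q < p'/q'$, in which case $\min(pq',p'q) = pq'$. By Lemma~\ref{lem:recur0}~(ii) the adjacency of $\bE, \bE'$ is equivalent to
\[
dd' - mm' = pq' = \min(pq', p'q).
\]
On the other hand, Lemma~\ref{lem:WW}~(ii) gives the universal inequality
\[
W(p/q)\cdot W(p'/q')\ \ge\ \min(pq', p'q).
\]

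Next I would bring in geometry. Since $\bE$ and $\bE'$ are perfect, they are represented by exceptional spheres; as they have distinct centers they are distinct classes, so by positivity of intersections (as recorded before Corollary~\ref{cor:pos})
\[
0 \ \le\ \bE\cdot\bE' \ =\ dd' - mm' - W(p/q)\cdot W(p'/q').
\]
Rearranging,
\[
W(p/q)\cdot W(p'/q')\ \le\ dd' - mm'\ =\ \min(pq', p'q).
\]
Combined with the reverse inequality above, every term must be equal, yielding simultaneously $W(p/q)\cdot W(p'/q') = \min(pq', p'q)$ and $\bE\cdot\bE' = 0$.

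Finally, with equality in \eqref{eq:weight} established, the remaining assertion—that the centers satisfy the conditions of Corollary~\ref{cor:WW}—is immediate, since that corollary is precisely the list of continued-fraction constraints imposed by equality in \eqref{eq:weight}. There is no genuine obstacle here: the entire argument is a three-line squeeze once the two ingredients (arithmetic lower bound on $W\cdot W$ and geometric positivity of $\bE\cdot\bE'$) are aligned with the two equivalent forms of adjacency. The only subtlety worth flagging is that one must invoke perfection (not just quasi-perfection) to apply positivity of intersections; for merely quasi-perfect classes the conclusion need not hold.
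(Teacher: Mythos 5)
Your proposal is correct and is essentially the paper's own argument: a squeeze between the universal weight inequality of Lemma~\ref{lem:WW}~(ii) and positivity of intersections of distinct exceptional classes, with adjacency rewritten via Lemma~\ref{lem:recur0}~(ii) as $dd'-mm'=\min(pq',p'q)$. The only point handled differently is the justification that $\bE\ne\bE'$: you simply assert the centers are distinct, whereas the paper notes that no quasi-perfect class can be adjacent to itself because $d^2-m^2=pq-1\ne pq$, which is the clean way to secure $\bE\cdot\bE'\ge 0$ without any extra hypothesis.
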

\begin{proof}  A quasi-perfect class cannot be adjacent to itself because each such  class satisfies $d^2-m^2 = pq-1 \ne pq$.  Therefore we have $\bE\ne \bE'$ and hence $\bE\cdot \bE' \ge 0$ because the classes are assumed to be perfect.   Thus
\begin{align*}
0 \le \bE\cdot \bE' = dd' - mm' - W(p/q)\cdot W(p'/q') = \min(pq',p'q)   - W(p/q)\cdot W(p'/q'),
\end{align*}
where the last equality  holds by Lemma~\ref{lem:recur0}~(ii).  By \eqref{eq:weight} this is possible only if 
$\min(pq',p'q)= W(p/q)\cdot W(p'/q')$ so that 
the conditions in Corollary~\ref{cor:WW} hold.
Further  $\bE\cdot \bE'=0$.\end{proof}

\begin{lemma}\label{lem:CFlength} Let $\Tt$ be a triple in $\Cc\Ss^U$, and let
$p_k/q_k = [s_{k,0};\dots,s_{k,n_k}], k\ge 0,$ be the step centers in one of the associated staircases $\Ss = \Ss^\Tt_\ell,$ or $ \Ss^\Tt_u$.  Write
$$
W(p_k/q_k) = [q_{k,0}^{\times s_{k,0}},\dots, q_{k,n_k}^{\times s_{k,n_k}}],\ \ k\ge 0.
$$
Then
\begin{itemlist} \item[{\rm(i)}]  The numbers $n_k: = \ell_{CF}(p_k/q_k)$ strictly increase.\footnote{Here, $\ell_{CF}$ is the length of the continued fraction defined in \eqref{eq:WCFlength}}
\item[{\rm(ii)}] 
For all $k\ge 0$ and $i< n_k$  we have $s_{k+j,i} = s_{k,i}$ for all  $j\ge 1$.  
Moreover for each $\ell$ and  $i\le n_{\ell} $ the weights $q_{k,i}$ satisfy the recursion
$$
q_{k+2,i} = t q_{k+1,i} - q_{k,i},\quad  k\ge \ell,
$$
where $t$ is the recursion parameter of $\Ss$.
\item[{\rm(iii)}] The accumulation point of $\Ss$ has infinite continued fraction $[s^\Ss_0; \dots, s^\Ss_i,\dots]$.  Further each step satisfies
\begin{align*}
CF(p_k/q_k) = [s^\Ss_0; \dots, s^\Ss_{n_k-1}, s^\Ss_{n_k} +\de]
\end{align*}
for some $\de \ne 0$, 
where $\de = +1$ if either $\Ss$ ascends and $n_k$ is odd, or $\Ss$ descends and $n_k$ is even.
\end{itemlist}
\end{lemma}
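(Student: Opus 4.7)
The plan is to apply Corollary~\ref{cor:WW} inductively to successive steps of $\Ss$. By Lemma~\ref{lem:2}, the staircase $\Ss$ is generated by $t$-recursion from seeds that are adjacent and $t$-compatible (condition~(b) or~(c) of Definition~\ref{def:gentr}), so Lemma~\ref{lem:recur0}~(i) makes every consecutive pair $\bE_k,\bE_{k+1}$ adjacent and $t$-compatible. All steps are perfect by Corollary~\ref{cor:perfATF}, so Proposition~\ref{prop:adjac} yields equality in~\eqref{eq:weight} for each such pair, placing consecutive continued fractions under the constraints of Corollary~\ref{cor:WW}. Moreover, Lemma~\ref{lem:recur0}~(iii) gives $|p_{k+1}q_k - p_k q_{k+1}| = t$, a fixed positive integer, while the recursion forces $q_k$ to grow like $\la^k$ with $\la>1$ (Remark~\ref{rmk:recur0}~(i)).

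For part~(i) I would rule out in turn the two degenerate cases of Corollary~\ref{cor:WW}. The case $n_{k+1}<n_k$ is excluded because then $CF(p_{k+1}/q_{k+1})$ is a shortening of $CF(p_k/q_k)$ (with the last digit adjusted by $\pm 1$), forcing $q_{k+1}<q_k$; but the recursion with $t\ge 3$ gives $q_{k+1}>(t-1)q_k>q_k$, a contradiction. The case $n_{k+1}=n_k=:n$ is excluded as follows: Corollary~\ref{cor:WW} forces a shared prefix of length $n-1$, and the classical identity for two continued fractions sharing such a prefix gives $|p_kq_{k+1}-p_{k+1}q_k|=|s_{k,n}-s_{k+1,n}|$, so $|s_{k,n}-s_{k+1,n}|=t$. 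Iterating, the last CF digit would then progress by $\pm t$ at every step, making $q_k = s_{k,n}\, Q_{n-1} + Q_{n-2}$ grow only linearly in $k$ (where $Q_{n-1},Q_{n-2}$ are the convergent denominators of the fixed shared prefix), contradicting the exponential growth $q_k\sim\la^k$. Since neither $n_{k+1}<n_k$ nor $n_{k+1}=n_k$ is possible, $n_{k+1}>n_k$ for all $k$.

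Part~(ii) then drops out: once $n_{k+1}>n_k$, Corollary~\ref{cor:WW} gives $s_{k+1,i}=s_{k,i}$ for all $i<n_k$, and induction on $j$ propagates this to $s_{k+j,i}$. For the weight recursion $q_{k+2,i}=tq_{k+1,i}-q_{k,i}$, observe that once the first $i$ CF digits of $p_k/q_k$ have stabilized (i.e.\ for $k\ge\ell$ and $i\le n_\ell$), the Euclidean algorithm computing $q_{k,i}$ from $(p_k,q_k)$ uses only those fixed digits, so $q_{k,i}$ is a fixed integral linear combination of $p_k$ and $q_k$; since $(p_k)$ and $(q_k)$ both obey $x_{k+2}=tx_{k+1}-x_k$, so does $q_{k,i}$. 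For part~(iii), the limits $s^\Ss_i:=\lim_k s_{k,i}$ are well-defined by~(ii) and form an infinite continued fraction equal to $z_\infty=\lim_k p_k/q_k$, which is irrational by Lemma~\ref{lem:2}~(iii). Since $p_k/q_k$ is rational with CF of length $n_k+1$, Corollary~\ref{cor:WW} pins down $s_{k,n_k}=s^\Ss_{n_k}+\de$ with $\de\ne 0$; the classical fact that the $n$th convergent of $z_\infty$ lies below (resp.\ above) $z_\infty$ for $n$ even (resp.\ odd), combined with the ascending/descending direction of $\Ss$, yields the four sign combinations claimed for $\de=\pm 1$. The main technical obstacle is the linear-vs-exponential growth argument in part~(i); everything else is essentially formal once that is in place.
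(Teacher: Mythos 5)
Your overall strategy—reduce to Corollary~\ref{cor:WW} via adjacency of consecutive steps (Proposition~\ref{prop:adjac}), then read off (ii) and (iii)—is the same as the paper's, and your exclusion of the case $n_{k+1}<n_k$ is sound: a continued fraction that shares the prefix and is strictly shorter has strictly smaller denominator, contradicting $q_{k+1}>q_k$. The genuine gap is in the case $n_{k+1}=n_k$. To prove that $(n_k)$ is \emph{strictly} increasing you must rule out a single index $k$ at which the lengths agree; but your contradiction only materializes if the last digit "progresses by $\pm t$ at every step," i.e.\ if $n_{j+1}=n_j$ for all $j\ge k$. Nothing forces one equality to propagate to the next pair, so your linear-versus-exponential growth comparison only excludes the sequence $(n_j)$ being eventually constant, which is far weaker than what is needed.

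The correct exclusion is local at the offending $k$, and this is exactly where the paper's prefix-matrix computation ($\det A=\pm1$, giving $t=mq_{k,n}-q_{k,n-1}$) does real work. In your language: with $n_{k+1}=n_k=n$ and a shared prefix, Corollary~\ref{cor:WW} fixes the parity; the "wrong" parity already forces the last digits in the order contradicting $q_{k+1}>q_k$, while the "right" parity together with your identity $|p_kq_{k+1}-p_{k+1}q_k|=t$ gives $q_{k+1}-q_k=t\,Q_{n-1}$. On the other hand the recursion (using $q_{k-1}<q_k$ and the last-digit convention $s_{k,n}\ge2$) gives $q_{k+1}-q_k>(t-2)q_k\ge 2(t-2)Q_{n-1}$, forcing $t<4$. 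This shows the estimate genuinely requires $t\ge 5$: for $t=3$ the comparison fails, and the paper disposes of that case by noting that the only $t=3$ staircase in $\Cc\Ss^U$ is the descending one with blocking class $\bB^U_0$, whose step centers visibly have strictly increasing $CF$-length. Your proposal never makes this $t\ge5$ versus $t=3$ distinction (you only invoke $t\ge3$), so even with the local argument inserted it would be incomplete as written. (Minor further points: in the case $n_{k+1}<n_k$ the shorter fraction's last digit need not differ by exactly $\pm1$ in both parities, though your denominator conclusion still holds; and the bound $q_{k+1}>(t-1)q_k$ comes from the recursion, hence needs a separate word for the seed pair $k=0$.) Parts (ii) and (iii) are essentially the paper's argument and are fine once (i) is repaired.
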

\begin{proof}  Consider a pair of steps $p_k/q_k, p_{k+1}/q_{k+1}$.  Since they are adjacent, Lemma~\ref{lem:WW}~(iii) shows that 
their continued fractions of lengths $n_k, n_{k+1}$ agree until the term  in the $n$th place, where $n = \min(n_k, n_{k+1})$.  We want to rule out the possibility that $n: = n_{k+1}\le n_k$.  
Because the steps are adjacent and $t$-compatible, the recursion parameter $t $ is given by $t= |p_{k+1}q_k - q_{k+1}p_k|$; and we may assume $t\ge 5$, since the only staircase in $\Cc\Ss^U$ with $t=3$ is
the descending stair with blocking class $\bB^U_0$, which has the property in question here. 
We will carry out the argument assuming that
 that the staircase ascends, so that  $p_k/q_k< p_{k+1}/q_{k+1}$, leaving the descending case to the reader.

We argue by contradiction.  Thus, suppose  that $n: = n_{k+1}\le n_k$, and  
write
\begin{align*}
W(p_k/q_k)&= (q_{k,0}^{\times s_0},\dots, q_{k,n-1}^{\times s_{n-1}}, q_{k,n}^{\times s_n},\dots), \\
W(p_{k+1}/q_{k+1}) &= \bigl((q'_{k+1,0})^{\times s_0},\dots, (q'_{k+1,n-1}=m)^{\times s_{n-1}}, 1^{\times m}\bigr).
\end{align*}
If $n$ is odd, then Corollary~\ref{cor:WW}~(ii) implies that $s_n\ge m$.  But then $q_{k,n-1}\ge m$ which implies that $q_{k,0} \ge  q'_{k,0}$, contrary to our hypothesis.
Hence $n$ is even.

Next observe that there is a $2\times 2$ matrix $A$ with $\det A = 1$ such that
\begin{align*}
\begin{pmatrix} p_{k+1}&p_k\\q_{k+1}&q_k\end{pmatrix} = A\begin{pmatrix} m&  q_{k,n-1}\\1& q_{k,n}\end{pmatrix}, \quad A = \prod_{i=0}^{n-1} \begin{pmatrix} s_i&1\\1&0\end{pmatrix}
= \begin{pmatrix} x&y\\z&w\end{pmatrix}.
\end{align*}
Therefore, because  $p_{k+1} = tp_k - p_{k-1} > (t-1)p_k$, we have 
$$
p_{k+1}= xm+y > (t-1) (xq_{k,n-1} + yq_{k,n}). 
$$
Since the entries of $A$ are nonnegative and $y< (t-1) y q_{k,n}$, we must have $$
xm > (t-1)x q_{k,n-1}, \quad \mbox{ i.e. } \quad m>(t-1)q_{k,n-1}. 
$$
 But also because $\det A=1$ and $t= p_{k+1}q_k - q_{k+1}p_k$, we know that 
$t = mq_{k,n} - q_{k,n-1}$. Thus, writing $q_{k,n-1}=a, q_{k,n}=b$ for simplicity, we have
$t = mb - a$ and 
$m> (t-1)a$  so that $a+t = mb > (t-1)ab$.  But $b\ge 1$, so that we need
$t>(t-2) a$.  Since $t\ge 5$ and $a\ge 2$ this cannot occur.

This completes the proof when the staircase ascends.  The case of a descending staircase is essentially the same, except that $n$ is now odd and $\det A = -1$.    This proves (i).                                                                       
\MS

The first claim in (ii) holds by (i) and Lemma~\ref{lem:WW}~(iii).
Then the
 second claim holds because, by \eqref{eq:Wpq}, if $p/q=[s_0,\dots,s_n]$  the $i$th weight $q_i$ of $p/q$ depends only on 
$p, q=q_0(p/q)$ and  $s_j$ for $j< i$.   
Thus, because the relevant $s_j$ do not depend on $k\ge k_0$,   the weights $q_{k,i}, i\le n_{k_0},$ satisfy the defining recursion of $\Ss$.  Thus (ii) holds.
\MS

Towards (iii), note first that as explained in Remark~\ref{rmk:recur0}~(i),
 the accumulation point of $\Ss$ is irrational and hence has infinite continued fraction.  Next,
consider two adjacent terms
$$
p_{k}/q_{k} = [s_{k,0}; \dots, s_{k,n_k}] , \quad
p_{k+1}/q_{k+1} = [s_{k+1,0}; \dots, s_{k+1,n_k}, s_{k+1,n_k+1}, \dots]
$$
Since $\bE_k, \bE_{k+1}$ are adjacent, there is equality in \eqref{eq:weight} so that we may apply Corollary~\ref{cor:WW}.  Therefore $s_{k+1,i} = s_{k,i}$ for $i< n_k$. 
Similarly, we have $$
s_{k+1, n_k} = s_{k+2, n_k} = s_{k+\ell, n_k} = s^\infty_{n_k}, \quad \ell\ge 1.
$$

Thus the general formula in (iii) holds, and we just have to check that $\de=1$ in the two given circumstances.  
Suppose first that  $\Ss$ ascends, and that  $n_k$ is odd.
Then, in the language of Corollary~\ref{cor:WW}, we have $p/q=p_k/q_k < p'/q' = p_{k+1}/q_{k+1}$ so that $n= n_k < n' = n_{k+1}$.  
Then by part (ii) we have $s_n =  s'_n+1$ which gives
$$
 s_{k,n_k} = s_{k+1,n_{k+1}} + 1 = s^\infty_{n} + 1
$$
as claimed.   Similarly, if $\Ss$ descends, and $n_k$ is even, we have
$p/q=p_{k+1}/q_{k+1}  < p'/q' = p_k/q_k$ and $n>n' = n_k$.  Hence by (i) we have
$$
s'_{n'} =  s_{n,k} = s_{n'}+ 1 = s_{n,k+1} + 1 = s^\infty_{n} + 1,
$$
as claimed.
\end{proof}

\begin{cor}\label{cor:adjac}    Let $\Ss = (\bE_k)_{k\ge 1}$ be any principal\footnote{Technically every staircase in $\Cc\Ss^U$ is principal by Definition~\ref{def:complete}, but we include \lq\lq principal'' here to highlight that these are not the staircases $\Ss_{\alpha_\infty}$ of Definitionn~\ref{def:newstair}, which for $\alpha_\infty\in[6,8]$ consist of perfect classes in $\Cc\Ss^U$.} staircase 
in the complete family $\Cc\Ss^U$ or one its images under a symmetry.  If $\Ss$ has recursion parameter  $t\ge 3$, then
\begin{align*}
&\bE_k\cdot \bE_{k+1} = 0,\qquad  \bE_k\cdot \bE_{k+2} = 1, \\
&\bE_k\cdot \bE_{k+j+1} = t \bE_k\cdot \bE_{k+j}  - \bE_k\cdot \bE_{k+j-1},\quad \forall j\ge 1,\ k\ge 0.
\end{align*}
\end{cor}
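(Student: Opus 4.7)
The plan is to decompose the intersection using the standard formula
\[
\bE_k\cdot\bE_l = (d_kd_l-m_km_l) - W(p_k/q_k)\cdot W(p_l/q_l),
\]
and exploit that the pair $(d_l,m_l)$ satisfies the three-term $t$-recursion $x_{l+1}=tx_l-x_{l-1}$ (Remark~\ref{rmk:recur0}(i) via the recursion on the underlying tuples). This makes the bilinear expression $d_kd_l-m_km_l$ satisfy the same recursion in $l$ for fixed $k$, so the entire problem reduces to showing that the weight product $W_k\cdot W_l$ obeys the recursion in $l$, valid for $l\ge k+1$.

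For the base case $\bE_k\cdot\bE_{k+1}=0$, I would apply Proposition~\ref{prop:adjac}: successive steps in a recursive staircase are adjacent (Lemma~\ref{lem:recur0}(i) applied iteratively starting from the seeds) and are perfect (Corollary~\ref{cor:perfATF}), and adjacent perfect classes intersect to zero. The value $\bE_k\cdot\bE_{k+2}=1$ will then fall out of the main recursion applied at $j=1$, combined with the self-intersection identity $\bE_k\cdot\bE_k=-1$: indeed, $\bE_k\cdot\bE_{k+2} = t\,\bE_k\cdot\bE_{k+1} - \bE_k\cdot\bE_k = 0-(-1) = 1$.

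To establish the recursion on $W_k\cdot W_l=\sum_i m_{k,i}m_{l,i}$, I would restrict the sum to positions $i\le N_k$ in the support of $\bE_k$ and analyze each position via Lemma~\ref{lem:CFlength}. Parts (ii) and (iii) say that each such position falls in a chunk of index $\alpha=\alpha_k(i)\le n_k$ in $W(p_k/q_k)$, and that the weights $q_{l,\alpha}$ satisfy the $t$-recursion in $l$ from $l=k$ onwards. For positions $i$ in chunks $\alpha<n_k$ of $W_k$, the chunk structure is common to every $\bE_l$ (for $l\ge 0$), so $m_{l,i}=q_{l,\alpha}$ satisfies the recursion directly. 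For positions in the terminal chunk $n_k$ of $W_k$, I would split on the sign $\delta=\pm1$ appearing in Lemma~\ref{lem:CFlength}(iii). The $\delta=-1$ case is immediate, since the chunk of $W_k$ is a proper subset of the $n_k$-chunk of every $W_l$ ($l\ge k+1$). The $\delta=+1$ case requires handling a single boundary position $i^*=N_k$ which sits in chunk $n_k$ of $W_k$ but in chunk $n_k+1$ of $W_l$ for $l\ge k+1$; using $\gcd(p_l,q_l)=1$, the last weight of each CF equals~$1$, so $m_{k,i^*}=q_{k,n_k}=1=q_{k+1,n_{k+1}}=m_{k+1,i^*}$ (invoking $n_{k+1}=n_k+1$ for recursive staircases), which supplies the correct initial conditions to align with the $t$-recursion for $q_{l,n_k+1}$ from Lemma~\ref{lem:CFlength}(ii). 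Summing all positions then yields the recursion for $W_k\cdot W_l$ and hence for $\bE_k\cdot\bE_l$ for $l\ge k+1$.

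Finally, for staircases that arise as images $(S^iR^\de)^{\sharp}\Ss'$ of staircases $\Ss'$ in $\Cc\Ss^U$, I would transfer the result via Lemma~\ref{lem:inters}, which guarantees that $S^\sharp$ and $R^\sharp$ preserve intersection numbers of classes whose centers are greater than $5$ (resp.\ $7$), as is the case for all principal staircase classes under consideration. The main obstacle is the careful bookkeeping at the boundary position in the $\delta=+1$ subcase: namely, verifying $n_{k+1}=n_k+1$ for the recursive staircases at hand, and matching the weights across the chunk transition via the identity that the last CF weight is always~$1$.
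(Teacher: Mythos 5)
Your argument is correct and is essentially the paper's own proof: the base case $\bE_k\cdot \bE_{k+1}=0$ comes from adjacency of successive steps (preserved under mutation and the symmetries) together with Proposition~\ref{prop:adjac}, the three-term recursion comes from the fact that $(d_l,m_l)$ and the weights $q_{l,\al}$ in the blocks meeting the support of $\bE_k$ all satisfy the $t$-recursion (Lemma~\ref{lem:CFlength}), and $\bE_k\cdot \bE_{k+2}=1$ is exactly the $j=1$ instance combined with $\bE_k\cdot\bE_k=-1$. The only divergences are minor: you transfer to the symmetry images via Lemma~\ref{lem:inters}, where the paper instead invokes Lemma~\ref{lem:Tgen} to retain adjacency and runs the same computation, and your bookkeeping at the terminal block is more explicit than the paper's one-line assertion --- though note that the paper never proves $n_{k+1}=n_k+1$, and matching the values $q_{k,n_k}=1=q_{k+1,n_{k+1}}$ does not by itself yield the needed $j=1$ identity $q_{k+2,n_k+1}=t\,q_{k+1,n_k+1}-1$ at the boundary position; the clean fix is that the block-$(n_k{+}1)$ weight is a fixed linear form in $(p_l,q_l)$ valid for $l\ge k+1$ whose value at $(p_k,q_k)$ is $\de$, so the recursion on $(p_l,q_l)$ gives the identity directly.
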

\begin{proof}   
Since the symmetries preserve adjacency by Lemma~\ref{lem:Tgen}, we know that every pair of classes
$\bE_k, \bE_{k+1}$ are adjacent by Proposition \ref{prop:gener}.
Therefore by Lemma~\ref{lem:CFlength}~(ii),  when $j\ge 1$ 
we have 
$$
q_{k+j+1,\al} = t  q_{k+j,\al}  - q_{k+j-1,\al}, \quad \al\le \ell_{CF}(p_k/q_k).
$$ 
Since these are the only terms in $W(p_{k+j}/q_{k+j})$ involved in computing $\bE_{k}\cdot\bE_{k+j}$, and $d_{k+j},m_{k+j}$ satisfy the same recursion, we always have
$$
\bE_k\cdot \bE_{k+j+1} = t \bE_k\cdot \bE_{k+j} - \bE_k\cdot \bE_{k+j-1},\quad j\ge 1.
$$
In particular, when $j=1$,
Proposition~\ref{prop:adjac} implies that 
$$
\bE_k\cdot \bE_{k+2} = t\ \bE_k\cdot \bE_{k+1} - \bE_k\cdot \bE_{k} = 0 - (-1) = 1.
$$
 This completes the proof.\end{proof}

Given a blocking class $\bE_\nu, \nu=\la,\rho$ we denote the corresponding blocked  $z$-interval by 
$(\p^-_\nu,\p^+_\nu):=I_{\bE_\nu}$.  We now show that in any derived triple $\Tt$ the  center point of the middle class $\bE_\mu$ has the minimal weight length among all points in $(p_\la/q_\la, p_\rho/q_\rho)$ that are not blocked by $\bE_\la$ or $\bE_\rho$. (We can expand the interval $I_\Tt$ in the conclusion of Proposition~\ref{prop:short} to $(p_\la/q_\la,p_\rho/q_\rho)$ by the fact that the center of a perfect class $\bE$ has shortest weight length amongst points in $I_\bE$ by the fact that $I_\bE$ is contained in the interval on which $\bE$ is nontrivial, and on this interval the center has shortest weight length by \cite[Lem.~2.28~(1)]{AADT}.)

\begin{prop}\label{prop:short}  Let $\Tt: = (\bE_\la, \bE_\mu, \bE_\rho)$ be any triple derived from one of the basic triples $(\bB^U_n, \bE_{[2n+7;2n+4]},\bB^U_{n+1})$.   Then the weight length of the center $p_\mu/q_\mu$ is strictly less than the weight length of any other point $p/q \in I_\Tt$.\footnote{The interval $I_\Tt=(\p^+(I_{\bE_\la}),\p^-(I_{\bE_\rho}))$ is defined in \eqref{eq:JjT}.}
 \end{prop}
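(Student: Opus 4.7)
The plan is to argue by induction on the mutation depth $\ell(\Tt)$, defined as the minimum number of $x$- and $y$-mutations needed to reach $\Tt$ from some basic triple $\Tt^n_*=(\bB^U_n, \bE_{[2n+7;2n+4]}, \bB^U_{n+1})$.

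For the base case $\Tt=\Tt^n_*$, the center has weight length $\ell_{wt}([2n+7;2n+4])=4n+11$. The interval $I_{\Tt^n_*}$ lies strictly inside $(2n+6,2n+8)$ and avoids the blocked neighborhoods of the integers $2n+6$ and $2n+8$, whose sizes one reads off from \eqref{eq:SsUparam} and the adjacency identity in Lemma~\ref{lem:recur0}(iii). I would partition rationals $p/q\in I_{\Tt^n_*}$ by their leading continued fraction digit $s_0\in\{2n+6,2n+7\}$, and apply Lemma~\ref{lem:WW}(i) to strip this digit and reduce to comparing weight lengths of rationals with small, explicitly bounded denominators. A finite case analysis then verifies the strict inequality.

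For the inductive step, let $\Tt=(\bE_\la,\bE_\mu,\bE_\rho)$. By Lemma~\ref{lem:2}(iii) the endpoints $\p^+(I_{\bE_\la}),\p^-(I_{\bE_\rho})$ are irrational, and likewise for the endpoints of $I_{\bE_\mu}$, so (modulo these irrational boundary points) we have a disjoint decomposition
\[
I_\Tt \;=\; I_{x\Tt}\ \sqcup\ I_{\bE_\mu}\ \sqcup\ I_{y\Tt}.
\]
Every rational $p/q\in I_\Tt$ lies in exactly one piece. If $p/q\in I_{\bE_\mu}$, the fact (cited in the paper, via \cite[Lem.~2.21(1)]{AADT}) that the center of a perfect class has minimum weight length in its blocked interval gives $\ell_{wt}(p/q)\ge\ell_{wt}(p_\mu/q_\mu)$, with equality iff $p/q=p_\mu/q_\mu$. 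If $p/q\in I_{x\Tt}$ or $I_{y\Tt}$, the inductive hypothesis applied to the mutated triple of strictly smaller depth gives $\ell_{wt}(p/q)\ge\ell_{wt}(p_{x\mu}/q_{x\mu})$ or $\ell_{wt}(p/q)\ge\ell_{wt}(p_{y\mu}/q_{y\mu})$.

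It therefore remains to establish the two strict inequalities
\[
\ell_{wt}(p_\mu/q_\mu)\;<\;\ell_{wt}(p_{x\mu}/q_{x\mu}),\qquad \ell_{wt}(p_\mu/q_\mu)\;<\;\ell_{wt}(p_{y\mu}/q_{y\mu}).
\]
These compare the second and third steps of the descending pre-staircase $\Ss^\Tt_u$ and the ascending pre-staircase $\Ss^\Tt_\ell$ respectively. Here I would invoke Lemma~\ref{lem:CFlength}: by part~(iii), writing the two steps as $[s_0;\dots,s_{n_k-1},s_{n_k}+\delta]$ and $[s_0;\dots,s_{n_{k+1}-1},s_{n_{k+1}}+\delta']$ with $\delta,\delta'\in\{\pm1\}$, the weight length difference equals $\sum_{i=n_k+1}^{n_{k+1}-1}s_i+s_{n_{k+1}}+\delta'-\delta\ge (n_{k+1}-n_k)-1$. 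Strict positivity then follows once $n_{k+1}-n_k\ge 2$, which in turn is forced by the parity alternation in Lemma~\ref{lem:CFlength}(iii) together with the recursion parameters being $\ge 3$ (by the hypothesis $t_\la,t_\rho\ge 3$ in the definition of a triple).

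The main obstacle is the base case: one must be careful about rationals $p/q\in I_{\Tt^n_*}$ whose continued fraction starts with $[2n+6;1,\dots]$ or $[2n+7;1,\dots]$, since their small leading digits keep weight length low. These are ruled out by checking that, to lie in $I_{\Tt^n_*}$ rather than in $I_{\bB^U_n}$ or $I_{\bB^U_{n+1}}$, subsequent digits must be large enough to push $\ell_{wt}$ past $4n+11$; this is exactly where Lemma~\ref{lem:WW}(i) is used to reduce the comparison to a uniformly bounded computation.
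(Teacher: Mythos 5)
Your decomposition $I_\Tt = I_{x\Tt}\sqcup I_{\bE_\mu}\sqcup I_{y\Tt}$ is fine, but the induction built on it is not well-founded: $x\Tt$ and $y\Tt$ are obtained from $\Tt$ by one \emph{additional} mutation, so by Lemma~\ref{lem:label} they sit one level deeper in the tree than $\Tt$, not shallower. Hence when a rational $p/q\in I_\Tt$ falls into $I_{x\Tt}$ or $I_{y\Tt}$, the ``inductive hypothesis of strictly smaller depth'' you invoke is precisely the statement not yet proved, and the recursion only ever pushes the problem to deeper triples without terminating (the basic triples are the \emph{shallowest} objects, so a base case at depth zero does not help). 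The scheme can be salvaged, but only by re-organizing the induction around the fixed rational: since $\ell_{wt}(p/q)$ is finite and the weight lengths of the centers grow with the level, after finitely many subdivisions $p/q$ must land in some $I_{\bE_\de}$, where Lemma~\ref{lem:pq} applies. That termination statement is essentially Lemma~\ref{lem:short} and its proof, so the repair is a genuine piece of work rather than a rewording. A second, smaller gap: the strict inequalities $\ell_{wt}(p_\mu/q_\mu)<\ell_{wt}(p_{x\mu}/q_{x\mu}),\ \ell_{wt}(p_{y\mu}/q_{y\mu})$ are true, but your justification via $n_{k+1}-n_k\ge 2$ is false --- Lemma~\ref{lem:CFlength}(i) only gives a strict increase of $CF$-length, and consecutive steps typically differ in $CF$-length by exactly $1$ (e.g.\ $\bE_{[7;4]}$ and $\bE_{[7;3,6]}$ in $\Tt^0_*$), so your lower bound degenerates to $\ge 0$. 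The correct argument, used in the proof of Lemma~\ref{lem:short}, combines Lemma~\ref{lem:CFlength}(i) with the convention that the final continued-fraction entry is at least $2$, so that even when $\de=-1$ the weight length increases by at least one.

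For comparison, the paper's proof avoids the tree recursion entirely: given $p/q\in I_\Tt$ with $\ell_{wt}(p/q)<\ell_{wt}(p_\mu/q_\mu)$, it compares $CF(p/q)$ with $CF(p_\mu/q_\mu)$ digit by digit, using Lemma~\ref{lem:CFlength}(iii) --- the endpoints $\p^+(I_{\bE_\la}),\p^-(I_{\bE_\rho})$ have infinite continued fractions agreeing with that of $p_\mu/q_\mu$ in all but the last entry --- together with the ordering conventions of Appendix~\ref{app:arith} (in particular \eqref{eq:odd}) to conclude directly that such a $p/q$ lies outside $I_\Tt$. This needs no base-case computation and no termination argument, which is exactly where your version is incomplete (your base case is in any event only sketched).
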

 \begin{proof} 
  Let $p/q$ be any rational number lying between the centers of $\bE_\la$ and $\bE_\rho$.
  Write 
   $$
  CF(p/q) = [a_0,\dots,a_\ell], \quad CF(p_\mu/q_\mu) = [s_0,\dots, s_n].
  $$
 and assume that
  $$
  \ell_{wt}(p/q)  = \sum_{\al=1}^\ell a_\al\; < \; \ell_{wt}(p_\mu/q_\mu) = \sum_{\al=1}^n s_\al.
  $$
We aim to show that $p/q$ must either be  $< \p^+_\la$ or $>\p^-_\rho$.
Note that, if we define $\al_0: = \min\{\al \ | \ a_\al \ne s_\al\}$,
then  we must have $\al_0 \le n$.
  
  Suppose first that $\al_0 < n$.  Then we claim that $p/q$ is either $< \p^+_\la$ or $>\p^-_\rho$.  This holds because, by Lemma~\ref{lem:CFlength}~(iii) these limit points have infinite continued fractions with first $n_k-1$ terms equal to those of  $p_\mu/q_\mu$.  Thus, for example, if $\al_0$ is odd
  and $a_{\al_0} < s_{\al_0}$ then $p/q > \p^-_\rho$, while if 
  $\al_0$ is odd
  and $a_{\al_0} > s_{\al_0}$ then $p/q < \p^+_\la$.  The other cases are similar.  
  
Next suppose that $\al_0 = n$.  Then, because $  \ell_{wt}(p/q) <  \ell_{wt}(p_\mu/q_\mu) $ we must have 
  $a_n < s_n$.  We consider the cases $n$ even or odd separately.
  Suppose that $n$ is even.  Then $p/q < p_\mu/q_\mu$, and so we need to check that $p/q$ is smaller than the accumulation point $\p^+_\la$ of the descending staircase $\Ss=\Ss^\Tt_u$.  We saw in  
  Lemma~\ref{lem:CFlength}~(iii) that in this case the last entry $s_n$ in $CF(p_\mu/q_\mu)$ satisfies
  $s_n = s^\Ss_n + 1$.  Therefore, $a_n \le s^\Ss_n$.  But $a_n$ is the last element in  $CF(p/q)$ while 
  $CF(\p^+_\la)$ is infinite.  Therefore $p/q < \p^+_\la$ by   \eqref{eq:odd}.
  
 The argument when $n$ is odd is similar, except that now one compares with the ascending staircase. Therefore such $p/q$ cannot exist.
 \end{proof}

  \begin{example}  \rm  Here are some simple examples to illustrate the argument in Proposition~\ref{prop:short}.
 Consider the basic triple 
 $\Tt_*^0: = (\bB^U_0, \bE_{[7;4]},\bB^U_1)$, and take $p/q = [7;3]$.  
 Then $\p^+_\la = [7; \{5,1\}^\infty],\p^-_\rho = [\{7,3\}^\infty]$ by \eqref{eq:SsUsteps}.  
Since in this case  $n=1$ is odd we should have $p/q >  [\{7,3\}^\infty]$, which can be readily checked.
Similarly, in the triple $x\Tt_*^0 = (\bB^U_0, \bE_{[7;5,2]}, \bE_{[7;4]})$ one can calculate that
the limit of the ascending staircase is $[\{7,5,3,1\}^\infty]$ so that,
for example,  $[7;5]>  [\{7,5,3,1\}^\infty]$ is blocked by $\bE_{[7;4]}$.
 \hfill$\er$
 \end{example}

\begin{lemma}\label{lem:short}  Every rational point in $[6,\infty)$  lies in some $I_\bE$ for $\bE\in \Cc\Ss^U$.  In particular, no staircase with $b>1/3$ 
accumulates to a rational point in $[6,\infty)$.  
\end{lemma}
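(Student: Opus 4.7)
My plan is to prove both claims by a single mutation-based induction whose termination is driven by the weight length $\ell_{wt}$ and Proposition~\ref{prop:short}.

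For the first claim, let $p/q \in [6,\infty) \cap \Q$. Choose $n \ge 0$ with $p/q \in [2n+6, 2n+8]$ and begin with the basic triple $\Tt_0 := \Tt_*^n = (\bB^U_n, \bE_{[2n+7;2n+4]}, \bB^U_{n+1})$. If $p/q$ lies in $I_{\bE_\nu}$ for $\nu \in \{\la,\mu,\rho\}$ we are done. Otherwise $p/q$ lies in $I_{\Tt_0} \setminus I_{\bE_\mu}$, and the complementary pieces of $I_{\Tt_0} \setminus \ovl{I_{\bE_\mu}}$ are exactly $I_{x\Tt_0}$ and $I_{y\Tt_0}$; replace $\Tt_0$ by whichever of $x\Tt_0, y\Tt_0$ contains $p/q$ and iterate. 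This produces a sequence of triples $\Tt_k = (\bE_{\la_k}, \bE_{\mu_k}, \bE_{\rho_k})$ with $p/q \in I_{\Tt_k}$ for all $k$, assuming (for contradiction) that $p/q$ never lies in any of the associated blocked intervals.

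The key observation is that $\ell_{wt}(p_{\mu_k}/q_{\mu_k})$ strictly increases in $k$. Indeed, for each mutation $\Tt_k \mapsto \Tt_{k+1} \in \{x\Tt_k, y\Tt_k\}$, the new center class $\bE_{\mu_{k+1}}$ is perfect (Corollary~\ref{cor:perfATF}) and its center $p_{\mu_{k+1}}/q_{\mu_{k+1}}$ lies in $I_{\Tt_k}$ (because it lies strictly between $p_{\la_k}/q_{\la_k}$ and $p_{\rho_k}/q_{\rho_k}$ and the disjointness of blocked intervals from Proposition~\ref{prop:1} forces it outside $I_{\bE_{\la_k}} \cup I_{\bE_{\rho_k}}$). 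Since it is not equal to $p_{\mu_k}/q_{\mu_k}$, Proposition~\ref{prop:short} applied to $\Tt_k$ gives $\ell_{wt}(p_{\mu_{k+1}}/q_{\mu_{k+1}}) > \ell_{wt}(p_{\mu_k}/q_{\mu_k})$. Thus $\ell_{wt}(p_{\mu_k}/q_{\mu_k}) \to \infty$. However, Proposition~\ref{prop:short} applied to each $\Tt_k$ also forces $\ell_{wt}(p/q) \ge \ell_{wt}(p_{\mu_k}/q_{\mu_k})$ for all $k$ (with equality only at $p/q = p_{\mu_k}/q_{\mu_k}$, which would already place $p/q \in I_{\bE_{\mu_k}}$). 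This bounded-versus-unbounded contradiction shows that at some finite stage $p/q$ must lie in $I_{\bE_{\nu_k}}$ for $\nu_k \in \{\la_k,\mu_k,\rho_k\}$, proving the first claim.

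For the second claim, suppose a staircase in $c_{H_{b_\infty}}$ with $b_\infty > 1/3$ accumulated at some rational $z_\infty \in [6,\infty)$. By \eqref{eq:accb1} we have $c_{H_{b_\infty}}(z_\infty) = V_{b_\infty}(z_\infty)$, i.e.\ $z_\infty$ is unobstructed at $b_\infty = \acc_{+1}^{-1}(z_\infty)$. By the first part, $z_\infty \in I_\bE$ for some $\bE \in \Cc\Ss^U$; every such $\bE$ has $m/d > 1/3$, so $J_\bE = \acc_{+1}^{-1}(I_\bE)$ contains $b_\infty$. But then $\mu_{\bE,b_\infty}(z_\infty) > V_{b_\infty}(z_\infty)$, so $c_{H_{b_\infty}}(z_\infty) \ge \mu_{\bE,b_\infty}(z_\infty) > V_{b_\infty}(z_\infty)$, a contradiction.

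The main obstacle here is really just bookkeeping: one must verify that the mutated center actually sits inside $I_{\Tt_k}$ (needed to invoke Proposition~\ref{prop:short}), and that the three subintervals $I_{\bE_{\la_k}}, I_{\bE_{\mu_k}}, I_{\bE_{\rho_k}}, I_{x\Tt_k}, I_{y\Tt_k}$ truly partition the closure of $I_{\Tt_k}$ up to endpoints, so that at each step $p/q$ is either captured or passed to a unique child triple. Both follow from the disjointness statement of Proposition~\ref{prop:1} together with the ordering conditions in Definition~\ref{def:gentr} propagated by Proposition~\ref{prop:gener}.
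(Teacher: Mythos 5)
Your argument is correct, but it is organized differently from the paper's proof, and the growth mechanism is supplied by a different lemma. The paper does not track an individual rational point through a nested sequence of mutated triples; it argues uniformly level by level: it sets $\om_k$ equal to the minimal weight length of the centers of classes at level $k$, proves $\om_{k+1}\ge \om_k+1$ using the adjacency results Lemma~\ref{lem:CFlength}~(i) and Corollary~\ref{cor:WW} (adjacent centers agree in all but the final continued-fraction entries, so the CF-length, and hence the weight length, must grow by at least one per level), deduces $\om_k\ge 4n+k$, and then invokes the minimality statement of Proposition~\ref{prop:short} once to conclude that every rational of weight length $K$ is blocked by a class at level $<K$. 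You instead obtain the growth of the center weight lengths from a second application of Proposition~\ref{prop:short} itself, viewing the mutated center $p_{\mu_{k+1}}/q_{\mu_{k+1}}$ as a point of $I_{\Tt_k}$; this is legitimate (the new center does lie in $I_{\Tt_k}$, most directly because it is a step center of the associated ascending or descending staircase, whose centers converge monotonically to an endpoint of $I_{\Tt_k}$), and it lets you bypass Lemma~\ref{lem:CFlength} and Corollary~\ref{cor:WW} entirely. What you give up is the quantitative form of the paper's conclusion — that any $p/q$ with $\ell_{wt}(p/q)=K$ is blocked at level $<K$ — which the lemma itself does not need but which is the more informative statement. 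The deduction of the second claim from the first is the same in both proofs, and your appeal to Proposition~\ref{prop:1} and Corollary~\ref{cor:perfATF} is not circular, since their proofs do not use this lemma, even though Proposition~\ref{prop:1} appears later in the text.

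One small bookkeeping caveat: the step where an uncaptured rational is passed to a unique child triple (and likewise your initial claim that an uncaptured $p/q\in[2n+6,2n+8]$ lies in $I_{\Tt_0}$) needs more than the disjointness of blocked intervals and the ordering conditions of Definition~\ref{def:gentr} — it needs that a rational number can never coincide with an endpoint of a blocked interval. This is true because those endpoints are accumulation points of recursively defined pre-staircases and hence are irrational (Lemma~\ref{lem:2}~(iii), Remark~\ref{rmk:recur0}~(i)), so you should cite that fact explicitly rather than only Proposition~\ref{prop:1}.
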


\begin{proof} 
Fix $n\ge 0$ and consider the classes $\bE_\de$ that belong to a triple with centers in $[2n+6, 2n+8]$.  Define
\begin{align*}
\om_k: = \min\{ \ell_{wt}(p/q) \ \big| \ p/q \mbox{ is the center of some }\bE_\de \mbox{ of level }  k\}.
\end{align*}
Thus $$
\om_1 = \ell_{wt}([2n+7;2n+4]) = 4n+11 \ge 4n+1. 
$$

Since each center $p/q$ of a class $\bE_\de$ at level $k+1$ is adjacent to a center at level $k$, it follows from Lemma~\ref{lem:CFlength} and Corollary~\ref{cor:WW} that $\om_{k+1}\ge \om_k  +1$.  Indeed,
Lemma~\ref{lem:CFlength}~(i) shows that  the CF-length of $p/q$ is at least one more than that of the centers at level $k$  and, even if $\de= -1$, the last entry of $CF(p/q)$ is $\ge 2$; thus the weight length increases by at least $1$. Hence $\om_k\ge 4n + k$. 
In other words the weight lengths of the centers at level $k$ (which are minimal among the unblocked weight lengths) are all at least $4n+k$, so that all rational numbers of weight length $< 4n+k$ are blocked by classes at level $<k$.   In particular, if $p/q$ is any rational number with $\ell_{wt}(p/q) = K$ then $p/q$ is blocked by some class in $\Cc\Ss^U$ at level $<K$.
\end{proof}

Recall the sequence $y_0=0, y_1 = 1, y_2 = 6, \dots, y_{i+1} = 6y_i - y_{i-1}$  and the associated points
\begin{align}\label{eq:vw1}
v_1 = \infty,\ v_2 = 6,\  v_3 = 35/6, \ \dots,\  v_k = y_{k}/y_{k-1} = S^{k-2}(v_2), \dots
\end{align}  
from \eqref{eq:symmvi}.   The points $$
w_k= (y_{k}+y_{k-1})/y_{k-1} = 7,\ 41/7,\ \dots
$$
 are also relevant; see Figure~\ref{fig:symm}.

\begin{cor}\label{cor:short2} \begin{itemlist}\item[{\rm (i)}] Every rational point in $(3+2\sqrt2, \infty)$  is blocked by a class with $m/d>1/3$,
except for the points $v_{2i+1}, i\ge 1$ which are unobstructed.
\item[{\rm (ii)}] Every rational point in $(3+2\sqrt2, \tau^4)$  is blocked by a class with $m/d<1/3$,
except for the points $v_{2i}, i\ge 1$  which are unobstructed.
\end{itemlist}
\end{cor}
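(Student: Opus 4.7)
The plan is to extend the covering by blocked intervals from Lemma~\ref{lem:short}, which handles rationals in $[6,\infty)$ using the family $\Cc\Ss^U$ (all of whose classes have $\eps=+1$, i.e.\ $m/d>1/3$), to the rest of $(a_{\min},\infty)$ by pushing forward via the symmetries $T = S^iR^\de$. Corollary~\ref{cor:perfATF} guarantees that these symmetry images are perfect classes, and Corollary~\ref{cor:main1iipf} gives the crucial compatibility $I_{T^\sharp(\bE)} = T(I_\bE)$. The bookkeeping will hinge on the fact that each of $S$ and $R$ flips $\eps$, so $T^\sharp$ preserves the $\eps$-sign iff $i+\de$ is even.

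For part (i), I first dispatch $z \in [6,\infty)\cap\Q$ by Lemma~\ref{lem:short}. For $z \in (a_{\min},6)\cap\Q$, I note that since $v_k\searrow a_{\min}$, each such $z$ either equals some $v_k$ with $k\ge 3$ or lies strictly inside $(v_{k+1},v_k)$ for a unique $k\ge 2$. In the open-interval case, $S^{k-1}$ is a Mobius transformation with integer entries that maps $(6,\infty)$ bijectively onto $(v_{k+1},v_k)$, so I write $z = T(w)$ with $w \in (6,\infty)\cap\Q$, choosing $T = S^{k-1}$ when $k-1$ is even and $T = S^{k-1}R$ when $k-1$ is odd, so that $T^\sharp$ preserves $\eps=+1$. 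Lemma~\ref{lem:short} supplies $\bE \in \Cc\Ss^U$ with $w \in I_\bE$, and then $T^\sharp(\bE)$ blocks $z$ with $m/d>1/3$. For $z = v_k$ with $k\ge 3$, I use $v_k = S^{k-2}(v_2)$: when $k$ is even, $S^{k-2}$ preserves $\eps$ and $(S^{k-2})^\sharp(\bB^U_0)$ blocks $z$, while the excluded values $z = v_{2i+1}$ with $i\ge 1$ are unobstructed by Remark~\ref{rmk:symm0}(iii), which identifies them as accumulation points of staircases for $b>1/3$.

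Part (ii) is the mirror argument, with $T$ now chosen so that $T^\sharp$ flips $\eps$ to $-1$. For $z \in (6,\tau^4)\cap\Q$, I apply $R^\sharp$ to a class in $\Cc\Ss^U$ blocking $R(z) \in (6,\infty)\cap\Q$; for $z$ in the open interval $(v_{k+1},v_k)\cap\Q$ with $k\ge 2$, I use $T = S^{k-1}$ when $k-1$ is odd and $T = S^{k-1}R$ when $k-1$ is even; and $z = v_{2i+1}$ is handled by $v_{2i+1} = S^{2i-1}(v_2)$, which yields $\eps=-1$ since $2i-1$ is odd. In each case the resulting class has $m/d<1/3$. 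The exclusion of $v_{2i}$ is again unobstructedness at staircase accumulation points, now for $b<1/3$.

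The real mathematical content is entirely absorbed into Lemma~\ref{lem:short}, Corollary~\ref{cor:perfATF}, and Corollary~\ref{cor:main1iipf}. The only genuine obstacle will be organizing the case analysis cleanly across endpoints and parities of $k$, and systematically tracking the $\eps$-sign across compositions of $S$ and $R$; once the small table of $\eps$-flips is written down, each case reduces to a one-line invocation of the above results.
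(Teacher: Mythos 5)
Your overall route is the same as the paper's: cover $[6,\infty)\cap\Q$ by blocked intervals (Lemma~\ref{lem:short}), transport by the symmetries $S^iR^\de$ using Corollary~\ref{cor:perfATF} and the compatibility $I_{T^\sharp(\bE)}=T(I_\bE)$ of Corollary~\ref{cor:main1iipf}, track the sign of $\eps$ by parity, and treat the points $v_k$ separately (blocked by $(S^{k-2})^\sharp(\bB^U_0)$ when the parity is right, unobstructed as limits of staircase accumulation points otherwise). However, there is a gap precisely at the point the paper's own proof is organized around: the anomalous behaviour of $R^\sharp$ on $\bB^U_0$. Whenever your chosen $T$ contains $R$ and the pulled-back rational $w=T^{-1}(z)$ lands in $I_{\bB^U_0}$ — which does happen, e.g.\ in part (i) with $k=2$ every rational $z\in(\p^-(I_{\bB^U_0}),6)\subset(v_3,v_2)$ pulls back under $SR$ into $(6,\p^+(I_{\bB^U_0}))$, where by disjointness (Proposition~\ref{prop:disj}) the only class Lemma~\ref{lem:short} can give you is $\bB^U_0$ — your ``one-line invocation'' runs through $R^\sharp(\bB^U_0)=(0,-1,1,0,3,-1)$, which is not a geometric class; Remark~\ref{rmk:symm0}~(i) warns that results such as Corollary~\ref{cor:perfATF} are only proved for $R$ acting on classes with center $>7$, so citing the bare statements here is not justified. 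The same issue is hidden in your first case of part (ii): ``apply $R^\sharp$ to a class blocking $R(z)$'' is legitimate only because for $z<\tau^4$ one has $R(z)>\p^+(I_{\bB^U_0})$, i.e.\ $R(\tau^4)=\p^+(I_{\bB^U_0})$, so that the class produced is never $\bB^U_0$; you neither state nor prove this identification (it is essentially the statement that the Fibonacci accumulation point is the $R$-image of the upper endpoint of $I_{\bB^U_0}$).

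The fix is what the paper does. Either handle the $\bB^U_0$-orbit separately: note that $(SR)^\sharp(\bB^U_0)=\bB^U_0$ and $SR(I_{\bB^U_0})=I_{\bB^U_0}$ (Remark~\ref{rmk:symm0}~(ii)), so every composite you actually need sends $\bB^U_0$ to the genuine class $(S^{k-2})^\sharp(\bB^U_0)$ and its blocked interval to $S^{k-2}(I_{\bB^U_0})$; or, as in the paper's proof, quote the result that $(S^i)^\sharp(\bB^U_0)$ blocks the interval between consecutive $w$-points for the appropriate sign of $b-1/3$, which covers exactly those rationals whose pullbacks would land in $I_{\bB^U_0}$. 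With that supplement your argument closes; without it, the step ``then $T^\sharp(\bE)$ blocks $z$'' is unjustified in the cases above.
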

\begin{proof}   The symmetry $R: p/q\mapsto (6p-35q)/(p-6)$ maps the interval $[7,\infty) = [w_1,v_1)$ to the interval $(6,7] = (v_2,w_1]$ and hence takes the complete family $\Cc\Ss^U$ to a complete family $R^{\sharp}(\Cc\Ss^U)$ whose blocking classes have $m/d< 1/3$ and block all rational points in $(6,7]\cap \acc([0,1/3))$.
Similarly, the symmetry $SR: p/q\mapsto (6p-35q)/(p-6)$ maps the interval $[7,\infty) = [w_1,v_1)$ to the interval $(v_3,w_2] = (35/6,41/7]$ and hence takes the complete family $\Cc\Ss^U$ to a complete family $(SR)^{\sharp}(\Cc\Ss^U)$ whose blocking classes have $m/d> 1/3$ and block all rational points in $(v_3,w_2]$.
We proved in \cite[Lem.3.4.6]{MM} that the  class $(S^i)^{\sharp}(\bB^U_0)$ blocks the interval $[w_{i+1},w_i]$ for $b<1/3$ when $i$ is odd and for $b>1/3$ when $i$ is even.  Hence the families
$(S^{2i})^{\sharp}(\Cc\Ss^U)$ and $(S^{2i+1}R)^{\sharp}(\Cc\Ss^U)$ for $i\ge 0$ together block all rational points 
for $b>1/3$ except for the points $v_3,v_5,$ and so on; see Figure~\ref{fig:symm}.  Notice that the points $v_{2i+1}$ are unobstructed since they are limits of accumulation points of staircases, which necessarily are unobstructed.

Similarly,  the families
$(S^{2i+1})^{\sharp}(\Cc\Ss^U)$ and $(S^{2i}R)^{\sharp}(\Cc\Ss^U)$ for $i\ge 0$ together block all rational points 
for $b<1/3$ except for the points $v_2,v_4,$ and so on.  Again, these points are unobstructed.
\end{proof}
\begin{figure}
    \centering
\includegraphics[width=.6\textwidth]{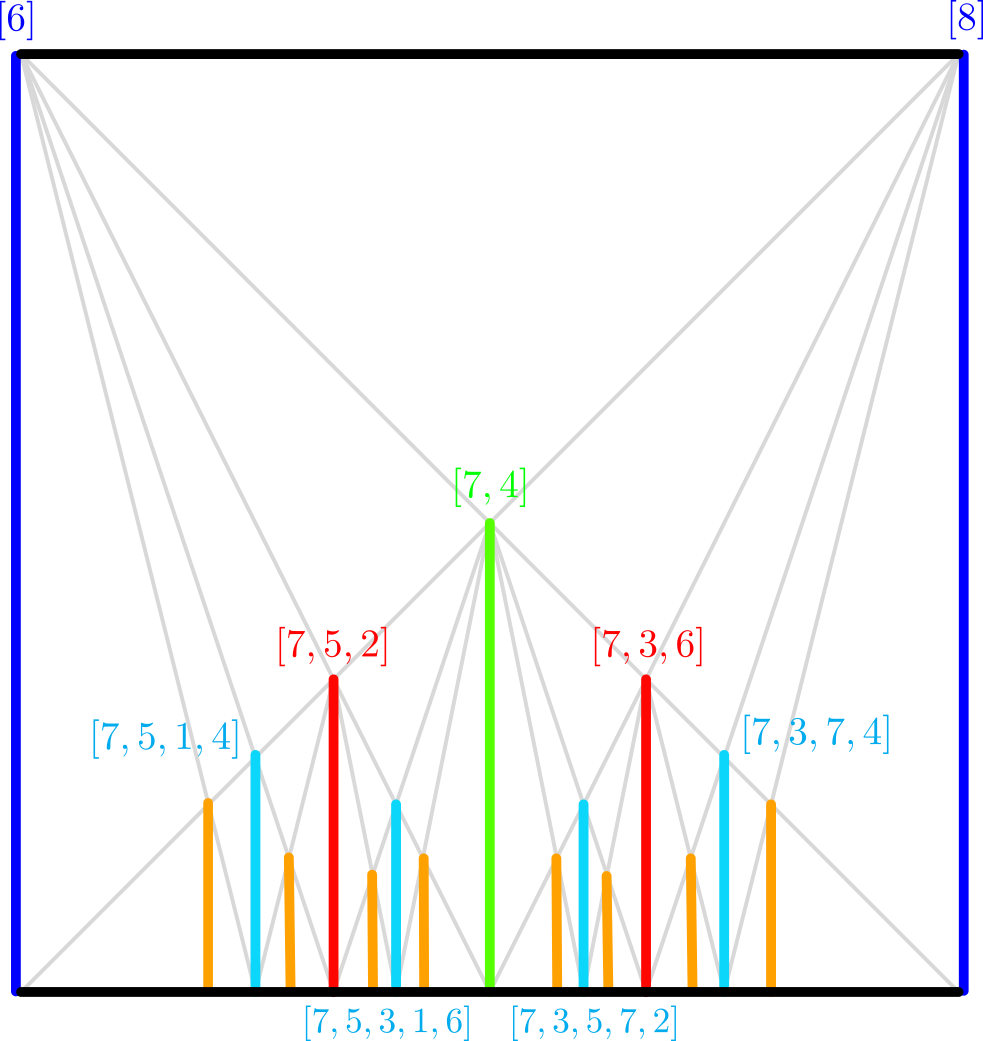}
    \caption{This diagram depicts the blocking classes between $6$ and $8$ through level five. The horizontal black lines represent the interval $[6,8]$. Each vertical line through level four is labeled with the continued fraction of a number between $6$ and $8$ which is the center of a blocking class. 
    To obtain the vertical line representing $E_\mu$ from that of $E_\lambda$ and $E_\rho$, draw a diagonal (gray) line from the top of the line representing $E_\mu$ to the bottom of the line representing $E_\rho$, and vice versa. The vertical line dropped from the intersection of these two diagonals represents $E_\mu$. 
    Any unbroken gray {``V\rq\rq} shape represents a generating triple. Each gray diagonal line represents a principal staircase 
     associated to the blocking class at its lower endpoint; the steps are the classes whose upper endpoints are on the given diagonal.    
    Classes on the same horizontal line have the same $CF$-length, while classes whose upper endpoints are on the same level are depicted in the same color.}\vspace{-.1in}
    \label{fig:Fareystairs}
\end{figure}

We end this subsection with an extended remark on the combinatorics and arithmetic properties of the family $\mathcal{CS}^U$ on the interval $[6,8]$, since they are so similar to those of Farey sequences.

\begin{rmk}\rm\label{rmk:Farey}  {\bf (Farey description of the blocking classes)}
Suppose that $p/q \in [p_1/q_1, p_2/q_2]$ has $q>q_1,q_2$.  
The first Farey sequence\footnote
{
 A subsequence of this sequence is relevant to the construction of  the weight decomposition of $p_\mu/q_\mu$; see the Appendix to \cite{ball}.} containing $p/q$   
consists of all rational numbers $p'/q'$ in the interval $ [p_1/q_1, p_2/q_2]$ with $q'\le q$, arranged in increasing order.  
For example, in the case of $[7;4]\in [7,8]$ the sequence is
is
\[
[7],\ [7;4],\ [7;3],\ [7;2], \ [7;1,2],\ [7;1,3],\ [8];
\]
similarly,  the first Farey sequence between $[7]$ and  
$[7;4]$ containing $[7;5,2]$ is
\[
[7], [7; 11], [7; 10], [7; 9], [7; 8], [7; 7], [7; 6], [7; 5, 2], [7; 5], [7; 4, 2], [7; 4].
\]
This sequence has the property that any three adjacent terms $p'/q', p/q, p''/q''$ satisfy the identity\footnote
{This expression is called the Farey sum of $p'/q', p''/q''$.}
$$
 \frac pq =  \frac{p'+p''}{q'+q''} =: \frac{p'}{q'} \oplus  \frac{p''}{q''}.
$$
Moreover, it turns out that the sequence can be constructed by repeatedly taking the Farey sum of two adjacent elements, discarding any with too large a denominator.

Proposition \ref{prop:short} implies that given any triple in $\Cc\Ss^U$ the nearest neighbors to 
$p_\mu/q_\mu$ in the first Farey sequence between $p_\la/q_\la$ and $p_\rho/q_\rho$ that contains $\rho_\mu/q_\mu$ lie in the blocked intervals $I_{\bE_\la}, I_{\bE_\rho}$.     Thus $p_\mu/q_\mu$ is the Farey sum of these numbers.
For example, 
$$
[7;4] = \frac{29}{4} =   \frac{7+22}{1+3} =:[7] \oplus [7;3], \quad  [7;5,2] =  [7;6] \oplus [7;5].
$$
and $[7], [7;6]\in I_{\bB^U_0}$ 
while $[7;3]\in I_{\bB^U_1}$, and $[7;5]\in I_{\bE_{[7;4]}}$. 
 \hfill$\er$
\end{rmk}

Thus  the class $E_\mu$ can be viewed as a type of Farey sum of $E_\lambda$ and $E_\rho$: given the blocked intervals $I_{\bE_\lambda}$ and $I_{\bE_\rho}$, there is a unique rational number between them with shortest weight decomposition, which will also be the center of a blocking class.
In this language, the first part of Conjecture \ref{conj:1} claims that the $\Cc\Ss$-length of the class $\bE_\de$ is precisely the continued fraction length $\ell_{CF}$  of its center. In particular, this would imply  that
\[
\ell_{CF}(\bE_\lambda)+\ell_{CF}(\bE_\rho)=\ell_{CF}(\bE_\mu),
\]
which experimentally seems to be true.  In fact there seems to be more internal structure here that comes from the relation of the adjacency condition to weight expansions explained in Lemma~\ref{lem:WW}.  
For example, if a principal staircase %
has blocking class $\bE_\mu$  with center $[s_0;s_1,\dots, s_n]$ then 
in (almost) all cases we have calculated \footnote{
The descending stairs associated to the blocking classes $\bB^U_n$ are the only exceptions we have found.}
the limit has the form  
$[s_0;s_1,\dots, s_{n-1}, P_\mu^{\infty}]$ where the periodic part $P_\mu$ has length $2(n+1)$ and is a combination in some order of the periodic parts $P_\la, P_\rho$ associated to $\bE_\lambda$ and $\bE_\rho$.
We give examples in the following table.\MS

\begin{tabular}{|l|l|l|}
    \hline center & $\p^-(I_\bE)$ & $\p^+(I_\bE)$
    \\\hline$\bE_0=[6]$ & $[\{5,1\}^\infty]$
     & $[7,\{5,1\}^\infty]$
    \\\hline$\bE_1=[8]$ & $[\{7,3\}^\infty]$ & $[9;\{7,3\}^\infty]$
    \\\hline$\bE_{.1}=[7;4]$ & $[7;\{5,3,1,7\}^\infty]$ & $[7;\{3,5,7,1\}^\infty]$
    \\\hline$\bE_{.01}=[7;5,2]$ & 
     $[7;5,\{1,3,5,1,7,5\}^\infty]$  & $[7;5,\{3,1,5,7,1,5\}^\infty]$
    \\\hline$\bE_{.21}=[7,3,6]$ & 
      $[7;3,\{5, 7, 3, 1,7,3\}^\infty]$   & $[7;3,\{7, 5, 3, 7, 1,3\}^\infty]$
    \\\hline
\end{tabular}
\MS

This pattern can be depicted just as the Farey diagram is recorded, and the two ways of depicting the Farey diagram (as in \cite[\S1.2]{hatcher}) emphasize two different features of the set of centers of our blocking classes. In the first diagram in Figure \ref{fig:decimal}, classes lying on the same horizontal line have equal level (see \S\ref{ss:ternary}). In Figure \ref{fig:Fareystairs}, however, classes lying on the same diagonal line lie in the same principal staircase, 
and classes lying on the same horizontal line have equal $\Cc\Ss$-length, thus equal ${CF}$-length (by Conjecture \ref{conj:1}).

\begin{rmk}\label{rmk:stairrat}\rm  If $b$ is rational, then the accumulation point formula forces $\acc(b)$ to be at worst a quadratic irrational, and hence to have periodic continued fraction. By Corollary \ref{cor:short2} no rational number greater than six can be an accumulation point of a staircase.
If we knew enough about the numerics of the principal staircases, then by extending the arguments in \S\ref{ss:uncount} one might be able to conclude that the only unblocked points $z$ with  periodic continued fraction are the endpoints of the blocked intervals.  Since none of these correspond to a rational $b$ by Remark~\ref{rmk:recur0}, it would follow that
  the only possible rational $b$ with staircases are $b=0,1/3$ or the special rational $b$ corresponding to the $z$ in \eqref{eq:specrat}.
\hfill$\er$
\end{rmk}

\subsection{Proof of Theorem~\ref{thm:main}}\label{ss:disj}

 Our arguments are based on the following result. 
 
 \begin{prop}\label{prop:disj}   
 Let  $\bE$ be a perfect class with $p/q> a_{\min}$ such that the endpoints $\p J_\bE$ are unobstructed, and let 
$\bE'\ne \bE$ be another perfect class with $p'/q'> a_{\min}$.  Then  $J_\bE\cap J_{\bE'} = \emptyset$.
 \end{prop}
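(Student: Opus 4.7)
The plan is to argue by contradiction. Suppose $J_\bE \cap J_{\bE'} \neq \emptyset$. First I would reduce to the containment $J_{\bE'} \subseteq J_\bE$: each endpoint $b_0 \in \partial J_\bE$ is unobstructed, meaning $c_{H_{b_0}}(\acc(b_0)) = V_{b_0}(\acc(b_0))$, so $b_0 \in J_{\bE'}$ would yield $\mu_{\bE',b_0}(\acc(b_0)) > V_{b_0}(\acc(b_0)) = c_{H_{b_0}}(\acc(b_0))$, contradicting $\mu_{\bE',b_0} \leq c_{H_{b_0}}$. Hence $\partial J_\bE$ is disjoint from the open interval $J_{\bE'}$, which, together with the assumed nonempty intersection and openness of both intervals, forces $J_{\bE'} \subseteq J_\bE$. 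When $\eps \neq \eps'$, the intervals lie on opposite sides of $1/3$ and are trivially disjoint, so $\eps = \eps'$; then monotonicity of the appropriate branch of $\acc^{-1}$ translates the containment into $I_{\bE'} \subseteq I_\bE$, and in particular $p'/q' \in I_\bE$.

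The main step is then to show that $\bE' \neq \bE$ with $p'/q' \in I_\bE$ is impossible. I would invoke the property that the center of a perfect class has strictly minimum weight length in its blocked $z$-interval (as used in the extension of Proposition~\ref{prop:short}, following \cite[Lem.~2.21~(1)]{AADT}): applied to $\bE$, this gives $\ell_{wt}(p'/q') \geq \ell_{wt}(p/q)$ with equality iff $p'/q' = p/q$. Since perfect classes are uniquely determined by their center (\cite[Prop.2.2.9]{MM}), equality would give $\bE' = \bE$, a contradiction, so we may assume $\ell_{wt}(p'/q') > \ell_{wt}(p/q)$. To rule this out, I would use the hierarchical organization of $\Cc\Ss^U$: by Corollary~\ref{cor:perfATF} combined with the symmetry invariance of blocked intervals (Corollary~\ref{cor:main1iipf}), both $\bE$ and $\bE'$ may be placed in $\Cc\Ss^U$ after applying a suitable symmetry $S^iR^\de$, and Proposition~\ref{prop:label1} organizes this family into a ternary tree in which the blocked $z$-intervals of non-comparable classes are mutually disjoint. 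Specifically, descendants of the triple $\Tt_\bE$ have centers in the flanking intervals $I_{x\Tt_\bE} \cup I_{y\Tt_\bE}$, disjoint from $I_\bE$, while ancestors and cousins have centers outside $I_{\Tt_\bE} \supset I_\bE$; basic classes $\bE = \bB^U_n$ are handled analogously using the disjointness of $I_{\bB^U_n}$ from all $I_{\Tt^k_*}$ established in Proposition~\ref{prop:dense}. Hence no perfect class $\bE' \neq \bE$ can have $p'/q' \in I_\bE$.

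The main obstacle is the last step: rigorously establishing that every perfect class with center $> a_{\min}$ lies in an image of $\Cc\Ss^U$ without circularly invoking Theorem~\ref{thm:main1}(iii), and handling the additional case where $\bE$ is a basic blocking class rather than the middle of a triple. This amounts to saying that the tree structure of Proposition~\ref{prop:label1} exhausts the set of perfect classes in the relevant $z$-range; I would verify this by combining Lemma~\ref{lem:short} (every rational in $[6,\infty)$ is blocked by some $\bE \in \Cc\Ss^U$) with the weight length uniqueness to show no new perfect class can be inserted between existing ones.
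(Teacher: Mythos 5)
Your opening reduction is sound and matches the paper's setup: since the endpoints of $J_\bE$ are unobstructed they cannot lie in $J_{\bE'}$, so a nonempty intersection forces $J_{\bE'}\subseteq J_\bE$, hence $\eps=\eps'$ and $p'/q'\in I_\bE$; this is exactly the situation treated in the paper via Lemma~\ref{lem:md1}. The problem is the main step. Your argument places both $\bE$ and $\bE'$ inside some symmetry image of the complete family $\Cc\Ss^U$ and then appeals to the tree structure of Proposition~\ref{prop:label1}. But the statement that \emph{every} perfect class with center $>a_{\min}$ belongs to such a family is Corollary~\ref{cor:disj1}, which in the paper is \emph{deduced from} Proposition~\ref{prop:disj} together with the density result Proposition~\ref{prop:dense}; Corollary~\ref{cor:perfATF} only gives the converse direction (family members are perfect). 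So as written the argument is circular, and you correctly flag this — but the patch you propose does not close the gap. Lemma~\ref{lem:short} tells you that the center $p'/q'$ of a hypothetical extra perfect class lies in some $I_\bE$ with $\bE\in\Cc\Ss^U$, and weight-length minimality of $p/q$ in $I_\bE$ then yields only $\ell_{wt}(p'/q')>\ell_{wt}(p/q)$, which is no contradiction: nothing in that reasoning excludes a perfect class whose center and blocked interval sit strictly inside $I_\bE$ without containing $p/q$. Ruling out precisely this configuration is the whole analytic content of the proposition.

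The paper's proof is a direct two-class argument that needs none of the classification. Lemma~\ref{lem:md1} shows (using liveness of $\mu_{\bE,b}$ near the unobstructed endpoints of $J_\bE$, the comparison of $b$-derivatives of obstruction functions in Lemma~\ref{lem:md}, positivity of intersections combined with \eqref{eq:weight} to get $dd'-mm'\ge pq'$, and the inequality $p'/q'<\acc(m'/d')$) that necessarily $p/q<p'/q'$ and $m/d<m'/d'$. Then, since $\mu_{\bE',b}$ is live at $p'/q'$ for $b=m'/d'$ while $\mu_{\bE,b}$ is live there at $b=\p^+J_\bE$, the two obstructions must agree at some intermediate $b=(p'd-pd')/(p'm-pm')$, and a sign analysis together with the Diophantine identities forces $\p^+J_\bE<1/3$, contradicting $b>1/3$ (the case $b<1/3$ being symmetric). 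This quantitative step — or some substitute for it — is what your proposal is missing; the combinatorial ordering of $\Cc\Ss^U$ can only compare classes already known to lie in the family, which is exactly what cannot be assumed at this point of the logical development.
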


 \begin{remark} \label{rmk:except}\rm 
    As noted in Remark~\ref{rmk:main0}~(i), the class $B=6L-3E_0-\sum_{i=1}^7 2E_i$ is an exceptional class since it Cremona reduces to $(0,-1,0,\hdots)$, but not a perfect class because the coefficients of the $E_i$ are twice those of the integer weight expansion of its center. This class illustrates the importance of the assumptions in Proposition~\ref{prop:disj} since, as we now show, $B$ is a blocking class such that $\emptyset\ne J_B \subset J_{\bB^U_0}.$ 
Note first that by \cite[Thm.1]{ICERM} we have
\begin{align}\label{eq:JBx}
J_{\bB^U_0} = \left(\frac{3-\sqrt 5}2, \frac {3(7+\sqrt 5)}{44}\right) \approx(0.382, 0.63),\qquad I_{\bB^U_0}= \bigl([\{5,1\}^\infty]], [7;\{5,1\}^\infty]\bigr).
\end{align}
Further, because the coefficients of $E_1,\dots,E_7$ in $B$ are a constant multiple of the weight expansion of $7/1$, the proof of \cite[Prop.~21(i)]{ICERM} (see also Lemma~\ref{lem:pos}) adapts to show that $B$ is live at $z=7$ for an interval of $b$ that includes $(4/9,10/19)\approx(0.44,0.526)$; in other words $c_{H_b}(7)=\mu_{B,b}(7)=\frac{14}{6-3b}$ for these $b$. We may also directly compute that $\mu_{B,\acc^{-1}_{+1}(7)}(7)\approx3.368>3.354\approx V_{H_{\acc^{-1}_{+1}(7)}}(7)$. Thus  $B$ blocks $\acc^{-1}_{+1}(7)\approx0.614$, which is contained in $J_{\bB^U_0}$ by \eqref{eq:JBx}. 
On the other hand, the class $\bB^U_0$ has $\mu_{\bB^U_0,b}(7) = \frac{6}{3-2b}$, so that
$\mu_{\bB^U_0,b}(7)=\mu_{B,b}(7)$ for $b=3/5$. By comparing the ratios of $m/d$, it follows from
 Lemma~\ref{lem:md}~(i) above that 
  the obstruction function for $\bB^U_0$ grows faster at $z=7$ than that of $B$ as $b$ increases from $3/5$ to $\p^+(J_{\bB^U_0})$.  Since both these obstruction functions are constant for $z\ge 7$ and $\mu_{\bB^U_0,b}(z) = V_{H_b}(z)$ at
  $z=\p^+(I_{\bB^U_0})>7$ and $b=\p^+(J_{\bB^U_0})>3/5$,  it follows that
   the class $B$ is no longer obstructive 
 at these values of $z,b$.

 The argument that $\p^-(J_{\bB^U_0}) < \p^-(J_{B})$ and that for $z<7$ we have $\mu_{B,b}(z)<\mu_{\bB^U_0,b}(z)$ is easier, since for $z< 7$ the obstruction $\mu_{B,b}$ lies on the line through the origin and $(7,\frac{14}{6-3b})$, while 
$ \mu_{\bB^U_0,b}$ is constant for $z\in [6,7]$ and for $z<6$ lies on the line from the origin to   $(6,\frac{6}{3-2b})$.

Another exceptional class which is not perfect yet is live and blocking is $73L-46E_0-22E_1-21E_{2,\dots,7}-4E_{8,\dots,12}-1E_{13,\dots,16}$, with center $151/21$ and whose blocked interval is nested inside that of $\bE_{[7;4]}$. Again because the coefficients of $E_1,\dots,E_{16}$ are closely related to the entries in the weight expansion of its center, we may use \cite[Prop.~21(i)]{ICERM} to find an interval on which this class is live and an argument similar to the one above to show that $J_{\bE_{[7;4]}}$ contains its blocked interval. We expect there are many such classes. This is in contrast to, for example, the class $5L-E_0-2E_{1,\dots,6}-E_7$, which is live for $b\approx1/5$ as explained in \cite[Ex.~34]{ICERM} but is not a blocking class by \cite[(2.2.6)]{ICERM}.
  \hfill$\er$
 \end{remark}

Since  in Proposition~\ref{prop:disj}  we assume that the endpoints of $J_{\bE}$ are unobstructed, the two intervals $J_{\bE}$ and $J_{\bE'}$ cannot overlap, and the key point of the proof is to show that they are not nested: in other words we cannot have   $J_{\bE'}\subset J_{\bE}$.  The arguments to prove this are somewhat delicate.  Hence we will begin the discussion
 by using  Proposition~\ref{prop:disj}  to deduce
 the main results stated in \S1.  We begin with a simple corollary of 
 Proposition~\ref{prop:disj}.

 \begin{cor}\label{cor:disj1} \begin{itemlist}\item[{\rm (i)}]  Every perfect class ${\bE'}$ with center in $(7,\infty)$ is derived by mutation from one of the basic generating triples 
 $\bigl(\bB^U_n, \bE_{[2n+7;2n+4]}, \bB^U_{n+1}\bigr),\ n\ge 0$ and so is a member of the complete family $\Cc\Ss^U$.

\item[{\rm (ii)}] Every other perfect class with center $> a_{\min}$ is the image of a perfect class in $\Cc\Ss^U$ by a symmetry  $S^iR^\de$, for some $i\ge 0,\ \de\in \{0,1\}$ with $i+\de>0$.
 \end{itemlist}
 \end{cor}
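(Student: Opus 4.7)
The plan is to combine disjointness of blocked intervals with density for part (i), and to pull back via symmetries for part (ii). The first part is essentially formal once the earlier structural results are in place; the bookkeeping work is concentrated in part (ii), specifically in the treatment of the boundary points $v_k$ and $w_k$.

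For (i), suppose $\bE'$ is a perfect class with $p'/q' > 7$ that does not lie in $\Cc\Ss^U$. Its blocked interval $I_{\bE'}$ is open and contains a neighborhood of $p'/q'$ inside $(7,\infty)$. For each $\bE \in \Cc\Ss^U$, both endpoints of $J_\bE$ are unobstructed by Corollary~\ref{cor:perfATF2}, so Proposition~\ref{prop:disj} yields $I_{\bE'}\cap I_\bE=\emptyset$. Hence $I_{\bE'}$ is disjoint from $\bigcup_{\bE\in\Cc\Ss^U} I_\bE$, which is dense in $[6,\infty)$ by Proposition~\ref{prop:dense}; this contradicts $I_{\bE'}$ being an open neighborhood of $p'/q'>7$.

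For (ii), let $\bE'$ be a perfect class with center $p'/q'\in(a_{\min},7]$. The sets $S^iR^\delta((w_1,v_1))$ for $i\ge 0$, $\delta\in\{0,1\}$ tile $(a_{\min},\infty)$ up to the discrete seam $\{v_k\}_{k\ge 2}\cup\{w_k\}_{k\ge 1}$; explicitly $S^iR^0((w_1,v_1))=(w_{i+1},v_{i+1})$ and $S^iR^1((w_1,v_1))=(v_{i+2},w_{i+1})$, as follows from \eqref{eq:SR} and the recursion \eqref{eq:symmvi} defining $v_k$ and $w_k$ (see Figure~\ref{fig:symm}). A direct integrality check via \eqref{eq:formdm} rules out perfect classes centered at any $w_k$: for $p/q=7=w_1$ one gets $t=4$ and both candidate values $m=(8\pm 12)/8$ are non-integral, and this extends to the other $w_k$ because the symmetries preserve integrality of the tuple. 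Thus $p'/q'$ lies either in the interior of some tile with $(i,\delta)\ne(0,0)$, or at a seam point $v_k$ with $k\ge 2$. In the interior case I set $T=S^iR^\delta$ and form the pullback $\bE:=(T^{-1})^\sharp(\bE')$; this is a perfect class by the preservation statements \cite[Lem.~4.1.2, 4.1.3]{MM}, with center $T^{-1}(p'/q')\in(7,\infty)$, so by (i) it lies in $\Cc\Ss^U$, yielding $\bE'=T^\sharp(\bE)$. At a seam point $p'/q'=v_k$ with $k\ge 3$, the identity $v_k=S^{k-2}(v_2)=S^{k-2}(6)$ together with the uniqueness of a perfect class at a given center (\cite[Prop.~2.2.9]{MM}) forces $\bE'=(S^{k-2})^\sharp(\bB^U_0)$. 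The remaining edge case $p'/q'=v_2=6$, so $\bE'=\bB^U_0$, is handled by the direct computation $(SR)^\sharp(\bB^U_0)=\bB^U_0$ (noting $(SR)(6)=6$ and that $SR$ preserves $\eps$), taking $(i,\delta)=(1,1)$.

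The hard part will be the boundary analysis at the seam points $v_k$: they do not lie in the interior of any tile, so the naive pullback argument of the interior case does not directly apply, and one must invoke the recursive structure $v_{k+1}=S(v_k)$ together with uniqueness of perfect classes at a given center. The remaining ingredients (disjointness, density, perfection-preservation under symmetries) are already in hand, so once the tile decomposition and the integrality rule-out at $w_k$ are set up, the argument reduces to a clean case analysis.
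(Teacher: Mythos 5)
Your part (i) is correct and is essentially the paper's own argument: perfectness of the classes in $\Cc\Ss^U$ (Corollary~\ref{cor:perfATF}), unobstructedness of the endpoints of their blocked intervals (Corollary~\ref{cor:perfATF2}), disjointness (Proposition~\ref{prop:disj}) and density (Proposition~\ref{prop:dense}) combine exactly as you say, and your contrapositive phrasing is fine. (It would be worth one sentence noting that a perfect class with center $>7>\tau^4$ automatically has $m/d>1/3$, since $p/q<\acc(m/d)$, so its blocked $b$-interval really does live in $(1/3,1)$ where the comparison with $\Cc\Ss^U$ takes place.)

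Part (ii) takes a genuinely different route from the paper (pullback through the tiling by $S^iR^\de\bigl((7,\infty)\bigr)$ rather than pushing the dense blocked family forward), and it has a gap at the pullback step. You assert that $(T^{-1})^\sharp(\bE')$ is perfect by citing the preservation statements from \cite{MM}, but as those are quoted in this paper (proof of Corollary~\ref{cor:perfATF}) they are one-directional: the image under $S$ (resp.\ $R$) of a perfect class is perfect, and in the case of $R$ only for centers $>7$. Your argument needs the converse --- that if $T^\sharp(\bE)$ is perfect then so is $\bE$ --- equivalently the forward statement for $S^{-1}$, and for $R$ applied to classes with centers in $(6,7)$; neither is covered by what is quoted. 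This is not a cosmetic point: mere quasi-perfectness of the pullback would not let you invoke part (i), because fake quasi-perfect classes with centers in $(7,\infty)$ do exist (e.g.\ the class $(282,196,579,71)$ with center $[8;6,2,5]$ in Remark~\ref{rmk:fakestair}), so perfectness of the pullback is precisely the issue and cannot be waved through. The same one-directional problem appears in your rule-out at the points $w_k$ (``extends because the symmetries preserve integrality''), though there it is harmless because a direct computation closes it: every $w_k$ has $t=4$, integrality of $(d,m)$ in \eqref{eq:formdm} with $t=4$ forces $p+q\equiv 4\pmod 8$, whereas at $w_k$ one has $p+q=y_{k+1}+2y_k+y_{k-1}=8y_k\equiv 0\pmod 8$.

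To repair the interior case you must either upgrade the citation to a genuine ``if and only if'' (Cremona equivalence of $\bE$ and $T^\sharp\bE$ on the relevant domains), or argue as the paper does: do not pull $\bE'$ back, but push the family forward. For each $T=S^iR^\de$ the classes of $T^\sharp(\Cc\Ss^U)$ are perfect (forward direction), their blocked $z$-intervals are $T(I_\bE)$ by Corollary~\ref{cor:main1iipf} and hence dense in the tile, and their endpoints are unobstructed because the principal pre-staircases are live (Proposition~\ref{prop:live}, Corollary~\ref{cor:live1}); Proposition~\ref{prop:disj} then identifies $\bE'$ with a member of $T^\sharp(\Cc\Ss^U)$, which is the paper's two-line argument for (ii). Your treatment of the seam points $v_k$ (forward image of $\bB^U_0$ plus uniqueness of the quasi-perfect class with a given center) and of the edge case $\bB^U_0=(SR)^\sharp(\bB^U_0)$ is fine as written.
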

\begin{proof}  Consider the union $\Jj$ of the intervals $J_\bE$ that are blocked by some class $\bE$ in the complete staircase family $\Cc\Ss^U$.  
Since all these classes are perfect by Corollary~\ref{cor:perfATF}, Proposition~\ref{prop:live} implies that all the associated pre-staircases are live.  Hence, by Corollary~\ref{cor:perfATF2},  both
endpoints of the corresponding blocked $b$-interval $J_\Ee$ are unobstructed.\footnote
{
In the proof that the classes are perfect we used the fact that the lower endpoint of $J_\bE$ is unobstructed;  now we have a similar result for the upper endpoint.}
  We proved in Proposition~\ref{prop:dense} that $\Jj$ is an open dense set of $\acc_U^{-1}([6, \infty)) = [5/11, 1)$.   Therefore, the interval $J_{\bE'}$ must have nonempty intersection with    $\Jj$.  But this is possible only if
  $J_{\bE'}$ is contained in a component $J_\bE$ of $\Jj$.  Proposition~\ref{prop:disj} then shows that   $J_{\bE'} = J_{\bE}$, which implies that $I_{\bE'} = I_\bE$.  But then the two classes have the same breakpoint (since this is the point of shortest length in $I_{\bE}$, see Lemma~\ref{lem:pq}),  and hence coincide by the uniqueness result in \cite[Lem.2.2.1]{MM}.  This proves (i).
 
 To prove (ii), notice that, as illustrated in Figure~\ref{fig:symm}, the set of $z$-values blocked by
 perfect classes with $m/d>1/3$ and in some family $(S^iR^\de)^{\sharp}(\Cc\Ss^U)$ with $i+\de$ even is dense in the interval $(a_{\min},\infty)$. Hence the set of blocked $b$-values is also dense, and it follows as above that there are no other perfect classes with $m/d>1/3$.  A similar argument applies to perfect classes with $m/d< 1/3$.
\end{proof}

\begin{proof}[Proof of Theorem~\ref{thm:main1} (iii)] By Corollary~\ref{cor:disj1} (ii), it suffices to consider perfect classes in the complete family $\Cc\Ss^U$. Besides the classes $\bB^U_n$, each perfect class in $\Cc\Ss^U$ is the middle entry in a unique triple and so a step in both the ascending staircase $\Ss^\Tt_\ell$ (live by Proposition~\ref{prop:live}) and the descending staircase $\Ss^\Tt_u$ (live by Corollary~\ref{cor:live1}). For $n\geq1$, $\bB^U_n$ is a member of both the descending staircase $\Ss^{\Tt^{n-1}_*}_u$ and the ascending staircase $\Ss^{\Tt^n_*}_\ell$. The class $\bB^U_0$ is a member of the ascending staircase $\Ss^{\Tt^0_*}_\ell$ and the descending staircase $\Ss^{(SR)^\sharp\Tt^0_*}_u$.
\end{proof}

\begin{proof}[Proof of Proposition~\ref{prop:1}]  We must show that ${\it Block}\subset [0,1)$ is the disjoint union of the sets $J_\bE$ as $\bE$ ranges over all perfect classes with centers $> a_{\min}.$  By Corollary~\ref{cor:disj1}, the class $\bE$ must belong to one of the families $(S^iR^\de)^{\sharp}(\Cc\Ss^U)$, and hence its endpoints are unobstructed.  Therefore all these sets $J_\bE$ are disjoint by Proposition~\ref{prop:disj}. 
\end{proof}

The statements in the first three parts of Theorem~\ref{thm:main} are also now easy to prove. 
\MS

\begin{proof}[Proof of Theorem~\ref{thm:main} parts (i), (ii), (iii)]
We must   show that
\begin{itemlist}\item[{\rm (i)}]  {\it Block} is an open dense subset of $[0,1)$ that is invariant under the action of the symmetries. 
\MS

\item[{\rm (ii)}] There are staircases at each end of each connected component of {\it Block}.\MS

\item[{\rm (iii)}]   For $n\ge 0$ define 
\begin{align*}
{\it Block}_{[2n+6,2n+8]}: = \bigl\{ b \in {\it Block} \ \big| \ \acc(b) \in [2n+6,2n+8], \ b>1/3\bigr\}.
\end{align*}   
For each $n\ge 0$ there is a homeomorphism of ${\it Block}_{[2n+6,2n+8]}$ onto the complement $[-1,2]\less C$ of  the middle third Cantor set $C\subset [0,1]$.
\end{itemlist}

 By Propositions~\ref{prop:1} and ~\ref{prop:disj}, 
 $$
 {\it Block} = \bigcup\bigl\{ J_\bE \ \big| \ \bE\in (S^iR^\de)^\sharp(\Cc\Ss^U), i\ge 0, \de\in \{0,1\}\ \big\}.
 $$
 Since each class $\bE$ in $\Cc\Ss^U$ is the center of a generating triple, it has two associated staircases that converge to the end points of $\p J_\bE$.  
 Therefore (ii) holds for the family $\Cc\Ss^U$, and hence it also holds for the image of this family under any symmetry.    Next notice that (i) follows from Proposition~\ref{prop:dense} and the fact that the image of the interval $[6,\infty)$ under the  symmetries $\bigl\{S^iR^\de: i+\de\; \mbox{ is even}\bigr\}$ is dense in  
 $(a_{\min}, \infty) = \acc\bigl((1/3,1)\bigr)$, while the image under the  symmetries
$\bigl \{S^iR^\de: i+\de\; \mbox{ is odd}\bigr\}$  is dense in $(a_{\min}, \tau^4)=\acc\bigl((0,1/3)\bigr)$.  
Finally, we construct the homeomorphism in (iii) in the case $n=0$  so that it takes the half-open intervals 
 $I_{\bB^U_0}\cap [6,8]$ and $I_{\bB^U_1}\cap [6,8]$ onto $[-1,0)$ and $(1,2]$ respectively, and more generally, takes the elements of $I_{\bE_\de}$ onto the interior of the
interval consisting of all $x\in [0,1]$ whose ternary decimal expansion starts with the entries in the decimal $\de$.   Here we are using the decimal notation introduced in Lemma~\ref{lem:label}.  Thus, 
$\bE_{[7;4]} = \bE_{.1}$ has image $(1/3,2/3)$ while   
$\bE_{[7;5,2]} = \bE_{.01}$  has image $(1/9,2/9)$.  The definition for the cases $n>0$ is analogous.
\end{proof}

It remains to prove Proposition~\ref{prop:disj}.
Our first lemma describes useful properties of the elements $p'/q'\in I_{\bE} : = \acc(J_{\bE})$.

\begin{lemma}\label{lem:pq}  Let $\bE = (d,m,p,q)$.  Then for all  $p'/q'\in I_{\bE}$ we have
$p'\ge p$ and $ q'\ge q$. 
\end{lemma}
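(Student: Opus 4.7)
The plan is to prove this via a Farey-neighborhood argument. If $p'/q'=p/q$ the claim is trivial, so assume $p'/q'\neq p/q$. Let $p_L/q_L<p/q<p_R/q_R$ be the Farey parents of $p/q$, defined by $pq_L-p_Lq=p_Rq-pq_R=1$, so that $p/q$ is their mediant. The strategy is to show $p'/q'\in(p_L/q_L,p_R/q_R)$; once this is established, the classical Farey theorem implies that every reduced rational strictly between two Farey neighbors has denominator at least the sum of theirs, and enumerating all such rationals as $(ap_L+bp)/(aq_L+bq)$ or $(ap+bp_R)/(aq+bq_R)$ with integers $a,b\ge 1$ shows the numerator is likewise at least the sum of those of the neighbors. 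This yields $q'>q$ and $p'>p$ in the strict case, and gives the desired $p'\ge p$, $q'\ge q$ overall.

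To localize $p'/q'$, I would write $CF(p/q)=[s_0;s_1,\dots,s_n]$ with $s_n\ge 2$, so that the two Farey parents have continued fractions $[s_0;\dots,s_{n-1}]$ and $[s_0;\dots,s_n-1]$, both with strictly smaller weight length than $p/q$. Proposition~\ref{prop:short}, applied to the triple $\Tt$ in which $\bE=\bE_\mu$ is the middle entry, then forces neither $p_L/q_L$ nor $p_R/q_R$ to lie in $I_\Tt$. Combined with the containment $I_\bE\subseteq I_\Tt$ (which holds because $I_\bE$ is the connected open set of $z$ at which $\bE$ is obstructive at $\acc_\eps^{-1}(z)$, and its expansion is cut off on either side by the obstructions coming from $\bE_\la$ and $\bE_\rho$, whose blocked intervals bound $I_\Tt$), one concludes that the Farey parents lie outside $I_\bE$. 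Since $I_\bE$ is an open interval containing $p/q$, its endpoints are then trapped as $\p^-(I_\bE)\ge p_L/q_L$ and $\p^+(I_\bE)\le p_R/q_R$, which gives $I_\bE\subset(p_L/q_L,p_R/q_R)$ as required.

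The main obstacle will be that Proposition~\ref{prop:short} is stated only for triples $\Tt$ derived from the basic triples $\Tt^n_*\in \Cc\Ss^U$, whereas a general perfect $\bE$ with $p/q>a_{\min}$ lies in $(S^iR^\de)^\sharp(\Cc\Ss^U)$ for some $i,\de$, by Corollary~\ref{cor:disj1}. The plan is to transfer through the symmetry $T:=S^iR^\de$: Proposition~\ref{prop:symm} and Lemma~\ref{lem:Tgen} guarantee that $T$ sends generating triples to generating triples, and the fact that $\det T=\pm1$ (with the matrix form from \eqref{eq:SR}) ensures that $T$ maps the Farey parents of the center $\tilde p/\tilde q$ of the preimage $\tilde\bE$ to the Farey parents of $p/q=T(\tilde p/\tilde q)$, possibly swapping left and right when $\det T=-1$. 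Since $T$ also carries $I_{\tilde\bE}$ bijectively onto $I_\bE$ (Corollary~\ref{cor:main1iipf}), I would pull $p'/q'\in I_\bE$ back to $\tilde p'/\tilde q'\in I_{\tilde\bE}$, apply the $\Cc\Ss^U$-case of the lemma to get $\tilde p'\ge\tilde p$, $\tilde q'\ge \tilde q$, and then push forward via the explicit integer matrix form of $T$, using the bound $\tilde p/\tilde q>a_{\min}$ to control the sign of $6(\tilde p'-\tilde p)-(\tilde q'-\tilde q)$ and its analogue for $R$. The edge cases of the end blocking classes $(S^iR^\de)^\sharp(\bB^U_n)$ (which are not the middle entry of any triple in the derived family) will be handled by the same argument applied to the two adjacent triples $\Tt^n_*$ and $\Tt^{n-1}_*$ that each have $\bB^U_n$ as one of their outer entries.
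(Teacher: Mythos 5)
Your overall Farey strategy is not what the paper does, and as written it has a genuine circularity. To place a general perfect class $\bE$ with center $>a_{\min}$ inside some family $(S^iR^\de)^\sharp(\Cc\Ss^U)$ you invoke Corollary~\ref{cor:disj1}; but Corollary~\ref{cor:disj1} is deduced from Proposition~\ref{prop:disj}, whose proof uses Lemma~\ref{lem:md1}, whose proof in turn quotes Lemma~\ref{lem:pq} (``Lemma~\ref{lem:pq}~(i) implies that $p'+q'>p+q$''). So the classification you lean on is downstream of the very lemma you are proving. The same problem infects your containment $I_\bE\subseteq I_\Tt$: the interval $I_\bE$ is defined purely by where $\mu_{\bE,\acc_\eps^{-1}(z)}(z)>V(z)$, not by being ``cut off'' by the obstructions of $\bE_\la,\bE_\rho$, and justifying that it does not leak into $I_{\bE_\la}$ or $I_{\bE_\rho}$ is essentially the disjointness/non-nesting statement that the paper only establishes later (again via Lemma~\ref{lem:md1}). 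Finally, the lemma must apply to a class $\bE$ that is not yet known to lie in any of the symmetric families --- indeed in Lemma~\ref{lem:md1} it is applied to an arbitrary perfect class before any classification is available --- so the symmetry-transfer step cannot give the needed generality, and Proposition~\ref{prop:short} (stated only for triples derived from the $\Tt^n_*$) is the wrong tool at this point in the logical order.

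The Farey-localization idea itself is salvageable, but with a different input: the paper's proof rests on the fact, quoted from \cite{AADT} (see also \cite[Lem.14]{ICERM}), that $p/q$ is the unique point of minimal weight length $\ell_{wt}$ in $I_\bE$. Since both Stern--Brocot parents $[s_0;\dots,s_{n-1}]$ and $[s_0;\dots,s_n-1]$ of $p/q=[s_0;\dots,s_n]$ have strictly smaller weight length, that fact alone already shows neither parent lies in $I_\bE$, hence $I_\bE\subset(p_L/q_L,p_R/q_R)$, and the classical mediant bound then gives $p'\ge p$, $q'\ge q$ with no reference to triples, the complete family, or symmetries. The paper instead argues directly with continued fractions: it shows $CF(p'/q')$ must agree with $CF(p/q)$ up to the last entry (else a shorter rational would lie in $I_\bE$), with $s_n'\ge s_n-1$ and a nonempty tail in the borderline case, and then deduces from the recursive construction of the weight expansion that every entry of $W(p'/q')$ dominates the corresponding entry of $W(p/q)$, so in particular $q'\ge q$ and $p'\ge p$. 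If you replace Proposition~\ref{prop:short} and Corollary~\ref{cor:disj1} by the minimal-length fact, your argument becomes a correct and arguably cleaner variant; as it stands, it does not prove the lemma.
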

\begin{proof}    It is proved in  \cite[\S2]{AADT} (see also \cite[\S2.2]{ball}, or \cite[Lem.14]{ICERM}) that  $\ell_{wt}(p/q)< \ell_{wt}(p'/q')$ for all (rational) $p'/q'\in I_{\bE} \less \{p/q\}$.  In other words $p/q$ is the unique point of shortest (weight) length in $I_{\bE}$.   Suppose that $p'/q' \in I_{\bE}$ and write 
$$
CF(p/q) = [s_0;s_1,\dots, s_n],  \quad CF(p'/q') = [s'_0;s'_1,\dots, s'_{n}, \dots] \quad \ s_n \geq 2.
$$

We claim that
\begin{itemlist}\item  {\it $s_i = s_i'$ for all $i<n$. }

This holds because otherwise at least one of the points $[s_0;s_1,\dots, s_i + \eps]$, where $\eps = 1, 0, -1$ would lie strictly between $p'/q'$ and $p/q$ and hence be in the interval $I_{\bE}$, even though its length is $<\ell_{wt}(p/q)$.  For example,\footnote{
See the beginning of Section~\ref{ss:arith} for further information on ordering continued fractions.} 
 if $p/q= [1;3,1,2]$ and $p'/q' = [1;2,8]$  then $p/q < [1;3] <[1;2,8]$.

 \item {\it $s'_n \ge s_n-1$, and if  $s'_n = s_n-1$ then $\sum_{i>n}s_i'\ge 2$. Hence $p'>p,q'>q$.}
If $s'_n \le s_n-2$ we can argue as before that the point   
$[s_0;s_1,\dots, s_n-1]$  lies strictly between $p/q$ and $p'/q'$.  Further we cannot have 
$p'/q' = [s_0;s_1,\dots, s_n-1]$ (since this has shorter weight 
length than $p/q$), and so the sum of the subsequent entries 
$\sum_{i>n}s_i'$ must be at least two by our convention that the last entry in any continued fraction is at least $2$. 
\end{itemlist}  
Thus $CF(p'/q')$ has the stated form.  Therefore at least one of the  entries of $W(p'/q')$ that correspond to the last block $1^{\times s_n}$ of $W(p/q)$ is $> 1$.  It now follows from the recursive definition of the weight sequence (where we construct it starting from the last block) that each entry in  $W(p'/q')$ is at least as large as the corresponding entry in $W(p/q)$.  The result now holds because $p/q \ne p'/q'$.
\end{proof}

\begin{lemma} \label{lem:md1}
 Let $\bE, \bE'$ be distinct perfect classes such that $J_{\bE'}\subset J_\bE$, and suppose also that both points in $\p J_\bE$ are unobstructed.  Then:
 \begin{itemlist}
\item[{\rm (i)}]  We must have
 $p'/q'\in I_\bE$ and $m/d<m'/d'$.
 \item[{\rm (ii)}]  Further $p/q< p'/q' $.
 \end{itemlist}
\end{lemma}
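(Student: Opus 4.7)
The plan is to derive both parts from the containment $J_{\bE'} \subset J_\bE$, the unobstructedness hypothesis on $\p J_\bE$, and the analytic properties of obstruction functions recorded in Lemma~\ref{lem:md}.

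For the first assertion of (i), observe that $J_\bE$ cannot contain $1/3$, since $c_{H_{1/3}}$ admits a staircase and hence $1/3$ is unobstructed. Thus $J_\bE$ and $J_{\bE'}$ lie in a common component of $[0,1)\less\{1/3\}$, on which $\acc$ is a homeomorphism. Consequently $I_{\bE'} = \acc(J_{\bE'}) \subset \acc(J_\bE) = I_\bE$, and $p'/q' \in I_{\bE'} \subset I_\bE$. This also forces $\eps = \eps'$. For the inequality $m/d < m'/d'$, I will argue by contradiction. I treat the case $\eps = 1$, so $J_\bE \subset (1/3,1)$; the case $\eps = -1$ is analogous. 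Suppose $m/d \ge m'/d'$, and set $b_0 = \p^- J_\bE$ with $z_0 = \acc(b_0) = \p^- I_\bE$. Unobstructedness of $b_0$ gives $\mu_{\bE, b_0}(z_0) = V_{b_0}(z_0)$, while $b_0 \notin J_{\bE'}$ forces $\mu_{\bE', b_0}(z_0) \le V_{b_0}(z_0) = \mu_{\bE, b_0}(z_0)$. If this inequality is strict, Lemma~\ref{lem:md}(ii) yields $\mu_{\bE', b}(z_0) < \mu_{\bE, b}(z_0)$ for all $b > b_0$; if it is an equality, Lemma~\ref{lem:md}(i) forces $\p_b \mu_{\bE', b} \le \p_b \mu_{\bE, b}$ at $b_0$, whence the same strict inequality holds on a right neighborhood of $b_0$ and Lemma~\ref{lem:md}(ii) again applies. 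Running the symmetric argument at $b_1 = \p^+ J_\bE$ propagates an analogous inequality in the opposite $b$-direction. Combining these propagation statements with the fact that $\mu_{\bE', b}(\acc(b)) > V_b(\acc(b))$ throughout $J_{\bE'} \subset J_\bE$, while $\mu_{\bE, b}(\acc(b)) = V_b(\acc(b))$ only on $\p J_\bE$, yields a contradiction. Hence $m/d < m'/d'$.

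For part (ii), I use the explicit parametrization of perfect classes. With $\eps = 1$, setting $r = p+q$ and $u = t/r$, one has $m/d = (1+3u)/(3+u)$, which is strictly increasing in $u$. From $t^2 = r^2 - 8pq + 8$ we obtain $u^2 = 1 - 8(pq-1)/r^2$, so $u$ is decreasing in $(pq-1)/(p+q)^2$. The inequality $m/d < m'/d'$ thus becomes
\[
\frac{pq-1}{(p+q)^2} \;>\; \frac{p'q'-1}{(p'+q')^2}.
\]
Part~(i) and Lemma~\ref{lem:pq} give $p'\ge p$ and $q'\ge q$ with strict inequality in at least one (as $\bE \ne \bE'$), so $r' = p'+q' > r$. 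Rearranging yields
\[
\frac{pq}{(p+q)^2} - \frac{p'q'}{(p'+q')^2} \;>\; \frac{1}{r^2} - \frac{1}{(r')^2} \;>\; 0.
\]
Writing $z = p/q$, $z' = p'/q'$, the left-hand side equals $\phi(z) - \phi(z')$ with $\phi(z) = z/(z+1)^2$; since $\phi$ is strictly decreasing on $(1, \infty)$ and $z, z' > a_{\min} > 5$, positivity forces $z < z'$, i.e.\ $p/q < p'/q'$.

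The hard part will be the $m/d < m'/d'$ claim in (i): Lemma~\ref{lem:md}(ii) only propagates inequalities in one $b$-direction at a fixed $z_0$, whereas the blocked interval condition involves the moving point $\acc(b)$. Converting the pointwise comparisons at the two endpoints of $J_\bE$ into the required global ordering of $m/d$ and $m'/d'$ is the main technical step. Once this is in hand, part (ii) follows from the algebraic manipulations sketched above.
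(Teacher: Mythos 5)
Your part (ii) is fine (given (i)): the reduction of $m/d<m'/d'$ to $\frac{pq-1}{(p+q)^2}>\frac{p'q'-1}{(p'+q')^2}$ via $u=t/r$, combined with $p'\ge p$, $q'\ge q$ from Lemma~\ref{lem:pq}, is just a repackaging of the computation the paper does (the paper passes through $t'(p+q)>t(p'+q')$ and the factorization $(pp'-qq')(p'q-q'p)>(p'+q')^2-(p+q)^2$), and both versions share the restriction to $\eps=1$. The first claim of (i) is also fine. The problem is exactly where you flagged it: the inequality $m/d<m'/d'$, and your sketch for it does not close and, as set up, cannot close. Knowing $\mu_{\bE',b}(z_0)\le\mu_{\bE,b}(z_0)$ at the fixed endpoint $z_0=\p^-I_\bE$ for all $b\ge b_0$ contradicts nothing: on $J_{\bE'}$ both classes satisfy $\mu_{\cdot,b}(\acc(b))>V_b(\acc(b))$, so having $\mu_{\bE'}$ dominated by $\mu_{\bE}$ (at $z_0$, or even everywhere) is perfectly compatible with $\bE'$ being a blocking class with $J_{\bE'}\subset J_\bE$. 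Moreover the ``symmetric argument at $b_1=\p^+J_\bE$ in the opposite $b$-direction'' is not available: under your standing assumption $m'/d'\le m/d$, Lemma~\ref{lem:md}(ii) only propagates the inequality upward in $b$, never downward from $b_1$.

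What is missing is the geometric input that the paper uses and that your argument never touches, namely perfectness of the classes via positivity of intersections. The paper's proof works at the center $p'/q'$, not at $\p^\pm I_\bE$: when $p'/q'<p/q$ it uses that unobstructedness of $\p^-J_\bE$ makes $\mu_{\bE,b^-}$ live on all of $\{z\in I_\bE: z\le p/q\}$ (\cite[Prop.42]{ICERM}), so $\mu_{\bE',b^-}\le\mu_{\bE,b^-}$ near $p'/q'$, while Lemma~\ref{lem:pos} (which is where $\bE\cdot\bE'\ge0$ enters) forces $\mu_{\bE',b}$ to dominate near $p'/q'$ for $b$ close to $m'/d'$; only then does Lemma~\ref{lem:md} convert the growth comparison into $m'/d'>m/d$. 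When $p'/q'>p/q$ the contradiction is arithmetic rather than dynamical: assuming $m'/d'\le m/d$ one first shows $m'/d'\in J_\bE$, and then the blocking inequality $\mu_{\bE,m'/d'}(\acc(m'/d'))>V_{m'/d'}(\acc(m'/d'))$, together with $p'/q'<\acc(m'/d')$ (\cite[Lem.2.2.11]{MM}) and $dd'-mm'\ge pq'$ (positivity of intersections plus Lemma~\ref{lem:WW}), yields $q'p>dd'-mm'\ge pq'$, a contradiction. Since your sketch uses only the two endpoint comparisons and Lemma~\ref{lem:md}, and never invokes liveness on a $z$-interval, Lemma~\ref{lem:pos}, or the intersection inequality, the central step of (i) remains unproved, and the statement is not plausibly provable without some such use of perfectness.
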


\begin{proof}
Because $\bE'$ is center-blocking by \cite[Prop.2.2.9]{MM}, $\acc_U^{-1}(p'/q')\in J_{\bE'}\subset J_\bE$ so that
$p'/q'\in I_{\bE'}\subset I_\bE$, as claimed.  Further, because  $m'/d' > J_{\bE'}$, we know that $m'/d' > \p^-(J_{\bE})$. 
\MS

Now suppose that $p'/q'< p/q$.  Because $\p J_\bE$ is unobstructed, $\mu_{\bE, b^-}$ is live on the interval $z\in I_{\bE}, z\le p/q$ for $b^-: =  \p^-(J_\bE)$ by \cite[Prop.42]{ICERM}. 
Therefore  we must have $\mu_{\bE',b^-}(z) \le \mu_{\bE,b^-}(z)$ for $b=b^-$ and $z$ close to $p'/q'$.  Moreover, because $\mu_{\bE',b^-}(z) $ is constant for $z>p'/q'$ while $\mu_{\bE,b^-}(z)$ is not,  this inequality is  strict  when $z> p'/q'$.   On the other hand, Lemma~\ref{lem:pos} shows that $\mu_{\bE', b}$ dominates $\mu_{\bE, b}$  for $z$ near $p'/q'$ when $b$ is sufficiently close to $m'/d'$.    Therefore $\mu_{\bE',b}(p'/q') $ must grow faster than $\mu_{\bE,b}(p'/q')$ as $b$ increases to $m'/d'$.  Hence Lemma~\ref{lem:md}~(ii) shows that we must have $m'/d'> m/d$ as claimed in (i). 
\MS

We next prove (i) when  $p'/q'>p/q$.   If also $m'/d' \le m/d$, then the obstruction $\mu_{\bE',b}(p'/q')$ changes no faster than $\mu_{\bE,b}(p'/q')$.  Also we have 
\begin{align*}
&\mu_{\bE',b}(p'/q')\le  \mu_{\bE,b}(p'/q'),\quad \mbox{ when } b = \p^+J_{\bE}, \quad \mbox{ while }\\
&\mu_{\bE',b}(p'/q') >  \mu_{\bE,b}(p'/q'),\quad \mbox{ when } b \approx m'/d'.
\end{align*}
It follows that we must have $m'/d'<  \p^+J_{\bE}$, since otherwise the obstruction $\mu_{\bE',b}(p'/q')$, which dominates at $m'/d'$, still dominates as $b$ decreases from $m'/d'$ to $\p^+J_{\bE}$ since it decreases more slowly than $\mu_{\bE,b}(p'/q')$ as $b$ decreases. (Note that $m'/d'$ is rational and so cannot equal $\p^{+} J_\bE$, which is irrational.) 
Next observe that because $p/q < p'/q'$ we have
 $$
 0\le \bE\cdot \bE' = dd'-mm'-qq' \bw(p/q)\cdot \bw(p'/q') \le dd'-mm'-pq'.
 $$
 by Lemma~\ref{lem:WW}.

Thus we find that
\begin{align*}
 \mu_{\bE,m'/d'}(\acc(m'/d'))& = \frac p{d- m \frac{m'}{d'}} \\
& > V_{m'/d'}(\acc(m'/d')) \quad \mbox{since $\mu_{\bE,m'/d'}$ blocks $m'/d'\in J_\bE$}\\
&= \frac{1 + \acc(m'/d')}{3- m'/d'} 
 > \frac {1+p'/q'}{3- m'/d'}
\end{align*}
where the last inequality uses the fact that $p'/q' < \acc(m'/d')$; see Remark~\ref{rmk:triple0}.
This simplifies to the strict inequality
$$
\frac{q'p}{dd'-mm'} > \frac{p'+q'}{3d'-m'}.
$$
But $ \frac{p'+q'}{3d'-m'}=1$, while $dd'-mm'\ge pq'$ by positivity of intersections.
Hence this is impossible, so that we must have $m'/d'> m/d$.  This proves (i).\MS

It is straightforward to check that the condition $m'/d'>m/d$ implies that $t'(p+q) > t(p'+q')$, i.e. 
$$
((p')^2  - 6p'q' + (q')^2 + 8)(p+q)^2 > (p^2  - 6pq + q^2 + 8)(p'+q')^2.
$$
This simplifies to
$$
(p'+q')^2 pq - p'q'(p+q)^2 > (p'+q')^2 - (p+q)^2,
$$
or equivalently
$$
(pp' - qq')(p'q-q'p) > (p'+q')^2 - (p+q)^2.
$$
If $p'/q' < p/q$ then $p'q-q'p< 0 $.  On the other hand, Lemma~\ref{lem:pq}~(i) implies that $p'+q'> p+q$, and $pp'-qq'>0$ because $p>q, p'>q'$.  Hence we must have $p/q< p'/q'$.  This proves (ii).  
\end{proof}

\NI \begin{proof}[Proof of Proposition~\ref{prop:disj}]
Since the endpoints of $J_{\bE}$ are unobstructed, this interval must either be disjoint from $J_{\bE'}$ or contain it.  If the latter,  
Lemma~\ref{lem:md1} shows that it suffices to consider the case when   $p/q<p'/q'$ and $m/d<m'/d'$. Since $\mu_{\bE',b}$ is live at $p'/q'$ when $b = m'/d'$ while 
$\mu_{\bE,b}$ is live at $p'/q'$ at the smaller value $b_\infty = \p^+J_{\bE}$, there is some $b\in (b_\infty, m'/d')$ where the two obstruction functions agree.   Thus, we have
$$
\frac{p}{d-mb} = \frac{p'}{d'-m'b},\quad i.e.\quad  b= \frac{p'd-pd'}{p'm - pm'}.
$$
If  $p'd-pd'>0$ then $p'm - pm'>0$ and we have
\begin{align*}  
\frac{p'd-pd'}{p'm - pm'}< \frac{m'}{d'}&\; \Longrightarrow  \; p'dd'-p(d')^2 < mm'p' - p(m')^2\\
&\; \Longrightarrow  \;p'(dd'-mm')< p((d')^2 - (m')^2)\\
&\; \Longrightarrow  \; p'pq'< p(p'q'-1) \;\;\mbox{ by } \eqref{eq:weight}, 
\end{align*}
which is impossible. Hence $
p'd-pd'<0, \;\; p'm - pm'< 0$
 which implies $$
 p'/p < d'/d,\qquad  p'/p < m'/m.
 $$
    With $\la: = p'/p$, write
$d'= (\la +\eps) d, m'=(\la + \eps')m$, where $\eps,\eps'>0$, and notice that  because $p'/q'> p/q$ we also have $q' = (\la -\eps'')q$ for some $\eps''>0$. 
Then
$$
3(\la d + \eps d) = \la p + \la m +\eps'm + \la q -\eps''q
$$
so that $3\eps d = \eps' m - \eps''q$, which implies that  
$\eps' m > 3\eps d$.  Further,
\begin{align*}  
\frac{pd'-p'd}{pm' - p'm} = \frac{p(\la d + \eps d) - \la p d }{p(\la m + \eps'm) - m\la p} =\frac{ \eps d}{\eps'm}.
\end{align*}
Thus, because $\eps' m > 3\eps d$, we find that
$$
b_\infty < \frac{ \eps d}{\eps' m}  < \frac{ \eps d}{3\eps d}  = \frac 13
$$
which is impossible since $b_\infty > 1/3$.  This completes the proof.
\end{proof}

 \begin{proof}[Proof of Proposition~\ref{prop:liveZ}~(i), (ii)]
 Claim (i) is proved in Corollary~\ref{cor:5T}.  To prove (ii) it remains to consider the possibility that
 all but finitely many of the steps $(\bE_k)_{k\ge 0}$ of some descending pre-staircase $\Ss^-_{\al_\infty}$ are not live at the limiting $b$-value $b_{\al_\infty}$.  Since there are no overshadowing classes by Lemma~\ref{prop:6T}, this can happen only if there are infinitely many exceptional classes $(\bE'_k)_{k\ge 0}, $ that are live at $b_{\al_\infty}$,  each of which  obscures a finite number of the step centers of $\Ss^-_{\al_\infty}$.
But then this new set of classes  $(\bE'_k)_{k\ge 0},$ forms a staircase for this $b$-value.  Indeed, the break points of  
these obstructions must converge to $\al_\infty$ by \cite[Thm.1.13]{AADT}.
 \end{proof}
 
 \begin{rmk}\rm It seems likely that all but finitely many of the steps in $\Ss^-_{\al_\infty}$ are live at 
 $b_{\al_\infty}$.  However, we have not analyzed the properties of exceptional but non-perfect classes in enough detail to be able to make this claim.    \hfill$\er$
 \end{rmk}

 \subsection{The special rational $b$}\label{ss:13stair}
 By arguing as in Remark~\ref{rmk:symm0}~(iii), one can show that if $b_i$ is a special rational value with $\acc(b_i) = v_i$ as in \eqref{eq:bi} then none of the perfect classes with centers $> a_{\min}$ are obstructive at $b_i$.  However, there are other exceptional classes that affect the capacity function.
 For example  $\bE = 3L - E_0 -  2E_1 - E_{2\dots6} = (3,1;\bbm)$ with $\bbm = (2,1^{\times 5})$ is such a class, 
 which seems to be live for $1/5<b<5/11$ on various intervals above the accumulation point $\acc(b)$; see \cite[Rmk.2.3.8]{MM} for a discussion of its properties.   Thus there could be some as yet undiscovered staircases that accumulate at these $b_i$ from above.

  We now show that these points $(b_i,v_i)$ cannot be  limits of ascending staircases.    
The proof hinges on the properties of the third strand\footnote
{
See Example~\ref{ex:SsU} for the definition of a strand of a staircase.}  of the staircase at $b=1/3$ that accumulates at $a_{\min}$.
This staircase is discussed in detail in \cite[Example 2.3.7]{MM}; see also Remark~\ref{rmk:quasitrip}.
It has a rather different structure from the staircases with accumulation points $>a_{\min}$ discussed above, since it has three intertwined strands, all ascending but with alternating values of $\eps$, so that successive steps need  not be adjacent. Moreover, the three strands cannot be assembled into one ascending, recursively defined staircase. 

We are mostly concerned here with the third strand that has initial seed 
$\bE_{1}=(2,0,5,1)$ with center $g_1/g_0 =5/1$ and $(t,\eps) = (2,-1)$, and subsequent steps 
$\bE_i, i\ge 2,$ with 
\begin{align}\label{eq:3rd}
\mbox{centers } \;\; S^{i-1}(g_1/g_0) = g_i/g_{i-1} ,\quad t=2,\quad \eps = (-1)^{i}, i\ge 2
\end{align}
As usual the degree coordinates $d,m$ are determined by \eqref{eq:pqt}; see \cite[eq.(3.3.2)]{MM}.
Since there seems to be no convenient reference in the literature we begin with the proof that
all the classes in this staircase are perfect.
We also show that these are the only perfect classes with centers $< a_{\min}$.

\begin{lemma} \label{lem:13perfect}
The classes  that form the steps of the staircase of $H_{1/3}$ are perfect classes. Moreover these are the only perfect classes with centers $< a_{\min}$.
\end{lemma}
\begin{proof}
As shown in \cite[Example 2.3.7]{MM}, the three seed classes for $H_{1/3}$ are 
\[ \bE_{seed,0}=(1,0,2,1, 1,-1), \quad \bE_{seed,1}=(2,1,4,1, 1, 1), \quad \bE_{seed,2}=(2,0,5,1,2,-1).\]
The centers of the subsequent classes are given by applying $S$, so that the
 next classes in the first two strands are $(5, 2,11,2, 1,1)$ and $(10,3,23,4,1,-1)$. 
One can easily check that these classes Cremona reduce.\footnote{An integral class $\bE$ such that $c_1(\bE) = 1, \bE\cdot\bE = -1$ is exceptional if and only if it Cremona reduces. This is a transformation on the coefficients of the classes, which for a specific class is easily computed. More details about this process can be found in \cite[Prop.1.2.12]{ball} and \cite[\S4.1]{MM}.}
Thus, the initial classes are perfect. Furthermore, the rest of the classes $\bE_j$ are formed by applying the shift $S$ to $\bE_j$. 
We then apply \cite[Lem.4.1.2]{MM}, which states that $S$ preserves Cremona equivalence provided that the centers of the classes considered are $\ge 5$.  Thus the rest of the classes $\bE_j$ also Cremona reduce, and hence are perfect. 

Now suppose that $\bE = (d,m,p,q)$ is
a perfect class with center $< a_{\min}$. 
Taking $b = 1/3$ and using \eqref{eq:muEb},
 we obtain the inequality 
 \begin{align}\label{eq:perf13}
c_{H_{1/3}}(p/q) \ge \frac {3p}{3d-m} = \frac {3p}{p+q} = \frac {3z}{1+z}.
\end{align}
On the other hand, the function $c_{H_{1/3}}(z), z\in [1,a_{\min}],$ was fully calculated in \cite[Prop.1.19]{AADT}.  It is piecewise linear with outer corners  at
the step centers on the graph of $z\mapsto  \frac {3z}{1+z}$, and inner corners lying strictly below this graph.  Therefore  the class $\bE$ must have its center $p/q$ at one of the steps of this staircase. Since perfect classes are determined by their centers by \cite[Lem.2.2.1]{MM}, this implies that $\bE$  must be one of these steps.
\end{proof}

We saw in \cite[Ex.22]{ICERM} that 
the obstruction function given by  the class $\bE_1 = (2,0,5,1)$ is constant and equal to 
$\mu_{\bE_1,b}(z) = 5/(2-b)$ for $z>5$, and
 when evaluated at $b = 1/5$  goes through the point $(6, 5/2) = \bigl(6,V_{1/5}(6)\bigr)$.  Therefore it obstructs the existence of an ascending staircase at $z=6, b=1/5$.  One might wonder 
how the function $\mu_{\bE_1,b}$ behaves 
 when $b\approx 1/5$   since we know from \cite{ICERM} that  there are sequences of values of $b$ that converge to $1/5$ from both sides that do admit ascending staircases.\footnote{These are the staircases in the $\Ss^E$ and $\Ss^L$ families constructed in \cite[Thm.~54,~58]{ICERM}. This behavior generalizes to the other special rational $b$ as shown in \cite{MM}.}  However it turns out that for all $b\ne1/5$ 
 the graph of the function $z\mapsto \mu_{\bE_1,b}(z)$, which is constant for $5<a_{\min}<z$, meets the volume curve $(z, V_b(z))$ at a point $z_0(b)$ that is strictly $< \acc(b)$ and hence is not obstructive.  We now apply $S$ to show that a similar phenomenon occurs at all the special points $v_i, i\ge 2$.  
 
 \begin{prop}\label{prop:noasc}   For each $i\ge 1$,  the class $\bE_i$ in \eqref{eq:3rd} is not a blocking class, though its obstruction does go through the point  $(\acc(b), V_b(\acc(b))$ where $b$ is the special rational point $b_{i+1} =  \acc_\eps^{-1}(v_{i+1}),$  where $\eps = (-1)^i$. 
In particular, the special rational $b$ have no ascending staircase.
\end{prop}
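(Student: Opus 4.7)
The proof splits into three coordinated parts:  (B) the obstruction $\mu_{\bE_i,b}$ passes through the point $(v_{i+1},V_b(v_{i+1}))$ at $b=b_{\eps,i+1}$, (A) $\bE_i$ is not a blocking class, and (C) no ascending staircase accumulates at a special rational $b$.  For (B) I would reduce the identity $\mu_{\bE_i,b_{\eps,i+1}}(v_{i+1})=V_{b_{\eps,i+1}}(v_{i+1})$ to an arithmetic statement about the sequences $(y_i)$ and $(g_i)$.  Using \eqref{eq:accform} to rewrite the right side as $(1+v_{i+1})/(3-b_{\eps,i+1})$, the explicit formula \eqref{eq:acceps} for $b_{\eps,i+1}$, and the relation $\eps t=3m-d$ extracted from \eqref{eq:pqt}, the equality reduces, after clearing denominators, to
\[
(g_i+g_{i-1})(y_i+y_{i+1})-8\,g_i y_i = 2
\]
(where $\bE_i$ has centre $g_i/g_{i-1}$ and $t_i=2$).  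This I would prove by induction on $i$:  the base case $i=1$ is $6\cdot7-40=2$, and the inductive step follows by substituting $g_{k+1}=6g_k-g_{k-1}$ and $y_{k+1}=6y_k-y_{k-1}$ and simplifying, with the extra cross terms cancelling via the identity at level $i$.

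For (A), when $i=1$ the fact that $m_1=0$ makes $\mu_{\bE_1,b}(\acc(b))=5/2$ independent of $b$ (using $\acc(b)\ge a_{\min}>5$), and since $b\mapsto V_b(\acc(b))$ attains its global minimum $5/2$ at $b=1/5$ (visible in Fig.~\ref{fig:1} and directly verified from \eqref{eq:accform} together with $\acc'(1/5)=-5/2$), we conclude $J_{\bE_1}=\emptyset$.  For $i\ge 2$ I would propagate this via the shift $S$:  since $\bE_i=(S^{i-1})^\sharp\bE_1$ as perfect classes by Lemma~\ref{lem:13perfect} and \eqref{eq:3rd}, and since the action of $S$ on obstruction data sends blocked intervals to blocked intervals in the manner of Corollary~\ref{cor:main1iipf}, one inductively obtains $J_{\bE_i}=\emptyset$.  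Alternatively, a direct polynomial approach is viable:  show that the numerator $(1+\acc(b))(d_i-m_ib)-p_i(3-b)$ has a double zero of the correct sign at $b=b_{\eps,i+1}$, generalising the factorisation $-(5b-1)^2(b-3)^2$ that governs $i=1$; the tangency $f_i'(b_{\eps,i+1})=0$ comes from the identity $\acc'(b_{\eps,i+1})(d_i-m_ib_{\eps,i+1})^2=p_i\eps_i t_i$ which follows from \eqref{eq:pqt}.

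For (C), observe that by (B) and the piecewise linear shape of $\mu_{\bE_i,b}$ (break at $z=p_i/q_i$, constant thereafter), we have $\mu_{\bE_i,b_{\eps,i+1}}(z)=V_{b_{\eps,i+1}}(v_{i+1})$ for every $z\in[p_i/q_i,v_{i+1}]$.  Suppose for contradiction that an ascending staircase existed at $b=b_{\eps,i+1}$ with step centres $p_k/q_k\nearrow v_{i+1}$ and outer corner heights $h_k=p_k/(d_k-m_kb)$.  Since $b$ is unobstructed at $v_{i+1}$, each step must satisfy $h_k<V_b(v_{i+1})$ for all but finitely many $k$ (otherwise the constant part of $\mu_{\bE_k,b}$ would extend past $v_{i+1}$ and block the accumulation point), with $h_k\to V_b(v_{i+1})^-$.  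But for all but finitely many $k$ the centre $p_k/q_k$ lies in $[p_i/q_i,v_{i+1}]$, and hence $c_{H_b}(p_k/q_k)\ge\mu_{\bE_i,b}(p_k/q_k)=V_b(v_{i+1})>h_k=\mu_{\bE_k,b}(p_k/q_k)$, forcing $\bE_k$ to be not live at its centre and contradicting the existence of the staircase.

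The main obstacle is making (A) rigorous for $i\ge 2$:  the shift route needs a sign-preservation of $V-\mu$ that is not literally contained in Corollary~\ref{cor:main1iipf}, since the $\bE_i$ lie outside $\Cc\Ss^U$ (their centres are less than $a_{\min}$), while the polynomial route demands an inductive description of the double-root factorisation with careful bookkeeping across the two branches $\acc_\pm^{-1}$.  I expect the shift route to be cleaner once the appropriate compatibility is verified, since it directly reuses the elegant minimum-of-$V\circ\acc$ argument that handles the base case.
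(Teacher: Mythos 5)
Your part (B) is correct (your identity $(g_i+g_{i-1})(y_i+y_{i+1})-8g_iy_i=2$ is indeed equivalent to $\mu_{\bE_i,b_{\eps,i+1}}(v_{i+1})=V_{b_{\eps,i+1}}(v_{i+1})$, using $3d_i-m_i=g_i+g_{i-1}$, $d_i-3m_i=-2\eps$ and \eqref{eq:acceps}, and the induction closes), and your part (C) is essentially right once you repair the strictness slip: the case $h_k=V_b(v_{i+1})$ is not excluded by a blocking argument, but in that case $c_{H_b}$ is constant on an interval abutting $v_{i+1}$ from the left, which equally forbids infinitely many nonsmooth points accumulating there. The genuine gap is exactly where you flag it: claim (A) for $i\ge 2$, which is the heart of the proposition, is not proved. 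The shift route needs the statement that $S^\sharp$ preserves the sign of $V_b(\acc(b))-\mu_{\bE,b}(\acc(b))$ (equivalently sends empty blocked intervals to empty blocked intervals) for classes whose centers lie \emph{below} $a_{\min}$; Corollary~\ref{cor:main1iipf} only covers classes in $\Cc\Ss^U$ and its symmetric images, whose blocked intervals are pinned down by accumulation points of principal pre-staircases, and no such structure is available here. Worse, the $\bE_i$ are borderline: the quantity $V_b(\acc(b))-\mu_{\bE_i,b}(\acc(b))$ vanishes to second order at exactly one $b$, so a soft ``symmetries preserve unblockedness'' argument is precisely the kind of statement that needs an identity, not an appeal to functoriality. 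Your alternative polynomial route is the right one, but what you have proved --- the identity in (B) --- only shows that $b_{\eps,i+1}$ is a \emph{root} of the relevant numerator; non-blocking requires the double root together with the correct sign of the quadratic (in $b$), and the derivative identity you assert (``follows from \eqref{eq:pqt}'') is itself an unproven Pell-number computation of the same difficulty as the main claim.

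For comparison, the paper closes this gap as follows. Lemma~\ref{lem:blockz} reduces ``$\bE_i$ blocks $b$'' to $z^{\bE}_b>\acc(b)$, using that the obstruction is constant across $\acc(b)$ (which is justified by $g_i/g_{i-1}<v_{i+1}$ and a weight-length comparison, a point your write-up should also record). The inequality $z^{\bE}_b\le\acc(b)$ is then rewritten as \eqref{eq:horrid} and shown to be \emph{identically} equal to $C\bigl(b(3y_{i+1}+3y_i+\eps)-(y_{i+1}+y_i+3\eps)\bigr)^2\ge 0$ with $C=1/16$, i.e.\ a perfect square vanishing exactly at $b=b_{\eps,i+1}$; this simultaneously gives non-blocking for every $b$ and the tangency statement of (B). The three coefficient identities (in $b^2$, $b$, $1$) are quadratic expressions in even-placed Pell numbers, so by \cite[Lem.6.6]{FM} they hold for all $i$ once they are checked for three values, and the paper verifies $i=1,2,3$ directly. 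If you want to keep your inductive style instead, you would need to run inductions for all three coefficient identities (or for the full factorization), not just the single evaluation identity you established; as it stands, your proposal proves the ``passes through the accumulation point'' half and the ``no ascending staircase'' consequence, but not the central non-blocking claim for $i\ge 2$.
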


\begin{rmk}\rm  By \eqref{eq:3rd}, the class  $\bE_i$ has $m_i/d_i > 1/3$ exactly if $i$ is even 
so that the corresponding rational $b$ is  $b_{i+1}> 1/3$.  \hfill$\er$  \end{rmk}

We begin the proof with the following lemma.

\begin{lemma}\label{lem:blockz}  Let $\bE = (d,m,p,q)$ be a perfect class such that
$\mu_{\bE,b}(z)$ is constant on the interval $[p/q, p'/q']$  and suppose that  $p/q< \acc(b) < p'/q'$ for some $b$.
Then $\bE$ blocks  $b$  only if $z^\bE_b> \acc(b)$ where 
\begin{align}\label{eq:blockz}
z^\bE_b = \frac{(3p-d) - b(p-m)}{d-mb}.
\end{align}
\end{lemma}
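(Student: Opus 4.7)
The plan is a direct computation: translate the definition of \lq\lq$\bE$ blocks $b$\rq\rq\ into the explicit inequality $z^\bE_b > \acc(b)$ using the two standard formulas we already have at hand. First I would recall that by \eqref{eq:muEb}, when $z\ge p/q$ the obstruction function takes the constant value $\mu_{\bE,b}(z) = p/(d-mb)$, and under the hypothesis that $\mu_{\bE,b}$ is constant throughout $[p/q,p'/q']$ and that $p/q<\acc(b)<p'/q'$, this value is in particular attained at $z=\acc(b)$. By definition, $\bE$ blocks $b$ precisely when $\mu_{\bE,b}(\acc(b)) > V_b(\acc(b))$.

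Second, I would plug in the identity \eqref{eq:accform}, namely
$$
V_b(\acc(b)) = \frac{1+\acc(b)}{3-b},
$$
and write the blocking inequality as
$$
\frac{p}{d-mb} > \frac{1+\acc(b)}{3-b}.
$$
Since $\mu_{\bE,b}$ is obstructive at $p/q$ we have $d-mb>0$, and $3-b>0$ trivially, so the inequality is equivalent to $p(3-b) > (d-mb)(1+\acc(b))$, i.e.
$$
(d-mb)\,\acc(b) < p(3-b)-(d-mb) = (3p-d) - b(p-m).
$$
Dividing through by $d-mb$ gives $\acc(b) < z^\bE_b$, which is precisely the claim.

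There is really no obstacle here: the whole content of the lemma is the algebraic rearrangement above, combined with the two preceding identities \eqref{eq:muEb} and \eqref{eq:accform}. The hypotheses on the constancy of $\mu_{\bE,b}$ and on $\acc(b)$ lying in $[p/q,p'/q']$ are used only to justify that $\mu_{\bE,b}(\acc(b)) = p/(d-mb)$, and the positivity of $d-mb$ (which follows from the class being obstructive at its center) is used to preserve the direction of the inequality when clearing denominators. The proof should take just a few lines.
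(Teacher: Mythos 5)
Your proof is correct and follows essentially the same route as the paper: both identify the constant value $p/(d-mb)$ of $\mu_{\bE,b}$ at $\acc(b)$, compare it with $V_b(\acc(b)) = \frac{1+\acc(b)}{3-b}$ via \eqref{eq:accform}, and solve for the crossing point $z^\bE_b$. The paper merely dresses the last step geometrically (the line $z\mapsto\frac{1+z}{3-b}$ crossing the volume curve at $\acc(b)$ with larger slope), which is the same content as your algebraic rearrangement using $d-mb>0$.
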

\begin{proof}  Our assumptions imply that 
the function $(b,z)\mapsto\mu_{\bE,b}(z) = \frac{p}{d-mb}$ does not depend on $z$ near $z=\acc(b)$, 
so that  we can think of $\mu_{\bE,b}(z) $  as a function of $b$ only. 
For each $b$ the graph of $z\mapsto V_b(z) = \sqrt{\frac z{1-b^2}}$ meets  the graph of $z\mapsto \frac{1+z}{3-b}$  at the point $z=\acc(b)$.  It is easy to check that at this point the line has larger slope than the volume curve.  
  Thus  if $\bE$ blocks $b$, the point $z^\bE_b$ where  $ \frac{p}{d-mb} = \frac{1+z}{3-b}$ must be larger than $\acc(b)$.  But this point is precisely given by the formula in \eqref{eq:blockz}.  \end{proof}

\begin{proof}[Proof of Proposition~\ref{prop:noasc}] Notice first that  $g_{i}/g_{i-1} < v_{i+1} = y_{i+1}/y_{i}$ for all $i\ge 1$.  Indeed this holds when $i=1$ since $g_1/g_0 = 5/1$ and $v_2 = 6/1$, 
and therefore it holds for all larger $i$ since the shift $S$ preserves orientation and increases the index $i$ by $1$ on each side. 
A similar argument shows that  $g_i = y_{i+1}-y_i$.   We next claim that
 $\ell_{wt}(g_{i}/g_{i-1}) < \ell_{wt}(y_{i+1}/y_i)$ for all $i$, since it holds when $i=1$ and the calculation
 $$
S\bigl([5;x]\bigr) = S\bigl(\frac{5x+1}x\bigr) = \frac{29x+6}{5x+1} = [5;1,4,x],
$$
where $x = [x_0; x_1,\dots]\ge 0$, shows that 
  applying $S$ increases the weight length of any two numbers $z,z'\in [5,6)$ by the same amount.
 Since $g_i/g_{i-1}< a_{\min}$, the hypotheses of
Lemma~\ref{lem:blockz} holds. 
Therefore the obstruction from the perfect class $\bE_{i} = (d_{i }, m_{i}, g_{i}, g_{i-1})$ in \eqref{eq:3rd} with center $g_{i}/ g_{i-1}$
 is constant near $v_{i+1}$.
 Hence by Lemma~\ref{lem:blockz} it suffices to show that 
 $z^\bE_b\le \acc(b)$ where 
\begin{align*}
z^\bE_b& = \frac{(3(y_{i+1}-y_i) - d_i) - b((y_{i+1}-y_i) - m_i)}{d_i - m_i b}.
\end{align*}
By \eqref{eq:3rd},  $8d_i = 3(y_{i+1}-y_{i-1}) + \eps,\ 8m_i = (y_{i+1}-y_{i-1}) + 3\eps$, where $\eps = (-1)^i$ is as in \eqref{eq:bi}.  Further because $\acc(b)$ is the solution to the equation
$$
\acc(b) + \frac 1{\acc(b)} = (3-b)^2/(1-b^2) - 2,
$$
  $z^\bE_b\le \acc(b)$  exactly if
\begin{align*}
z^\bE_b + \frac{1}{z^\bE_b} \le (3-b)^2/(1-b^2) - 2.
\end{align*}
That is, if 
  \begin{align*}
\frac{( z^\bE_b + 1)^2}{z^\bE_b} \le \frac{(3-b)^2}{1-b^2}
\end{align*}

Since
  \begin{align*}
z^\bE_b + 1 &= \frac{3(y_{i+1}-y_i) - d_i) - b((y_{i+1}-y_i) - m_i) + d_i - m_i b}{d_i - m_i b}\\
& = \frac{3(y_{i+1}-y_i)- b(y_{i+1}-y_i)}{d_i - m_i b} = 
 \frac{(3-b)(y_{i+1}-y_i)}{d_i - m_i b},
\end{align*}
this  simplifies to the inequality
  \begin{align}\label{eq:horrid}
(d_i - m_i b)\Bigl(3(y_{i+1}-y_i) - d_i - b((y_{i+1}-y_i) - m_i)\Bigr) - (y_{i+1}-y_i)^2 (1-b^2)\ge 0.
\end{align}
We claim that this inequality is equivalent to $(b-b_{i+1})^2 \ge 0$  where $b_{i+1}$ is as in 
\eqref{eq:bi}.  Thus we must show that for some constant $C$, the 
coefficients of $b^2, b, 1$ in  \eqref{eq:horrid} are respectively
$$ 
 C(3y_{i+1} + 3y_i + \eps)^2, \quad -2C(3y_{i+1} + 3y_i + \eps)(y_{i+1} + y_i + 3\eps), \quad 
C(y_{i+1} + y_i + 3\eps)^2.
$$
Since $d_i,m_i$ are given by linear expressions in the $y_i$ and the $y_i$ are even placed Pell numbers,
these are quadratic expression in the Pell numbers.  But by 
\cite[Lem.6.6]{FM} these hold in general if and only if they hold for three distinct values of $i$.  (Note that because the $y_i$ correspond to even placed Pell numbers the sign $(-1)^s$ in this result is always $= 1$.)
 But when $i = 1,2,3$ the quantities $ (d_i,m_i, y_{i+1},y_i, \eps)$ are equal to 
 $$
 (2,\ 0,\ 6,\ 1,\ -1), \quad (13,\ 5,\ 35,\ 6,\ 1),\quad (74,\ 24,\ 204,\ 35,\ -1)
$$
and it is straightforward to check that the required identities hold with $C = 1/16$. This completes the proof.
\end{proof} 
 
 \begin{cor}\label{cor:13stair}  Theorem \ref{thm:main}~(iv) holds.
 \end{cor}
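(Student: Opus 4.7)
The plan is to combine Corollary~\ref{cor:short2} with Proposition~\ref{prop:noasc}, both of which are already established in this subsection. Theorem~\ref{thm:main}~(iv) makes two assertions, and each will follow from one of these inputs almost immediately; the work in this section has been to set them up.

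First, to show that the only rational numbers that can serve as staircase accumulation points are the special rational points, I will use equation~\eqref{eq:accb1}: if $H_{b_\infty}$ has a staircase then its accumulation point $z_\infty = \acc(b_\infty)$ satisfies $c_{H_{b_\infty}}(z_\infty) = V_{b_\infty}(z_\infty)$, i.e.\ $z_\infty$ is unobstructed. Since $\acc(b) \ge a_{\min} = 3 + 2\sqrt 2$ and $a_{\min}$ is irrational, any rational accumulation point must lie strictly in $(a_{\min}, \infty)$. Now Corollary~\ref{cor:short2} says that every rational point in $(a_{\min}, \infty)$ is blocked by some perfect class, with the sole exceptions being the points $v_i$, $i \ge 2$: part~(i) handles blocking by classes with $m/d > 1/3$ (the points $v_{2i+1}$, $i \ge 1$ are the only unobstructed ones), while part~(ii) handles blocking by classes with $m/d < 1/3$ (the points $v_{2i}$, $i \ge 1$ are the only unobstructed ones). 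Together these rule out every rational number in $(a_{\min}, \infty)$ except the $v_i$, which are by definition the special rational points.

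Second, to show that any staircase accumulating at a special rational point must be descending, I will apply Proposition~\ref{prop:noasc} directly. That proposition asserts that no special rational $b$-value admits an ascending staircase; the point being that for each $i \ge 1$ the perfect class $\bE_i$ from the third strand of the staircase at $b = 1/3$ gives an obstruction function whose graph passes through the accumulation point $(\acc(b), V_b(\acc(b)))$ at the corresponding special rational $b$, with the graph lying at or above the volume curve immediately to the right of $\acc(b)$. This rules out ascending staircases, so any staircase at a special rational point must descend.

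The main conceptual obstacles have already been overcome in proving the two inputs: Corollary~\ref{cor:short2} rests on the density and disjointness results for the blocked intervals built from the complete families $(S^iR^\de)^{\sharp}(\Cc\Ss^U)$ together with the symmetries, while Proposition~\ref{prop:noasc} rests on the identification of the third strand at $b = 1/3$, the verification that its steps are perfect (Lemma~\ref{lem:13perfect}), and the quadratic identity~\eqref{eq:horrid} checked via \cite[Lem.6.6]{FM}. The corollary itself is therefore a short synthesis, and the only subtlety to check is the harmless boundary case $z_\infty = a_{\min}$, which is excluded automatically by irrationality.
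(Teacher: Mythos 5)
Your proposal is correct and follows essentially the same route as the paper: the first assertion of Theorem~\ref{thm:main}~(iv) is deduced from Corollary~\ref{cor:short2} (rational points other than the $v_i$ are blocked, hence cannot be unobstructed accumulation points), and the second from Proposition~\ref{prop:noasc}. The extra details you supply (the use of \eqref{eq:accb1} and the irrationality of $a_{\min}$) are harmless elaborations of what the paper leaves implicit.
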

 \begin{proof}  This states that the only rational numbers that can be accumulation points of staircases are 
 the points $v_i, i\ge 2$, and if there is such a stair it must descend.  The first claim is proved in 
Corollary~\ref{cor:short2}
while the second is proved in Proposition~\ref{prop:noasc}.  \end{proof}

 \begin{rmk} \rm
 Proposition~\ref{prop:noasc} only considers the third strand of the staircase at $H_{1/3}$. The other two strands of the staircase at $H_{1/3}$ have seeds $\bE_{seed,0}=(1,0,2,1)$ and $\bE_{seed,1}=(2,1,4,1).$ The rest of the classes are determined by taking $(S^k)^\sharp(\bE_{seed,i}).$ The classes in these two strands are also not blocking classes, but unlike the third strand, the obstructions from these strands do not go through the point $\bigl(\acc(b),V_b(\acc(b)\bigr)$ for any $b.$ Thus, these perfect classes do not obstruct either ascending or descending staircases for any value of $b.$  \hfill$\er$
 \end{rmk}
 
 The proof that $b_2=1/5$ has no descending staircase in \cite[Thm.94]{ICERM} shows that for some $\de>0$ and $z\in[6,6+\de)$, the capacity function $c_{1/5}(z)$ equals the obstruction from the 19th ECH capacity, or equivalently, from the nonperfect exceptional class $(d,m;\mathbf{m})=(5,1;2^{\times6},1)$. Experimental evidence suggests that the class $(3,1;2,1^{\times5})$ plays this role for all higher $i$.
 
 \begin{conjecture}\label{conj:specialdesc} For $i>2$, $c_{H_{b_i}}$ has no descending staircase. Furthermore, for some $\de>0$ and $z\in[v_i,v_i+\de)$, the capacity function $c_{H_{b_i}}(z)$ equals the obstruction from the 8th ECH capacity, or equivalently, from the nonperfect exceptional class $(3,1;2,1^{\times5})$.
 \end{conjecture}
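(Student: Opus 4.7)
The plan is to reduce both parts of the conjecture to a single claim: on some right neighborhood $[v_i,v_i+\varepsilon)$, the obstruction $\mu_{\bE'',b_i}$ is \emph{live}, i.e.\ equals $c_{H_{b_i}}$. Once this is established the second assertion is tautologous, and the first follows because $\mu_{\bE'',b_i}$ is linear on this interval (see below), whereas a descending staircase accumulating at $v_i$ would require infinitely many nonsmooth step corners in every right neighborhood of $v_i$. Note that $v_i<6$ for all $i>1$: indeed $v_2=6$, the shift $S$ maps $(a_{\min},\infty)$ contractively into $(a_{\min},6)$ fixing $a_{\min}=3+2\sqrt2$, and the special rationals of interest here have $z$-coordinate in $(a_{\min},6)$.

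The first task is to pin down the form of the obstruction. For $z=p/q\in(5,6)$ the weight expansion begins $W(z)=(q^{\times 5},p-5q,\dots)$, so the numerator of $\mu_{\bE'',b}(z)$ equals $2q+4q+(p-5q)=p+q$, giving $\mu_{\bE'',b}(z)=(1+z)/(3-b)$. By~\eqref{eq:accform} this line passes through $(v_i,V_{b_i}(v_i))$, meeting the volume curve tangentially at the accumulation point and dominating it for $z>v_i$. The remaining and central step is to show $\mu_{\bE'',b_i}$ is the \emph{largest} obstruction on $[v_i,v_i+\varepsilon)$. I would split potential competitors into three families: (i) perfect classes with center $>a_{\min}$; by Corollary~\ref{cor:disj1} these all lie in some $(S^jR^\de)^\sharp(\Cc\Ss^U)$, and a slope/curvature computation patterned on Proposition~\ref{prop:noasc} should show that the corresponding graph touches $(v_i,V_{b_i}(v_i))$ only from below; (ii) exceptional classes with center at most $a_{\min}$, whose obstructive intervals lie strictly to the left of $v_i$; and (iii) other non-perfect exceptional classes, whose obstructions could in principle pass through the accumulation point with steeper slope.

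The main obstacle is category~(iii), since neither perfectness nor the adjacency/triple framework developed in this paper applies to it. For $b=1/5$ the argument in \cite[Thm.94]{ICERM} dispatched this via a direct ECH capacity comparison showing that the $19$th ECH capacity --- realized by $(5,1;2^{\times 6},1)$ --- dominates the $8$th at $z=6$; that specific integer-valued telescoping breaks down for $i>1$ because the high-index ECH capacities at smaller $v_i$ behave qualitatively differently. The most promising route seems to be via the almost toric fibration technology of Magill~\cite{M1} that already underlies Corollary~\ref{cor:perfATF}: for each special rational $b_i$, construct an ATF on a suitable blowup of $H_{b_i}$ whose moment polytope realises an explicit full symplectic filling of $\tfrac{1+v_i}{3-b_i}\,E(1,v_i)$, with edge slopes matching the $\bE''$ obstruction precisely. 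Combined with the scaling property and a stabilization argument in the spirit of \S\ref{ss:stab}, this would promote the equality $c_{H_{b_i}}(v_i)=\mu_{\bE'',b_i}(v_i)$ to a full right neighborhood of $v_i$, simultaneously giving the live obstruction and ruling out descending staircases.
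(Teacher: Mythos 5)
The statement you are addressing is not a theorem of the paper: it is stated as Conjecture~\ref{conj:specialdesc} and left open, so there is no proof of record to compare with. Your proposal does not close it either. The reduction in your first paragraph is fine and is essentially the paper's own framing (see Remark~\ref{rmk:main0}~(v) and the paragraph preceding the conjecture): if $c_{H_{b_i}}$ agrees with the linear function $z\mapsto\frac{1+z}{3-b_i}=\mu_{\bE'',b_i}(z)$ on $[v_i,v_i+\varepsilon)$, then there are no nonsmooth points above the volume curve in a right neighborhood of $v_i$, hence no descending staircase. Likewise your computation that the numerator of $\mu_{\bE'',b}$ is $1+z$ for $z\in(5,6)$, and the identity \eqref{eq:accform} showing the line passes through the accumulation point, are correct (though the line meets the volume curve transversally with steeper slope, as in the proof of Lemma~\ref{lem:blockz}, not tangentially). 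Your family~(i) can indeed be handled: the paper notes at the start of \S\ref{ss:13stair} that no perfect class with center $>a_{\min}$ is obstructive at a special rational $b_i$, by the test $|b_id-m|<\sqrt{1-b_i^2}$ of Lemma~\ref{lem:basic} as in Remark~\ref{rmk:symm0}~(iii).

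The genuine gap is exactly where you flag it, and it is the entire content of the conjecture: showing that $\mu_{\bE'',b_i}$ is the \emph{maximal} obstruction on some $[v_i,v_i+\varepsilon)$, i.e.\ ruling out your category~(iii) of non-perfect exceptional classes. You do not carry this out; you only indicate that an ATF construction ``in the spirit of'' \cite{M1} ``seems the most promising route.'' But the ATF results quoted in this paper (Corollary~\ref{cor:perfATF}) produce full fillings only \emph{at} accumulation points of ascending pre-staircases of perfect classes, which is how they show points are unobstructed; they say nothing about the value of $c_{H_{b_i}}(z)$ for $z$ strictly above $v_i$, where $c_{H_{b_i}}(z)>V_{b_i}(z)$ and a full filling of the conjectured size does not exist. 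No such construction is known, which is why the authors could verify the claim only for $i=1$ via the ECH computation of \cite[Thm.94]{ICERM} and otherwise cite experimental evidence. The stabilization discussion of \S\ref{ss:stab} also cannot help here: it transfers known equalities $c_{H_b}=\mu_{\bE,b}$ at centers of \emph{perfect} classes to the stabilized functions, and the paper explicitly cautions that obstructions from non-perfect classes such as $\bE''$ are not expected to stabilize, so it provides no mechanism for computing $c_{H_{b_i}}$ near $v_i$ in the first place. As written, your proposal is a plausible strategy outline whose decisive step remains open, so the conjecture is not proved.
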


\subsection{Stabilization}\label{ss:stab} 

We now show that all the staircases found in this paper stabilize.  

Consider the {\bf stabilized embedding function }
$$
c_{H_b,s}(z) = \inf\ \bigl\{ \la\ \Big| \ E(1,z)\times \R^{2s} \se \la H_b\times \R^{2s}\bigr\} ,\quad s\ge 0.
$$
We always have $c_{H_b} (z) = c_{H_b,0} (z) \ge c_{H_b,1}(z) \ge c_{H_b,2}(z) \ge \dots$ because by taking the product with the identity $\id_{\R^2}$ any embedding 
$\io:  E(1,z)\times \R^{2s} \to \la H_b\times \R^{2s}$ extends to an embedding 
$\io\times \id_{\R^2}:  E(1,z)\times \R^{2s+2} \to \la H_b\times \R^{2s+2}$.

\begin{rmk}\rm We will base our discussion on the arguments in \cite{CGHM} that consider the stabilized embedding function for $\C P^2$.  These arguments extend to the semipositive case,  that is to arbitrary six-manifolds and to monotone manifolds of any dimension.   Thus, unless $b=1/3$, we consider only the case $s = 1$.   However, this restriction is purely technical, and our results should hold for all $s$. \hfill$\er$
\end{rmk}

\begin{lemma}\label{lem:stab}  Let $\bE = (d,m,p,q)$ be a perfect class. 
Then,
$c_{H_b,s}(p/q) \ge \mu_{\bE,b}(p/q)$ for $s=1$ and all $b \in [0,1)$.
When $b = 1/3$, this holds for all $s\ge 1$.
\end{lemma}
\begin{proof}  The analogous result  was proved in \cite{CGH} in the case of the target $\CP^2$, that is when $b=0$. A rather different proof of this result was outlined in  \cite[Rem.3.1.8(ii)]{CGHM}.    The latter methods  extend almost immediately to the current situation.  One simply needs to replace the ambient manifold $\CP^2$ by the  blowup $H_b$ 
in the basic stabilization result \cite[Prop.3.6.1]{CGHM} that specifies when an immersed  curve  in a four-dimensional cobordism  of both genus and Fredholm index zero
persists under stabilization.\footnote
{
A detailed proof of a generalized version of this result that applies explicitly to the semipositive case will appear in a forthcoming paper by McDuff--Siegel~\cite{MSie}.} 
One important point is  that the statement in \cite[Prop.3.6.1]{CGHM} permits both the multiplication of the target by $\R^{2}$ and the perturbation of the symplectic form on the target.
This holds because the crucial compactness statement applies to a generic  path $J_t$ in the space $\Jj(T)$, where $(J_t)_{t\in [0,1]}$ is a path  of admissible almost complex structures that are compatible with a path $\om_t$ of symplectic forms on the target
In particular, one can change $b$ during this deformation. However, it is important that the form remain semipositive, since otherwise one would need to use virtual techniques to 
 control the possible degenerations.    Thus we must either fix $b=1/3$ or take $s=1$.
 
 To
 apply \cite[Prop.3.6.1]{CGHM}, it suffices to
 produce a 
 suitable curve in the negative completion $X_{b_0}$ of $\la H_{b_0}\less \io(E(1,p/q+\de))$ in class $dL - mE_0$ that has one negative end on the $p$-fold cover of the short orbit $\be_1$ on $\p E(1,p/q+\de)$, where  $b_0\in [0,1)$ is any suitable value and $\de$ is a suitably small constant.  This curve necessarily has genus zero because it is constructed by neck-stretching an exceptional sphere.

 We will construct this curve for $b_0 = m/d$, where $(d,m)$ are the degree coordinates of $\bE$, by using the method explained in 
  \cite[\S3.1]{CGHM}.  As we will see, the argument given there is much simplified by the fact that  when $b=m/d$ the class $\bE$ is live at $z=p/q$, so that
   the 
  capacity function $c_{H_{m/d}}(p/q)$ is equal to the obstruction $\mu_{\bE, m/d}(p/q)$. 
  Geometrically this means that, for any $\eps>0$, 
   there is a symplectic embedding $\io:E(1,p/q+\de)\to \la H_{m/d}$, where $\de>0$ is sufficiently small and
   $$
  \la: = c_{H_b}(p/q) +\eps = \frac {p}{d- m^2/d} + \eps=  \frac{pd}{pq-1} + \eps,
  $$
where we use Lemma \ref{lem:basic} (i) and the identities in \eqref{eq:muEb}.
  
  Let $(m_1,\dots,m_n) = (q,\dots,1)$ be the integral weight expansion of $p/q$. By 
  \cite[Prop.2.1.2]{CGHM}, for any $\de,\de'>0$ we may embed the disjoint union of $n$ balls of capacities $(1-\de')m_1,\dots,(1-\de')m_n$ into the interior of $\io(E(1,p/q+\de))$ and then blow them up to obtain a symplectic form $\Tilde{\om}$ on $H_{m/d}\# n {\ov \CP}\,\!^2$ in the class Poincar\'e dual to $\la (L - (m/d)E_0) - (1-\de') \sum_i m_i E_i$. We now consider what happens to the unique representative $C_\bE$ of the class $\bE$ in this blowup manifold as we stretch the neck around the boundary $\p(\io(E(1,p/q+\de)))$ of the ellipsoid.  As described in \cite[\S3.1]{CGHM}, the curve $C_\bE$ converges to a limiting building, whose  
 top component $C_U$ lies in the negative completion $X_{m/d}$ of $H_{m/d}\less \io(E(1,p/q+\de))$.
 Thus $C_U$ has negative end on some orbit set  $\{(\be_1,k)\}\cup \{ (\be_2,\ell)\}$, where 
 this notation means that the ends on the short orbit $\be_1$ have total multiplicity $k$, while
 those on the long orbit $\be_2$ have total multiplicity $\ell$.
 Our aim is to show that (when $\eps,\de,\de'$ are all sufficiently small) there is  only one possibility for $C_U$, namely $C_U$ must be connected, embedded, and have  one negative end on 
$\be_1$ of multiplicity $p$.   Granted this, the result follows from \cite[Prop.3.6.1]{CGHM}.

For all $\de, \eps>0$, the energy $
 \la (d - m^2/d) -  (k + \ell (p/q + \de))
 $
 of the top part $C_U$ is positive but arbitrarily small.  Hence, because $\la = \frac {p}{d-m^2/d} + \eps$, we must have the equality
 $$
 \frac {p}{d-m^2/d} (d - m^2/d) = k + \ell (p/q), \quad \mbox { i.e. } \ p = k + \ell p/q.
  $$
Since $p,q$ are relatively prime and $k,\ell \ge 0$, the only possibilities are: $k=p, \ell=0$ or $k=0, \ell = q$.   Further, if $C_U$ were disconnected or multiply covered, there would  be $d'<d, m'<m$ and either $k<p$ or $\ell<q$ such that
$$
\frac {p}{d-m^2/d} (d' - m' m/d) = k, \;\;  \mbox{ or } \;\; \frac {p}{d-mb} (d' - m' m/d) = \ell (p/q)
 $$ 
 Since $d^2-m^2 = pq -1$, the first of these equations simplifies to
 $p(dd'-mm') = k(pq-1)$, which implies $p|k$ and hence has no solution.  Similarly, the second equation implies that $q|\ell$, so that it also has no solution.
 Therefore $C_U$ must be connected and somewhere injective.
 
  We can now appeal to the properties of the ECH index $I(C)$.  As explained in the survey article \cite[\S3.4]{Hut},  this index $I(C)$ is a generalization of the quantity $c_1(A) + A\cdot A$ for a curve in a closed manifold of homology class $A$, and has the property that  $I(C)\ge 0$ for any immersed curve $C$. Moreover, by \cite[(2.2.28)]{CGHM} and \cite[(2.4)ff]{CGH},
  if the above curve $C_U$   has negative end on  $\{(\be_1,k)\}\cup \{ (\be_2,\ell)\}$ then
  $$
  I(C) = d^2  - m^2  + 3d - m - {\rm gr}(\be_1^k\be_2^\ell),
  $$
   where ${\rm gr}(\be_1^k\be_2^\ell)$ is twice the number of lattice points  $(m,n)$ that lie in the nonnegative quadrant of the plane and (strictly) below the line
   through $(k,\ell)$ with slope $-q/(p+ \de)$.
  Thus, when $\gcd(p,q) = 1$, 
  $$
  {\rm gr}(\be_1^p) = pq + p+q-1,\quad  {\rm gr}(\be_2^q) = pq + p+q+1.
  $$
  Since
  $$
 d^2  - m^2  + 3d - m =  pq -1 + p+q,
 $$
 there can be no curve in $X_{m/d}$ in class  $dL - mE_0$ with negative end on $\be_2^q$, since such a curve would have $I(C) < 0$.  Moreover,  curves in this class with negative end on 
$\be_1^p$ have  $I(C)=0$.  By \cite[\S3.9]{Hut}, this implies that it must have ECH partitions, which in the current situation means that it has one negative end: see for example \cite[Rmk.2.2.1ff]{CGHM}. The fact that $C_U$ has Fredholm index zero follows from \cite[(3.1)]{Hut}, \cite[(2.2.28)]{CGHM}, and the computation of the monodromy angle of $\beta_1$ (called $\gamma_1$) below \cite[(2.2.23)]{CGHM}.
This completes the proof.
\end{proof}

\begin{prop}\label{prop:stab} \begin{itemize}\item[\rm (i)] For all $b\notin {\it Block}$ except the special rational $b$,  the stabilized embedding function $c_{H_b,1}$ has a staircase. If, in addition, $b\notin \p({\it Block})$ and $b\neq1/3$, then $c_{H_b,1}$ has both ascending and descending staircases. 
\item[\rm (ii)] $c_{H_{1/3},s}(z) = c_{H_{1/3}}(z)$ for all $s\ge 1$ and all $1\le z\le a_{\min}$.
\end{itemize}
\end{prop}  
\begin{proof}  We always have $c_{H_b,s}(z) \le c_{H_b}(z)$, because any  embedding $\io: E(1,z)\to \la H_b$  extends to  an embedding $\io\times \id: 
E(1,z)\times \R^{2s}\to \la H_b \times \R^{2s}$.   Lemma~\ref{lem:stab} implies that if $H_b$ has a staircase with steps given by perfect classes $\bE_k$ with centers at the points $p_k/q_k$  then the stabilized function $c_{H_b,1}(z)$ must equal $c_{H_b}(z)$  at the step centers, and for $z\approx p/q$  have $c_{H_b}(z)$ as an upper bound.  
But  by the remarks about overshadowing classes before \eqref{eq:accform}, we know that  $c_{H_b}(z)= \mu_{\bE_k, b}(z)$ for $z\approx p/q$.  Thus
$c_{H_b}(z)$ is determined for $z\approx p/q$  by 
the scaling property  $c_{H_b,1}(\la z) \le \la  c_{H_b,1}(z)$ (as in \cite[Prop.2.1]{AADT}), and by monotonicity (i.e. the fact that $c_{H_b,1}(z)$ is nondecreasing).
Since $c_{H_b,1}(z)$ also has these properties for each $s$ it follows that
 $c_{H_b,1}(z) = c_{H_b}(z)$ in some neighborhood of each step center. 
Thus all the staircases that we have found stabilize. This proves (i).

The argument (i), now applied with any $s\ge 1$, shows that the staircase in $H_{1/3}$ stabilizes.  Thus for each $s$ 
$c_{H_{1/3},s}(z) = c_{H_{1/3}}(z)$ in a neighborhood of the centers of the steps of the $1/3$ staircase.  Since elsewhere 
$c_{H_{1/3},s}(z) \le c_{H_{1/3}}(z)$, it follows from  scaling and monotonicity
that these functions must coincide on the whole interval $1\le z\le a_{\min}$.
 This proves (ii).
\end{proof}

Although the calculation of $c_{H_b,s}$ for an arbitrary $b\in (0,1)$ seems out of reach at present,  the monotone case is more approachable.  Indeed, Hind's folding construction
in \cite{H} implies that  
\begin{align}\label{eq:Hind}
c_{H_{1/3},s}(z)\le \frac{3z}{1+z}, \quad z\ge 1, \ s\ge 1.
\end{align}
As in the case when the target is a ball, the graph of $z\mapsto \frac{3z}{1+z}$ crosses the volume constraint at the staircase accumulation point $a_{\min} = 3 + 2\sqrt2$, and one can conjecture that
\begin{align}\label{eq:stabconj}
 c_{H_{1/3},s}(z)=\frac{3z}{1+z},  \quad z\ge a_{min}, \ s\ge 1.
 \end{align}
 This is known as the {\bf Stabilized embedding conjecture} for $H_{1/3}$.  

\begin{cor} \label{cor:13StabConj} The stabilized embedding conjecture holds for the monotone Hirzebruch surface $H_{1/3}$  on the closure of the set of all points $z>a_{\min}$ that are the centers of perfect classes.
\end{cor}
\begin{proof} For all perfect classes $\bE = (d,m,p,q)$,
 we saw in the proof of Lemma~\ref{lem:13perfect} that
Proposition~\ref{prop:stab} and equation~\eqref{eq:perf13} imply that
$$
c_{H_{1/3},s}(z) \ge 
 \frac {3z}{1+z}, \quad \mbox{ when } z = p/q 
$$
Hence by \eqref{eq:Hind} we must have equality at these points.  The statement in the lemma then follows by continuity.  
\end{proof}

\begin{rmk}\label{rmk:13stab}\rm (i) The set of $z$ for which we know that
$c_{H_{1/3},s}(z) = \frac {3z}{1+z}$ has quite complicated structure because  the function $b\to \acc(b)$ is two-to-one; and it is probably best understood via Figure~\ref{fig:symm}.  For example, even though the only step center in the interval $[w_2,w_1] = [41/7,7]$ with $b>1/3$ is $6$, there are infinitely many step centers corresponding to classes with $b<1/3$ --- indeed all the steps centers  in the complete families $S^\#(\Cc\Ss^{U})$ and $R^\#(\Cc\Ss^{U})$.
Nevertheless, one can check that it is nowhere dense.
\MS

\NI  (ii) It may well not be true that, when $H_b$ has a staircase, $c_{H_b}(z) = c_{H_b,1}(z)$ for $z$ in some neighborhood of $z_\infty: = \acc(b)$ since the capacity function $c_{H_b}(z)$ might be determined near $z_\infty$ by curves that do not stabilize.
    For example, when $\acc(b) < 6$, figures such as \cite[Fig.5.3.1]{ICERM}  seem to show that the obstruction $z\mapsto \frac{1+z}{3-b}$ from the (nonperfect) class $\bE': = 3L-E_0 - 2E_1 - E_{2\dots6}$ is $b$-live for some values of $z>\acc(b)$  that 
are arbitrarily close to $\acc(b)$. However, 
because $\frac{1+z}{3-1/3} > \frac{3z}{1+z}$ when $z>a_{\min}$, it follows from
 \eqref{eq:Hind} that this obstruction does not stabilize when $b = 1/3$. Hence the curve that gives this embedding obstruction does not persist under stabilization for any $b$ 
 since  curves that do stabilize 
are not sensitive to changes in the parameter $b$.
  
An explicit example of this kind was worked out in \cite{CGHM} in the case of $H_0$ (or the ball) and the
 class $\bE'' = 3L- 2E_1 - E_{2\dots7}$.  This class, which obstructs  the capacity function $c_{H_0}$ of the ball for $z\in (\tau^4,7]$, definitely does not stabilize.  Indeed,  if one blows  $H_0$ up seven times inside the ellipsoid $E(1,7+\de)$ and  stretches the neck as in the proof of Lemma~\ref{lem:stab},  
one can show that the  top component of the resulting building has two negative ends, one on the long orbit and one on the short orbit, so that the stabilization result \cite[Prop.3.6.1]{CGHM} does not apply. 
Further, \eqref{eq:Hind} implies that  $c_{H_0,s}(7) \le 21/8$, while the obstruction from $\bE''$ at $z = 7$, if it did stabilize, would be the larger value $8/3$.
\MS

\NI
(iii) Here is another example of an obstruction that does not stabilize even though it is given by the \lq\lq nearly perfect\rq\rq\, class $\bE = 6L - 3E_0 - 2E_{1\dots 7}$.  (See Remark~\ref{rmk:except} for further discussion of this class.)
When $b = 1/3$, 
we have $\mu_{\bE,1/3}(7) = 14/5 > 21/8 = c_{H_{1/3},k}(7)$.  Therefore the obstruction from this class cannot stabilize.  Note that the proof of Lemma~\ref{lem:stab} breaks down  because we would be considering trajectories with negative end on the short orbit of $\p E(1,7+)$ with total multiplicity $14$,  and in this case the ECH partition is $(7,7)$.
Thus these curves do not have a single negative end.
  \hfill$\er$
\end{rmk}

\appendix
\numberwithin{equation}{section}

\section{Continued fractions}\label{app:arith}
For the convenience of the reader, we here collect together some useful facts about continued fractions.
Each rational number $a>1$ has a continued fraction representation $[s_0;s_1,\dots,s_n]$, where  $s_i$ is a positive integer, and $n>0$ unless $a\in \Z$. 
By convention, the last entry in a continued fraction is always taken to be $> 1$, since $[s_0;s_1,\dots,s_n, 1] = [s_0;s_1,\dots,s_n+ 1]$; for example
$$
[1;3,1] = 1 + \frac 1{3 + \frac 11} =  1 + \frac 1{4} = [1;4].
$$

\NI {\bf Order properties:}
If $n$ is odd, then $[s_0;s_1,\dots,s_n]> [s_0;s_1,\dots,s_n + 1]$, in other words, if the last place is odd,  increasing this  entry decreases the number represented.
For example, because $\frac 2{13}< \frac 3{19}$, we have
\begin{align*}
[1;3,6,2] = 1 + \frac 1{3 + \frac 1{ 6 + \frac 12}} = 1 + \frac 1{3 + \frac 2{13}}  >
[1;3,6,3] = 1 + \frac 1{3 + \frac 1{ 6 + \frac 13}} = 1 + \frac 1{3 + \frac 3 {19}}.
\end{align*}
Further, although increasing an even place usually  increases the number represented, the opposite is true
if one increases an even place\footnote{This place would necessarily be the one just after the last; and notice that the initial place is labelled $0$ and hence is considered to be even.} from $0$ to a positive number. For example 
\begin{align}\label{eq:ctfr4}
[1;4] = 1+\frac 14 > [1;4,2] = 1+\frac 1{4 + \frac 12} = 1 + \frac 29 > [1;4,1]= [1;5] = 1 + \frac 15.
\end{align}
Similarly, if one increases an odd place from $0$, the corresponding number increases even if the new entry $s_n$ is $1$.  Thus
\begin{align}\label{eq:odd}
[s_0;s_1,\dots,s_{n-1}]< [s_0;s_1,\dots,s_n] \quad\mbox { if $n$ is odd} \ \forall s_n\ge 1. 
\end{align} 
For example, $2 < [2;1,2] <[2;1] = 3.$\MS

\NI {\bf Length:} 
There are two natural notions of the  length of a continued fraction, namely
\begin{align}\label{eq:WCFlength}
\ell_{CF} ([s_0;\dots, s_n]) : = n+1,\quad \ell_{wt}([s_0;\dots, s_n]): = \sum_{i=0}^n s_i.
\end{align}
The first notion is common in the theory of continued fractions, while  in our previous papers we used the second notion since this describes the length of (or number of nonzero entries in) the corresponding 
weight expansion  $W(p/q)$ and hence plays a central role in the theory; witness the fact proved in \cite[Prop.2.30]{AADT} that if the  function $z\mapsto \mu_{\bE,b}(z)$ is obstructive (that is, greater than the volume) on some interval $I$, then there is a unique point\footnote
{
In the case of a quasi-perfect class, this point is the center; cf. the formulas in \eqref{eq:muEb}.}
  $z_0\in I$ of minimal weight length  at which its slope changes.
Since we will now use both these notions, for clarity we will call $\ell_{CF}(p/q) $ the {\bf CF-length} of $p/q$, while $\ell_{wt}(p/q)$ is its {\bf weight length}. 
\MS

Notice that by Remark~\ref{rmk:recur0}~(i), the accumulation points of our staircases are quadratic irrationals and therefore have infinite continued fractions that are eventually periodic.
\MS

\NI {\bf Weight decomposition:}  The {\bf integral weight decomposition} $W(p/q)$ of a rational number
$p/q= [s_0;\dots,s_n]$ is an array of numbers that are recursively defined\footnote{
Note that this recursion can be read either as defining $q_{\al+1}$ in terms of $q_\al, q_{\al-1}$, or as defining  
 $q_{\al-1}$ in terms of $q_\al, q_{\al+1}$ where the last nonzero entry is $1$.}
 as follows:
\begin{align}\label{eq:Wpq}
W(p/q) &= \bigl(q_0^{\times s_0}, q_1^{\times s_1},\dots, q_n^{\times s_n}\bigr), \quad\mbox{ where }\\ \notag
& q_0 =q, \ q_1= p -s_0 q_0,\dots, q_\al = q_{\al-2} - s_{\al-1}  q_{\al-1}, \al\ge 2,\;\; q_n = 1.
\end{align}
For example, $5/3=[1;1,2]$ and $W(5/3) = (3;2,1,1)=:(3; 2, 1^{\times 2})$. If $W(p/q)$ is $(w_1,\dots,w_N)$ then
\begin{align}\label{eq:Wpq1}
\sum_{i=1}^N w_i = p+q-1, \quad \sum_{i=1}^N w_i^2 = pq,\quad w_1 = q, \;\; w_N = 1.
\end{align}
Using the fact that the first entry of $W(p/q)$ is $q$, we can interpret the other entries as the denominators $q_\al$ of the $\al$th \lq\lq tail\rq\rq\, $p_\al/q_\al$ of  $p/q = [s_0;\dots,s_n]$.  
Namely, if we define $p_\al/q_\al: = [s_\al;\dots,s_n]$, then we have
\begin{align}\label{eq:Wpi}
W(p_\al/q_\al): =  W([s_\al;\dots,s_n])
 = \bigl(q_\al^{\times s_\al} , \dots, q_n^{\times s_n}\bigr), \quad 0\le i\le n.
\end{align}
Moreover, $q_n = 1, q_{n-1} = s_n,$ and the other terms $q_{n-1}, q_{n-2},\dots$ can be calculated from the recursion
$q_{n-\al-1} = s_{n-\al}q_{n-\al} + q_{n-\al+1}$.  In particular, 
$$
q=q_0,\quad  p= p_0 = s_0q_0 + q_1.
$$
Each group of terms $q_\al^{\times s_\al}$ is called a block. 

Note that the entries of $W(p/q)$ are integers. We also sometimes use the weight decomposition of $p/q$ defined as follows: 
\begin{align}\label{eq:wpq}
w(p/q): = \frac 1q W(p/q) = (1,\dots, 1/q)
\end{align}
More generally, the weight expansion of $z\approx p/q$ is defined by the same recursion that defines $w(p/q)$.  For example if $6 <z<7$, $w(z) = (1^{\times 6}, z-6,\dots)$.
For more information, see \cite[Lem.2.2.1]{ball}.

\section{Computations on triples}
\setcounter{thm}{0}
\numberwithin{thm}{section}

We now gather together various computations that are needed for the main argument.
Note that for any index $\bullet = \la, \mu, \rho$  we define $r_\bullet: = p_\bullet + q_\bullet$.

\begin{lemma}\label{lem:prelim0}  The following identities and inequalities hold in any triple $\Tt$.
\begin{itemlist}\item [{\rm(i)}] $t_\rho r_\mu - t_\mu r_\rho = 8p_\la$.
\item [{\rm(ii)}]    $t_\rho d_\mu - t_\mu d_\rho = 3p_\la$,\  $t_\la d_\mu - t_\mu d_\la = 3q_\rho$
\item [{\rm(iii)}]  $m_\rho d_\mu -  m_\mu d_\rho = \eps p_\la$,  $m_\mu d_\la - m_\la d_\mu = \eps q_\rho$
 \item [{\rm(iv)}] $t_\la r_\mu - t_\mu r_\la = 8q_\rho$.
 \item[{\rm (v)}]  $t_\la t_\rho < 2t_\mu$; in particular  $3\le t_\la, t_\rho< t_\mu$.
 \end{itemlist}
 \end{lemma}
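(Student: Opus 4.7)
The proof naturally splits into three layers. First, I would reduce the four equalities (i)--(iv) down to just (i) and (iv). Using the formulas $8d = 3r + \eps t$ and $8m = r + 3\eps t$ from~\eqref{eq:formdm}, a direct calculation gives
\[
64(m_\rho d_\mu - m_\mu d_\rho) = 8\eps(t_\rho r_\mu - t_\mu r_\rho), \qquad
8(t_\rho d_\mu - t_\mu d_\rho) = 3(t_\rho r_\mu - t_\mu r_\rho),
\]
together with the $\la\leftrightarrow\rho$ analogs. Thus (i) is equivalent to the first half of (iii) and implies the first half of (ii), while (iv) similarly handles the second halves. So it suffices to prove (i), (iv), and (v).

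Second, I would prove (i) by reducing it to a simpler, linear identity. Multiplying both sides by $r_\mu$, applying the adjacency relation $r_\mu r_\rho = t_\mu t_\rho + 8 p_\mu q_\rho$ (from $\bE_\mu,\bE_\rho$ adjacent) and the quasi-perfect identity $r_\mu^2 - t_\mu^2 = 8(p_\mu q_\mu-1)$, and dividing by $8$ converts (i) to the equivalent statement
\[
t_\rho(p_\mu q_\mu - 1) = p_\la r_\mu + t_\mu p_\mu q_\rho.
\]
Now substitute $t_\rho = q_\la p_\mu - p_\la q_\mu$ (from Lemma~\ref{lem:recur0}(iii) applied to the $t_\rho$-compatible pair $\bE_\la,\bE_\mu$); after expansion the left side factors as $p_\mu$ times $q_\mu t_\rho - r_\la$, so cancelling $p_\mu$ leaves
\[
t_\rho q_\mu - t_\mu q_\rho = r_\la.
\]
An entirely parallel calculation reduces (iv) to $t_\la p_\mu - t_\mu p_\la = r_\rho$. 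To establish these two linear identities, I would use the explicit expression $t_\mu = p_\la p_\rho - 6 p_\la q_\rho + q_\la q_\rho$, derived by combining $\bx_\la^T B \bx_\rho = 0$ (adjacency of $\bE_\la,\bE_\rho$, giving $t_\la t_\rho = p_\la p_\rho - 7p_\la q_\rho + q_\la p_\rho + q_\la q_\rho$) with condition (d) of Definition~\ref{def:gentr}, together with $t_\la = q_\mu p_\rho - p_\mu q_\rho$. Substituting these into $t_\la p_\mu - t_\mu p_\la$ and collecting terms, one uses the adjacency relation $t_\la t_\mu = p_\la p_\mu - 7p_\la q_\mu + q_\la p_\mu + q_\la q_\mu$ (from $\bx_\la^T B \bx_\mu = 0$) to cancel the higher-order terms, leaving exactly $r_\rho$. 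The analogous manipulation yields $t_\rho q_\mu - t_\mu q_\rho = r_\la$. The main obstacle here is bookkeeping: the identity is forced by the compatibility conditions, but the cancellations are not obvious and require tracking many cross terms.

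Finally for (v): The bounds $t_\la, t_\rho \ge 3$ are built into Definition~\ref{def:gentr}. Condition (d) gives $t_\la t_\rho - t_\mu = q_\la p_\rho - p_\la q_\rho > 0$ since $p_\la/q_\la < p_\rho/q_\rho$, hence $t_\la t_\rho > t_\mu$. For the strict inequality $t_\la t_\rho < 2t_\mu$, substitute the closed form for $t_\mu$ above: it becomes
\[
q_\la p_\rho - p_\la q_\rho < p_\la p_\rho - 6 p_\la q_\rho + q_\la q_\rho,
\]
equivalently $p_\la(p_\rho - 5 q_\rho) > q_\la(p_\rho - 6 q_\rho)$. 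When $p_\rho > 6q_\rho$ this follows at once from $p_\la > q_\la$ and $p_\rho - 5q_\rho > p_\rho - 6q_\rho > 0$; when $a_{\min}\, q_\rho < p_\rho \le 6q_\rho$ the right-hand side is non-positive while the left is positive, since $p_\rho > a_{\min}\, q_\rho > 5q_\rho$. Combined with $t_\la,t_\rho \ge 3$, the bound $t_\la t_\rho < 2t_\mu$ forces $t_\rho \le t_\la t_\rho/3 < 2t_\mu/3 < t_\mu$ and symmetrically $t_\la < t_\mu$, completing (v).
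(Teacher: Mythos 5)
Your strategy is genuinely different from the paper's: the paper simply quotes \cite[Lem.3.2.2]{M1} for (i) and (iv) and proves (v) by checking the basic triples and showing the inequality persists under mutation and the symmetries, whereas you attempt a self-contained derivation from the axioms of Definition~\ref{def:gentr}. Your reduction of (ii) and (iii) to (i) and (iv) via $8d=3r+\eps t$, $8m=r+3\eps t$ is exactly the paper's step and is correct (carried out carefully it in fact shows the second identity in (iii) should read $m_\la d_\mu-m_\mu d_\la=\eps q_\rho$, as one can check on $\Tt^0_*$). Your reduction of (i) and (iv) to the linear identities $t_\rho q_\mu-t_\mu q_\rho=r_\la$ and $t_\la p_\mu-t_\mu p_\la=r_\rho$ is also correct: I verified the factorization through $p_\mu$ (resp.\ $q_\mu$). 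The genuine gap is the step where these two identities are actually proved. After substituting $t_\la=q_\mu p_\rho-p_\mu q_\rho$ and $t_\mu=p_\la p_\rho-6p_\la q_\rho+q_\la q_\rho$, the expression $t_\la p_\mu-t_\mu p_\la$ is linear in $(p_\rho,q_\rho)$ with coefficients quadratic in the $\la,\mu$ data and contains no $t_\la t_\mu$ term, so the adjacency relation $t_\la t_\mu=p_\la p_\mu-7p_\la q_\mu+q_\la p_\mu+q_\la q_\mu$ cannot be ``used to cancel the higher-order terms'' as described. What is actually needed is to treat condition (c) and the closed form of $t_\mu$ as two linear equations in $(p_\rho,q_\rho)$, solve them (the determinant is $t_\la t_\mu-t_\rho$), and then verify a quadratic identity in the $\la,\mu$ data using their quasi-perfectness and adjacency. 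That computation is the real content of (i) and (iv) --- precisely what the paper outsources to \cite{M1} --- and it is not carried out in your proposal.

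There is also an algebra slip in (v). With condition (d) and the closed form of $t_\mu$, the inequality $t_\la t_\rho<2t_\mu$ is equivalent to $p_\la(p_\rho-5q_\rho)>q_\la(p_\rho-q_\rho)$, not to $p_\la(p_\rho-5q_\rho)>q_\la(p_\rho-6q_\rho)$; your case analysis, which only uses $p_\la>q_\la$, proves the weaker (incorrect) target and does not suffice for the correct one. The statement is still true and your approach is repairable: since $t_\la\ge3$ forces $p_\la^2-6p_\la q_\la+q_\la^2\ge1$ and hence $p_\la/q_\la>a_{\min}$, while for $z:=p_\rho/q_\rho>a_{\min}$ the decreasing function $(z-1)/(z-5)$ satisfies $(z-1)/(z-5)<(a_{\min}-1)/(a_{\min}-5)=a_{\min}$, one gets $p_\la/q_\la>a_{\min}>(p_\rho-q_\rho)/(p_\rho-5q_\rho)$, which is exactly what is needed; the deduction of $t_\la,t_\rho<t_\mu$ from $t_\la t_\rho<2t_\mu$ and $t_\la,t_\rho\ge3$ is fine. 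If completed, this would give a cleaner proof of (v) than the paper's (which only verifies it for triples obtained from the basic ones by mutation and symmetry), but as written both (v) and the key identities behind (i) and (iv) are not yet established.
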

\begin{proof}  The formulas (i) and (iv) are proved in  \cite[Lem.4.6]{M1} by direct computation using the adjacency and $t$-compatibility conditions.\footnote
{
One could also check them for the basic triples $\Tt^n_*,n\ge 0,$ 
and show that they are preserved by  symmetries and mutations.}
Formulas (ii) and (iii) can be deduced from (i) since $8d_\bullet = 3r_\bullet + \eps t_\bullet$ while  $8m_\bullet = r_\bullet + 3\eps t_\bullet$ by \eqref{eq:pqt}.  

Using the formulas in \eqref{eq:baU}, it is straightforward to check
that  (v)  holds for the basic triples in $\Cc\Ss^U$.  Moreover (v) continues to holds under mutation.
 Indeed, if  (v) holds for $\Tt$ then it  holds for  $x\Tt$,  provided that
 $
  ( t_\la t_\mu - t_\rho)^2 + t_\mu^2< t_\la( t_\la t_\mu - t_\rho) t_\mu$.  But this
 simplifies to $t_\mu^2 < t_\rho(t_\la t_\mu - t_\rho)$, which holds by assumption.  
A similar argument works for the mutation $y\Tt$.   Therefore (v) holds for all
 triples in $\Cc\Ss^U$.
It follows that  (v) holds for all triples  since the variable $t_\bullet$ is preserved by the symmetries $S,R$ and the given expression is symmetric in $\la,\rho$ (which are interchanged by $R$.)
\end{proof}

\begin{cor}\label{cor:prelim0}  Let  $\Tt$ be any triple and denote $
 r_\bullet: = p_\bullet + q_\bullet.$
Then 
 \begin{equation}\label{eq:rt}
 \frac{r_\mu}{t_\mu} >  \max\bigl(\frac{r_\la}{t_\la}, \frac{r_\rho}{t_\rho}\bigr),
 \end{equation}
 except in the case
 of $y^kR^{\sharp}(\Tt^0_*)$, when we have 
 \begin{equation}\label{eq:rtR0}
 \frac{r_\la}{t_\la}=\frac{r_\mu}{t_\mu} =  3> \frac{r_\rho}{t_\rho}=1/3.
\end{equation}
\end{cor}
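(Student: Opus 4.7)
The strategy is to read the two desired inequalities directly off of identities (i) and (iv) of Lemma~\ref{lem:prelim0}, and then to pin down precisely when each of those identities degenerates into an equality.

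Concretely, dividing identity~(i) by $t_\mu t_\rho$ and identity~(iv) by $t_\mu t_\la$ gives
\begin{equation}\label{eq:keydiff}
\frac{r_\mu}{t_\mu}-\frac{r_\rho}{t_\rho}=\frac{8p_\la}{t_\mu t_\rho},\qquad
\frac{r_\mu}{t_\mu}-\frac{r_\la}{t_\la}=\frac{8q_\rho}{t_\mu t_\la}.
\end{equation}
Since $t_\bullet\ge3$ by Lemma~\ref{lem:prelim0}~(v), the sign of each difference is controlled entirely by $p_\la$ and $q_\rho$, respectively. Thus strict inequality \eqref{eq:rt} holds precisely when $p_\la>0$ and $q_\rho>0$, and the content of the corollary is to determine exactly which triples can violate one of these positivity conditions.

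The second step is a case analysis on the basic triples generating the families $(S^iR^\de)^\sharp(\Cc\Ss^U)$. For every basic triple $\Tt^n_*$, inspection of \eqref{eq:baU} gives $p_\la=2n+6$ and $q_\rho=1$, both positive. Applying $S$ never kills $p$ or $q$, because $p>q>0$ for the classes at hand forces $6p-q>0$. Applying $R$ can only produce a zero coordinate when acting on the class $\bB^U_0$, where $R(\frac61)=\frac10$; this is the anomalous class $R^\sharp(\bB^U_0)=(0,-1,1,0,3,-1)$ of Remark~\ref{rmk:symm0}. Consequently, among the basic triples $(S^iR^\de)^\sharp(\Tt^n_*)$, the only case with a non-positive $p,q$-coordinate is $R^\sharp(\Tt^0_*)$, and there it is exactly the $\bE_\rho$-slot that has $q_\rho=0$. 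Under $y$-mutation, $\bE_\rho$ is preserved (by construction), whereas an $x$-mutation would replace $\bE_\rho$ by $\bE_\mu$ whose $q$-entry is nonzero. Hence $q_\rho=0$ persists exactly along the orbit $y^k R^\sharp(\Tt^0_*)$, $k\ge0$; and since $p_\la$ never vanishes in our families (the analogous check for $p_\la$ is symmetric and equally short), this single orbit is the unique exceptional case.

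It remains to establish the numerical values $3$ and $1/3$ in the exceptional case. For $\bE_\rho=R^\sharp(\bB^U_0)$ one has $r_\rho=1$, $t_\rho=3$, giving $r_\rho/t_\rho=1/3$ immediately. Equation \eqref{eq:keydiff} already forces $r_\la/t_\la=r_\mu/t_\mu$ when $q_\rho=0$; to compute the common value, one verifies it directly for $k=0$ using \eqref{eq:baL} with $n=0$, namely $r_\la/t_\la=15/5=3$ and $r_\mu/t_\mu=39/13=3$, and then argues by induction on $k$: a $y$-mutation sends $\bE_\la^{new}=\bE_\mu^{old}$, so if both ratios equal $3$ before the mutation, then $r_\la^{new}/t_\la^{new}=r_\mu^{old}/t_\mu^{old}=3$, and \eqref{eq:keydiff} with $q_\rho=0$ then forces $r_\mu^{new}/t_\mu^{new}=3$ as well.

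The only subtle step is the second one, the case analysis verifying that $R^\sharp(\bB^U_0)$ is genuinely the sole source of a vanishing $p_\la$ or $q_\rho$ across all of $(S^iR^\de)^\sharp(\Cc\Ss^U)$. Everything else is an immediate consequence of Lemma~\ref{lem:prelim0} combined with the elementary observations above.
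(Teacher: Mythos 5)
Your proof is correct and follows essentially the same route as the paper: divide the identities of Lemma~\ref{lem:prelim0}~(i),(iv) to reduce \eqref{eq:rt} to the positivity of $p_\la$ and $q_\rho$, observe that $q_\rho=0$ occurs exactly along the orbit $y^kR^\sharp(\Tt^0_*)$ coming from the anomalous class $R^\sharp(\bB^U_0)$, and obtain \eqref{eq:rtR0} from \eqref{eq:baL} together with the fact that $q_\rho=0$ forces $r/t$ to be constant along the ascending strand. The only difference is that you spell out the case analysis (and the induction giving the constant value $3$) which the paper states in a single sentence.
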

\begin{proof}  The inequality \eqref{eq:rt} follows from Lemma~\ref{lem:prelim0}~(i) and (iv) whenever $p_\la, q_\rho > 0$.    This holds in all cases except for the triples $y^kR^{\sharp}(\Tt^0_*)$ since they have $q_\rho=0$.  But they satisfy \eqref{eq:rtR0}; indeed, this holds for  $R^{\sharp}(\Tt^0_*)$ by the formulas in \eqref{eq:baL},
and the fact that  $q_\rho=0$ in $R^{\sharp}(\Tt^0_*)$ implies that the ratio $r/t$ is constant for all terms in the ascending stairs in this triple.
\end{proof}

We next turn to the details need to complete the proof of Lemma~\ref{lem:slopeest3}. To this end, we must
verify  the inequality \eqref{eq:MDTineq1} 
$$
\frac{t_{k+1}^2 - 8}{t_{k+1}r_{k+1}}  > \frac{t_{k}^2 - 8}{t_k r_k},
$$
where $k$ labels the staircase step, in the situations stated below.

\begin{lemma}\label{lem:basecases}\begin{itemlist}\item[{\rm(i)}]
The inequality \eqref{eq:MDTineq1} holds for the first two steps in the descending pre-staircases 
associated to $yx^i\Tt^0_*$ for all $i\ge 0$
\item[{\rm(ii)}]
The inequality \eqref{eq:MDTineq1}  holds for the second and third steps in the descending pre-staircases 
associated to $y^iR^\sharp(\Tt^0_*)$, $i\geq0$.
\end{itemlist}
\end{lemma}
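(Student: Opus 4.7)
For \textbf{part (ii)}, the key simplification comes from Lemma~\ref{lem:prelim0}(iv) applied to $y^iR^\sharp(\Tt^0_*) = (D_{i-1}, D_i, R^\sharp(\bB^U_0))$: since $q_{R^\sharp(\bB^U_0)}=0$, it gives $t_{D_{i-1}}r_{D_i} = t_{D_i}r_{D_{i-1}}$, so $r_{D_i}/t_{D_i}$ is independent of $i$ and equal to its value $3$ at $i=0$. Writing $\tau_i := t_{D_i}$, the third step $E_i := t_{D_{i-1}}D_i - R^\sharp(\bB^U_0)$ satisfies $t_{E_i} = \tau_{i-1}\tau_i - 3$ and $r_{E_i} = 3\tau_{i-1}\tau_i - 1$. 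Cross-multiplying \eqref{eq:MDTineq1} for $(D_i,E_i)$ and cancelling common factors reduces the inequality to $\tau_i(3\tau_{i-1} - \tau_i) > 10 - 3/(\tau_{i-1}\tau_i)$; applying the recursion $\tau_i = 3\tau_{i-1} - \tau_{i-2}$ (extend to $i=0$ by setting $\tau_{-2} := 3\tau_{-1} - \tau_0 = 2$), this reads $\tau_i\tau_{i-2} > 10 - 3/(\tau_{i-1}\tau_i)$, which holds since $\tau_i\tau_{i-2} \ge \tau_0\tau_{-2} = 26$ for every $i \ge 0$.

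For \textbf{part (i)}, set $\alpha := t_{B_{k-1}}$, $\beta := t_{B_k}$, $\gamma := r_{B_{k-1}}$, $q := q_{B_k}$. The first two steps of $\Ss^{yx^k\Tt^0_*}_u$ are $B_{k-1}$ and $F_k := y(B_k) = t_{B_{k-1}}B_k - \bB^U_0$, so $t_{F_k} = \alpha\beta - 3$ and $r_{F_k} = \alpha r_{B_k} - 7$. Lemma~\ref{lem:prelim0}(i) applied to this triple gives the key identity $\alpha r_{F_k} - t_{F_k}\gamma = 8p_{B_k}$. Substituting this identity together with $\alpha(r_{B_k} - p_{B_k}) = \alpha q$ into the numerator of $\tfrac{t_{F_k}^2 - 8}{t_{F_k}r_{F_k}} - \tfrac{\alpha^2 - 8}{\alpha\gamma}$ placed over the common denominator $t_{F_k}r_{F_k}\alpha\gamma$, the numerator collapses after straightforward algebra to
$$8\bigl[(\alpha\beta - 3)(\alpha q - 7) - \alpha\gamma\bigr].$$

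It therefore suffices to show $(\alpha\beta - 3)(\alpha q - 7) > \alpha\gamma$. The auxiliary bound $\gamma < 3\alpha$ follows from $3r_{B_j} - 7t_{B_j} = 8q_{B_{j-1}}$ (Lemma~\ref{lem:prelim0}(iv) applied to $x^{j+1}\Tt^0_*$), which rearranges to $3t_{B_j} - r_{B_j} = \tfrac{2}{3}u_j$ for $u_j := t_{B_j} - 4q_{B_{j-1}}$; this $u_j$ satisfies the same recursion $u_j = 3u_{j-1} - u_{j-2}$ as $t_{B_j}$ and $q_{B_j}$, with $u_{-1} = u_0 = 9$, so $u_j \ge 9$ throughout. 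It is thus enough to verify $(\alpha\beta - 3)(\alpha q - 7) > 3\alpha^2$. Since $\alpha \ge 5, \beta \ge 13, q \ge 4$ for $k \ge 0$, one has $\alpha\beta \ge 65 > 6$ and $\alpha q \ge 20 > 14$, so each factor exceeds half its leading term, giving $(\alpha\beta - 3)(\alpha q - 7) > \alpha^2\beta q/4 \ge 13\alpha^2 > 3\alpha^2$. The main obstacle is identifying the precise combination of identities from Lemma~\ref{lem:prelim0} that effects the clean reduction in the previous paragraph; once that is done, the remaining estimates are elementary.
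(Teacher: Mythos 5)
Your argument is correct in both parts, and it takes a genuinely different route from the paper. For part (i) the paper argues by induction on $i$, sandwiching the two sides of \eqref{eq:MDTineq1} around the intermediate ratio $t_{i+1}/r_{i+1}$ and feeding in the asymptotic bounds $\tfrac52<r_i/t_i<\tfrac83$ from \eqref{eq:TR}; you instead collapse the difference of the two sides exactly, using the identity $t_\rho r_\mu-t_\mu r_\rho=8p_\la$ of Lemma~\ref{lem:prelim0}(i) for the triple $yx^k\Tt^0_*$ (I checked the collapse to $8\bigl[(\alpha\beta-3)(\alpha q-7)-\alpha\gamma\bigr]$, and it is right), and then only need the crude bound $\gamma<3\alpha$, which you get from Lemma~\ref{lem:prelim0}(iv) plus the recursion. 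This avoids the induction and the limit estimates entirely. For part (ii) the paper tries to separate the two sides at the value $1/3$; note that in its displayed inequality the third step's $r$-coordinate should be $t_{i-1}r_i-1$ rather than the written $t_{i-1}r_{i-1}$, and with the correct coordinate both sides of \eqref{eq:MDTineq1} lie just below $1/3$, so the split at $1/3$ does not work as stated — your direct cross-multiplication, which reduces to $\tau_i(3\tau_{i-1}-\tau_i)>10-3/(\tau_{i-1}\tau_i)$ and then to $\tau_i\tau_{i-2}\ge 26$, handles this correctly and is the cleaner argument. Two bookkeeping remarks: the identity $3r_{B_j}-7t_{B_j}=8q_{B_{j-1}}$ comes from applying Lemma~\ref{lem:prelim0}(iv) to $x^{j}\Tt^0_*$ (your citation of $x^{j+1}\Tt^0_*$ is off by one), and your value $u_{-1}=9$ implicitly uses $q_{B_{-2}}=-1$, i.e.\ the algebraic seed $\bE^U_{u,seed}$ of Remark~\ref{rmk:quasitrip}; alternatively one can simply check $3t_{B_{-1}}-r_{B_{-1}}=6>0$ and $3t_{B_0}-r_{B_0}=6>0$ and run the recursion on $3t_{B_j}-r_{B_j}$ directly. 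Neither point affects the validity of the proof.
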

\begin{proof}
To prove (i),  first note that it holds for $i=0$ since the first two steps in $\Ss^{\Tt_y}_u$ are 
$\bB^U_1, \bE_{[7;3,6]}$ with  $(r,t) = (9,5), (158, 62)$.   In general,  the descending staircase $\Ss^U_0$ has steps  $\bE_{x^i[7;4]}, i\ge 0$, and we denote their $(r,t)$ coefficients as $(r_i,t_i)$. They satisfy a recursion with parameter $3$ and
seeds $(9,5),  (r_0,t_0) = (33,13)$ so that $$
 (r_1,t_1) = (90,34),\quad  (r_2,t_2) = (237,89).
 $$
By Remark~\ref{rmk:misc}~(ii) the ratios $r_i/t_i$ increase, and using \eqref{eq:XX'} one finds that
for $i\ge 2$, 
\begin{align}\label{eq:TR}
\frac 52 < \frac {237}{89} < \frac{r_i}{t_i} <\lim \frac{r_i}{t_i}=: \frac RT =  \frac{45 + 39 \sqrt 5}{25+11\sqrt 5} < \frac 83.
\end{align}
When $i\ge 1$, the descending pre-staircase associated to 
$yx^i\Tt^0_*$ has first two steps $\bE_{x^{i-1}[7;4]},$ $ \bE_{yx^{i}[7;4]}$ where the $(r,t)$ components of 
$\bE_{yx^{i}[7;4]}$ are $\bigl(t_{i-1}r_i - 7, t_{i-1}t_i - 3\bigr)$.  Thus it suffices to show that
$$
\frac{(t_{i-1}t_i - 3)^2 - 8}{(t_{i-1}t_i - 3)(t_{i-1}r_i - 7)} > \frac{t_{i+1}}{r_{i+1}} > \frac{t_{i-1}^2-8}{t_{i-1}r_{i-1}}.
$$
We now prove  by induction on $i\ge 1$ that each of these inequalities holds.  

The base case $i=1$ is readily checked. With $i\ge 2$, we assume these inequalities known for $i-1$. 
To simplify notation let us write  $$
t'': = t_i, \ t': = t_{i-1},\  t: = t_{i-2}
\mbox{ so that } \ \ t_{i+1} = 3t'-t =  8t' - 3t,
$$
and similarly for $r$.
Then the second inequality is equivalent to 
$$
(8t'-3t) tr > (8r' - 3r)(t^2 - 8), \quad i.e. \quad 64r' - 24r > 3t'(tr'-rt') = 144 t',
$$
where we have used the fact that $tr'-rt' = t_{i-2}r_{i-1} - r_{i-2}t_{i-1}$ is constant and equal to $48$.  This will hold if $8r' - 3r > 18t'$, i.e. if $8r_{i-1} - 3r_{i-2}> 18t_{i-1}$.  
It is easy to check that  $r_{i-2} < \frac 25 r_{i-1}$ for all $i$, so that $8r_{i-1} - 3r_{i-2}> \frac{34}5 r_{i-1}$.  Thus it suffices to verify that $ \frac{34}5 r_{i-1} \ge 18 t_{i-1}$ for $i\ge 2$.  But this holds when $i=2$, and holds for $i>2$ because $r_i/t_i$ increases.

Now consider the first inequality, which in the current notation is equivalent to
$$
(3r'' - r')\bigl( (t't'' - 3)^2 - 8\bigr) >  (3t''-t') (t' t'' - 3)(t' r'' - 7).
$$
Putting the terms of highest order on the left and simplifying the LHS as above, we obtain
\begin{align*}
&(t')^2 t''\bigl( (3r''-r') t'' - (3t''-t') r''\bigr)  = 48(t')^2 t'' \\
& \qquad \qquad >  (3r''-r')(6t' t'' - 1) -(3t''-t')\bigr(7t' t''+3t' r'' - 21\bigl).
\end{align*}
By \eqref{eq:TR},  we have $t_i> \frac 38 r_i > \frac {15}{16} t_i$ for all $i$.   Therefore  
$3r''-r' < \frac 83 (3t''-t')$ (since $x_{i+1} = 3x''-x' $) and
$7t'' + 3r'' >  \frac{29}2 t''$.  Hence
\begin{align*}
& (3r''-r')(6t' t'' - 1) -(3t''-t')\bigr(7t' t''+3t' r'' - 21\bigl) \\
& 
\qquad < (3t''-t')\bigl( 16  t't'' -   \tfrac{29}2 t't''   + 21 \bigr)
 <  2(3t''-t')t't'' < 48 (t')^2 t''.
\end{align*}
where the first inequality forgets the $-1$, the second holds because $t',t''\ge 5$ and the last holds because $t'' < 3t'$.

\MS

To prove (ii), note first that the descending stairs in $y^iR^\sharp(\Tt^0_*)$ has first two steps
with $(p,q,t)$ coordinates $(2,0,3)$ and $(p_i,q_i,t_i)$ and recursion parameter $t_{i-1}$, where $(p_i,q_i,t_i)$ 
is defined recursively with recursion parameter $3$ and first two terms $
(p_{-1},q_{-1},t_{-1})=(13,2,5), (p_0,q_0,t_0)=(34,5,13)$.
Therefore, we
 need to show that
\[
\frac{(t_{i-1}t_i-3)^2-8}{(t_{i-1}t_i-3)(t_{i-1}r_{i-1})}>\frac{t_i^2-8}{t_ir_i}.
\]
It suffices to show that the left hand side is always greater than $1/3$ and the right hand side is always less than $1/3$. The latter is immediate from the fact that $t_i=3r_i$ by \eqref{eq:rtR0}. For the former, again using the fact that $t_{i-1}=3r_{i-1}$, we need to show
\begin{align*}
(t_{i-1}t_i-3)^2-8&>(t_{i-1}t_i-3)(t_{i-1}^2),
\end{align*}
or, equivalently $t_{i-1}^2t_i^2-6t_{i-1}t_i+1>t_{i-1}^3t_i-3t_{i-1}^2$.
It suffices to show the much weaker inequality $t_{i-1}^2t_i^2-6t_{i-1}t_i>t_{i-1}^3t_i$, 
or, equivalently, $ t_{i-1}(t_i-t_{i-1})>6$. 
But this holds because $t_k\ge 5$ and $t_k-t_{k-1}>1$. This completes the proof.
\end{proof}

The next inequalities are needed in the proof of Lemma~\ref{lem:2T}.

\begin{lemma}\label{lem:prelim2}  The following inequality holds for all triples.
\begin{align}\label{eq:dtineq}
\frac {d_\mu}{d_\rho} - \frac {d_\rho}{d_\mu} > t_\la - 1. 
\end{align}  
\end{lemma}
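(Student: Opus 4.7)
My plan is induction on the number of $x/y$-mutations needed to reach $\Tt$ from a base triple. By Propositions~\ref{prop:gener} and~\ref{prop:symm}, every triple has the form $w \cdot (S^iR^\delta)^\sharp(\Tt^n_*)$ for some word $w$ in $x,y$ and indices $i\ge 0,\ \delta\in\{0,1\},\ n\ge 0$, so it suffices to verify the inequality on base triples and to show that both mutations preserve it.

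The $y$-mutation step is the easy one and needs no inductive hypothesis. In $y\Tt = (\bE_\mu, \bE_{y\mu}, \bE_\rho)$ the new $t_\la$ is the old $t_\mu$ and $d_{y\mu} = t_\rho d_\mu - d_\la$. Lemma~\ref{lem:prelim0}~(ii) gives $t_\rho d_\mu = t_\mu d_\rho + 3p_\la$, whence
\[
\frac{d_{y\mu}}{d_\rho} \;=\; t_\mu + \frac{3p_\la - d_\la}{d_\rho}.
\]
A short calculation from $8d_\la = 3(p_\la+q_\la) + \eps t_\la$ shows $3p_\la - d_\la = (21 p_\la - 3q_\la - \eps t_\la)/8$, which is positive because $t_\la < p_\la$ for every quasi-perfect class with $p_\la/q_\la > a_{\min}$. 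Thus $d_{y\mu}/d_\rho > t_\mu$; and since $d_{y\mu} \ge 2 d_\mu > d_\rho$, I get $d_{y\mu}/d_\rho - d_\rho/d_{y\mu} > t_\mu - 1$, which is exactly the inequality for $y\Tt$.

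The $x$-mutation step is where the induction hypothesis enters. In $x\Tt = (\bE_\la, \bE_{x\mu}, \bE_\mu)$ the value of $t_\la$ is unchanged and $d_{x\mu} = t_\la d_\mu - d_\rho$; setting $y := d_\rho/d_\mu < 1$, a direct manipulation shows the desired inequality is equivalent to the quadratic condition
\[
y^2 - (t_\la+1) y + (t_\la - 1) \;>\; 0,
\]
which, since $y<1$, amounts to $y < y_- := \tfrac12\bigl((t_\la+1) - \sqrt{(t_\la-1)^2+4}\bigr)$. The inductive hypothesis on $\Tt$ gives $y < y_{IH} := \tfrac12\bigl(-(t_\la-1) + \sqrt{(t_\la-1)^2+4}\bigr)$, and a quick comparison yields
\[
y_- - y_{IH} \;=\; t_\la - \sqrt{(t_\la-1)^2 + 4} \;>\; 0 \quad \text{for } t_\la \ge 3,
\]
because $(t_\la - 1)^2 + 4 < t_\la^2$ whenever $t_\la > 5/2$. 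Hence the hypothesis on $\Tt$ is strictly stronger than what I need for $x\Tt$.

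The hard part will be the base case verification for all symmetry images $(S^iR^\delta)^\sharp(\Tt^n_*)$. For $\Tt^n_*$ itself this is painless: the formulas in \eqref{eq:baU} give $d_\mu = (n+4)(2n+3)+2$, $d_\rho = n+4$, $t_\la = 2n+3$, so $d_\mu/d_\rho = t_\la + 2/(n+4) > t_\la$, from which the conclusion is immediate. For the images, one can check $R^\sharp(\Tt^n_*)$ directly from \eqref{eq:baL} (getting $d_{R\mu}/d_{R\la}$ of the form $t_{\text{new}\la} + 13/(5n)$ for $n\ge 1$, with the $n=0$ case handled via the anomalous class $R^\sharp(\bB^U_0)$ of Remark~\ref{rmk:symm0}), and use the compact transformation $D = 3p-d$ under $S^\sharp$ (which I plan to derive from $8D = 3(P+Q) - \eps t$ and $3d = m+p+q$) to propagate the base inequality along iterates of $S$. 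The main concern is organizing these computations uniformly rather than case by case; if that proves delicate, a fallback is separate verification family-by-family, which is finite in the sense that only the initial triples of each $(S^iR^\delta)^\sharp(\Cc\Ss^U)$ require explicit checking before the inductive mutation step takes over.
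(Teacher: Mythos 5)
Your overall architecture (verify the inequality on the base triples, then show it is preserved by $x$- and $y$-mutation) is the same as the paper's, and your two mutation steps are sound. The $y$-step is essentially the paper's argument: Lemma~\ref{lem:prelim0}~(ii) gives $d_{y\mu}/d_\rho = t_\mu + (3p_\la-d_\la)/d_\rho$, and $3p_\la-d_\la>0$; your justification of this positivity via $t_\la<p_\la$ is simpler than the paper's (which routes through $r/t>1/3$ and Corollary~\ref{cor:prelim0}) and is valid for every class that actually occurs as a left entry, since the only class with $t>p$, namely $R^\sharp(\bB^U_0)=(0,-1,1,0,3,-1)$, only ever appears as $\bE_\rho$. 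Your $x$-step, reducing to $y^2-(t_\la+1)y+(t_\la-1)>0$ for $y=d_\rho/d_\mu$ and comparing the root $y_-$ with the bound $y_{IH}$ coming from the inductive hypothesis, is correct and is a genuinely different (and cleaner) treatment: the paper does not invoke the inductive hypothesis at the $x$-step at all, instead bounding $d_\rho/d_\mu$ and $d_\mu/(t_\la d_\mu-d_\rho)$ by $1/2$ using the recursion $d''=td'-d$, $t\ge 3$, $d<d'$. (Both arguments, like the paper's, implicitly read the inequality as trivially satisfied for the degenerate triples $y^kR^\sharp(\Tt^0_*)$, where $d_\rho=0$.)

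The genuine gap is the base case for the symmetry images $(S^iR^\de)^\sharp(\Tt^n_*)$ with $i\ge 1$, which is exactly where the paper does its real work. Your proposed mechanism --- propagating the inequality along iterates of $S$ via the quantity $3p-d$ --- is only a plan: you have not shown that this quantity controls the ratio $d_\mu/d_\rho$ under $S^\sharp$ (degrees do not transform in a way that preserves such ratios in any evident manner), and your fallback of ``checking the initial triples of each family'' is not a finite verification, since $(i,\de,n)$ ranges over an infinite set. The paper closes this uniformly using the quasi-triples of Remark~\ref{rmk:quasitrip}: because $(\bB^U_n,\bB^U_{n+1},\bE^U_{u,seed})$ is a quasi-triple and the symmetries preserve this structure (Corollary~\ref{cor:qtrip}), one has $d_\mu=t_\la d_\rho-d_{seed}$, where $d_{seed}$ is the degree of $(S^i)^\sharp(\bE^U_{u,seed})$ (resp.\ $(S^iR)^\sharp(\bE^U_{u,seed})$), and $d_{seed}<0$ for all $i$; viewing the desired inequality as a quadratic in $d_\mu$ with $t_\la=2n+3$ reduces it to
\begin{align*}
\bigl(n+2-\sqrt{n^2+2n+2}\bigr)\,d_\rho \;>\; d_{seed},
\end{align*}
which holds because the left side is positive and the right side negative, uniformly in $i$ and $n$. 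Without an argument of this kind (or some other bound uniform in $i$), your induction has no verified starting points beyond $i=0$, so the proof as proposed is incomplete.
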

\begin{proof}
We first check that this holds for $\Tt^*_n, R^{\sharp}(\Tt^n), n\ge 0$, where in the first case we may use the formulas in \eqref{eq:SsUparam}, and in the second we use that, by \eqref{eq:pqt} and \eqref{eq:baL}, the $(d,t)$ coordinates of
$R^{\sharp}(\Tt^n)$ are 
$$
(d_\la,t_\la) = (5(n+1) , 2n+ 5),  (d_\rho, t_\rho) = (5n  , 2n+3), \ 
(d_\mu, t_\mu) = (10n^2+ 25n+13 , 4n^2 + 16n + 13).
$$

To see that they hold for the images of the symmetries, first consider $(S^i)^\sharp(\Tt^*_n)$ so $t_\la=2n+3$. Letting $x:=d_\mu$ and $y:=d_\rho$ of $(S^i)^\sharp(\Tt^*_n)$, \eqref{eq:dtineq} is equivalent to
\[x^2-y^2-xy(2n+2) > 0.\] Considering this as a quadratic in $x$ for fixed $y,$ this holds if
\begin{equation} \label{eq:dtineqapp}
     x > (n+1+\sqrt{n^2+2n+2})y.
     \end{equation}
Further, by Remark~\ref{rmk:quasitrip}, we have that $x=(2n+3)y-d_{seed}$ where $d_{seed}$ is the degree coordinate of $(S^i)^\sharp(\bE^U_{seed,u})=(S^i)^\sharp(-2,0,-5,-1)$ and hence $d_{seed}<0.$ Therefore, \eqref{eq:dtineqapp} is equivalent to
\begin{align*} 
 (2n+3)y-d_{seed} &> (n+1+\sqrt{n^2+2n+2})y \\ (2+n-\sqrt{n^2+2n+2})y &> d_{seed},
\end{align*} 
which holds as $d_{seed}$ is negative and the left hand size is positive. A similar argument holds for $(S^iR)^\sharp(\Tt^*_n).$

Finally we check that they remain true under mutation.  
If \eqref{eq:dtineq} holds for $\Tt$ then for $x\Tt$ we have
$$
\frac {t_\la d_\mu - d_\rho}{d_\mu} - \frac {d_\mu}{t_\la d_\mu - d_\rho} = t_\la - \frac{d_\rho}{d_\mu}-\frac {d_\mu}{t_\la d_\mu - d_\rho}> t_\la -1,
$$
where the inequality holds because each of the last two terms is $<1/2$.  To show this, notice that any two degree values of sequential classes $\bE, \bE', \bE''$ in a staircase satisfy $d''=td'-d$ for $t\geq3$ (see Lemma~\ref{lem:prelim0}~(v)) and $d<d'$. This is even true if $\bE'$ is the first geometric step in the staircase, thinking of $(S^iR^\delta)^\sharp(\bE^U_{\ell/u,seed})$ as the algebraic first step $\bE$ if necessary (see Remark~\ref{rmk:quasitrip}).

For $y\Tt$ we need 
$
\frac {t_\rho d_\mu - d_\la}{d_\rho} - \frac {d_\rho}{t_\rho d_\mu - d_\la}
 > t_\mu - 1.
 $
But $t_\rho d_\mu  - t_\mu d_\rho  = 3p_\la$ by Lemma~\ref{lem:prelim0}~(ii).   So by similar reasoning we have 
$$
\frac {t_\rho d_\mu - d_\la}{d_\rho} - \frac {d_\rho}{t_\rho d_\mu - d_\la}  = \frac {3p_\la -d_\la}{d_\rho} + t_\mu - \frac {d_\rho}{t_\rho d_\mu - d_\la} > t_\mu - 1,
$$
because for any triple $3p-d>0$ by combining \eqref{eq:pqt}, the fact that $p/q>5$, and induction using Corollary~\ref{cor:prelim0}, where we prove the base case $r/t>1/3$ using the formulas for $(S^iR^\de)(2n+6,1)$ in terms of the $y_i$ in the proof of Lemma~\ref{lem:rsforS}.
\end{proof} 

We next find suitable constants $x_0,x_1$ to use in  Lemma~\ref{lem:2T}~(iii).

\begin{lemma}\label{lem:qtrip2} \begin{itemlist}\item[{\rm (i)}] For each $n\geq0$ and $i\geq0$, the limiting $b_\infty$-values of the descending staircases $\Ss$ in the family $(S^i)^\sharp(\Cc\Ss^U\cap[2n+6,2n+8])$ satisfy
\[
\acc(m_{i,n-1}/d_{i,n-1})<\acc(b_\infty)<\acc(m_{i,n}/d_{i,n}),
\]
where $d_{i,n}, m_{i,n}$ are the degree components of $\bE_{i,n}:=(S^i)^\sharp(\bB^U_n)$. Here $\bE_{0,-1}=\bB^U_{-1}=(2,1,4,1,1,1)$.

\item[{\rm (ii)}] Similarly, the limiting $b_\infty$-values of the descending staircases in the family $(S^iR)^\sharp(\Cc\Ss^U\cap[2n+6,2n+8])$ satisfy
\[
\acc(m_{i,n+2}'/d_{i,n+2}')<\acc(b_\infty)<\acc(m_{i,n+1}'/d_{i,n+1}'),
\]
where $d_{i,n}', m_{i,n}'$ are the degree components of $\bE_{i,n}':=(S^iR)^\sharp(\bB^U_n)$.
\end{itemlist}
\end{lemma}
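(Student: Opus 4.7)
My plan is to prove part (i) by first handling the case $i=0$, then lifting via the order-preserving symmetry $(S^i)^\sharp$, and finally obtaining part (ii) by an analogous argument incorporating the orientation-reversing reflection $R$.

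For $i=0$ in part (i), each descending principal staircase $\Ss = \Ss^\Tt_u$ in $\Cc\Ss^U \cap [2n+6,2n+8]$ is associated to a triple $\Tt = (\bE_\la, \bE_\mu, \bE_\rho)$ in this family, with limit $z_\infty := \acc(b_\infty) = \p^+(I_{\bE_\la})$ by Lemma~\ref{lem:2}. The upper bound $z_\infty < \acc(m_n/d_n)$ will follow from combining (a) $\p^+(I_{\bE_\la}) < \acc(m_\la/d_\la)$ (from \cite[Lem.~2.2.11]{MM}); (b) the monotonicity of $\acc$ on $(1/3,1)$; and (c) the key claim that $m_\la/d_\la \le m_n/d_n = (n+2)/(n+3)$ for every triple in the family. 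I will establish (c) by induction on the mutation tree rooted at $\Tt^n_* = (\bB^U_n, \bE^1_n, \bB^U_{n+1})$: under an $x$-mutation the child's $\bE_\la$ equals the parent's $\bE_\la$, while under a $y$-mutation it equals the parent's $\bE_\mu$. Condition (e) of Definition~\ref{def:gentr} gives $m_\mu/d_\mu < m_\la/d_\la$ in the parent, so in either case $m_\la/d_\la$ cannot strictly increase; since the root value is $m_n/d_n$, induction yields the claim.

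For the lower bound $\acc(m_{n-1}/d_{n-1}) < z_\infty$, the disjointness of blocked intervals (Proposition~\ref{prop:disj}) and their ordering by centers imply that the minimum of $\p^+(I_{\bE_\la})$ over triples in the family is achieved when $\bE_\la = \bB^U_n$, giving $\p^+(I_{\bB^U_n}) > 2n+6$ (since the center $2n+6 \in I_{\bB^U_n}$). For $n \ge 1$, equation~\eqref{eq:accn} applied to index $n-1$ gives $\acc(m_{n-1}/d_{n-1}) < p^0_n/q^0_n = 2n+6 < z_\infty$. For $n=0$, where $\bB^U_{-1} = (2,1,4,1,1)$ has $m/d = 1/2$, a direct calculation from \eqref{eq:accb0} gives $\acc(1/2) = (19+5\sqrt{13})/6$; applying the symmetry $SR$ (which fixes $\bB^U_0$) to $\p^-(I_{\bB^U_0}) = [\{5,1\}^\infty] = (5+3\sqrt 5)/2$ yields $\p^+(I_{\bB^U_0}) = (65+3\sqrt 5)/10$, and a routine rationalization confirms $\acc(1/2) < \p^+(I_{\bB^U_0})$. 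For general $i \ge 1$, Proposition~\ref{prop:symm} and Corollary~\ref{cor:main1iipf} give an order-preserving bijection between triples and their $(S^i)^\sharp$-images, preserving the $\bE_\la/\bE_\mu/\bE_\rho$ roles and mapping blocked intervals to blocked intervals, so both inequalities transfer to the image family.

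For part (ii), the reflection $R$ is order-reversing and interchanges $\bE_\la$ with $\bE_\rho$, thereby exchanging descending with ascending staircases (cf.\ Remark~\ref{rmk:symm0}). Consequently, descending staircases in $(S^iR)^\sharp(\Cc\Ss^U \cap [2n+6, 2n+8])$ correspond under the symmetry to ascending staircases in $\Cc\Ss^U \cap [2n+6, 2n+8]$, whose $z$-limits are the lower endpoints $\p^-(I_{\bE_\rho})$. A tree induction parallel to (c) (with $x$-mutation now replacing $\bE_\rho$ by the parent's $\bE_\mu$ and $y$-mutation leaving it fixed) yields $m_\rho/d_\rho \le m_{n+1}/d_{n+1}$, and the extremal ascending stair accumulates at $\p^-(I_{\bB^U_{n+1}})$; applying $(S^iR)^\sharp$ and using that $R$ swaps endpoints of blocked intervals, these translate to the stated bounds involving $\bE'_{i,n+1}$ and $\bE'_{i,n+2}$, with the latter obtained by applying \eqref{eq:accn} to index $n+1$. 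The hardest parts will be the orientation/role-swapping bookkeeping in part (ii) and verifying in (c) that every blocking class appearing as $\bE_\la$ of some triple in the family is indeed reached by mutations from $\Tt^n_*$, which follows from the structural description in Proposition~\ref{prop:label1}.
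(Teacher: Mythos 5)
Your $i=0$ argument for part (i) is essentially sound (the upper bound via a mutation-tree induction combined with $\acc(m_\la/d_\la)>\p^+(I_{\bE_\la})$ from Remark~\ref{rmk:misc}~(i), and the lower bound via \eqref{eq:accn} together with $2n+6\in I_{\bB^U_n}$ and your explicit check at $n=0$). The genuine gap is the sentence ``both inequalities transfer to the image family.'' The symmetries satisfy $I_{T^\sharp(\bE)}=T(I_{\bE})$ (Corollary~\ref{cor:main1iipf}) and act order-(anti)equivariantly on centers and blocked intervals, but the assignment $\bE\mapsto\acc(m/d)$ is \emph{not} equivariant: for $T^\sharp(\bE)=(d',m',\dots)$ one does not have $\acc(m'/d')=T(\acc(m/d))$ (already for $\bB^U_0$, $S(\acc(2/3))\approx 5.8696$ while $\acc(4/15)\approx 5.8728$ for $S^\sharp(\bB^U_0)=(15,4,35,6,3,-1)$). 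Consequently the lower bound $\acc(m_{i,n-1}/d_{i,n-1})<\acc(b_\infty)$ for $i\ge1$, and every bound in part (ii) (where $R$ is applied even when $i=0$), is not obtained from your $i=0$ inequalities by order-preservation; in particular \eqref{eq:accn} concerns only the original family and has no image-family analogue in your argument, so the heart of the lemma is missing.

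For comparison, the paper proves the lower bounds by applying Lemma~\ref{lem:accmd} \emph{directly to the image quasi-triples} $(S^i)^\sharp(\bE^U_{\ell,seed},\bB^U_{n-1},\bB^U_n)$ and $(S^iR)^\sharp(\bE^U_{\ell,seed},\bB^U_{n+1},\bB^U_{n+2})$, which requires verifying the hypothesis $\sqrt2\,p'>p$ in image coordinates using $S^i(2n+6,1)=(2ny_{i+1}+y_{i+2},\,2ny_i+y_{i+1})$, etc.; this hypothesis fails when $n=0$ in case (i), and that case needs a separate computation, uniform in $i$, showing $\acc(m_{i,-1}/d_{i,-1})<S^i(7)<\acc(b_\infty)$ via Pell-type identities for the $y_i$ and the fact that $(S^i)^\sharp(\bB^U_0)$ blocks $S^i(7)$. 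Your single numerical check at $(i,n)=(0,0)$ does not substitute for any of this. Your upper bounds, by contrast, can be repaired cheaply and even give a slightly different route from the paper's (which uses Corollary~\ref{cor:symm} and the ascending staircase through $\bE_{i,n}$): run your mutation-tree induction inside the image family on the quantities $\acc(m_\la/d_\la)$ themselves, using condition (e) of Definition~\ref{def:gentr} for the image triples (valid by Proposition~\ref{prop:symm}), rather than on the ratios $m_\la/d_\la$ with monotonicity of $\acc$ on $(1/3,1)$, which breaks when $i+\de$ is odd.
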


\begin{proof} Notice that we have
\begin{equation}\label{eq:pqEb}
\frac{p_{i,n}}{q_{i,n}}<\partial^+(I_{\bE_{i,n}})\leq\acc(b_\infty)<\partial^-(I_{\bE_{i,n+1}})
\end{equation}
because $\acc(b_\infty)$ is the limit of the step centers of a descending staircase. To obtain the upper bound in case (i),  notice that the class $\bE_{i,n}$ is a step in a staircase accumulating to $\partial^-(I_{\bE_{i,n+1}})>\acc(b_\infty)$. By Corollary~\ref{cor:symm}, for every staircase (whether $b>1/3$ or $b<1/3$) the values $\acc(m_k/d_k)$ always descend to the $z$-coordinate $\acc(b)$ of the accumulation point, thus
\[
\acc(b_\infty)<\partial^-(I_{\bE_{i,n+1}})<\acc(m_{i,n}/d_{i,n}).
\]

 To obtain the lower bound in (i), we would like to apply Lemma~\ref{lem:accmd} to $\bE'$ the middle and $\bE$ the right entry of
 \[
 (S^i)^\sharp(\bE^U_{\ell,seed}, \bB^U_{n-1},\bB^U_n)=((S^i)^\sharp(\bE^U_{\ell,seed}),\bE_{i,n-1},\bE_{i,n}).
 \]
 Thus we would obtain $\acc(m_{i,n-1}/d_{i,n-1}) < p_{i,n}/q_{i,n}<\acc(b_\infty)$, where for the second inequality we use \eqref{eq:pqEb}. The only hypothesis of Lemma~\ref{lem:accmd} that does not automatically hold for $\bE, \bE'$ (because they are the middle and right entries of a quasi-triple, the other hypotheses hold) is $\sqrt{2}p'<p$. Because $S^i(2n+6,1)=(2ny_{i+1}+y_{i+2},2ny_i+y_{i+1})$, this holds if
 \begin{align*}
\sqrt{2}(2(n-1)y_{i+1}+y_{i+2})&>2ny_{i+1}+y_{i+2} \\
& \qquad   \;\;\Longleftrightarrow\;\;
ny_{i+1}>(\sqrt{2}-1)y_{i+1}+\tfrac{1}{2}y_i.
\end{align*}
Therefore if $n\geq1$, we obtain the lower bound $\acc(m_{i,n-1}/d_{i,n-1}) < p_{i,n}/q_{i,n}$.
 
 When $n=0$, it is not true that  $\acc(m_{0,-1}/d_{0,-1})<6$, where $6=p_{0,0}/q_{0,0}$, so we instead show
 \begin{equation}\label{eq:7b}
 \acc(m_{i,-1}/d_{i,-1})<S^i(7,1)<\acc(b_\infty).
 \end{equation}
 For the first inequality in \eqref{eq:7b}, the analogue of \eqref{eq:p'q'<pq} with $(p_i,q_i)=S^i(7,1)=(y_{i+2}+y_{i+1},y_{i+1}+y_i)$ is
\[
\frac{p_{i,-1}^2+q_{i,-1}^2+2}{p_{i,-1}q_{i,-1}-1}<\frac{(y_{i+1}+y_{i+2})^2+(y_i+y_{i+1})^2}{(y_i+y_{i+1})(y_{i+1}+y_{i+2})}.
\]
We may use the fact that $t^2=p^2-6pq+q^2+8$ and $t_{i,-1}=1$ to simplify the LHS to $6+1/(p_{i,-1}q_{i,-1}-1)$. Now because $p^2-6pq+q^2$ is invariant under $S$ and $(y_1+y_2)^2-6(y_1+y_2)(y_0+y_1)+(y_0+y_1)^2=8$, subtracting 6 from both sides gives us
\begin{align*}
\frac{1}{(-2y_{i+1}+y_{i+2})(-2y_i+y_{i+1})-1}&<\frac{8}{(y_i+y_{i+1})(y_{i+1}+y_{i+2})},
\end{align*}
which simplifies to
\begin{align*}
y_{i+1}y_i+2y_{i+1}^2-1+y_{i+2}y_{i+1}&<32y_{i+1}y_i-32y_{i+1}^2+8y_{i+2}y_{i+1}+8,
\end{align*}
where in the second line we have used $y_{i+1}^2=y_{i+2}y_i+1$, which is easy to prove by induction. It is enough to show a stronger inequality with the constants removed, thus we can divide by $y_{i+1}$ to obtain
\[
y_i+2y_{i+1}+y_{i+2}<32y_i-32y_{i+1}+8y_{i+2}, \text{ i.e. }34y_{i+1}<7y_{i+2}+31y_i,
\]
which follows immediately from the definition of the $y_i$ sequence.

For the second inequality, in \eqref{eq:7b}, it suffices to show that $(S^i)^\sharp(\bB^U_0)=\bE_{i,0}$ blocks $S^i(7,1)$. This is shown in the second conclusion of Corollary~\ref{cor:main1iipf}; see also \cite[Cor.4.1.5]{MM}. 
In case (ii), notice that
\begin{equation}\label{eq:p'q'Eb}
\frac{p_{i,n+1}'}{q_{i,n+1}'}<\partial^+(I_{\bE_{i,n+1}'})\leq\acc(b_\infty)<\partial^-(I_{\bE_{i,n}'}).
\end{equation}
We obtain $\acc(b_\infty)<\acc(m_{i,n+1}'/d_{i,n+1}')$ as in (i): notice that $\bE_{i,n+1}'$ is a step in a staircase accumulating to $\partial^-(I_{\bE_{i,n}'})>\acc(b_\infty)$; finish as in the proof of the analogous inequality in (i).

To obtain $\acc(b_\infty)>\acc(m_{i,n+2}'/d_{i,n+2}')$, we can apply Lemma~\ref{lem:accmd} with $\bE'$ the left and $\bE$ the middle entry of
\[
(S^iR)^\sharp(\bE^U_{\ell,seed}, \bB^U_{n+1} ,\bB^U_{n+2})=(\bE_{i,n+2}',\bE_{i,n+1}',(S^iR)^\sharp(\bE^U_{\ell,seed})).
\]
Because $S^iR(2n+6,1)=(2ny_{i+2}+y_{i+1},2ny_{i+1}+y_i)$, the $p$ coordinates satisfy $p'>p$, which is stronger than the hypothesis $\sqrt{2}p'>p$ of Lemma~\ref{lem:accmd}. Because $p_{i,n+1}'/q_{i,n+1}'<\partial^+(I_{\bE_{i,n+1}'})\leq\acc(b_\infty)$ by \eqref{eq:p'q'Eb}, we obtain the desired inequality.
\end{proof}

\begin{cor} \label{cor:qtrip2} Let $d_{i,n}$ and $m_{i,n}$ be the degree coordinates of $(S^i)^\sharp(\bB^U_n)$, let $d_{i,n}'$ and $m_{i,n}'$ be the degree coordinates of $(S^iR)^\sharp(\bB^U_n)$, and let $b_\infty$ be as in Lemma~\ref{lem:qtrip2}. 

\begin{itemlist}\item[{\rm (i)}] If $i$ is even then $m_{i,n-1}/d_{i,n-1}<b_\infty<m_{i,n}/d_{i,n}$ and $m_{i,n+1}'/d_{i,n+1}'<b_\infty<m_{i,n+2}'/d_{i,n+2}'$.

\item[{\rm (ii)}]  If $i$ is odd then $m_{i,n}/d_{i,n}<b_\infty<m_{i,n-1}/d_{i,n-1}$ and $m_{i,n+2}'/d_{i,n+2}'<b_\infty<m_{i,n+1}'/d_{i,n+1}'$.
\end{itemlist}
\end{cor}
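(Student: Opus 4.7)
The plan is to deduce Corollary~\ref{cor:qtrip2} directly from Lemma~\ref{lem:qtrip2} by bookkeeping the sign $\eps$ through the symmetries and then applying the appropriate (order-preserving or order-reversing) branch of $\acc^{-1}$. There is no new numerical content required; the corollary just re-expresses the $\acc$-statements of the lemma on the $b$-axis.

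First I would record the $\eps$-dependence: both $S$ and $R$ flip the sign of $\eps$ (see the discussion at~\eqref{eq:SR} and~\eqref{eq:acceps}), and every class in $\Cc\Ss^U$ has $\eps = +1$. Consequently
$$\bE_{i,n} = (S^i)^\sharp(\bB^U_n) \text{ carries } \eps = (-1)^i,$$
so that $m_{i,n}/d_{i,n} > 1/3$ iff $i$ is even, while
$$\bE'_{i,n} = (S^iR)^\sharp(\bB^U_n) \text{ carries } \eps = (-1)^{i+1},$$
so that $m'_{i,n}/d'_{i,n} > 1/3$ iff $i$ is odd. Because each descending staircase in a given family has limit $b_\infty$ whose $\eps$-sign matches that of its steps (it is the one-sided limit of the ratios $m_k/d_k$), the same parity assignments determine whether $b_\infty$ lies in $(1/3,1)$ or in $(0,1/3)$.

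Next I would invoke the fact, recorded in \S\ref{ss:symm}, that the two branches $\acc_{+1}^{-1}\colon (a_{\min},\infty)\to(1/3,1)$ and $\acc_{-1}^{-1}\colon (a_{\min},\infty)\to(0,1/3)$ are respectively order-preserving and order-reversing. Applying the appropriate branch to the chains in Lemma~\ref{lem:qtrip2} then gives the corollary case by case. For instance, in case~(i) with $i$ even, both $m_{i,n-1}/d_{i,n-1}$, $m_{i,n}/d_{i,n}$ and $b_\infty$ lie in $(1/3,1)$, so the order-preserving $\acc_{+1}^{-1}$ turns the conclusion of Lemma~\ref{lem:qtrip2}(i) into $m_{i,n-1}/d_{i,n-1} < b_\infty < m_{i,n}/d_{i,n}$; whereas the corresponding primed classes and $b_\infty$ lie in $(0,1/3)$, so $\acc_{-1}^{-1}$ reverses the chain in Lemma~\ref{lem:qtrip2}(ii) to $m'_{i,n+1}/d'_{i,n+1} < b_\infty < m'_{i,n+2}/d'_{i,n+2}$. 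The parities in case~(ii) of the corollary are simply opposite, and the same two mechanisms (order-preserving inverse on $(1/3,1)$ and order-reversing inverse on $(0,1/3)$) produce the stated inequalities. No step here is genuinely hard: the only thing to watch out for is the parity/sign bookkeeping, which is the main (and essentially the only) obstacle.
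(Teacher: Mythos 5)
Your proposal is correct and is essentially the paper's own argument: the paper likewise deduces the corollary immediately from Lemma~\ref{lem:qtrip2} together with the fact that $\acc$ preserves order on $(1/3,1)$ and reverses it on $(0,1/3)$, with the relevant branch determined by the parity of $i+\de$. Your sign bookkeeping via $\eps=(-1)^i$ and $(-1)^{i+1}$ is just an explicit spelling-out of that same parity observation.
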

\begin{proof} The conclusions follow immediately from Lemma~\ref{lem:qtrip2} and the fact that $\acc$ is orientation preserving/reversing according to whether $b>1/3$ or $<1/3$, or equivalently, according to whether $i+\delta$ is even/odd.
\end{proof}

Our final results complete
the proof of Lemma~\ref{lem:2T}~(iv).

\begin{lemma} \label{lem:xmod}
 Given two classes $\bE=(d,m,p,q,t,\eps)$ and $\bE'=(d',m',p',q',t',\eps)$, let $x=m/d$ and $x'=m'/d'$.
\begin{itemlist}\item[{\rm (i)}] If $(\bE_\la,\bE',\bE)$ is a quasi-triple, then 
$ \frac{1-xx'}{x'-x}=\frac{ \eps p'q}{p_\la}$;  while
\item[{\rm (ii)}] if
$(\bE,\bE',\bE_\rho)$ is a quasi-triple, then 
$ \frac{1-xx'}{x'-x}=-\frac{\eps pq'}{q_\rho}.$
\end{itemlist}
\end{lemma}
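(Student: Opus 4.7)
The plan is to reduce both assertions to a single short algebraic identity and then apply two previously established facts about quasi-triples: the adjacency relation of Lemma~\ref{lem:recur0}(ii) and the $(d,m)$-product identities of Lemma~\ref{lem:prelim0}(iii).

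Clearing denominators gives
\begin{equation*}
\frac{1-xx'}{x'-x} \;=\; \frac{1 - (m/d)(m'/d')}{m'/d' - m/d} \;=\; \frac{dd' - mm'}{m'd - md'},
\end{equation*}
so in each case it suffices to identify the numerator and denominator separately.

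For part (i), the ordering built into the definition of a quasi-triple forces $p'/q' < p/q$ when $(\bE_\la,\bE',\bE)$ is such a triple (so $\bE' = \bE_\mu$ and $\bE = \bE_\rho$), and conditions (a)--(c) of Definition~\ref{def:gentr} ensure that $\bE'$ and $\bE$ are adjacent. I would apply Lemma~\ref{lem:recur0}(ii) to the adjacent pair $(\bE',\bE)$, with $\bE'$ the class of smaller center, to obtain $dd' - mm' = p'q$. For the denominator, Lemma~\ref{lem:prelim0}(iii) identifies $m_\rho d_\mu - m_\mu d_\rho$ with $\eps p_\la$; in the present notation this reads $md' - m'd = \eps p_\la$. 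Substituting both identifications into the displayed ratio (and using $\eps^2 = 1$) yields the formula in (i).

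For part (ii), one has $\bE = \bE_\la$ and $\bE' = \bE_\mu$, so now $p/q < p'/q'$, and Lemma~\ref{lem:recur0}(ii) applied to the adjacent pair $(\bE,\bE')$ gives $dd' - mm' = pq'$. The companion identity from Lemma~\ref{lem:prelim0}(iii), namely $m_\la d_\mu - m_\mu d_\la = \eps q_\rho$, translates to $md' - m'd = \eps q_\rho$. Substituting into the reduced ratio produces the formula in (ii). The entire argument is a two-line computation in each case; there is no substantive obstacle, only the bookkeeping of correctly matching the unordered pair $(\bE,\bE')$ with its role inside the ambient ordered triple so as to pick the right form of the adjacency relation and the correct identity from Lemma~\ref{lem:prelim0}(iii).
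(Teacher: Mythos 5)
Your argument is the paper's own proof: the identical reduction of the ratio to $\frac{dd'-mm'}{m'd-md'}$, followed by the adjacency identity \eqref{eq:adjac1} for the numerator and Lemma~\ref{lem:prelim0}(iii) for the denominator. One sign caveat: in (i), with your (correct) transcription $md'-m'd=\eps p_\la$ the denominator is $m'd-md'=-\eps p_\la$, so the substitution actually yields $-\eps p'q/p_\la$ rather than the stated $+\eps p'q/p_\la$; a check on the base triple $(\bB^U_0,\bE_{[7;4]},\bB^U_1)$ gives $-29/6$, so the mismatch traces to sign slips in the paper itself (in the statement of (i) and in the printed second identity of Lemma~\ref{lem:prelim0}(iii), which should read $m_\la d_\mu-m_\mu d_\la=\eps q_\rho$, exactly as you in fact quote it), not to your method. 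Since the sole application, Lemma~\ref{lem:rsforS}, uses only the absolute value of this ratio, the sign is harmless, but you should flag the discrepancy rather than assert that the substitution reproduces (i) verbatim.
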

\begin{proof} We have
\[ \frac{1-(m/d)(m'/d')}{m'/d'-m/d}=\frac{dd'-mm'}{m'd-md'}.\]
If $(\bE_\la,\bE',\bE)$ is a quasi-triple, then, we have \eqref{eq:adjac1} and Lemma~\ref{lem:prelim0}~(iii) give $dd'-mm'=p'q$ and $m'd-md'=\eps p_\la.$ 
Similarly if $(\bE,\bE',\bE_\rho)$ is a quasi-triple, we have $dd'-mm'=pq'$ and, by Lemma~\ref{lem:prelim0}~(iii),  $m'd-md'=-\eps q_\rho.$
\end{proof}

\begin{lemma} \label{lem:rsforS} \begin{itemlist}\item[{\rm(i)}] For fixed $i>0$ and $n\ge0$,
let $$
(S^i)^\sharp\bB^U_n:=
(d_1,m_1,p_1,q_1), \;\; (S^i)^\sharp\bB^U_{n-1}:=(d_0,m_0,p_0,q_0),\quad
x_i=\frac{m_i}{d_i}, \;\; i=0,1.
$$
Assume $(p_\la,q_\la,t_\la) \in (S^i)^\sharp(C\Ss^U_{n})$ is such that $(p_\la,q_\la) \neq S^i(2n+6,1)$. Then 
\begin{align}\label{eq:x0x1} \left|\frac{1-x_0x_{1}}{x_{1}-x_{0}} \right|>\frac{p_\la}{t_\la-1}.
\end{align} 
\item[{\rm(ii)}] For fixed $i,n$, let \[(S^iR)^\sharp(2n+8,1):=
(d_0,m_0,p_0,q_0), \quad (S^iR)^\sharp(2n+10,1):=(d_1,m_1,p_1,q_1),\] 
and set $x_i=\frac{m_i}{d_i}, i=0,1$. 
Then, the inequality \eqref{eq:x0x1} also holds for any $(p_\la,q_\la,t_\la) \in (S^iR)^\sharp(C\Ss^U_n)$.
\end{itemlist}
\end{lemma}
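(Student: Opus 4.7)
The plan is to apply Lemma~\ref{lem:xmod} to an appropriate quasi-triple (produced by Corollary~\ref{cor:qtrip}) to get a closed form for the left-hand side, to bound $p_\la/(t_\la-1)$ uniformly on the right-hand side using the structure of $\mathcal{CS}^U_n$, and then to compare the two.

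\smallskip

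For part~(i) I would apply Lemma~\ref{lem:xmod}(i) to the quasi-triple $(S^i)^\sharp\Tt^{n-1}_{\ell,\mathrm{seed}}=\bigl((S^i)^\sharp\bE^U_{\ell,\mathrm{seed}},\,(S^i)^\sharp\bB^U_{n-1},\,(S^i)^\sharp\bB^U_n\bigr)$. An easy induction from $S\cdot(p,q)=(6p-q,p)$ and the recurrence $y_{k+1}=6y_k-y_{k-1}$ yields the action formula $S^k\cdot(p,q)=(y_{k+1}p-y_kq,\,y_kp-y_{k-1}q)$; in particular $(S^i)^\sharp\bE^U_{\ell,\mathrm{seed}}$ has $p$-coordinate $y_{i+1}-y_i$, while $p_0=2(n-1)y_{i+1}+y_{i+2}$ and $q_1=2ny_i+y_{i+1}$. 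Substituting gives the closed form
\[
\Bigl|\tfrac{1-x_0x_1}{x_1-x_0}\Bigr|=\frac{(2(n-1)y_{i+1}+y_{i+2})(2ny_i+y_{i+1})}{y_{i+1}-y_i}.
\]
For part~(ii) I would apply Lemma~\ref{lem:xmod}(ii) to $(S^iR)^\sharp\Tt^{n+1}_{u,\mathrm{seed}}$ instead. The key input is that $R\cdot(-5,-1)=(5,1)$, so the $q$-coordinate of $(S^iR)^\sharp\bE^U_{u,\mathrm{seed}}$ again equals $y_{i+1}-y_i$, producing a formula of the same general shape but with a different numerator.

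\smallskip

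For the upper bound on $p_\la/(t_\la-1)$, write $(P,Q,T)$ for the coordinates of the underlying class in $\mathcal{CS}^U_n$; since $S,R$ preserve $t$ we have $t_\la=T$, while $p_\la$ is obtained by the matrix action of $S^i$ (resp.\ $S^iR$) on $(P,Q)$. By Proposition~\ref{prop:short}, every class in $\mathcal{CS}^U_n\setminus\{\bB^U_n\}$ has center $z=P/Q$ strictly exceeding $\partial^+(I_{\bB^U_n})=[2n+7;\{2n+5,2n+1\}^\infty]>2n+7$, or equals $\bB^U_{n+1}$ with $z=2n+8$. The inequality $T^2\ge Q^2(z^2-6z+1)$ together with a direct derivative calculation — showing that $z\mapsto(y_{i+1}z-y_i)/\sqrt{z^2-6z+1}$ is monotone on $(3+2\sqrt2,\infty)$ in the appropriate direction — gives a uniform upper bound on $p_\la/T$; multiplying by $T/(T-1)\le(2n+3)/(2n+2)$ then bounds $p_\la/(t_\la-1)$ explicitly. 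The exclusion $(p_\la,q_\la)\neq S^i(2n+6,1)$ in part~(i) is used precisely to force $z>2n+7$ (rather than $z=2n+6$, where the estimate degenerates); in part~(ii) no exclusion is required because under the orientation-reversing $R^\sharp$ the corresponding problematic class becomes a right-entry rather than a left-entry of any triple containing it, and so never appears as $\bE_\la$.

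\smallskip

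The final step is a direct comparison. Asymptotically as $i\to\infty$ both sides grow proportionally to $y_{i+1}$, and computing the ratio of leading coefficients in terms of $\lambda=3+2\sqrt2$ shows that the LHS strictly dominates for every $n\ge 0$ — at $n=0$ the ratio approaches roughly $(4\lambda-1)\sqrt{8}/(7(\lambda-1))\approx 1.9$, while for $n\ge 1$ the margin grows quadratically in $n$ against a bound on the RHS that is only linear in $n$. The main technical obstacle will be verifying the handful of small base cases — notably $(n,i)=(0,0)$, where the LHS equals $4$ and the maximum of $p_\la/(t_\la-1)$ over $\mathcal{CS}^U_0\setminus\{\bB^U_0\}$ is attained at $\bE_{[7;4]}$ with value $29/12\approx 2.42$ — after which an induction on $i$ exploiting the recurrence for $(y_k)$ closes the argument.
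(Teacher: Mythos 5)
Part (i) of your proposal is essentially the paper's own proof: you use the same quasi-triple $(S^i)^\sharp\bigl(\bE^U_{\ell,seed},\bB^U_{n-1},\bB^U_n\bigr)$, the same closed form $p_0q_1/(y_{i+1}-y_i)$ via Lemma~\ref{lem:xmod}, the same lower bound on the center supplied by the exclusion of $S^i(2n+6,1)$, and the same factor $\tfrac{2n+3}{2n+2}$ coming from $t_\la\ge 2n+3$; your monotonicity of $z\mapsto (y_{i+1}z-y_i)/\sqrt{z^2-6z+1}$ is a repackaging of the paper's monotonicity of $1-6/r+1/r^2$ together with $r^2-6r+1=(4n^2+16n+8)/(q')^2$. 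The only divergence is at the end, where the paper reduces the comparison to two one-line inequalities in the $y_k$, while you leave it as an asymptotic estimate plus base cases (and note that (i) assumes $i>0$, so $(n,i)=(0,0)$ is not the relevant base case).

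The genuine gap is in part (ii). Your choice of quasi-triple $(S^iR)^\sharp\Tt^{n+1}_{u,seed}$ is the natural one, since it contains exactly the two classes in the statement, but your application of Lemma~\ref{lem:xmod}(ii) is not valid: the identities behind Lemma~\ref{lem:xmod} (the adjacency formula $dd'-mm'=pq'$ and Lemma~\ref{lem:prelim0}(iii)) presuppose the $z$-ordering of the slots, and $R$ reverses that ordering. After applying $(S^iR)^\sharp$ the seed occupies the $\la$-slot of the reversed triple $\bigl((S^iR)^\sharp\bE^U_{u,seed},(S^iR)^\sharp\bB^U_{n+2},(S^iR)^\sharp\bB^U_{n+1}\bigr)$, so the correct case is Lemma~\ref{lem:xmod}(i) and the denominator is the $p$-coordinate of $(S^iR)^\sharp\bE^U_{u,seed}$, namely $5y_{i+1}-y_i=y_{i+2}-y_{i+1}$, not its $q$-coordinate $y_{i+1}-y_i$. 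A direct check at $i=0$, $n=0$, with $R^\sharp\bB^U_1=(5,0,13,2)$ and $R^\sharp\bB^U_2=(10,1,25,4)$, gives
\[
\Bigl|\frac{1-x_0x_1}{x_1-x_0}\Bigr|=\frac{50-0}{5-0}=10=\frac{p_1q_0}{5},
\]
whereas your formula would give $50$. Since your version inflates the left-hand side by a factor tending to $3+2\sqrt2\approx 5.8$, the comparison you then run (asymptotic constants, base cases) does not establish the stated inequality for (ii). The route is salvageable: with the corrected denominator the left-hand side still dominates (e.g.\ $10$ versus a maximum of $13/4$ for $p_\la/(t_\la-1)$ at $i=0,n=0$), and the scheme of part (i) carries over verbatim, but the numerology must be redone. (Incidentally, the paper's own sketch of (ii) writes the denominator as $y_{i+2}-y_i$, apparently a slip for $y_{i+2}-y_{i+1}$; that slip only shrinks its claimed left-hand side, so it is harmless there, unlike yours.) Finally, your explanation of why (ii) needs no exclusion is not quite on target: the statement covers every class in $(S^iR)^\sharp(\Cc\Ss^U_n)$, not only those arising as $\bE_\la$ of some triple; the point is simply that the lower bound on the image centers is taken at $S^iR(2n+8,1)$, the left endpoint of the image interval, and the estimate goes through for all classes of the family.
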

\begin{proof}
By Remark~\ref{rmk:quasitrip}, we have $\bigl((S^i)^\sharp(1,1),(S^i)^\sharp(2n+6,1),(S^i)^\sharp(2n+8,1)\bigr)$ is a quasi-triple. 
Applying Lemma~\ref{lem:xmod}, we have \[ \left|\frac{1-x_0x_{1}}{x_{1}-x_{0}}\right|=\frac{ p_{0} q_{1}}{y_{i+1}-y_i}\] since  $(S^i)(1,1)=(y_{i+1}-y_i,y_i-y_{i-2}).$ It remains to show that
\[ \frac{p_{0}q_{1}}{y_{i+1}-y_i}>\frac{p_\la}{t_\la-1}.\]
By assumption, we have
$S^i\left({2n+7}/{1}\right) <p_\la/q_\la < S^i\left({2n+8}/{1}\right).$
Further,  $S^i(2n+7,1)= S^i(w_1) = (2ny_{i+1}+y_{i+2}+y_{i+1},2ny_{i}+y_{i+1}+y_i):=(p',q')$ by \eqref{eq:symmvi}. Let $r=p'/q'.$ So $r<p_\la/q_\la.$
We have 
\begin{equation} \label{eq:tlapla}
 t_\la^2/p_\la^2=1+q_\la^2/p_\la^2-6q_\la/p_\la+8>1-6/r+1/r^2=\frac{r^2-6r+1}{r^2}
 \end{equation}
 as $1-6/r+1/r^2$ is an increasing function for $r>a_{min}$. 
 A proof by induction verifies that
 \[ r^2-6r+1=\frac{4n^2+16n+8}{(q')^2},\] so
 \[ t_\la^2/p_\la^2>\frac{4n^2+16n+8}{(p')^2}.\]
 Therefore,
 \[ p_\la/(t_\la-1)<\frac{p'}{\sqrt{4n^2+16n+8}}\frac{2n+3}{2n+2}.\]
 It remains to show that
 \[ \frac{p_{0} q_{1}}{y_{i+1}-y_i}>\frac{p'}{\sqrt{4n^2+16n+8}}\frac{2n+3}{2n+2}.\]
 Substituting in \[p_0=2(n-1)y_{i+1}+y_{i+2}, q_1=2ny_{i}+y_{i+1}, \quad  p'=2ny_{i+1}+y_{i+2}+y_{i+1}\] and simplifying by taking $n=0$ to the terms on the right hand side that decrease in $n$, this is equivalent to
 \begin{align*} 
 (2(n-1)y_{i+1}+y_{i+2})(2ny_{i}+y_{i+1})&>\frac{3}{4\sqrt{2}}(2ny_{i+1}+y_{i+2}+y_{i+1})(y_{i+1}-y_i).
 \end{align*} 
 This holds as for $n \geq 0,$ we have $2ny_{i}+y_{i+1}>y_{i+1}-y_i$ and
 \[ 2(n-1)y_{i+1}+y_{i+2}>\frac{3}{4\sqrt{2}}(2ny_{i+1}+y_{i+2}+y_{i+1}).\]
 This proves (i).
\MS

Towards (ii), notice that  $\bigl((S^iR)^\sharp(2n+8,1),(S^iR)^\sharp(2n+6,1),(S^iR)^\sharp(1,1,1,1)\bigr)$ is also a quasi-triple, so that  \[ \left|\frac{1-x_{1}x_{0}}{x_{1}-x_{0}}\right|=\frac{p_{1}q_{0}}{y_{i+2}-y_i}.\]
Further, we have
$S^iR\left({2n+8}/{1}\right)<p_\la/q_\la<S^iR\left({2n+6}/{1}\right)$ where 
 $(S^iR)(2n+8,1)=(2(n+1)y_{i+2}+y_{i+1},2(n+1)y_{i+1}+y_{i})$.
The rest of the argument is very similar to (i) and is again left to the reader.
\end{proof}

\end{document}